\newtheorem{thm}{Theorem}[section]
\newtheorem{lemma}[thm]{Lemma}
\newtheorem{prop}[thm]{Proposition}
\newtheorem{cor}[thm]{Corollary}
\newtheorem{question}[thm]{Question}
\newtheorem{fact}[thm]{Fact}
\theoremstyle{definition}
\newtheorem{df}[thm]{Definition}
\theoremstyle{remark}
\newtheorem*{rmk}{Remark}
\newtheorem*{rmks}{Remarks}
\def\dotminussym#1#2{%
  \setbox0=\hbox{$\m@th#1-$}%
  \kern.5\wd0%
  \hbox to 0pt{\hss\hbox{$\m@th#1-$}\hss}%
  \raise.6\ht0\hbox to 0pt{\hss$\m@th#1.$\hss}%
  \kern.5\wd0}
\newcommand{\dotminus}{\mathbin{\mathpalette\dotminussym{}}}
\renewcommand{\r}{\mathbb{R}}
\newcommand{\Z}{\mathbb{Z}}
\newcommand{\bC}{\mathbb{C}}
\newcommand{\curly}[1]{\mathcal{#1}}
\newcommand{\B}{\curly{B}}
\newcommand{\C}{\curly{C}}
\newcommand{\N}{\mathfrak{N}}
\newcommand{\n}{\mathbb{N}}
\newcommand{\cU}{\curly{U}}
\renewcommand{\S}{\curly{S}}\newcommand{\cR}{\mathcal{R}}
\newcommand{\cN}{\mathcal{N}}
\newcommand{\la}{\curly{L}}
\renewcommand{\to}{\rightarrow}
\newcommand{\e}{\epsilon}
\def \<{\langle}
\def \>{\rangle}
\def \z {{\mathbb Z}}
\def \*Z {{{^*}\Z}}
\def \((  {(\!(}
\def \)) {)\!)}
\def \m{\operatorname{M}}
\def \O{\operatorname{O}}
\def \tp{\operatorname{tp}}
\numberwithin{equation}{section}
\def \Th{\operatorname{Th}}
\def \R{\mathcal R}
\def \u{\mathcal U}
\def \c{\mathbb C}
\def \opsys{\operatorname{opsys}}
\def \ops{\operatorname{ops}}
\def \O{\mathcal O}
\def \nuc{\operatorname{nuc}}
\def \id{\operatorname{id}}
\def \cb{\operatorname{cb}}
\def \cS{\mathcal S}
\def \m{\mathcal M}
\def \N{\mathcal N}
\def \k{\mathcal K}
\title[On Kirchberg's Embedding Problem]{On Kirchberg's Embedding Problem}
\author{Isaac Goldbring and Thomas Sinclair}
\thanks{Goldbring's work was partially supported by NSF grant DMS-1007144. Sinclair was supported by an NSF RTG Assistant Adjunct Professorship.}
\address {Department of Mathematics, Statistics, and Computer Science, University of Illinois at Chicago, Science and Engineering Offices M/C 249, 851 S. Morgan St., Chicago, IL, 60607-7045}
\email{isaac@math.uic.edu}
\urladdr{http://www.math.uic.edu/~isaac}
\address{Department of Mathematics, University of California Los Angeles, 520 Portola Plaza Box 951555, Los Angeles, CA 90095-1555}
\email{thomas.sinclair@math.ucla.edu}
\urladdr{http://www.math.ucla.edu/~thomas.sinclair}
\begin{document}

\begin{abstract}
Kirchberg's Embedding Problem (KEP) asks whether every separable C$^*$ algebra embeds into an ultrapower of the Cuntz algebra $\O_2$.  In this paper, we use model theory to show that this conjecture is equivalent to a local approximate nuclearity condition that we call \emph{the existence of good nuclear witnesses}.  In order to prove this result, we study general properties of existentially closed C$^*$ algebras.  Along the way, we establish a connection between existentially closed C$^*$ algebras, the weak expectation property of Lance, and the local lifting property of Kirchberg.  The paper concludes with a discussion of the model theory of $\O_2$.  Several results in this last section are proven using some technical results concerning tubular embeddings, a notion first introduced by Jung for studying embeddings of tracial von Neumann algebras into the ultrapower of the hyperfinite II$_1$ factor.
\end{abstract}
\maketitle

\tableofcontents

\newpage

\section{Introduction} The Cuntz algebra $\O_2$ is defined to be the universal C$^*$-algebra generated by two isometries $v_1, v_2$ satisfying the relation $v_1v_1^* + v_2v_2^* = 1$. It is a unital, separable, simple, nuclear, purely infinite C$^*$-algebra (we refer to such algebras in short as \emph{Kirchberg algebras}). In the 1990s Kirchberg obtained two remarkable theorems which established $\O_2$ as a central object of study in the K-theoretic classification of separable, nuclear C$^*$-algebras: $A\otimes \O_2 \cong \O_2$ for any Kirchberg algebra $A$; and (jointly with N. Phillips) that any separable, exact (e.g., nuclear) C$^*$-algebra is embeddable in $\O_2$ (this in fact characterizes exactness).

The techniques developed by Kirchberg and Kirchberg-Phillips to prove the $\O_2$ embedding theorem in fact suggest the possibility that \emph{every} separable C$^*$-algebra is embeddable into an ultrapower of $\O_2$, though there are many examples of non-exact, separable C$^*$-algebras also due to Kirchberg \cite{K}. This possibility was already noticed by Kirchberg and we refer to the question of whether every separable C$^*$-algebra is $\O_2^\omega$-embeddable as \emph{Kirchberg's Embedding Problem} (KEP). We remark that this problem should not be conflated with Kirchberg's conjecture that there is a unique C$^*$-norm on the algebraic tensor product C$^*(\mathbb F_\infty)\odot \text{C}^*(\mathbb F_\infty)$, which is known to be equivalent to Connes' Embedding Problem \cite{K}.

The purpose of this paper is to investigate KEP from the perspective of the continuous model theory of C$^*$-algebras. The development of the model theory of C$^*$-algebras in general, and nuclear (or exact) C$^*$-algebras specifically, is in its very early stages, so many results obtained in this work are aimed at developing the foundational aspects of this theory. The results and ideas detailed below are complementary to (and borrow from) a larger systematic treatment of the model theory of nuclear C$^*$-algebras by Farah, et al.\ \cite{FHRTW} which is forthcoming. One important feature of our approach is that we emphasize the importance of the model theory of C$^*$-algebras as operator spaces, though this is also implicit in \cite{FHRTW}. In appendix B we have included a treatment of operator systems and operator spaces in the context of continuous logic as an exposition of this material has not appeared so far in the literature.

We now describe the structure of the paper, pointing out the main results in the order they appear. Aside from the introduction (first section), the paper is divided into four main sections and three appendices. As a means for preparing the groundwork for discussing KEP, the second section of the paper deals with general properties of existentially closed (e.c.) unital C$^*$-algebras. The first main result is that an exact, unital, e.c.\   C$^*$-algebra is nuclear (Theorem \ref{exactec}). Using results from \cite{FHRTW} and Kirchberg's ``$A\otimes \O_2$'' theorem, it follows that $\O_2$ is the only possible separable, exact, unital, e.c.\ C$^*$-algebra (Proposition \ref{onlypossibility}). As will be demonstrated in section 3, KEP is equivalent to whether $\O_2$ is actually existentially closed. We further show that all e.c.\ C$^*$-algebras have trivial K-theory. The second section ends with a discussion of the relationship between existential closedness and Lance's weak expectation property (WEP).

The main results on KEP are established in the third section. The first main result is that KEP has a positive solution if and only if $\O_2$ is existentially closed which, by the results described in section 2, is true if and only if there is a unital, exact, e.c.\ C$^*$-algebra (Theorem \ref{KEPequiv}). This is the first key step towards the main result of the paper: KEP has a positive solution if and only if every satisfiable condition admits \emph{good nuclear witnesses} (Theorem \ref{witnesses}). In operator algebraic parlance, the latter condition roughly means that any C$^*$-algebra can be locally modelled by operators in $\mathcal B(H)$ which are approximately nuclear in $\mathcal B(H)$. The hard direction of the proof of this result rests on a finer analysis of an ``omitting types'' characterization of nuclearity as developed in \cite{FHRTW} combined with a general Omitting Types Theorem from the theory of continuous model theoretic forcing to produce an existentially closed C$^*$-algebra from the collection of good nuclear witnesses, whence KEP follows by Theorem \ref{KEPequiv}.

The fourth and fifth sections of the paper are concerned with results on the general model theory of $\O_2$. The fourth section establishes some technical result on tubularity of embeddings of C$^*$-algebras, a concept first introduced in the context of von Neumann algebras by Jung \cite{Jung}. In the fifth and final section, it is established, among other results, that $\O_2$ is the prime model of its theory, that KEP implies that $\Th(\O_2)$ is not model complete nor does the theory of C$^*$ algebras admit a model companion, and that $\O_2$ is the only so-called \emph{stably presented} model of its theory. We leave it as open questions whether $\Th(\O_2)$ is model complete or admits an exact, non-nuclear model ($\O_2$ is the only nuclear model of $\Th(\O_2)$). It is observed though that at least one of the two questions must have a negative answer (Proposition \ref{goodnews}).

For the sake of completeness, three appendices are included. The first discusses the general theory of continuous model theoretic forcing towards establishing a technical result needed in the proof of Theorem \ref{witnesses}: that generic models of $\forall\exists$-axiomatizable theories are e.c.\ models of the theory. The second appendix gives an axiomatization of operator spaces and operator systems in continuous logic. The last appendix contains the proof of a result, due to Martino Lupini, on the definability of the operator norm on the matrix algebras $M_n(A)$ over a C$^*$-algebra $A$.

We assume that the reader is familiar with the model-theoretic treatment of operator algebras as presented in \cite{MTOA2}.  A reader looking for a quicker introduction to the model theory of operator algebras can consult the introduction of \cite{ecfactor} (which only treats von Neumann algebras).

In this paper, $\omega$ always denotes a nonprincipal ultrafilter on the natural numbers.  In the presence of a nonprincipal ultrafilter $\omega$, the phrase ``for almost all $i$, $P$(i) holds'' will mean that the set of $i$ for which $P(i)$ holds belongs to the ultrafilter $\omega$.

\subsection{Acknowledgements} We would like to thank Ilijas Farah, Bradd Hart, and David Sherman for useful discussions on the subject of this paper.  We would also like to thank Gilles Pisier for pointing us to the paper \cite{jungepisier}, allowing us to improve on an earlier draft of the paper.

\section{Existentially closed C$^*$ algebras}

\subsection{General properties}

Suppose that $A$ and $B$ are C$^*$ algebras and $i:A\to B$ is an embedding of C$^*$ algebras.  We say that $i$ is an \emph{existential embedding} if, for any quantifier-free formula $\varphi(v,w)$, where $v$ and $w$ are tuples of variables, any $n\geq 1$, and any tuple $a$ from $A$, we have
$$\inf\{\varphi(a,b) \ : \ b\in A_{\leq n}\}=\inf\{\varphi(i(a),c) \ : \ c\in B_{\leq n}\}.$$
If $A\subseteq B$ and the inclusion is an existential embedding, we say that $A$ is \emph{existentially closed in $B$}.  We say that $A$ is \emph{existentially closed} (e.c.) if $A$ is existentially closed in $B$ for any $B$ containing $A$.

In \cite{MTOA2}, the authors present the language for arbitrary (i.e., not necessarily unital) C$^*$ algebras.  For our purposes, this is a bit troublesome:

\begin{prop}
Suppose that $A$ is an e.c.\   $C^*$ algebra in the language for $C^*$ algebras that does not require a unit.  Then $A$ is not unital.
\end{prop}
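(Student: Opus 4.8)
The plan is to prove the contrapositive: I will show that every unital C$^*$ algebra $A$ fails to be existentially closed in some extension, hence cannot be e.c. Let $e$ denote the unit of $A$. The guiding idea is that, in the language without a unit symbol, the defining property of $e$ — namely $ex = xe = x$ for all $x$ — is a \emph{universal} condition, and existential closedness should force $A$ to ``see'' an element orthogonal to $e$ whenever such an element occurs in an extension. Since a genuine unit admits no nonzero orthogonal element, I will manufacture an extension in which $e$ does, and then detect the discrepancy with a single existential formula.

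Concretely, I would take $B := A \oplus \mathbb{C}$, the C$^*$-algebra direct sum (with $\|(a,\lambda)\| = \max(\|a\|,|\lambda|)$ and coordinatewise multiplication), together with the embedding $\iota\colon A \to B$, $\iota(a) = (a,0)$. This is an injective $*$-homomorphism, and since the language carries no unit symbol it need not send $e$ to a unit of $B$; indeed $(e,0)$ is no longer a unit of $B$ because $(0,1)$ is orthogonal to it and has norm one. To witness the failure of existentiality, I would use the truncated subtraction $\dotminus$ and set $\varphi(v,w) := \max\big(\|wv\|,\,\|vw\|,\,1 \dotminus \|w\|\big)$, with parameter $a = e$ and $n = 1$. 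In $A$, the relations $we = ew = w$ give $\varphi(e,w) = \max(\|w\|,\,1 \dotminus \|w\|)$ for $\|w\|\le 1$, and since $\max(t,1-t) \ge \tfrac12$ on $[0,1]$ we obtain $\inf\{\varphi(e,b) : b \in A_{\leq 1}\} \ge \tfrac12$. In $B$, however, the witness $c = (0,1) \in B_{\leq 1}$ satisfies $c\,(e,0) = (e,0)\,c = 0$ and $\|c\| = 1$, so $\varphi(\iota(e),c) = 0$ and thus $\inf\{\varphi(\iota(e),c) : c \in B_{\leq 1}\} = 0$. As $\tfrac12 \neq 0$, the embedding $\iota$ is not existential, contradicting that $A$ is e.c.

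The argument is short, and the delicate points are the ones I expect to be the real work. The first is packaging ``there is a norm-one element orthogonal to $e$'' as a single \emph{quantifier-free} formula that genuinely lives in the unit-free language; this is exactly the role played by $\max$ and $\dotminus$, and it is precisely here that the absence of a unit symbol is essential, since one must not smuggle a unit into the formula. The second is that one must compare the two \emph{infima} rather than values at chosen points: the lower bound $\tfrac12$ in $A$ is the crux, and it should be checked that it uses nothing beyond the unit relations, so that it holds for every unital $A$. Finally, the degenerate case $A = 0$ is handled automatically by the same computation: then $e = 0$ and $B = \mathbb{C}$, the infimum over $A_{\leq 1} = \{0\}$ equals $1 > 0$, while the infimum over $B_{\leq 1}$ is still $0$.
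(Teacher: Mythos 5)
Your proof is correct and is essentially the paper's argument: the paper likewise takes the extension $B:=A\oplus M$ with $M$ nonunital and uses existential closedness to transfer a quantifier-free condition witnessing that $1_A$ fails to act as a unit on $B$ (there, $r\dotminus d(1_A\cdot x,x)$; here, your formula $\max(\|wv\|,\|vw\|,1\dotminus\|w\|)$) down to $A$, where no witness can exist. The only differences are cosmetic: you specialize to $M=\mathbb{C}$ and detect a norm-one element orthogonal to the unit, rather than an element merely moved by it.
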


\begin{proof}
Suppose that $A$ is unital.  Then for any $B\supseteq A$ we have that $B$ is unital and $1_A=1_B$.  Indeed, if $x\in B$ and $r:=d(1_A\cdot x,x)>0$, then $(\inf_x(r\dotminus d(1_A\cdot x,x)))^B=0$; since $A$ is e.c., there is $x\in A$ such that $d(1_A\cdot x,x)>\frac{r}{2}$, a contradiction.

Now such a phenomenon cannot happen: just take a nonunital $C^*$ algebra $M$ and let $B:=A\oplus M$.
\end{proof}

\emph{Thus, in the rest of this paper, unless otherwise stated, we work in the language of \emph{unital} $C^*$ algebras and all C$^*$ algebras are required to be unital}.

In the rest of this subsection, we outline some of the basic C$^*$ algebraic properties that any e.c.\  C$^*$ algebra must have. As the attentive reader may begin to suspect, existential closure imposes a myriad of strong structural properties on a C$^*$-algebra. Therefore, before proceding to establish some of these properties, we point out that existential closure is well worth studying as separable e.c.\ C$^*$-algebras are in some sense generic among all separable C$^*$-algebras. To be more precise, the set of separable e.c.\ C$^*$-algebras in dense in the the set of all separable C$^*$-algebras in the ``logic topology'' described in section 2.2 of \cite{usvyatsov}: see Fact 4.2 therein. In fact a random C$^*$-algebra is e.c.\ for any reasonable choice of probability measure on the space of separable C$^*$-algebras equipped with the logic topology by Lemma 4.7 in \cite{usvyatsov}. Moreover the separable e.c.\ C$^*$-algebras form a universal class:

\begin{prop} Every separable C$^*$-algebra is a (unital) subalgebra of an existentially closed C$^*$-algebra.
\end{prop}

\begin{proof} For a proof of this result in the context of classical logic see Lemma 3.5.7 in \cite{chang-keisler}: the proof for continuous logic is similar. See the proof of Fact 2.8 in \cite{usvyatsov} for more details.
\end{proof}

Occasionally, we may choose an embedding of a given C$^*$-algebra A into some e.c.\ C$^*$-algebra $B$ which we refer to as an ``existential closure'' of $A$ (there is no apparent reason why there is a unique or even minimal such one).

\begin{prop}\label{uncountable} There are uncountably many pairwise non-isomorphic separable e.c.\ C$^*$-algebras.
\end{prop}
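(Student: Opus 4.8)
The plan is to deduce the result from the genericity of e.c.\ algebras recorded in \cite{usvyatsov}, via a measure-theoretic pigeonhole. Suppose, toward a contradiction, that there are only countably many separable e.c.\ C$^*$-algebras up to isomorphism, say $A_1, A_2, \ldots$. Fix any reasonable probability measure $\mu$ on the space of separable C$^*$-algebras equipped with the logic topology. By Lemma 4.7 in \cite{usvyatsov}, $\mu$-almost every separable C$^*$-algebra is e.c.; hence the set $E$ of e.c.\ algebras satisfies $\mu(E)=1$. Since every e.c.\ algebra is isomorphic to some $A_n$, and each $A_n$ is e.c., we get $E = \bigcup_n \{A : A \cong A_n\}$, so $\sum_n \mu(\{A : A\cong A_n\}) \geq 1$. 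In particular, some single isomorphism class must carry positive $\mu$-mass.

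The contradiction will then come from choosing $\mu$ so that no isomorphism class is charged, i.e.\ $\mu(\{A : A \cong A_0\}) = 0$ for every fixed separable $A_0$ (a measure ``non-atomic on isomorphism classes''). First I would check that each isomorphism class is $\mu$-measurable — it is analytic in the standard Borel structure underlying the logic topology — so that the displayed countable sum makes sense. The heart of the matter is to produce one reasonable measure that is simultaneously (i) eligible for Lemma 4.7 and (ii) non-atomic on isomorphism classes. The natural route is to realize $\mu$ as the law of a randomly generated C$^*$-algebra built from continuously distributed data — for instance, generators subject to relations depending on a parameter drawn from a non-atomic distribution on some $[0,1]$-family — engineered so that a genuine isomorphism invariant of the resulting algebra inherits a non-atomic law. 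Then each isomorphism class, being contained in a fixed level set of that invariant, is $\mu$-null, completing the contradiction and yielding uncountably (indeed continuum) many pairwise non-isomorphic separable e.c.\ algebras.

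The main obstacle is precisely step (ii): exhibiting an isomorphism invariant of separable C$^*$-algebras that is both measurable for a reasonable random construction and non-atomically distributed. This is delicate because a $\mu$-random algebra is almost surely e.c., so the invariant must actually separate e.c.\ algebras; in particular the invariants that trivialize on e.c.\ algebras (e.g.\ K-theory, by the results of this section) are useless, and classical ``existential'' features such as the presence of a unital copy of $M_n$ are saturated in every e.c.\ algebra and hence $\mu$-almost surely constant. Thus the invariant has to be of a genuinely metric, non-elementary character. An alternative packaging of the same idea replaces measure by Baire category: using that the e.c.\ algebras are dense — indeed generic — in the logic topology (Fact 4.2 in \cite{usvyatsov}), one argues that the comeager set of e.c.\ algebras cannot be covered by countably many isomorphism classes once each such class is shown to be meager; here the obstacle simply migrates to verifying that no isomorphism class is comeager, i.e.\ that the isomorphism relation has no generic class.
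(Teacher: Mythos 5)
Your proposal is not a proof: the step you yourself flag as ``the main obstacle'' is the entire content of the statement, and the way you have set things up it is actually circular. Your contradiction requires a ``reasonable'' measure $\mu$ (eligible for Lemma 4.7 of \cite{usvyatsov}, so that $\mu$-almost every algebra is e.c.) under which every isomorphism class is null. But under your reductio hypothesis --- countably many classes of separable e.c.\ algebras --- no such $\mu$ can exist, since countable subadditivity forces some class to carry positive mass; so ``choosing $\mu$ non-atomic on isomorphism classes'' presupposes the failure of the very hypothesis you are trying to refute. To break the circle you would have to construct such a measure outright, and that construction would itself already prove the proposition (and more). The same defect afflicts the Baire-category packaging: you would need every isomorphism class to be meager, and this is not automatic even granting genericity of the e.c.\ algebras --- in nearby settings (the Gurarij operator space, the Urysohn space) the generic separable structure is a \emph{single} isomorphism class, so ``no generic class'' is a substantive claim needing proof, not a formal consequence of genericity. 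Your observation that the usual invariants (K-theory, $\O_2$-stability, simplicity, existential features) trivialize on e.c.\ algebras correctly diagnoses why no candidate invariant with non-atomic law is in sight; but diagnosing the obstacle does not remove it.

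The paper's proof sidesteps all of this with a two-line counting argument of a completely different flavor: by \cite[Proposition 2.6]{jungepisier} there is \emph{no} separable C$^*$-algebra into which every separable C$^*$-algebra embeds. If there were only countably many isomorphism classes of separable e.c.\ algebras, take representatives $A_1,A_2,\dots$; since every separable C$^*$-algebra embeds into a separable e.c.\ algebra, hence into some $A_n$, the (separable) tensor product of the $A_n$ would be universal, contradicting Junge--Pisier. I would suggest rebuilding your argument on this embedding-theoretic pigeonhole rather than on measure or category: the universality of the class of e.c.\ algebras is the cheap input, and Junge--Pisier supplies the hard non-existence result that your approach was implicitly asking you to reprove.
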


\begin{proof} It follows from \cite[Proposition 2.6]{jungepisier} that there is no separable C$^*$-algebra into which all separable C$^*$-algebras embed. Thus there cannot be countably many isomorphism classes of separable e.c.\ C$^*$-algebras, lest we could construct such a universal C$^*$-algebra as a tensor product of representatives from each class.
\end{proof}

Let $A$ be a (unital) C$^*$ algebra.  We say that $A$ has the \emph{strong Dixmier property} (SDP) if, for every $a_1,\ldots,a_n\in A_+$ and every $\epsilon>0$, there are unitaries $u_1,\ldots,u_k\in A$ and real numbers $r_1,\ldots,r_n$ with $\|a_j\|\leq 4|r_j|$ and $$\|\frac{1}{k}\sum_{i=1}^k u_ia_ju_i^*-r_j\cdot 1\|<\epsilon$$ for each $j=1,\ldots,n$.  (While there is a general consensus on what the Dixmier property means, there does not seem to be a consensus on what the strong Dixmier property means.  In particular, this is not what Blackadar \cite[III.2.5.16]{Black} calls the strong Dixmier property.  Instead, we follow Kirchberg's terminology from \cite{K}.)

\begin{fact}
If $A$ has the SDP, then $A$ is simple.
\end{fact}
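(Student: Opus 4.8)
The plan is to show that any nonzero closed two-sided ideal $I \subseteq A$ must be all of $A$, by producing an invertible element inside $I$. Recall that a closed two-sided ideal in a C$^*$-algebra is automatically self-adjoint, so if $I \neq 0$ then $I$ contains a nonzero positive element $a \in A_+$ (for instance $x^*x$ for any nonzero $x \in I$). First I would fix $\epsilon > 0$ with $\epsilon < \|a\|/4$ and apply the SDP to the single positive element $a$ (the case $n = 1$): this yields unitaries $u_1, \ldots, u_k \in A$ and a real number $r$ with $\|a\| \leq 4|r|$ and $\|\frac{1}{k}\sum_{i=1}^k u_i a u_i^* - r \cdot 1\| < \epsilon$.

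The key observations are then as follows. Since $I$ is a two-sided ideal, each $u_i a u_i^* \in I$, and hence the average $b := \frac{1}{k}\sum_{i=1}^k u_i a u_i^*$ again lies in $I$. The constraint $\|a\| \leq 4|r|$, together with $a \neq 0$, forces $|r| \geq \|a\|/4 > 0$, so in particular $r \neq 0$ and $\frac{1}{r} b \in I$. Combining the SDP estimate with our choice of $\epsilon$ gives
$$\left\| \frac{1}{r} b - 1 \right\| = \frac{1}{|r|} \left\| b - r \cdot 1 \right\| < \frac{\epsilon}{|r|} \leq \frac{4\epsilon}{\|a\|} < 1.$$

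Finally, an element of a unital Banach algebra at distance strictly less than $1$ from the identity is invertible, so $\frac{1}{r} b$ is invertible in $A$; as it belongs to $I$, we conclude $1 = (\frac{1}{r} b)^{-1}(\frac{1}{r} b) \in I$, whence $I = A$. This shows $A$ has no nontrivial closed two-sided ideals, i.e.\ $A$ is simple.

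The argument is essentially the classical deduction of simplicity from a Dixmier-type averaging property, and none of the steps is genuinely hard. The one place requiring care — and the only spot where the precise formulation of the SDP is used — is the normalization $\|a_j\| \leq 4|r_j|$: this is exactly what guarantees that the scalar $r$ approximating the averaged element is bounded away from $0$ (uniformly in $\epsilon$), so that the average is close to an invertible scalar multiple of the unit rather than possibly degenerating to $0$. I would therefore be careful to choose $\epsilon < \|a\|/4$ \emph{before} invoking the SDP, since $r$ is produced by the property and is not under our direct control.
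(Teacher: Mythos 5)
Your proof is correct and follows essentially the same route as the paper's: average $a$ into a near-scalar element of the ideal via the SDP, use the normalization $\|a\|\leq 4|r|$ to keep $r$ bounded away from $0$, and conclude that $\frac{1}{r}b$ is invertible in $I$. The only cosmetic difference is that the paper normalizes $\|a\|=4$ and takes $\epsilon=\frac{1}{2}$, whereas you keep $\|a\|$ general and choose $\epsilon<\|a\|/4$.
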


\begin{proof}
Suppose that $I$ is a nonzero (closed) ideal in $A$.  Fix $a\in I_+$ with $\|a\|=4$.  Take unitaries $u_1,\ldots,u_k\in A$ and a real number $r$ such that, setting $Y:=\frac{1}{k}\sum_{i=1}^k u_iau_i^*$, we have $\|Y-r\cdot 1\|<\frac{1}{2}$ and $|r|\geq 1$.  Then $\|\frac{1}{r}Y-1\|<\frac{1}{2}$, whence $\frac{1}{r}Y$ is invertible.  But $\frac{1}{r}Y\in I$, whence $I=A$.
\end{proof}

\begin{prop}
An e.c.\   (unital) C$^*$ algebra has SDP.  In particular, an e.c.\ unital C$^*$ algebra is simple.
\end{prop}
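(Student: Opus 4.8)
The plan is to exploit existential closedness to pull the strong Dixmier property down from a suitable over-algebra. Fix a tuple $a_1,\dots,a_n\in A_+$ and $\epsilon>0$; it suffices to produce, inside $A$, unitaries and scalars witnessing the SDP inequalities for this data. The key observation is that, for a fixed number $k$ of unitaries, the existence of such witnesses is an existential (i.e.\ $\inf$-of-quantifier-free) condition, so by existential closedness it is enough to find the witnesses in \emph{some} extension $B\supseteq A$ that is already known to have the SDP. Once $A$ is shown to have the SDP, simplicity is immediate from the Fact above.

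First I would fix the over-algebra. Here I would invoke the purely C$^*$-algebraic fact, implicit in Kirchberg's work \cite{K}, that every unital C$^*$-algebra embeds unitally into a unital C$^*$-algebra $B$ with the SDP --- for instance a unital, simple, purely infinite C$^*$-algebra --- and take such a $B\supseteq A$. The point of passing to $B$ rather than to, say, $A\otimes\O_2$ is that in a tensor product the elements $a_j\otimes 1$ are central for the $\O_2$-factor and so cannot be averaged to scalars by conjugation; one genuinely needs an extension in which conjugation moves the $a_j$ around, which is exactly what simplicity provides.

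Next I would encode the SDP witnesses as an $\inf$-formula. For a fixed $k$, consider the formula in variables $x_1,\dots,x_k$ ranging over the unit ball given by the sum of the penalty term $\sum_{i=1}^k(\|x_i^*x_i-1\|+\|x_ix_i^*-1\|)$ and the main term $\max_{1\le j\le n}\min_m\|\frac{1}{k}\sum_{i=1}^k x_i a_j x_i^*-r_j^{(m)}\cdot 1\|$, where $\{r_j^{(m)}\}_m$ is a fixed finite $\frac{\epsilon}{4}$-net of the compact interval $[\|a_j\|/4,\,\|a_j\|+1]$. Replacing the scalar $r_j$ by a finite net turns the scalar-infimum into a finite minimum, so this is a genuinely quantifier-free formula, and its infimum over $x_1,\dots,x_k\in B_{\le 1}$ is exactly of the form to which the definition of existential embedding applies (the net depends only on the now-fixed constants $\|a_j\|$). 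The unitary penalty forces approximate unitarity, and every net point satisfies $4r_j^{(m)}\ge\|a_j\|$ by construction, which is the required SDP norm constraint.

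Finally I would run the transfer. Because $B$ has the SDP, for the given data there is some $k$ and unitaries in $B$ making the above infimum $<\epsilon/2$; since $A\subseteq B$ is an existential embedding, the same infimum computed in $A$ is also $<\epsilon/2$. This yields $x_1,\dots,x_k\in A_{\le 1}$ with small unitary penalty and with $\frac{1}{k}\sum_i x_i a_j x_i^*$ within $\epsilon/2$ of some $r_j^{(m)}\cdot 1$ satisfying $4r_j^{(m)}\ge\|a_j\|$; a routine polar-decomposition perturbation replaces the $x_i$ by genuine unitaries of $A$ while keeping all estimates below $\epsilon$. Hence $A$ has the SDP, and is in particular simple. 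I expect the only substantive obstacle to be the first step --- guaranteeing an SDP over-algebra for an \emph{arbitrary} (possibly non-exact) unital $A$, since one cannot appeal to embeddings into $\O_2$ here --- while the formula encoding and the transfer are routine.
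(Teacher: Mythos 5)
Your proposal is correct and follows essentially the same route as the paper's proof: embed $A$ into an algebra with the SDP via (the proof of) Corollary 3.5 of \cite{K}, encode the SDP witnesses as the infimum of a quantifier-free formula with scalar parameters and an approximate-unitary penalty, transfer the small infimum down by existential closedness, and correct the approximate unitaries to genuine unitaries by functional calculus. The one obstacle you flag --- producing an SDP over-algebra for arbitrary unital $A$ --- is resolved in the paper by first passing to a separable elementary substructure containing the given tuple (existential closedness passes to such substructures), after which Kirchberg's separable result applies directly; beyond that, your finite net of scalars $r_j^{(m)}$ and your two-sided penalty $\sum_i\bigl(\|x_i^*x_i-1\|+\|x_ix_i^*-1\|\bigr)$ are harmless (indeed slightly more careful) refinements of the paper's formula, which fixes the $r_j$ outright and penalizes only $\sum_i\|y_i^*y_i-1\|$.
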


\begin{proof}
First, given any finite tuple $a$ from $A$, there is a separable elementary substructure $A_0$ of $A$ containing $a$; if $A$ is e.c., then so is $A_0$.  It thus suffices to consider the case that $A$ is separable.  The proof of \cite[Corollary 3.5]{K} shows that every unital separable C$^*$ algebra embeds into a separable (unital) C$^*$ algebra with SDP.   Thus, we can embed $A$ into $B$ separable with SDP.  Given any $a_1,\ldots,a_n\in A_+$ and $\epsilon>0$ there are real numbers $r_1,\ldots,r_n$ such that $\inf\varphi(\bar a,\bar y)^B=0$, where $\varphi(\bar a,\bar y)$ is the formula
$$\max\left(\max_{1\leq j\leq n}\|\sum_{i=1}^k y_ia_jy_i^*-r_j\cdot 1\|,\sum_{i=1}^k \|y_i^*y_i-1\|\right).$$
By existential closedness of $A$ and functional calculus, we may find unitaries in $A$ witnessing that $A$ has the SDP.
\end{proof}

\begin{cor}
C$^*(\mathbb F_2)$ and C$^*(\mathbb F_\infty)$ are not existentially closed.
\end{cor}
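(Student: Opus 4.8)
The plan is to read this off immediately from the preceding Proposition, which asserts that every e.c.\ unital C$^*$-algebra is simple. So it suffices to check that neither $C^*(\mathbb F_2)$ nor $C^*(\mathbb F_\infty)$ is simple, i.e.\ that each admits a proper nonzero closed two-sided ideal.

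To produce such an ideal I would invoke the trivial representation. By the universal property of the full group C$^*$-algebra, every unitary representation of a group $G$ extends to a unital $*$-representation of $C^*(G)$; applying this to the trivial representation $g\mapsto 1$ of $G=\mathbb F_2$ (resp.\ $\mathbb F_\infty$) yields a surjective unital $*$-homomorphism $\tau\colon C^*(G)\to\mathbb C$. Since $G$ is nontrivial, $\tau$ is not injective: for a generator $g$, the element $u_g-1$ is nonzero and lies in $\ker\tau$. Hence $\ker\tau$, the augmentation ideal, is a proper nonzero closed ideal, so $C^*(\mathbb F_2)$ and $C^*(\mathbb F_\infty)$ are not simple and therefore cannot be e.c.

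I do not anticipate any genuine obstacle here: essentially all the content is carried by the previous Proposition, and the only remaining ingredient is the standard fact that the full group C$^*$-algebra of a nontrivial group is never simple, since it surjects onto $\mathbb C$ via the trivial character with proper nonzero kernel.
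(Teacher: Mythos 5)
Your proposal is correct and matches the paper's intent exactly: the corollary is stated there as an immediate consequence of the preceding proposition (e.c.\ implies SDP implies simple), with the non-simplicity of $C^*(\mathbb F_2)$ and $C^*(\mathbb F_\infty)$ left implicit. Your argument via the trivial character and the augmentation ideal is precisely the standard way to fill in that implicit step, so there is nothing to correct.
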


\begin{prop}
If $A$ is an e.c.\   C$^*$ algebra, then every automorphism of $A$ is approximately inner.
\end{prop}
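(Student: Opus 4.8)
The plan is to realize $\alpha$ by an honest unitary in a larger algebra and then pull this witness back into $A$ using existential closedness. First I would form the crossed product $B := A \rtimes_\alpha \Z$. This is a unital C$^*$-algebra containing $A$ as a unital subalgebra, together with a canonical unitary $u \in B$ satisfying $u a u^* = \alpha(a)$ for every $a \in A$. In particular the inclusion $A \subseteq B$ is a unital embedding, so the existential closedness of $A$ applies to it.

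Next, fix a finite tuple $a_1, \dots, a_n \in A$ and $\epsilon > 0$, and consider the quantifier-free formula
$$\varphi(a_1,\dots,a_n,\alpha(a_1),\dots,\alpha(a_n),y) := \max\left(\max_{1\le j\le n}\|y a_j y^* - \alpha(a_j)\|,\ \|y^*y-1\|,\ \|yy^*-1\|\right),$$
whose parameters $a_j$ and $\alpha(a_j)$ all lie in $A$ (here we use that $\alpha$ is an automorphism, so $\alpha(a_j)\in A$). Taking $y = u$ shows $\inf_y \varphi^B = 0$, since the unitary $u$ makes every term vanish identically. As $A$ is existentially closed in $B$, we conclude $\inf_y \varphi^A = 0$ as well. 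Thus for any $\delta > 0$ there is $y \in A$ with $\|y^*y - 1\| < \delta$, $\|yy^* - 1\| < \delta$, and $\|y a_j y^* - \alpha(a_j)\| < \delta$ for all $j$.

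Then I would upgrade the approximate unitary $y$ to a genuine one by functional calculus, exactly as in the proof that e.c.\ algebras have the SDP: for $\delta$ small, $y$ is invertible and the unitary $v := y(y^*y)^{-1/2}$ from its polar decomposition satisfies $\|v - y\| = O(\delta)$, whence $\|v a_j v^* - \alpha(a_j)\| < \epsilon$ for all $j$, provided $\delta$ was chosen small enough relative to $\epsilon$ and $\max_j \|a_j\|$. Letting the pair $(F,\epsilon)$, with $F = \{a_1,\dots,a_n\}$ a finite subset of $A$, range over all such data directed by reverse inclusion of $F$ and decreasing $\epsilon$, the resulting unitaries form a net $(v_{F,\epsilon})$ in $A$ with $v_{F,\epsilon}\, a\, v_{F,\epsilon}^* \to \alpha(a)$ for every $a \in A$; that is, $\alpha$ is approximately inner. (When $A$ is separable one extracts an approximating sequence instead.)

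The conceptual content is concentrated in the first step --- choosing the crossed product as the ambient algebra in which $\alpha$ becomes inner --- after which everything reduces to a direct application of the definition of existential closedness. The only mildly technical point is the passage from approximate to exact unitaries, but this is routine functional calculus, already invoked in the SDP argument above, so I do not expect a genuine obstacle there.
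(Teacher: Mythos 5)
Your proposal is correct and follows essentially the same route as the paper's own (sketched) proof: pass to the crossed product $B = A \rtimes_\alpha \mathbb{Z}$ where $\alpha$ becomes inner, transfer the existential statement back to $A$ using existential closedness, and correct the almost-unitary to a genuine unitary via functional calculus. Your write-up merely makes explicit the quantifier-free formula and the polar-decomposition correction $v = y(y^*y)^{-1/2}$ that the paper leaves implicit, with no substantive difference in method.
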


\begin{proof}
This was proven in \cite{ecfactor} for tracial von Neumann algebras so we only sketch the proof.  If $\alpha$ is an automorphism of $A$, then setting $B:=A\rtimes_\alpha \mathbb{Z}$, we see that $\alpha$ is implemented by a unitary $u$ in $B$.  Thus, for any $x_1,\ldots,x_n\in A$, there is a unitary $u\in B$ that conjugates $x_i$ to $\alpha(x_i)$ for each $i=1,\ldots,n$.  Since $A$ is e.c., one concludes that there is an ``almost'' unitary in $A$ that conjugates each $x_i$ close to $\alpha(x_i)$; by functional calculus, this almost unitary can be correct to an actual unitary while maintaining that the conjugate of $x_i$ and $\alpha(x_i)$ remain close.
\end{proof}

\begin{fact}[\cite{FHRTW}]\label{O2stableaxiom}
The following properties are $\forall\exists$-axiomatizable properties of separable $C^*$ algebras:
\begin{enumerate}
\item $\O_2$-stable;
\item simple and purely infinite.
\end{enumerate}
\end{fact}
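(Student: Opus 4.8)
The plan is to exhibit, for each property, an explicit set of conditions of the syntactic shape $\sup_{\bar x}\inf_{\bar y}\varphi(\bar x,\bar y)=0$ with $\varphi$ quantifier-free, and to check that a separable $C^*$ algebra satisfies this set exactly when it has the property in question. Semantically this amounts to saying that each class is elementary and closed under inductive limits, by the continuous analogue of the Chang--{\L}o\'{s}--Suszko theorem; but the cleanest route is to write the axioms down and verify the equivalence by hand.

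For (1) I would use the characterization of $\mathcal D$-stability for a strongly self-absorbing $\mathcal D$, applied to $\mathcal D=\O_2$: a separable unital $A$ satisfies $A\cong A\otimes\O_2$ if and only if $A$ admits approximately central unital copies of $\O_2$, i.e.\ for every finite $F\subseteq A$ and $\epsilon>0$ there are $s_1,s_2\in A$ with $s_i^*s_j\approx\delta_{ij}1$, $s_1s_1^*+s_2s_2^*\approx 1$, and $\|[s_i,a]\|<\epsilon$ for $a\in F$. For each $n$ this is encoded by the single condition
$$\sup_{\bar a}\ \inf_{s_1,s_2}\ \max\Big(\ \|s_1^*s_1-1\|,\ \|s_2^*s_2-1\|,\ \|s_1^*s_2\|,\ \|s_1s_1^*+s_2s_2^*-1\|,\ \max_{i\le 2,\,j\le n}\|s_ia_j-a_js_i\|\ \Big)=0,$$
with the $a_j$ ranging over the unit ball and $s_1,s_2$ over a ball of fixed radius; this is manifestly $\forall\exists$. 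What makes this case tractable is that the witnesses $s_1,s_2$ are approximate isometries and hence automatically norm-bounded, so the inner infimum is genuinely over a fixed ball. The only real work is the equivalence of the approximate condition with honest $\O_2$-stability, which is exactly the content of the theory of strongly self-absorbing $C^*$ algebras: the forward direction rests on $\O_2\cong\O_2\otimes\O_2$ and approximate innerness of the flip, and the reverse on an Elliott-type intertwining argument valid for separable algebras.

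For (2) I would start from Cuntz's characterization: a unital $A\ne\mathbb C$ is simple and purely infinite if and only if for every nonzero $a\in A_+$ there is $x\in A$ with $x^*ax=1$ (simplicity being automatic, as then $1$ lies in the closed ideal generated by any nonzero element). The goal is to encode the assertion ``for positive $a$ of norm $1$ one has $\inf_x\|x^*ax-1\|=0$'' as conditions $\sup_a\inf_{x\in A_{\le m}}(\cdots)=0$ indexed by a witness-radius $m$ and a tolerance. Restricting the universal quantifier to positive elements bounded away from $0$ is handled by passing to the functional-calculus cut-down $(a-\tfrac12)_+$, which is nonzero as soon as $\|a\|>\tfrac12$ and is dominated by $a$, so that a witness for the cut-down yields one for $a$ by routine functional calculus.

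The main obstacle, and the place where this case is genuinely harder than (1), is that there is no uniform bound on $\|x\|$: if $a$ attains its norm only on a ``small'' part of the algebra (e.g.\ $a=\tfrac1n p+(1-p)$ for a large projection $p$), then any $x$ with $x^*ax=1$ must be large, so the naive condition is not first-order and, symptomatically, is destroyed under ultraproducts, where the witnesses need not have bounded representatives. The crux is therefore to prove a \emph{uniform} pure infiniteness: that the witness for $(a-\tfrac12)_+$ can be taken of norm controlled by a fixed function of the tolerance alone, using the structure theory of simple purely infinite algebras. This is where I would expect to spend the real effort. Granting such uniform bounds the conditions take $\forall\exists$ shape over fixed balls, and the same bounds are what make the class closed under ultraproducts; one could then alternatively finish via the semantic route, noting that simplicity and pure infiniteness also pass to inductive limits with injective unital connecting maps (for simplicity: a nonzero ideal meeting some $A_n$ forces $1\in I$, and otherwise the quotient map is isometric on the dense union; pure infiniteness is a standard inductive-limit permanence property) and invoking the continuous Chang--{\L}o\'{s}--Suszko theorem.
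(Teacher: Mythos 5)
On part (1) your proposal is essentially the paper's own proof (which is itself a repetition of the argument from \cite{FHRTW}): both encode, with one axiom for each tuple length, the existence of approximately central elements satisfying the $\O_2$-relations, i.e.\ the statement that $\O_2$ embeds into $A'\cap A^\omega$, and both delegate the equivalence of this with $A\cong A\otimes\O_2$ to the theory of strongly self-absorbing algebras (the paper cites \cite[Theorem 7.2.2]{Ror}; your appeal to $\O_2\cong\O_2\otimes\O_2$, the approximately inner flip, and Elliott intertwining is the same material, cf.\ \cite{TW}). Your extra term $\|s_1^*s_2\|$ is redundant---the exact Cuntz relations force $s_1^*s_2=0$---but harmless, and your observation that near-isometries are automatically norm-bounded, so the inner infimum genuinely ranges over a fixed ball, is the correct justification that these are legitimate $\forall\exists$ conditions.

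On part (2) the paper offers no argument at all (it simply cites \cite{FHRTW}), so there is nothing to match; your sketch follows the standard route and correctly isolates the one nontrivial point, but as written it contains a genuine (if standard) gap: the uniform bound on the witness is asserted, not proven. It closes as follows. If $A$ is unital, simple, and purely infinite and $a\in A_+$ has $\|a\|=1$, then the hereditary subalgebra generated by $b:=(a-\tfrac12)_+$ is nonzero and hence contains an infinite projection $p$; simplicity then yields $v$ with $v^*v=1$ and $vv^*\leq p$ (see \cite[Section 4.1]{Ror}). Since $(a-\tfrac12)_-$ annihilates the hereditary subalgebra generated by $b$, one has $p(a-\tfrac12)p=p(a-\tfrac12)_+p\geq 0$, so $pap\geq \tfrac12 p$; therefore $c:=v^*av$ satisfies $\tfrac12\leq c\leq 1$, and $x:=vc^{-1/2}$ gives $x^*ax=1$ with $\|x\|\leq\sqrt2$. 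With this fixed bound (suitably rescaled through your cut-down, whose functional calculus is a uniform limit of $*$-polynomials on the unit ball and hence definable), the conditions become honest $\forall\exists$ axioms over fixed balls, and the same bound is what yields ultraproduct permanence, so your semantic Chang--{\L}o\'s--Suszko detour is then unnecessary. One degenerate point you should not skip: $\mathbb C$ satisfies your conditions vacuously but is not purely infinite, so one must add a (purely existential, hence $\forall\exists$) axiom excluding it, e.g.\ asserting the existence of a proper isometry.
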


Although the above result is proven in \cite{FHRTW}, we repeat their proof of $\O_2$-stability here for the current reader's convenience.  For $A$ separable, $A$ is $\mathcal O_2$-stable if and only if $\mathcal O_2$ embeds into $A'\cap A^\omega$; see, for example, \cite[Theorem 7.2.2]{Ror}.  Now notice that $\mathcal O_2$ embeds into $A'\cap A^\omega$ if and only if $A$ satisfies the axioms $$\sup_{\vec x}\inf_{y_1,y_2}(\Sigma \|[x_i,y_j]\|+d(y_1^*y_1,1)+d(y_2^*y_2,1)+d(y_1y_1^*+y_2y_2^*,1))=0,$$ where we have one such axiom for all possible lengths of the finite tuples $\vec x$ appearing in the displayed formula.

\begin{cor}
A separable e.c.\   $C^*$ algebra is $\mathcal O_2$-stable.
\end{cor}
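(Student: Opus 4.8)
The plan is to transfer the explicit $\forall\exists$-axiomatization of $\O_2$-stability recorded in Fact \ref{O2stableaxiom} down from a suitable extension. For each length of tuple $\vec x$, write $\psi_{\vec x}$ for the displayed axiom
$$\psi_{\vec x}\ :\ \sup_{\vec x}\inf_{y_1,y_2}\left(\sum_{i,j}\|[x_i,y_j]\|+d(y_1^*y_1,1)+d(y_2^*y_2,1)+d(y_1y_1^*+y_2y_2^*,1)\right)=0,$$
and note that the expression inside the infimum is a quantifier-free formula $\varphi(\vec x,y_1,y_2)$. By Fact \ref{O2stableaxiom}, to show the separable e.c.\ algebra $A$ is $\O_2$-stable it suffices to verify $A\models\psi_{\vec x}$ for every length of $\vec x$; since all the summands are nonnegative, this amounts to showing that $\inf_{y_1,y_2}\varphi(\vec a,y_1,y_2)^A=0$ for every tuple $\vec a$ from $A$.

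First I would produce a separable $\O_2$-stable C$^*$ algebra containing $A$. The natural choice is $B:=A\otimes\O_2$, with $A$ embedded unitally via $a\mapsto a\otimes 1$. Since $\O_2$ is nuclear the C$^*$-norm on $A\odot\O_2$ is unique, $B$ is separable, and the isomorphisms $B\otimes\O_2\cong A\otimes(\O_2\otimes\O_2)\cong A\otimes\O_2\cong B$ show that $B$ is $\O_2$-stable. In particular $B\models\psi_{\vec x}$ for every length of $\vec x$.

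Now fix a tuple $\vec a$ from $A$, viewed also as a tuple in $B$. Instantiating the outer supremum of $\psi_{\vec a}$ at $\vec a$ gives $\inf_{y_1,y_2}\varphi(\vec a,y_1,y_2)^B=0$; moreover any near-witness has $\|y_i\|^2=\|y_i^*y_i\|$ close to $1$, so the infimum is already attained over the ball of radius $2$. Because $A$ is existentially closed in $B$ and $\varphi$ is quantifier-free, the defining equality of an existential embedding (applied to the tuple $w=(y_1,y_2)$ and $n=2$) yields
$$\inf\{\varphi(\vec a,y_1,y_2)^A : y_1,y_2\in A_{\leq 2}\}=\inf\{\varphi(\vec a,y_1,y_2)^B : y_1,y_2\in B_{\leq 2}\}=0.$$
As $\vec a$ was arbitrary, $A\models\psi_{\vec x}$ for every length of $\vec x$, so $A$ is $\O_2$-stable.

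I do not expect a genuine obstacle here: the argument is the standard observation that a property defined by sentences of the form $\sup\inf(\text{quantifier-free})=0$ descends from any extension to an existentially closed subalgebra, and the only input beyond the definition of existential closure is the existence of \emph{some} $\O_2$-stable extension, supplied by $A\otimes\O_2$. The one point requiring a little care is matching the norm-ball sorts over which the infimum is taken on the two sides, and this is handled by the fact that the Cuntz relations pin the witnesses $y_1,y_2$ to a fixed ball.
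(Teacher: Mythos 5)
Your proof is correct and is essentially the paper's argument: the paper invokes the general model-theoretic fact that an $\forall\exists$-axiomatizable property descends to a separable e.c.\ algebra once every separable algebra embeds into one with the property (via $A\otimes\O_2$), and your write-up simply unwinds that general principle for the specific axioms of Fact \ref{O2stableaxiom}, including the correct handling of the ball over which the witnesses $y_1,y_2$ range.
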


\begin{proof}
By general model theory, if $P$ is an $\forall\exists$-axiomatizable property of separable C$^*$ algebras and every separable C$^*$ algebra embeds into a separable C$^*$ algebra with property P, then a separable e.c.\   C$^*$ algebra has property P.  Clearly any separable C$^*$ algebra embeds into a separable $\O_2$-stable C$^*$ algebra:  simply tensor with $\O_2$.
\end{proof}

From the above corollary, we also see that an e.c.\   C$^*$ algebra does not admit a quasitrace.  Indeed, if an e.c.\   C$^*$ algebra admitted a quasitrace, then since it is also simple, it would be stably finite (see, for example, \cite[Theorem 2.2]{RorICM}), contradicting the previous corollary.

\



Recall that if $A$ is a C$^*$ algebra, then the \emph{theory of $A$} is the function $\Th(A)$ which maps a sentence $\sigma$ to its truth value $\sigma^A$.  We say that $A$ and $B$ are \emph{elementarily equivalent}, denoted $A\equiv B$, if $\Th(A)=\Th(B)$; in this case, we also say that $A$ is a model of $\Th(B)$ (and, of course, that $B$ is a model of $\Th(A)$).

We say that a C$^*$ algebra $A$ is an existentially closed model \emph{of its theory} if it satisfies the above definition of being existentially closed but only for extensions $B$ that are elementarily equivalent to $A$.  We define the \emph{universal theory of $A$} to be the function $\Th_\forall(A)$ which is simply the restriction of $\Th(A)$ to the set of universal sentences.  (One can, of course define $\Th_\exists(A)$ and $\Th_{\forall\exists}(A)$ similarly.)  It is a basic fact of model theory that if $A$ and $B$ are separable C$^*$ algebras, then $\Th_\forall(A)\leq \Th_\forall(B)$ (as functions) if and only if $A$ is embeddable into an ultrapower of $B$.  Thus, $A$ is an e.c.\   model of its theory if and only if it satisfies the above definition of being existentially closed but only for extensions $B$ that are embeddable into an ultrapower of $A$.  Finally, recall that an embedding between C$^*$ algebras $f:A\to B$ is said to be \emph{elementary} if, for any formula $\varphi(x)$, where $x$ is an $m$-tuple of variables, and any $m$-tuple $a$ from $A$, we have $\varphi(a)^A=\varphi(f(a))^B$.  If $A\subseteq B$ is such that the inclusion map is elementary, we say that $A$ is an \emph{elementary substructure of $B$}, denoted $A\preceq B$.

\begin{lemma}\label{ssa}
Suppose that $A$ is a separable C$^*$ algebra such that every embedding of $A$ into $A^\omega$ is elementary.  Then $A$ is an existentially closed model of its theory.  In particular, $\O_2$ is an existentially closed model of its theory.
\end{lemma}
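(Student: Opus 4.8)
The plan is to work from the reformulation recorded just above the statement: $A$ is an existentially closed model of its theory precisely when it is existentially closed in every extension $B\supseteq A$ that embeds into an ultrapower of $A$. So I fix such a $B$, a quantifier-free formula $\varphi(v,w)$, a radius $n\geq 1$, and a tuple $a$ from $A$, and write, for any model $M$ containing $a$, $\psi^M(a):=\inf\{\varphi^M(a,w):w\in M_{\leq n}\}$ for the corresponding existential quantity; the goal is $\psi^A(a)=\psi^B(a)$.

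First I would reduce to separable $B$. By downward L\"owenheim--Skolem in continuous logic, choose a separable elementary substructure $B_0\preceq B$ with $A\subseteq B_0$ (possible since $A$ is separable). Then $B_0\preceq B$ gives $\psi^{B_0}(a)=\psi^B(a)$, and $\Th_\forall(B_0)=\Th_\forall(B)\leq\Th_\forall(A)=\Th_\forall(A^\omega)$, so by countable saturation of $A^\omega$ the separable algebra $B_0$ embeds into the fixed countable ultrapower $A^\omega$. It therefore suffices to prove $\psi^A(a)=\psi^{B_0}(a)$.

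Now I would run a soft ``squeeze''. Fix any unital embedding $\rho\colon B_0\to A^\omega$. Its restriction $\rho|_A\colon A\to A^\omega$ is an embedding of $A$ into $A^\omega$, hence \emph{elementary} by hypothesis, so in particular $\psi^A(a)=\psi^{A^\omega}(\rho(a))$. On the other hand, since $\rho$ is an isometric $*$-homomorphism it carries $(B_0)_{\leq n}$ into $(A^\omega)_{\leq n}$ and preserves the quantifier-free $\varphi$; restricting the infimum defining $\psi^{A^\omega}(\rho(a))$ to the smaller index set $\rho((B_0)_{\leq n})$ gives $\psi^{A^\omega}(\rho(a))\leq\psi^{B_0}(a)$. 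Finally $A_{\leq n}\subseteq(B_0)_{\leq n}$ yields the trivial inequality $\psi^{B_0}(a)\leq\psi^A(a)$. Chaining these, $\psi^A(a)=\psi^{A^\omega}(\rho(a))\leq\psi^{B_0}(a)\leq\psi^A(a)$, forcing equality throughout. The hypothesis pins the existential value of $A$ to its value in $A^\omega$, and the embedding of $B_0$ traps $\psi^{B_0}(a)$ in between.

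For the ``in particular,'' I must verify that $\O_2$ satisfies the hypothesis, i.e.\ that every unital embedding $\O_2\to\O_2^\omega$ is elementary, and this is where the genuine operator-algebraic input enters. I would invoke the strong self-absorption of $\O_2$: any two unital embeddings of a strongly self-absorbing algebra into its own ultrapower are approximately unitarily equivalent, so any given embedding $\varphi\colon\O_2\to\O_2^\omega$ is approximately unitarily equivalent to the diagonal embedding $\Delta$ (which is elementary, as $\O_2\preceq\O_2^\omega$). Using countable saturation of $\O_2^\omega$ together with separability of $\O_2$, I would upgrade approximate unitary equivalence to an actual unitary $u\in\O_2^\omega$ with $\operatorname{Ad}(u)\circ\varphi=\Delta$; since $\operatorname{Ad}(u)$ is an automorphism of $\O_2^\omega$ it preserves the value of every formula, and composing with the elementarity of $\Delta$ shows $\varphi$ is elementary. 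I expect the main obstacle to lie not in the formal squeeze but in this last step: one needs the passage from \emph{approximate} to honest unitary equivalence, since only an honest automorphism transfers elementarity. The reduction to a separable $B_0$ and the attendant passage to the specific countable ultrapower $A^\omega$ (the hypothesis being stated only for $A^\omega$, not for arbitrary ultrapowers) is the other point deserving care.
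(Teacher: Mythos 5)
Your proof is correct and follows essentially the same route as the paper: the displayed chain $\psi^A(a)\geq\psi^{B}(a)\geq\psi^{A^\omega}(\rho(a))=\psi^A(a)$ is exactly the paper's squeeze, and your treatment of $\O_2$ (approximate unitary equivalence of any embedding with the diagonal one, upgraded to honest unitary conjugacy by countable saturation of $\O_2^\omega$) is precisely the ``well known'' fact the paper cites, with the details it defers to the end of Section 5. Your L\"owenheim--Skolem reduction to a separable $B_0$ is a careful elaboration of a step the paper leaves implicit, not a different argument.
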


\begin{proof}
Suppose that $B$ is embeddable in $A^\omega$, say $j:B\to A^\omega$, $\varphi(v,w)$ is a formula, and $a\in A$, we have
$$(\inf_w \varphi(a,w))^A\geq (\inf_w \varphi(a,w))^B\geq (\inf_w \varphi(j(a),w))^{A^\omega}=(\inf_w \varphi(a,w))^A.$$

It is well known that every embedding of $\O_2$ into its ultrapower is unitarily conjugate to the diagonal embedding, whence elementary.  (The main ideas behind the proof can be found at the end of Section 5.)
\end{proof}

In \cite{FHRTW}, it is shown that any strongly self absorbing C$^*$ algebra (see \cite{TW}) satisfies the assumption of the previous lemma, whence any strongly self absorbing C$^*$ algebra is an e.c.\   model of its theory.

\subsection{Connection with nuclearity and exactness}

In this subsection, we investigate whether or not there is a connection between existential closedness on the one hand and exactness and/or nuclearity on the other hand.  A first observation:

\begin{fact}
Not every e.c.\   $C^*$ algebra is exact.
\end{fact}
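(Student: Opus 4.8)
The plan is to manufacture a non-exact e.c.\ C$^*$ algebra by passing to an existential closure of a non-exact separable C$^*$ algebra, and then to observe that exactness cannot have been created in the process since it is inherited by subalgebras.

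First I would fix a non-exact separable C$^*$ algebra $A$; the existence of such algebras is guaranteed by Kirchberg's examples cited in the introduction, and the full free group C$^*$ algebra $C^*(\mathbb F_\infty)$ provides a concrete instance. By the Proposition established above asserting that every separable C$^*$ algebra is a unital subalgebra of an existentially closed one, I may choose an e.c.\ C$^*$ algebra $B$ with $A\subseteq B$. Note that this $B$ need not be separable, but that is irrelevant here, as the statement only asserts the existence of \emph{some} non-exact e.c.\ C$^*$ algebra.

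Next I would invoke the standard permanence property that a C$^*$-subalgebra of an exact C$^*$ algebra is again exact. A clean way to see this is through the Kirchberg--Wassermann characterization of exactness as nuclear embeddability: were $B$ exact, it would embed into a nuclear C$^*$ algebra $N$, and then the composite inclusion $A\subseteq B\hookrightarrow N$ would realize $A$ as a subalgebra of a nuclear algebra, forcing $A$ to be exact. This contradicts the choice of $A$, so $B$ fails to be exact, and $B$ is the desired example. There is essentially no serious obstacle: the entire content lies in the two classical black boxes being cited, namely the existence of a non-exact separable C$^*$ algebra and the descent of exactness to subalgebras.
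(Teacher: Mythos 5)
Your proof is correct and follows essentially the same route as the paper's: both rest on the two facts that every C$^*$ algebra embeds into an e.c.\ one and that exactness passes to subalgebras, the paper phrasing it contrapositively (if all e.c.\ algebras were exact, all algebras would be) while you run the argument directly from a fixed non-exact $A$. The extra details you supply (nuclear embeddability, the concrete example $C^*(\mathbb F_\infty)$) are fine but not a different method.
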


\begin{proof}
Since being exact is preserved under substructure and every C$^*$ algebra embeds into an e.c.\ one, we would then have that every C$^*$ algebra is exact, a contradiction.
\end{proof}

Let us be less ambitious:

\begin{question}
Is there an exact e.c.\ $C^*$ algebra?  Is there a nuclear e.c.\ C$^*$ algebra?
\end{question}

It turns out these are the same question:

\begin{thm}\label{exactec}
If $A$ is an exact e.c.\ $C^*$ algebra, then $A$ is nuclear.
\end{thm}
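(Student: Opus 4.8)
The plan is to establish nuclearity of $A$ via the completely positive approximation property (CPAP) and to transfer the approximating maps from a nuclear overalgebra into $A$ using existential closedness. First I would reduce to the separable case exactly as in the proof that e.c.\ algebras have the SDP: any finite tuple from $A$ lies in a separable elementary substructure $A_0\preceq A$, which is again e.c.; being a subalgebra of the exact algebra $A$, it is again exact, so it suffices to treat separable $A$.

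Since $A$ is separable and exact, it embeds into a nuclear C$^*$-algebra $B$ (for instance, by Kirchberg--Phillips, $A\hookrightarrow \O_2$). Fix a finite set $F\subseteq A$ and $\epsilon>0$. By the CPAP for the nuclear algebra $B$, there are $n$ and completely positive contractions $\alpha\colon B\to M_n$ and $\beta\colon M_n\to B$ with $\|\beta(\alpha(x))-x\|<\epsilon$ for all $x\in F$. The map $\phi:=\alpha|_A\colon A\to M_n$ is a completely positive contraction that already lives over $A$, and $\phi(a)$ is a concrete scalar matrix for each $a\in F$. The only obstruction to nuclearity of $A$ is that the second leg $\beta$ has range in $B$ rather than in $A$.

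The key step is to produce a replacement $\psi\colon M_n\to A$ by existential closedness. A completely positive map $M_n\to B$ is determined by the tuple $b_{ij}$ of images of matrix units, and the assertion that there is a completely positive contraction $M_n\to B$ with $e_{ij}\mapsto b_{ij}$ approximating the inclusion on $F$ translates into a quantifier-free condition on the tuple $\bar b=(b_{ij})$: positivity of the Choi element $[b_{ij}]\in M_n(B)$, the norm bound $\|\sum_i b_{ii}\|\le 1$ (which is exactly contractivity since the norm of a completely positive map equals the norm of the image of the unit), and $\max_{a\in F}\|\sum_{i,j}\phi(a)_{ij}b_{ij}-a\|<\epsilon$. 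Here complete positivity is expressed through the operator norm on $M_n(B)$, which is a definable predicate by the result of Lupini proved in Appendix C; the scalars $\phi(a)_{ij}$ and the parameters $a\in F$ all come from $A$. Thus there is a quantifier-free formula $\theta(\bar b)$ with parameters in $A$ such that $\inf_{\bar b}\theta(\bar b)^B=0$. Since $A$ is existentially closed in $B$, also $\inf_{\bar b}\theta(\bar b)^A=0$, so we may choose $\bar b\in A$ making $\theta(\bar b)$ arbitrarily small.

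Finally I would turn this approximate witness into an honest completely positive contraction: replacing the self-adjoint part of the approximately positive matrix $[b_{ij}]$ by its positive part and rescaling by a factor close to $1$ yields, via continuous functional calculus, a genuine completely positive contraction $\psi\colon M_n\to A$ with $\|\psi(\phi(a))-a\|<2\epsilon$ for $a\in F$. As $F$ and $\epsilon$ are arbitrary, $A$ has the CPAP and is therefore nuclear. The main obstacle is the encoding of complete positivity and contractivity of the second leg as a genuinely quantifier-free (definable) condition, so that existential closedness applies verbatim; this is precisely where the definability of the matrix norm is essential, together with the routine perturbation correcting an approximately completely positive map to an exact one.
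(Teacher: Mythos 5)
Your proof is correct, and at its core it is the same argument the paper gives: the paper deduces Theorem \ref{exactec} from Theorem \ref{exactec2}, whose proof likewise fixes a factorization $\tau_n\circ\phi_n$ of the (nuclear) inclusion $A\hookrightarrow B$ through $M_{k_n}$, keeps the first leg, transfers the second leg into $A$ via an existential statement about its Choi matrix, and then corrects the approximate witness by functional calculus, Choi's theorem, and rescaling to obtain honest cpc maps witnessing the CPAP. The one genuine point of divergence is the encoding of the constraint that the tuple $\bar b$ determine a cpc map, which you correctly identify as the crux. You express positivity of the Choi matrix and contractivity through the operator norm on $M_n(B)$, invoking its quantifier-free definability (the Appendix C result together with the uniform quantifier-free approximability of $\Phi_n$ established in the proof of Proposition \ref{ecchangecategories}), plus the identity $\|\psi\|=\|\psi(1)\|=\|\sum_i b_{ii}\|$ for completely positive maps. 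The paper instead keeps this step elementary and self-contained: positivity is written existentially via the clause $\sigma_1=\|w^*w-u\|$, measured in the max norm on $M_{k_n}(B)$ (which is quantifier-free in the entries, at the cost of an extra existentially quantified tuple $w$), and the norm control is enforced by the net condition $\sigma_3$, which compares $\|\sum\eta_{k,l}u_{kl}\|$ with $\|\sum\eta_{k,l}e_{k,l}\|$ over a finite $\delta$-net of scalar matrices and yields an approximate Lipschitz bound $\rho_n\to 1$ that is then scaled away. Your encoding is slicker --- in particular, contractivity becomes the single atomic condition $\|\sum_i b_{ii}\|\leq 1$, usable once positivity has been restored by passing to the positive part and normalizing by the image of the unit --- but it leans on the matrix-norm definability machinery, which the paper only deploys later (for Proposition \ref{ecchangecategories}), whereas the $w^*w$-plus-net device needs nothing beyond the max metric. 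The remaining ingredients in your write-up (reduction to the separable case by L\"owenheim--Skolem, with exactness passing to subalgebras and nuclearity recovered via Arveson extension of the finite-dimensional legs; the unital embedding $A\hookrightarrow\O_2$ by Kirchberg--Phillips; the final perturbation and rescaling) all match the paper's intentions and are carried out correctly.
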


Theorem \ref{exactec} follows from the following more general statement:

\begin{thm}\label{exactec2}
Suppose that $f:A\to B$ is an existential nuclear embedding.  Then $A$ is nuclear.
\end{thm}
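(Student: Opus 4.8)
The plan is to prove that $\id_A$ has the completely positive approximation property (CPAP), which is equivalent to nuclearity of $A$. The hypothesis gives two things about $f\colon A\to B$: it is a nuclear map and an existential embedding. Nuclearity of $f$ means that for every finite $F=\{a_1,\dots,a_p\}\subseteq A$ and every $\epsilon>0$ there are $m\ge 1$ and completely positive (c.p.) contractions $\phi\colon A\to M_m$ and $\psi\colon M_m\to B$ with $\|\psi(\phi(a_l))-f(a_l)\|<\epsilon$ for all $l$ (one may factor through a full matrix algebra; the general finite-dimensional case is analogous). I would then keep $\phi$ fixed and try to replace the codomain $B$ of $\psi$ by $A$, at the cost of a slightly worse approximation, using that $f$ is existential.

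The key step is to recognize that ``there is a c.p. contraction $M_m\to B$ approximately splitting $f$ over $F$'' is an \emph{existential} condition with parameters from $A$. To avoid expressing complete positivity of the Choi matrix directly --- which would require the definability of the norm on $M_m(B)$ established in the last appendix --- I would encode c.p. maps by Kraus data. Writing the positive Choi matrix $[\psi(e_{ij})]\in M_m(B)$ as $\sum_k \xi_k\xi_k^*$ with $\xi_k=(b_{k,1},\dots,b_{k,m})^{T}$ the columns of its square root (so $m$ terms suffice), one has $\psi(e_{ij})=\sum_k b_{k,i}b_{k,j}^*$ and hence
$$\psi(x)=\sum_{i,j}x_{ij}\,\psi(e_{ij})=\sum_k\sum_{i,j}x_{ij}\,b_{k,i}b_{k,j}^*$$
for $x=[x_{ij}]\in M_m$ --- a $*$-polynomial in the $b_{k,i}$ with scalar coefficients. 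Any choice of elements $b_{k,i}$ defines a c.p. map by this formula, so complete positivity becomes automatic, while contractivity is the single condition $\|\sum_{k,i}b_{k,i}b_{k,i}^*\|=\|\psi(1_{M_m})\|\le 1$, which already forces $\|b_{k,i}\|\le 1$.

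With $c=(b_{k,i})_{1\le k,i\le m}$ a tuple of $m^2$ variables, I would form the quantifier-free formula
$$\varphi(a_1,\dots,a_p,c):=\max\left(\max_{1\le l\le p}\Big\|\sum_{k,i,j}\phi(a_l)_{ij}\,b_{k,i}b_{k,j}^*-a_l\Big\|,\ \Big(\big\|\textstyle\sum_{k,i}b_{k,i}b_{k,i}^*\big\|\dotminus 1\Big)\right),$$
where the $\phi(a_l)_{ij}\in\bC$ are the fixed matrix entries. The Kraus data of $\psi$ witnesses $\inf\{\varphi(f(a_1),\dots,f(a_p),c):c\in B_{\le 1}\}<\epsilon$, since the second coordinate vanishes ($\psi$ is contractive) and the first is $<\epsilon$ by choice of $\phi,\psi$. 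Because $f$ is an existential embedding, its defining equality with $n=1$ yields $\inf\{\varphi(a_1,\dots,a_p,c):c\in A_{\le 1}\}<\epsilon$, so there is $c'=(b'_{k,i})$ in $A_{\le 1}$ with $\varphi(a_1,\dots,a_p,c')<\epsilon$. This $c'$ defines a c.p. map $\psi'\colon M_m\to A$ with $\|\psi'(1_{M_m})\|<1+\epsilon$ and $\|\psi'(\phi(a_l))-a_l\|<\epsilon$. Rescaling, $\psi'':=(1+\epsilon)^{-1}\psi'$ is a c.p. contraction; since $\|\psi'(\phi(a_l))\|\le(1+\epsilon)\|a_l\|$ is uniformly bounded, $\|\psi''(\phi(a_l))-a_l\|\to 0$ as $\epsilon\to 0$, uniformly over $l$. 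Thus $\phi$ and $\psi''$ are c.p. contractions approximately factoring $\id_A$ over $F$; as $F,\epsilon$ were arbitrary, $A$ has the CPAP and is nuclear.

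The only genuinely delicate point --- and the main obstacle --- is turning ``there is a c.p. contraction $M_m\to B$ approximately splitting $f$'' into an honest existential condition: complete positivity is not prima facie quantifier-free, and contractivity must be controlled tightly enough to survive transfer from $B$ to $A$. The Kraus encoding resolves the first issue (complete positivity becomes automatic and $\psi$ becomes a $*$-polynomial term), and the norm bound $\|b_{k,i}\|\le 1$ forced by contractivity lets me work inside the fixed ball $(\cdot)_{\le 1}$ and rescale afterward. Once these reductions are in place, the transfer is an immediate application of the defining property of an existential embedding.
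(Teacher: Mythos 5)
Your proof is correct, and it shares the overall skeleton of the paper's argument --- express the existence of a cpc map $M_m\to B$ approximately splitting $f$ over a finite set as an existential condition with parameters from $A$, transfer through the existential embedding, then rescale --- but the technical device at the crucial step is genuinely different. The paper quantifies over the Choi matrix $u$ itself, imposing positivity via $\sigma_1=\|w^*w-u\|$, and controls the norm of the induced map $\psi_n$ through a finite $\delta$-net of $(M_{k_n})_1$ (the formula $\sigma_3$); this forces a Lipschitz estimate ($\psi_n$ is $\rho_n$-Lipschitz with $\rho_n\to1$), a rescaling, and an appeal to Choi's theorem to recover complete positivity of the map obtained on the $A$-side. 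Your Kraus parametrization $\psi(x)=\sum_k R_k x R_k^*$ makes complete positivity automatic for \emph{any} witness tuple, and replaces the entire net-plus-Lipschitz apparatus by the single condition $\|\psi(1_{M_m})\|\leq 1$, via the standard identity $\|\psi\|_{\cb}=\|\psi\|=\|\psi(1)\|$ for completely positive maps --- that identity is your substitute for $\sigma_3$ together with Choi's theorem. In fact your encoding is essentially a C$^*$-language implementation of the paper's \emph{alternative} proof, which uses the duality ${\rm CP}(M_n(\bC),B)\leftrightarrow M_n(B)^+$ and the formula $\inf_{\Psi\in M_n(B)^+}\max_{i}\|\sum_k v_k^i\Psi(v_k^i)^*-a_i\|$ but must work in the operator-system language to quantify over the cone $M_n(B)^+$; by substituting the square-root entries of $\Psi$ you keep positivity quantifier-free and stay in the C$^*$-language, as the paper's first proof does. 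Two small dividends of your route: the argument is purely local (one finite set and one $\epsilon$ at a time), so the paper's reduction to separable $A$ is unnecessary; and your observation that $\|\sum_{k,i}b_{k,i}b_{k,i}^*\|\leq 1$ forces each $b_{k,i}$ into the unit ball (since $b_{k,i}b_{k,i}^*\leq\sum_{k',i'}b_{k',i'}b_{k',i'}^*$) is exactly what licenses quantifying over $B_{\leq 1}$, i.e.\ taking $n=1$ in the definition of existential embedding. The only point to spell out in a final write-up is the reduction to full matrix algebras: embed a finite-dimensional C$^*$ algebra into some $M_N(\bC)$ and compose with the (u.c.p.) conditional expectation, which preserves the cpc factorization.
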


\begin{proof}
For the sake of simplicity, let us suppose that $f$ is an inclusion $A\hookrightarrow B$ and that $A$ is e.c.\   in $B$.  Once again, for simplicity assume that $A$ is separable (although the general case is no more difficult.)  Let $(a_n)$ be an enumeration of a dense subset of $A$ and let $\phi_n:A\to M_{k_n}$ and $\tau_n:M_{k_n}\to B$ be completely positive contractions such that $$\max_{1\leq j\leq n}\|(\tau_n\circ \phi_n)(a_j)-a_j\|\leq \frac{1}{n}.$$  For each $j$, let $\lambda_{k,l}^{j,n}$ denote the coordinates of $\phi_n(a_j)$ with respect to the standard basis of $M_{k_n}$.  Fix a finite $\delta$-net $F\subseteq (M_{k_n})_{1}$, for $\delta$ sufficiently small.  Consider the formulae $\sigma_1:=\|w^*w-u\|,$ $\sigma_2:=\max_{1\leq j\leq n}\|\sum \lambda_{k,l}^{j,n}u_{kl}-a_j\|\dotminus \frac{1}{n},$ and $\sigma_3:=\max_{\eta_{k,l} \in F}(\|\sum \eta_{k,l}u_{kl}\|\dotminus \|\sum \eta_{k,l}e_{k,l}\|).$  We should mention that in $\sigma_1$, we are using the max norm on $M_{k_n}(B)$ and not the operator norm.  Then the formula $\inf_{u,w\in M_{k_n}(B)}  \max(\sigma_1,\sigma_2,\sigma_3)$ evaluates to $0$ in $B$.


Since the above formula only mentions parameters from $A$, it must also evaluate to $0$ in $A$.  By functional calculus, this allows us to choose $u\in M_n(B)_+$ such that, defining $\psi_n:M_{k_n}\to A$ by $\psi_n(e_{k,l}):=u_{k,l}$, we have that $$\|(\psi_n\circ \phi_n)(a_j)-a_j\|\leq \frac{2}{n}$$ for $j=1,\ldots,n$.  Moreover, if $\delta$ was chosen sufficiently small (and if the $u$ was chosen to make the $\inf$ sufficiently small in $A$), we get that $\psi_n$ is $\rho_n$-Lipshitz, where $\lim_{n\to \infty}\frac{1}{\rho_n}=1$.  By rescaling $u$ by $\frac{1}{\rho_n}$ and choosing $\rho_n$ sufficiently small, we get a positive $u$ that defines $\psi_n$ that is contractive and for which the error $\|(\psi_n\circ \phi_n)(a_j)-a_j\|\leq \frac{3}{n}$ for $j=1,\ldots,n$.  By Choi's Theorem (see \cite[Theorem 3.14]{Paulsen}), $\psi_n$ is completely positive.  It follows that the identity map on $A$ is nuclear, whence $A$ is nuclear.
\end{proof}

In the above proof, observe that the only use of the algebra structure was to express positivity.  Thus, we really have proven:

\begin{thm}\label{exactecopsys}
Suppose that $f:A\to B$ is a nuclear embedding that is existential in the language of operator systems.  Then $A$ is nuclear.
\end{thm}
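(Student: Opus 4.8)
The plan is to rerun the proof of Theorem \ref{exactec2} essentially verbatim, changing only the single ingredient that used the multiplicative structure. Recall that in that argument the sole place where the $C^*$-algebra operations (beyond the operator-system data of unit, involution, and the matrix norms) entered was the formula $\sigma_1 = \|w^*w - u\|$, whose only purpose was to force the reconstruction matrix $u = [u_{k,l}] \in M_{k_n}(B)$ to be positive. Everything else — the coordinates $\lambda_{k,l}^{j,n}$ of $\phi_n(a_j)$, the reconstruction error $\sigma_2$, and the Lipschitz/norm control $\sigma_3$ measured against a finite net $F \subseteq (M_{k_n})_1$ — is built from linear combinations and matrix norms alone, hence already lives in the operator-system language. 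So the whole task reduces to re-expressing positivity of $u$ without a product.

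First I would replace $\sigma_1$ by a quantifier-free operator-system formula detecting positivity. For a self-adjoint element $x$ of a unital operator system with $\|x\| \le r$, the Archimedean order-unit characterization gives $x \ge 0$ if and only if $\|r\cdot 1 - x\| \le r$; applying this inside $M_{k_n}(B)$, with its canonical matrix unit and level-$k_n$ matrix norm, I set $\sigma_1' := \max(\|u - u^*\|, \ \|r\cdot 1 - u\| \dotminus r)$ for a fixed constant $r$ (e.g.\ $r = k_n$, which bounds the norm of the Choi matrix of any completely positive contraction $M_{k_n}\to B$). This formula uses no multiplication, so $\inf_{u} \max(\sigma_1',\sigma_2,\sigma_3)$ is an existential operator-system formula with parameters from $A$. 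Taking the witness $u = [\tau_n(e_{k,l})]$ in $B$: since $\tau_n$ is completely positive its Choi matrix is positive, so $\sigma_1'$ vanishes there, while $\sigma_2,\sigma_3$ are small exactly as before (the latter because $\tau_n$ is contractive). Hence the infimum is $0$ in $B$, and because $f$ is existential in the operator-system language, it is $0$ in $A$ as well.

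Finally I would convert the approximate witness in $A$ into a genuine completely positive factorization. Where the original proof invoked functional calculus to pass from $u \approx w^*w$ to an exactly positive element, here I instead use that the positive cone of $M_{k_n}(A)$, intersected with a bounded ball, is closed: an approximately self-adjoint, approximately positive $u$ lies near an exactly positive $\tilde u \in M_{k_n}(A)_+$, and replacing $u$ by $\tilde u$ perturbs $\sigma_2$ and $\sigma_3$ negligibly. Defining $\psi_n(e_{k,l}) := \tilde u_{k,l}$, positivity of the Choi matrix $\tilde u$ yields complete positivity by Choi's Theorem, $\sigma_3$ controls the norm so that rescaling produces a contraction, and $\sigma_2$ gives $\|(\psi_n\circ\phi_n)(a_j)-a_j\|$ small. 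Thus $\mathrm{id}_A$ is approximated by completely positive contractive factorizations through matrix algebras, i.e.\ $A$ is nuclear. The one point that genuinely needs care — and the only real obstacle — is the positivity encoding: checking that the order-unit norm condition really is quantifier-free in the operator-system signature (so that only operator-system existentiality is invoked) and that closedness of the cone cleanly replaces the functional-calculus perturbation that is no longer available.
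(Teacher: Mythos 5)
Your proposal is correct, and its skeleton is exactly how the paper obtains this theorem: the paper proves Theorem \ref{exactec2} and then observes that the only use of multiplication there was the formula $\sigma_1=\|w^*w-u\|$, whose sole purpose was to force positivity of the Choi matrix, so the theorem follows once positivity is expressed in the operator-system language. Where you differ is the device for doing so. You encode positivity by the order-unit inequality $\|r\cdot 1-u\|\dotminus r$ together with approximate self-adjointness, which is indeed a correct quantifier-free formula in the operator-system signature, and your choice $r=k_n$ is a valid bound on the norm of the Choi matrix of a c.p.c.\ map $M_{k_n}\to B$. The paper, by contrast, never needs such an encoding: in its axiomatization of operator systems (Appendix B) the positive cones $\C_n$ are sorts of the structure, so one may quantify $\inf$ directly over the cone and the transferred witness in $A$ is \emph{exactly} positive, with no perturbation step at all. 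This is what the paper's alternative proof does explicitly, rewriting the factorization condition via Choi duality as $\inf_{\Psi\in M_n(B)^+}\max_{i\leq j}\|\sum_k v_k^i\Psi(v_k^i)^*-a_i\|<\delta$, where $\phi(a_i)=\sum_k(v_k^i)^*v_k^i$ and conjugation by scalar matrices is a primitive of the language. Your route buys independence from how the cones are axiomatized; the paper's buys a shorter argument with no approximate-positivity bookkeeping.

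One correction to the step you flag as the only real obstacle: closedness of the bounded positive cone of $M_{k_n}(A)$ does \emph{not} by itself yield that an approximately positive element is near the cone --- closedness concerns limits of exactly positive elements, whereas you need quantitative stability of the cone. The true (and standard) fact is spectral: if $u=u^*$ and $\|r\cdot 1-u\|\leq r+\epsilon$, then the spectrum of $u$ lies in $[-\epsilon,2r+\epsilon]$, so the positive part $u_+$ satisfies $\|u-u_+\|=\|u_-\|\leq\epsilon$. This is precisely the functional-calculus perturbation you declare ``no longer available'' --- but it \emph{is} available: restricting to the operator-system language constrains only the formulas across which you transfer via existentiality, not the tools you may apply to actual elements of $M_{k_n}(A)$ afterwards, since $M_{k_n}(A)$ is still a C$^*$-algebra. (The paper's proof of Theorem \ref{exactec2} likewise applies functional calculus semantically, after the transfer.) With that repair --- or, more simply, by quantifying over the cone sorts from the start --- your argument is complete.
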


Since there has yet to be any exposition of operator spaces and operator systems in continuous logic appearing in the literature, we take the opportunity to present this in Appendix B.

We would like to offer an alternative proof of the previous theorem.  Without loss of generality, assume that $A$ is a subalgebra of $B$ such that the inclusion is nuclear and such that $A$ is e.c.\   in $B$ in the language of operator systems.  Again, for simplicity, we assume that $A$ is separable.  Let $(a_i)$ enumerate a countable dense subset of $(A^+)_1$. Fix $j,\delta$ and choose $n$ and u.c.p.\ maps $\phi: A\to M_n(\bC)$ and $\psi: M_n(\bC)\to B$ so that the following condition $(\ast)$ holds: $\max_{i\leq j} \|\psi\circ\phi(a_i) - a_i\|<\delta$.

Recall that any positive matrix $x\in M_n(\bC)$ can be written as a sum of $n$ matrices of the form $v^*v$ where $v = (v_1,\dotsc,v_n)$. (Use the diagonalization.) Write $\phi(a_i)$ as $\sum_{k=1}^n (v_k^i)^*v_k^i$. Examining the operator space duality ${\rm CP}(M_n(\bC), B) \leftrightarrow M_n(B)^+$ we see that condition $(\ast)$ is the same as \[\inf_{\Psi\in M_n(B)^+}\max_{i\leq j}\|\sum_{k=1}^n v_k^i \Psi (v_k^i)^* - a_i\|<\delta.\] Since $A$ is e.c.\   in $B$ in the language of operator systems, we can find such $\Psi_n \in M_n(A)^+$ instead. This concludes the proof.



The hypothesis in Theorem \ref{exactecopsys} really is a weakening of the hypothesis in Theorem \ref{exactec2}:

\begin{prop}\label{ecchangecategories}
Suppose that $A$ and $B$ are unital C$^*$ algebras such that $A$ is a unital subalgebra of $B$.  If $A$ is e.c.\   in $B$ in the language of C$^*$ algebras then $A$ is e.c.\   in $B$ in the language of operator systems.
\end{prop}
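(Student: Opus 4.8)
The plan is to exploit that, on the class of C$^*$ algebras, the language of operator systems is essentially a reduct of the language of C$^*$ algebras: it retains the complex vector space operations, the involution, the unit, and the matrix norms $\|\cdot\|_{M_k(\,\cdot\,)}$ (equivalently the matrix order), but it drops multiplication. Since multiplication only enriches the stock of terms, every quantifier-free operator system formula is built from affine combinations of the variables together with the matrix-norm predicates and the positivity cones $M_k(\,\cdot\,)_+$. The whole point is therefore that these matrix-norm predicates, and the cones witnessing positivity, are \emph{definable predicates} in the language of C$^*$ algebras. For the matrix norm this is exactly the definability result of Lupini proved in the last appendix; positivity is definable since on self-adjoint elements $x\mapsto d(x,A_+)$ is computed by continuous functional calculus. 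Consequently, for each quantifier-free operator system formula $\theta(v,w)$ there is a single C$^*$-definable predicate $P_\theta(v,w)$ with $\theta^C=P_\theta^C$ for every C$^*$ algebra $C$; in particular $\theta^A=P_\theta^A$ and $\theta^B=P_\theta^B$, the common formula being interpreted in each.

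Concretely I would proceed as follows. First, reduce the claim to verifying, for every quantifier-free operator system formula $\theta(v,w)$, every $n$, and every tuple $a$ from $A$, the equality
$$\inf\{\theta(a,b):b\in A_{\leq n}\}=\inf\{\theta(a,c):c\in B_{\leq n}\};$$
here the quantification domains are the norm balls, which are literally the same sets whether one views $A$ and $B$ as operator systems or as C$^*$ algebras, and $M_k(A)\subseteq M_k(B)$ isometrically, so the operator system matrix norms on $A$ agree with the restrictions of those on $B$. Second, replace the matrix norms and positivity predicates occurring in $\theta$ by the corresponding C$^*$-definable predicates to obtain $P_\theta$ as above. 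Third, upgrade the hypothesis of existential closedness from quantifier-free formulas to definable predicates: writing $P_\theta$ as a uniform limit (uniform over all C$^*$ algebras) of quantifier-free C$^*$ formulas $\varphi_m$, and noting that the map $\psi\mapsto\inf\{\psi(a,b):b\in A_{\leq n}\}$ is $1$-Lipschitz for the supremum norm, we get $\inf_{A_{\leq n}}\varphi_m\to\inf_{A_{\leq n}}P_\theta$ and likewise over $B$. Since $A$ is e.c.\ in $B$ in the C$^*$ language, $\inf_{A_{\leq n}}\varphi_m=\inf_{B_{\leq n}}\varphi_m$ for every $m$; passing to the limit gives $\inf_{A_{\leq n}}P_\theta=\inf_{B_{\leq n}}P_\theta$, which by the second step is precisely the desired equality for $\theta$.

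The only real content, and the step I expect to carry the weight, is the definability of the matrix norm in the language of C$^*$ algebras: without it there would be no way to see the operator system matrix norms as C$^*$-definable data, and the whole reduction would collapse. For this I would invoke Lupini's theorem (the last appendix), together with the routine observation that existential closedness is stable under uniform limits of formulas, so that it automatically extends from quantifier-free formulas to definable predicates. Everything else — the matching of quantification domains, the compatibility of the matrix norms on $A$ with those on $B$, and the definability of positivity — is bookkeeping.
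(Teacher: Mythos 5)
Your overall strategy is the paper's own: reduce to quantifier-free operator-system formulas, invoke Lupini's definability of the matrix norms (Appendix C), and transfer infima over balls using existential closedness in the C$^*$ language. But there is a genuine gap at exactly the step that carries all the weight. You write $P_\theta$ ``as a uniform limit (uniform over all C$^*$ algebras) of quantifier-free C$^*$ formulas $\varphi_m$,'' justified by ``the routine observation that existential closedness is stable under uniform limits of formulas, so that it automatically extends from quantifier-free formulas to definable predicates.'' That observation is only routine for uniform limits of \emph{quantifier-free} formulas; it does not upgrade existential closedness to arbitrary definable predicates, and Lupini's theorem, as it stands, produces a predicate that is not quantifier-free: the formula $\Phi_n$ computing $\|(z_{ij})\|$ is $\sup_{(\vec x,\vec y)\in X_{n,A}}\left\|\sum_{i,j} x_i^* z_{ij} y_j\right\|$, a supremum over the definable set $X_{n,A}$. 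If you substitute this into $\theta$ and then take $\inf$ over a ball, the result is an $\exists\forall$-statement, and existential closedness of $A$ in $B$ gives no transfer for those (downward transfer of $\forall\exists$-information is precisely what fails in general). So the claim that $P_\theta$ is approximable by quantifier-free formulas is not contained in Lupini's theorem and is the thing that actually requires proof.

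The paper closes this hole with the standard quantifier-elimination test (Proposition 13.2 of \cite{BBHU}): a predicate uniformly definable in all C$^*$ algebras is uniformly approximable by quantifier-free formulas provided its value on tuples from a common subalgebra agrees in any two extensions $B$ and $C$; for $\Phi_n$ this agreement holds precisely because the operator norm on $M_n(A)$ is the restriction of the operator norms on $M_n(B)$ and on $M_n(C)$. Note the irony: you list ``the compatibility of the matrix norms on $A$ with those on $B$'' as mere bookkeeping, when it is in fact the essential input to the missing step. With this QE-test argument inserted (and with your functional-calculus remark for positivity, which does suffice for the cone sorts once the matrix norm is known to be quantifier-free definable), your argument closes up and coincides with the paper's proof.
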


\begin{proof}
It was already proven in \cite{FHRTW} that if $A$ is elementary in $B$ then $M_n(A)$ is elementary in $M_n(B)$, the crucial point being that the operator norm on $M_n(A)$ is a definable predicate in $A$, a fact due to Martino Lupini and proven here in Appendix C.  A more careful analysis of this fact yields the proof of the current proposition.  In Appendix C, we prove the following:  let $A$ be a C$^*$ algebra and let
$$X_{n,A}:=\{(\vec x,\vec y)\in A_1^{2n} \ : \ \max\left(\left\|\sum x_i^*x_i\right\|,\left\|\sum y_i^*y_i\right\|\right)\leq 1\}.$$   Then for $(a_{ij})\in M_n(A)$ we have $\|(a_{ij})\|=\sup_{(\vec x,\vec y)\in X_{n,A}}\|\sum x_i^*a_{ij}y_j\|$.  Moreover, the relation defining $X_{n,A}$ is stable, whence $X_{n,A}$ is uniformly definable in all C$^*$ algebras in the sense that there is a formula $\psi(\vec x,\vec y)$ such that $\psi(\vec x,\vec y)^A$ is the distance between $(\vec x,\vec y)$ and $X_{n,A}$ in all C$^*$ algebras $A$.  (See \cite[Lemma 2.1]{AF} for a proof of the preceding fact.)  By the proof of \cite[Theorem 9.17]{BBHU}, it follows that there is a formula (as opposed to an arbitrary definable predicate) $\Phi_n((z_{ij}))$ such that, for all C$^*$ algebras $A$ and all $(a_{ij})\in M_n(A)$ we have $\Phi_n((a_{ij}))^A=\|(a_{ij})\|$.  Finally, it remains to notice that $\Phi_n$ is approximable by \emph{quantifier-free} formulae, uniformly in all C$^*$ algebras, whence any existential statement about $M_n(A)$ in the language of operator systems can be approximated by existential statements about $M_n(A)$ in the language of C$^*$ algebras, finishing the proof of the proposition.  Indeed, by a standard quantifier-elimination test (see \cite[Proposition 13.2]{BBHU}), it suffices to show that whenever $A$ is a C$^*$ algebra that is a common subalgebra of the C$^*$ algebras $B$ and $C$, then for any $(a_{ij})\in M_n(A)$, we have $\Phi_n((a_{ij}))^B=\Phi_n((a_{ij}))^C$, which clearly holds as the operator norm on $M_n(A)$ is the restriction of both the operator norm on $M_n(B)$ and the operator norm on $M_n(C)$.
\end{proof}

There is something a bit strong about asking that a C$^*$ algebra be existentially closed as an operator system for it would require, by definition, that $A$ be existentially closed in all operator systems in which $A$ admits a complete order embedding.  In particular, it would imply that any embedding of $A$ into a C$^*$ algebra that is merely a complete order embedding is an existential embedding.  This requirement is so strong that the first author and Martino Lupini were able to prove in \cite{goldlupini} that if $A$ is an existentially closed C$^*$ algebra, then $A$ is \emph{not} existentially closed as an operator system.  Thus, it seems appropriate to allow for an intermediate notion.

\begin{df}\label{semi}
We say that a C$^*$ algebra $A$ is \emph{semi-existentially closed as an operator system} (resp. \emph{semi-existentially closed as an operator space}) if, for any unital inclusion $A\subseteq B$ of C$^*$ algebras, any existential formula $\varphi(x)$ in the language of operator systems (resp. in the language of operator spaces), and any tuple $a$ from $A$, we have $\varphi(a)^A=\varphi(a)^B$.
\end{df}

We can thus restate Theorem \ref{ecchangecategories} as follows:  if a C$^*$ algebra is existentially closed (as a C$^*$ algebra), than it is semi-existentially closed as an operator system.

Theorem \ref{exactec} tells us that we should focus our attention on the search for an e.c.\   nuclear C$^*$ algebra.  If we search for a separable nuclear C$^*$ algebra, we know where to search:

\begin{prop}\label{onlypossibility}
If $A$ is a separable unital e.c.\   nuclear $C^*$ algebra, then $A\cong \mathcal O_2$.
\end{prop}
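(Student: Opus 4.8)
The plan is to show that $A$ is a Kirchberg algebra---that is, unital, separable, simple, nuclear, and purely infinite---and then to combine the $\mathcal O_2$-stability of $A$ with Kirchberg's theorem that $D\otimes\mathcal O_2\cong\mathcal O_2$ for every Kirchberg algebra $D$. Indeed, once $A$ is known to be a Kirchberg algebra, that theorem yields $A\otimes\mathcal O_2\cong\mathcal O_2$, while $\mathcal O_2$-stability gives $A\cong A\otimes\mathcal O_2$; together these force $A\cong\mathcal O_2$.

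Three of the five defining properties are immediate or already in hand: $A$ is unital, separable, and nuclear by hypothesis, and $A$ is simple because an e.c.\ unital C$^*$ algebra has the strong Dixmier property and is therefore simple (proven above). Moreover, $A$ is $\mathcal O_2$-stable by the corollary that every separable e.c.\ C$^*$ algebra is $\mathcal O_2$-stable, so $A\cong A\otimes\mathcal O_2$. It remains only to verify that $A$ is purely infinite.

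For this I would reuse the model-theoretic mechanism already employed to establish $\mathcal O_2$-stability: if $P$ is a property of separable C$^*$ algebras that is $\forall\exists$-axiomatizable and every separable C$^*$ algebra embeds into a separable one with property $P$, then every separable e.c.\ C$^*$ algebra has $P$. By Fact \ref{O2stableaxiom}(2) the property ``simple and purely infinite'' is $\forall\exists$-axiomatizable, so it suffices to know that every separable (unital) C$^*$ algebra embeds unitally into a separable, simple, purely infinite C$^*$ algebra---a standard fact, obtainable for instance from suitable free-product constructions. Granting it, the separable e.c.\ algebra $A$ is simple and purely infinite, so $A$ is a Kirchberg algebra and hence $A\cong\mathcal O_2$ by the first paragraph.

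The main obstacle is the purely infinite property, since simplicity and $\mathcal O_2$-stability are already supplied by the preceding results while unitality, separability, and nuclearity are hypotheses. The $\forall\exists$ argument reduces pure infiniteness to the embedding of every separable C$^*$ algebra into a separable simple purely infinite one; alternatively, one can argue directly that a simple, separable, nuclear, $\mathcal O_2$-stable C$^*$ algebra---hence $\mathcal O_\infty$-stable, as $\mathcal O_2\cong\mathcal O_2\otimes\mathcal O_\infty$---is purely infinite by the Kirchberg--R\o rdam theorem, consistent with the earlier observation that an e.c.\ C$^*$ algebra admits no quasitrace and so is not stably finite.
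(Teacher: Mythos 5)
Your proof is correct, and its skeleton --- playing $A\cong A\otimes\mathcal{O}_2$ (from $\mathcal{O}_2$-stability of separable e.c.\ algebras) against $A\otimes\mathcal{O}_2\cong\mathcal{O}_2$ (from Kirchberg's absorption theorem) --- is exactly the paper's. The one real difference is that you work harder than necessary: the form of Kirchberg's theorem the paper invokes (\cite[Theorem 7.1.2]{Ror}) says $A\otimes\mathcal{O}_2\cong\mathcal{O}_2$ for \emph{every} simple, separable, unital, nuclear C$^*$-algebra; pure infiniteness is a conclusion of that theorem, not a hypothesis. Since simplicity is already supplied by the strong Dixmier property and the remaining properties are hypotheses, the paper's proof is two lines, and your entire verification that $A$ is a Kirchberg algebra can be deleted. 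That said, both of your routes to pure infiniteness are sound: the embedding of every separable unital C$^*$-algebra into a separable, simple, purely infinite one is indeed standard (e.g., via reduced free product constructions of Dykema--R{\o}rdam, noting every separable C$^*$-algebra has a faithful state), after which the $\forall\exists$-axiomatizability of ``simple and purely infinite'' (Fact \ref{O2stableaxiom}(2)) applies; and the direct argument that $\mathcal{O}_2$-stability yields $\mathcal{O}_\infty$-stability (as $\mathcal{O}_2\cong\mathcal{O}_2\otimes\mathcal{O}_\infty$) and hence, for simple algebras, pure infiniteness by Kirchberg--R{\o}rdam is also valid. So what you have is a correct, if roundabout, proof whose detour records the true but here unneeded fact that separable e.c.\ C$^*$-algebras are purely infinite.
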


\begin{proof}
Since $A$ is e.c., we have $A\otimes \mathcal O_2\cong A$.  However, because $A$ is simple and nuclear, by Kirchberg's ``$A\otimes \O_2$ Theorem'' (see \cite[Theorem 7.1.2]{Ror}), we have that $A\otimes \mathcal O_2\cong \mathcal O_2$, whence we have $A\cong \mathcal O_2$.
\end{proof}

Thus even though there are uncountably many separable e.c.\ C$^*$-algebras, at most one of them, $\O_2$, can be exact. The obvious question is:  is $\O_2$ e.c.?  This turns out to be equivalent to KEP; see Theorem \ref{KEPequiv}.

We just saw that $\O_2$ is the only possible nuclear (even exact) separable e.c.\   nuclear C$^*$ algebra.  The next result says that $\O_2$ is the only possible separable e.c.\   C$^*$ algebra satisfying some other properties.

\begin{prop}\label{flip} Let A be unital separable e.c.\   C$^*$ algebra.  Then the following are equivalent:

\begin{enumerate}
\item $A \cong \O_2$;
\item $A \cong A\otimes_{\operatorname{min}} A$;
\item The ``flip'' $(a\otimes b)\to (b\otimes a)$ is an approximately inner automorphism of $A\otimes_{\operatorname{min}} A$;
\item The embeddings $A\otimes 1$ and $1\otimes A$ into $A\otimes_{\operatorname{min}} A$ are approximately unitarly equivalent.
\end{enumerate}
\end{prop}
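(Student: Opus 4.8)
The plan is to establish the cyclic chain $(1)\Rightarrow(2)\Rightarrow(3)\Rightarrow(4)\Rightarrow(1)$, so that all four conditions become equivalent. Throughout I write $\iota_1,\iota_2\colon A\to A\otimes_{\operatorname{min}}A$ for the coordinate embeddings $\iota_1(a)=a\otimes 1$ and $\iota_2(a)=1\otimes a$, and let $\sigma$ denote the flip automorphism of $A\otimes_{\operatorname{min}}A$, so that $\sigma\circ\iota_1=\iota_2$.

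The first three arrows are soft. For $(1)\Rightarrow(2)$, if $A\cong\O_2$ then $A\otimes_{\operatorname{min}}A\cong\O_2\otimes_{\operatorname{min}}\O_2\cong\O_2\cong A$ by Kirchberg's $\O_2\otimes\O_2\cong\O_2$ (the special case of the ``$A\otimes\O_2$'' theorem already invoked in Proposition \ref{onlypossibility}). For $(2)\Rightarrow(3)$, an isomorphism $A\cong A\otimes_{\operatorname{min}}A$ shows that $A\otimes_{\operatorname{min}}A$ is itself existentially closed, existential closedness being an isomorphism invariant; since the flip $\sigma$ is an automorphism of this e.c.\ algebra, the earlier proposition that every automorphism of an e.c.\ C$^*$-algebra is approximately inner applies directly and yields (3). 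For $(3)\Rightarrow(4)$, if $\sigma=\lim_n\operatorname{Ad}(w_n)$ point-norm for unitaries $w_n$, then $w_n\iota_1(a)w_n^*=w_n(a\otimes1)w_n^*\to\sigma(a\otimes1)=1\otimes a=\iota_2(a)$ for every $a$, which is exactly the approximate unitary equivalence of $\iota_1$ and $\iota_2$.

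The crux is $(4)\Rightarrow(1)$, and my approach is to extract nuclearity of $A$ directly from (4) and then invoke Proposition \ref{onlypossibility}, using that $A$ is separable, unital, and e.c. Given a finite set $F\subseteq A$ of elements of norm at most $1$ and $\epsilon>0$, choose by (4) a unitary $u\in A\otimes_{\operatorname{min}}A$ with $\|u(a\otimes1)u^*-1\otimes a\|<\epsilon$ for all $a\in F$, and approximate $u$ in norm by a finite sum $u'=\sum_{i=1}^m x_i\otimes y_i$ of elementary tensors (possible since the algebraic tensor product is norm-dense). Fix a state $\phi$ on $A$ and apply the slice map $\phi\otimes\id\colon A\otimes_{\operatorname{min}}A\to A$, a completely positive contraction. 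The point is that $(\phi\otimes\id)(u'(a\otimes1)u'^*)=\sum_{i,j}\phi(x_i a x_j^*)\,y_iy_j^*$ factors as $\Psi\circ\Phi$ through $M_m$, where $\Phi\colon A\to M_m$ is given by $\Phi(a)=[\phi(x_ia x_j^*)]_{i,j}$ and $\Psi\colon M_m\to A$ by $\Psi([t_{ij}])=\sum_{i,j}t_{ij}y_iy_j^*$. Both are completely positive by the standard factorizations $\Phi(a)=W^*\pi_\phi(a)W$ (with $We_i=\pi_\phi(x_i^*)\xi_\phi$ in the GNS space of $\phi$) and $\Psi(x)=y\,x\,y^*$ (with $y=(y_1,\dots,y_m)$ a row over $A$). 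Since $\|\Psi\Phi(a)-a\|$ is small on $F$, after a routine rescaling making $\Phi,\Psi$ contractive and letting $F\nearrow A$, $\epsilon\to0$, we obtain the completely positive approximation property; thus $A$ is nuclear and Proposition \ref{onlypossibility} gives $A\cong\O_2$.

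I expect the main obstacle to be precisely this last implication, and the key insight is that the a priori one-sided information in (4) already suffices to manufacture a matrix factorization of $\id_A$. I would deliberately \emph{not} attempt to first upgrade (4) to the stronger-looking condition (3), since a unitary conjugating $A\otimes1$ onto $1\otimes A$ need not conjugate $1\otimes A$ back onto $A\otimes1$, so no formal manipulation of the given unitaries produces a single flip-implementing sequence; the slice-map argument sidesteps this entirely by only ever exploiting the action on the first tensor leg. This effectively reproves, in our setting, the theorem of Effros--Rosenberg that approximate innerness of the flip forces nuclearity. The only remaining routine points are the norm bookkeeping in passing from $u$ to $u'$ and the normalization of $\Phi$ and $\Psi$ to contractions.
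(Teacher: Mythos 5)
Your proof is correct, and its skeleton is exactly the paper's: the same cyclic chain $(1)\Rightarrow(2)\Rightarrow(3)\Rightarrow(4)\Rightarrow(1)$, with $(2)\Rightarrow(3)$ via existential closedness of $A\otimes_{\operatorname{min}}A$ and approximate innerness of automorphisms of e.c.\ algebras, and $(4)\Rightarrow(1)$ routed through nuclearity and Proposition \ref{onlypossibility}. The one place you genuinely diverge is the crux: the paper disposes of ``(4) $\Rightarrow$ $A$ nuclear'' by citing \cite[Lemma 3.10]{KP}, whereas you reprove it from scratch with the Effros--Rosenberg slice-map argument. Your argument checks out: $u'(a\otimes 1)u'^*=\sum_{i,j}x_iax_j^*\otimes y_iy_j^*$, so slicing by $\phi\otimes\id$ gives $\Psi\circ\Phi$ with $\Phi(a)=W^*\pi_\phi(a)W$ and $\Psi$ given by row-conjugation, both completely positive, and the composite is within $\epsilon$ plus a perturbation term of $\id_A$ on $F$. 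Your observation that only the one-sided relation $u(a\otimes 1)u^*\approx 1\otimes a$ is used (so no upgrade of (4) to (3) is needed) is exactly why the citation to \cite{KP} suffices for the paper and why your direct argument works. What each approach buys: the paper's proof is four lines at the cost of a black box; yours is self-contained and makes visible that the approximate factorization through $M_m$ comes straight from an almost-implementing unitary. One point you wave at that deserves a sentence in a final write-up: the rescaling to contractions is not just dividing by norms, since $\|\Phi\|$ and $\|\Psi\|$ are individually uncontrolled (only $\|\Psi\Phi(1)-1\|$ is small); the standard fix is to replace $\Phi$ by the u.c.p.\ map $a\mapsto (\Phi(1)+\eta)^{-1/2}\Phi(a)(\Phi(1)+\eta)^{-1/2}$ and absorb $(\Phi(1)+\eta)^{1/2}$ into $\Psi$, whose new norm $\|\Psi(\Phi(1)+\eta)\|$ is then close to $1$. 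This is routine, as you say, but it is the only step where ``divide by the norm'' would not literally work.
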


\begin{proof}
(1)$\Rightarrow$(2) is trivial.  (2)$\Rightarrow$(3) holds since $A\otimes_{\operatorname{min}} A$ would be e.c., hence any automorphism would be approximately inner.  (3)$\Rightarrow$(4) is clear.  (4)$\Rightarrow A$ is nuclear (\cite[Lemma 3.10]{KP}), which implies (1) by the previous proposition.
\end{proof}

In fact, the above argument shows that the only possible e.c.\   ``tensor square'' is $\O_2$, that is, the only possible separable e.c.\   $B$ such that $B\cong A\otimes A$ for some $A$ is $\O_2$.

\subsection{K-theory}\footnote{The remaining two subsections are not needed in connection with the material on Kirchberg's Embedding Problem appearing in the next section.}

In this subsection, we show that e.c.\   C$^*$ algebras have trivial $K$-theory.  Since e.c.\ C$^*$ algebras are $\O_2$-stable, the results of this subsection follow from the fact (pointed out to us by Chris Phillips) that $A\otimes \O_2$ has trivial K-theory for any C$^*$ algebra $A$.  Since this aforementioned fact uses the K\"unneth formula for C$^*$ algebras (see \cite{schochet}) together with the fact that $\O_2$ belongs to the so-called ``bootstrap class'' (see \cite[Section 4.2]{Ror}), we prefer to give completely elementary arguments, which further highlight the role of existential closedness in connection with familiar C$^*$-algebraic concepts.

\begin{prop}
If $A$ is an e.c.\   $C^*$ algebra, then $K_0(A)=\{0\}$.
\end{prop}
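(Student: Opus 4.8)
The plan is to compute $K_0(A)$ directly from projections, exploiting that an e.c.\ algebra carries the defining relation of $\O_2$ not only globally but inside every corner. First I would reduce to projections living in $A$ itself. Since $A$ embeds unitally into $A\otimes\O_2$ and $A$ is e.c.\ in this extension, the existential condition asserting the existence of two isometries with orthogonal ranges summing to $1$ --- which is satisfied in $A\otimes\O_2$ by $1\otimes s_1,\,1\otimes s_2$ --- must also be approximately satisfiable in $A$; after perturbing, $A$ contains a unital copy of $\O_2$ and is in particular properly infinite. A standard and elementary fact then gives that every projection in $M_n(A)$ is Murray--von Neumann equivalent to a projection in $A$, so that every class in $K_0(A)$ is of the form $[p]$ with $p$ a projection in $A$. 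It therefore suffices to show $[p]=0$ for each such $p$.

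The heart of the argument is a doubling trick performed in the corner $pAp$. Working in $A\otimes\O_2$ and writing $s_1,s_2$ for the canonical isometries of $\O_2$, the elements $w_i:=p\otimes s_i$ satisfy $w_i^*w_i=p\otimes 1$, $w_1w_1^*\perp w_2w_2^*$, and $w_1w_1^*+w_2w_2^*=p\otimes 1$; that is, they realize the $\O_2$-relation with unit the image of $p$. Consequently the quantifier-free formula $\varphi(p,w_1,w_2):=\max(\|w_1^*w_1-p\|,\|w_2^*w_2-p\|,\|w_1w_1^*+w_2w_2^*-p\|)$ has infimum $0$ over the unit ball of $A\otimes\O_2$. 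Since $A$ is e.c.\ in $A\otimes\O_2$ and $\varphi$ mentions only the parameter $p\in A$, this infimum is also $0$ over the unit ball of $A$, producing approximate witnesses $w_1,w_2\in A$. Upgrading these to honest isometries $\tilde w_1,\tilde w_2\in pAp$ with orthogonal ranges summing to $p$, one reads off in $K_0(A)$ that $[p]=[\tilde w_1\tilde w_1^*]+[\tilde w_2\tilde w_2^*]=[p]+[p]$, whence $[p]=0$.

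The main obstacle, as usual, is passing from the approximate witnesses delivered by existential closedness to exact ones. I would handle this by invoking the semiprojectivity of $\O_2$: an approximate unital $*$-homomorphism $\O_2\to pAp$ (which is exactly what the pair $(w_1,w_2)$ provides once $\e$ is small) can be perturbed to a genuine one, yielding $\tilde w_1,\tilde w_2$. This same perturbation, applied with $p=1$, is what supplies the unital copy of $\O_2$ and the proper infiniteness used in the reduction step, so the entire proof runs off a single mechanism: transport the $\O_2$-relation from $A\otimes\O_2$ into $A$ by existential closedness and rigidify it by semiprojectivity. The only other point requiring care is the standard reduction of projections over $M_n(A)$ to projections in $A$, which follows from proper infiniteness by an elementary partial-isometry argument.
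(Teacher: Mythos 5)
Your proposal is correct, but it follows a genuinely different route from the paper's. The paper fixes two arbitrary projections $p\in P(M_m(A))$, $q\in P(M_n(A))$, represents $A$ concretely on $\B(H)$, and uses triviality of $K_0(\B(H))$ to write a single existential statement (existence of $r$ and $x$ with $p\oplus r\oplus 0_{n-m}=x^*x$ and $q\oplus r=xx^*$) holding in $\B(H)$; existential closedness transfers it to $A$, and approximate witnesses are corrected via functional calculus together with the elementary fact that projections at distance below some universal $\e_0$ are Murray--von Neumann equivalent. This directly yields that any two projections over $A$ are stably equivalent, and the same scheme is recycled for $K_1$. You instead transfer the \emph{scaled} Cuntz relation: in $A\otimes\O_2$ the elements $p\otimes s_1,\,p\otimes s_2$ realize the $\O_2$-relations with unit $p\otimes 1$, so existential closedness of $A$ in $A\otimes\O_2$ produces approximate witnesses in $A$, which you compress into the corner $pAp$ and rigidify using semiprojectivity (weak stability) of the Cuntz relations to get exact isometries, whence $[p]=2[p]$ and $[p]=0$; the case $p=1$ supplies the unital copy of $\O_2$ and hence the proper infiniteness needed to reduce projections over $M_n(A)$ to projections in $A$. (Strictly speaking, you do not even need the Cuntz-type fact that every $K_0$-class of a properly infinite algebra is a single $[p]$: since each generator $[p]$, $p\in P(M_n(A))$, is killed, the Grothendieck group vanishes outright.) The trade-off is clear: the paper's argument is deliberately elementary, using only $\B(H)$ and norm-perturbation of projections, while yours imports Blackadar's semiprojectivity of $\O_2$ as a black box but extracts more structure --- every projection in an e.c.\ algebra is properly infinite, indeed halvable --- in effect recovering by hand the consequence of $\O_2$-stability that the paper notes could be obtained from the K\"unneth formula but chooses to avoid; your route avoids K\"unneth as well. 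Both proofs run on the same two-step engine (exact witnesses in an extension, transfer by existential closedness, correction by a stability phenomenon), so your argument is sound and non-circular.
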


\begin{proof}
Suppose that $A$ is an e.c.\   C$^*$ algebra and $p\in P(M_m(A))$ and $q\in P(M_n(A))$ with $m\leq n$.  We claim that $p$ and $q$ are stably equivalent, so that $K_0(A)^+=\{0\}$.  Suppose that $A$ is concretely represented as a C$^*$ subalgebra of $\B(H)$ where $H$ is an infinite-dimensional Hilbert space.  Since $K_0(\B(H))^+=\{0\}$, we know that there is $r\in P(M_k(\B(H)))$ such that $p\oplus r\oplus 0_{n-m}\sim q\oplus r$, that is, there is $x\in M_{n+k}(A)$ such that
$$p\oplus r\oplus 0_{n-m}=x^*x, \ \ q\oplus r=xx^*.$$  Thus, in $\B(H)$, the following sentence evaluates to $0$:
$$\inf_{r\in M_k}\inf_{x\in M_{n+k}}\max(d(r,r^*),d(r,r^2),d(p\oplus r\oplus 0_{n-m},x^*x),d(q\oplus r,xx^*)).$$

Once again, the $d$ symbols refer to the max norm on the appropriate matrix algebras and not the operator norm.  In connection with the previous sentence, it is useful to note that, for $M_k(A)$, the operator norm is bounded by $\sqrt{k}$ times the max norm.

Fix $\epsilon_0$ such that, for all $C^*$ algebras $B$ and all projections $p,q\in B$, we have that $p$ and $q$ are Murray-von Neumann equivalent if and only if there is $b\in B$ such that $\max(\|p-b^*b\|,\|q-bb^*\|)<\epsilon_0$.  Since $A$ is e.c., using functional calculus, we may find $r\in P(M_k(A))$ and $x\in M_{n+k}(A)$ such that $d(p\oplus r\oplus 0_{n-m},x^*x),d(q\oplus r, xx^*)<\frac{\epsilon_0}{\sqrt{n+k}}$, whence they are within $\epsilon_0$ in operator norm.  It follows that $p$ and $q$ are stably equivalent in $A$.
\end{proof}

\begin{prop}
If $A$ is an e.c.\  C$^*$ algebra, then $K_1(A)=\{0\}$.
\end{prop}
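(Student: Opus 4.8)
The plan is to follow the template of the $K_0$ computation: exhibit the triviality of $[u]$ inside $\B(H)$, where every unitary is an exponential, and then transport a witness back into $A$ by existential closedness. Recall that $K_1(A)$ is the inductive limit of the groups $U_m(A)/U_m(A)_0$, where $U_m(A)$ is the unitary group of $M_m(A)$ and $U_m(A)_0$ is the connected component of the identity; hence it suffices to prove that $U_m(A)=U_m(A)_0$ for every $m$, i.e.\ that every unitary $u\in M_m(A)$ lies in $U_m(A)_0$. So fix $m$ and a unitary $u\in M_m(A)$, and represent $A$ faithfully on an infinite-dimensional Hilbert space $H$, so that $M_m(A)\subseteq M_m(\B(H))=\B(H^m)$; as $A$ is e.c., it is in particular e.c.\ in $\B(H)$. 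Since $H^m$ is again infinite-dimensional and $\B(H^m)$ is a von Neumann algebra, Borel functional calculus lets me write $u=\exp(ih)$ for some self-adjoint $h\in\B(H^m)$ with $\|h\|\le 2\pi$. The point of passing to $\B(H)$ is exactly that connectedness of the unitary group becomes the genuine algebraic equation ``$u$ is a single exponential.''

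The key step is to make the condition ``$u$ is approximately an exponential'' existential and quantifier-free, so that existential closedness applies. Given $\e>0$, choose a polynomial $p$ with $\sup_{|t|\le 2\pi}|p(t)-e^{it}|<\e$; then for any self-adjoint $x$ with $\|x\|\le 2\pi$ in any C$^*$ algebra one has $\|p(x)-\exp(ix)\|<\e$ by functional calculus, uniformly across all C$^*$ algebras. Thus the existential condition $\inf_{h}\max(\|h-h^*\|,\|p(h)-u\|)$, the infimum taken over $h\in M_m(\cdot)$ of norm at most $2\pi$, evaluates to at most $\e$ in $\B(H)$, and it mentions only the parameter $u$, whose entries lie in $A$. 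Exactly as in the $K_0$ proof I would phrase the matricial infimum using the max-norm and control the operator norm by a $\sqrt{m}$-factor (alternatively, invoke the definability of the operator norm on $M_m(A)$ used in Proposition~\ref{ecchangecategories}), so that existential closedness of $A$ in $\B(H)$ forces the same condition to evaluate to at most $\e$ in $A$. Cleaning up with functional calculus, for $\e$ small enough this produces a self-adjoint $h'\in M_m(A)$ with $\|\exp(ih')-u\|<2$.

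The final step records that norm-closeness of unitaries forces them into the same component. Set $w:=\exp(ih')$, a unitary in $M_m(A)$; being an exponential, $w\in U_m(A)_0$. Since $\|u-w\|=\|w^*u-1\|<2$, the spectrum of the unitary $w^*u$ omits $-1$, so a continuous branch of the logarithm yields $w^*u=\exp(ik)$ for some self-adjoint $k\in M_m(A)$; thus $w^*u\in U_m(A)_0$ and therefore $u=w(w^*u)\in U_m(A)_0$. As $u$ and $m$ were arbitrary, $K_1(A)=\{0\}$.

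The main obstacle is conceptual rather than computational: path-connectedness is not directly expressible in continuous logic. This is circumvented by the standard fact that the connected component of the identity in the unitary group is precisely the set of (products of) exponentials, together with the observation that ``being an exponential'' becomes an existential, quantifier-free condition once $\exp$ is replaced by a uniformly approximating polynomial. The only other technical point to watch is that existential closedness must be applied at the level of the matrix amplification $M_m(A)\subseteq M_m(\B(H))$, which is handled either by the max-norm device of the $K_0$ proof or by the definability of the operator norm established in Appendix~C and used in Proposition~\ref{ecchangecategories}.
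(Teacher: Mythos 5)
Your proof is correct, and while it follows the same transfer-from-$\B(H)$ template, the key device is genuinely different from the paper's. The paper works with a homotopy directly: since $K_1(\B(H))=\{0\}$, there is a continuous path of unitaries joining $u\oplus 1_{k-m}$ to $v\oplus 1_{k-n}$ in $U(H^k)$, which by uniform continuity gets discretized into a finite chain $y_1,\dots,y_j$ of approximate unitaries, each within $\frac{1}{2}$ of the next; this chain is the existential witness, and after transferring to $A$ the paper must correct the $y_i$ to genuine unitaries (invoking definability of the unitary group) and then use the fact that sufficiently close unitaries are homotopic. You instead exploit the von Neumann algebra fact that every unitary of $\B(H^m)$ is a \emph{single} exponential $\exp(ih)$ with $\|h\|$ bounded, and render ``being an exponential'' quantifier-free by replacing $\exp$ with a polynomial approximating $e^{it}$ uniformly on $[-2\pi,2\pi]$, which is legitimate since continuous functional calculus makes the estimate uniform over all C$^*$ algebras. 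This buys a simpler cleanup: the witness is a single nearly self-adjoint matrix, corrected trivially by taking its real part (which preserves the norm bound $2\pi$, so the polynomial estimate still applies), and $\exp(ih')$ is automatically unitary, so you never need the definability of the unitary group; your final step, that $\|w^*u-1\|<2$ forces $-1\notin\sigma(w^*u)$ and hence $w^*u=\exp(ik)$, is exactly the explicit form of ``close unitaries lie in the same component'' that the paper invokes more implicitly. You also connect $u$ directly to $1$ in $U_m(A)$ rather than comparing two padded unitaries $u\oplus 1_{k-m}$ and $v\oplus 1_{k-n}$ as the paper does, which is all that triviality of $K_1(A)$, as the inductive limit of the groups $U_m(A)/U_m(A)_0$, requires. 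Your handling of the matrix amplification (the max-norm encoding with a dimension-dependent constant, or alternatively the definability of the operator norm from Appendix C) matches the paper's $K_0$ template and is sound. In trade, the paper's chain-of-unitaries route avoids Borel functional calculus and Weierstrass approximation, while yours yields a one-element witness whose formula does not depend on the modulus of continuity of a path.
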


\begin{proof}
For $m\geq 1$, let $U_m(A)$ denote the unitary group of $M_m(A)$.  Fix $u\in U_m(A)$ and $v\in U_n(A)$.  We want to show that $u\sim_1 v$.  Since $\B(H)$ has trivial $K_1$, there is $k\geq \max(m,n)$ such that $u\oplus 1_{k-m}\sim_h v\oplus 1_{k-n}$, say witnessed by the path $\alpha:[0,1]\to U(H^k)$.  By uniform continuity of the path, there is $j\geq 1$ such that \[\inf_{y_1,\ldots,y_j\in M_k(B(H))}\varphi(u,v,\vec y)\] evaluates to $0$ in $B(H)$, where, setting \[\varphi_1(\vec y):=\max_{1\leq i\leq j}\max(d(y_i^*y_i,1),d(y_iy_i^*,1))\] and \[\varphi_2(\vec y):=\max_{1\leq i<j}(d(y_i,y_{i+1})\dotminus \frac{1}{2}),\] $\varphi(u,v,\vec y)$ is the formula
$$\max(\varphi_1(\vec y),\varphi_2(\vec y),d(u\oplus 1_{k-m},y_1)\dotminus \frac{1}{2},d(y_j,v\oplus 1_{k-n})\dotminus \frac{1}{2}).$$  Once again, the metric is the max metric.  Since $A$ is e.c., the value of the sentence is $0$ in $A$ as well.  Since the operator norm on $M_k(A)$ is bounded by $\sqrt{k}$ times the max norm and since the unitary group of a C$^*$ algebra is definable, there are actual unitaries making this $\inf$ as small as we want.  Since elements of the unitary group that are sufficiently close are automatically homotopy equivalent, we get that $u\sim_1 v$ in $A$.  It follows that $K_1(A)=\{0\}$.
\end{proof}

One can play a similar game to show that an e.c.\ C$^*$ algebra has trivial Cuntz semigroup.


\subsection{Connection with the Weak Expectation Property}

We recall the following

\begin{df} Let $A\subset B$ be a unital inclusion of C$^*$-algebras. We say that $B$ is \emph{weakly injective relative to A} if there exists a contraction $\phi: B\to A^{**}$ such that $\phi |_A = \id_A$ (the map $\phi$ is known as a weak conditional expectation). We say that $A$ has the \emph{weak expectation property} (WEP) of Lance if every unital extension $A\subset B$ of $A$ is weakly injective relative to $A$.
\end{df}

We remark that the canonical extension $\phi^{**}: B^{**}\to A^{**}$ can be shown to be a norm one projection from $B^{**}$ onto $A^{**}$ (see \cite{K}, p. 459). By Tomiyama's theorem, $\phi^{**}$ is u.c.p.\ whence so is $\phi$.

An operator system $X\subset \B(H)$ is \emph{weakly injective} (or \emph{almost injective} in the terminology of \cite{CE}) if there is a u.c.p.\ extension $\phi: \B(H)\to \overline{X}$ of the identity $\id: X\to X$, where $\overline{X}$ is the weak closure of $X$. The following proposition is heavily inspired by an argument of Choi and Effros (see \cite[Theorem 3.4]{CE}).

\begin{prop}\label{ecinj} Let $X\subset \B(H)$ be an operator system. If $X$ is e.c.\ as an operator system in $\B(H)$, then $X$ is weakly injective.
\end{prop}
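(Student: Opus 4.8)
The plan is to realize the desired weak conditional expectation as a point-ultraweak limit, in the spirit of Choi and Effros, of \emph{approximately} unital completely positive (u.c.p.) maps that are supported on finite-dimensional operator systems and whose existence \emph{inside} $X$ is forced by existential closedness.

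First I would set up the limiting scheme. Direct the collection of tuples $\lambda=(E,F_0,\epsilon,N,\delta)$, where $E\subseteq \B(H)$ is a finite-dimensional operator system containing $1$, $F_0\subseteq X\cap E$ is finite, and $\epsilon,\delta>0$, $N\in\mathbb N$, by refinement. Since every $T\in\B(H)$ lies in the finite-dimensional operator system $\operatorname{span}\{1,T,T^*\}$, these systems $E$ exhaust $\B(H)$. For each $\lambda$ I will produce a linear map $\psi_\lambda\colon E\to X$ that is \emph{approximately} u.c.p.\ (unital, and completely positive to matrix level $N$ with tolerance $\delta$) and satisfies $\|\psi_\lambda(a)-a\|<\epsilon$ for $a\in F_0$. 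Fixing an ultrafilter $\cU$ refining the order filter on the directed set, define $\phi(T):=\lim_{\lambda\to\cU}\psi_\lambda(T)$ in the ultraweak topology; this makes sense because $\psi_\lambda(T)$ is defined once $E\ni T$ and, being approximately contractive, ranges in a fixed ultraweakly compact ball. As $\lambda$ runs through the net, approximate unitality and approximate complete positivity sharpen to honest unitality and complete positivity of $\phi$: for $p\in M_n(\B(H))_+$ one eventually has $p\in M_n(E)_+$ while the tolerance tends to $0$, so $(\phi\otimes\id_n)(p)\ge 0$. Thus $\phi$ is u.c.p.; since each $\psi_\lambda(T)\in X\subseteq\overline X$ and $\overline X$ is ultraweakly closed, $\phi$ maps into $\overline X$; and since $\psi_\lambda(a)\to a$ for $a\in X$, we get $\phi|_X=\id$, so $\phi$ witnesses weak injectivity.

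It remains to produce the maps $\psi_\lambda$, and here existential closedness enters. Fix a linear basis $1=e_1,\dots,e_d$ of $E$ chosen so that the elements of $F_0$ are among the $e_i$. A linear map $\psi\colon E\to\B(H)$ is encoded by the tuple $y_i:=\psi(e_i)$, and the requirements ``unital'', ``completely positive to level $N$'', and ``$\epsilon$-close to the identity on $F_0$'' become conditions on $\vec y$ that are quantifier-free in the language of operator systems: unitality is $y_1=1$; closeness is $\|y_i-e_i\|<\epsilon$ for $e_i\in F_0$ (genuine parameters from $X$); and complete positivity to level $N$ is the requirement that for each $p=(p_{st})\in M_n(E)_+$ with $n\le N$ and $\|p\|\le 1$, writing $p_{st}=\sum_i c_i^{st}e_i$ with $c_i^{st}\in\mathbb C$, the matrix $\bigl(\sum_i c_i^{st}y_i\bigr)_{st}$ lies in $M_n(\cdot)_+$. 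The $c_i^{st}$ are scalars, so this is a condition on $\vec y$ with scalar data plus parameters from $X$; compactness of the base of the cone $M_n(E)_+$ lets me replace the quantification over $p$ by a finite $\delta$-net, keeping the formula quantifier-free. Because the matrix-order structure is part of (indeed definable in) the language of operator systems — exactly the definability of matrix norms used in Proposition \ref{ecchangecategories} and established in Appendix B — the atomic predicates ``distance to $M_n(\cdot)_+$'' agree across $X\subseteq\overline X\subseteq\B(H)$, so matrix positivity is absolute and the formula has the same meaning in $X$ as in $\B(H)$. The inclusion $E\hookrightarrow\B(H)$ (i.e.\ $y_i=e_i$) is an honest u.c.p.\ map, so this quantifier-free formula has infimum $0$ over $\B(H)$; since $X$ is existentially closed as an operator system in $\B(H)$, its infimum over $X$ is also $0$, yielding the required $\vec y\in X$ and hence $\psi_\lambda$.

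The main obstacle is this middle step: faithfully encoding complete positivity — an a priori infinitary, all-matrix-levels constraint — as a single quantifier-free condition to which the e.c.\ hypothesis applies. This rests on two points requiring care: the \textbf{absoluteness of matrix positivity} for operator subsystems (so the constraint set is identical in $X$ and in $\B(H)$, which is what legitimizes the existential transfer and is governed by the definability results behind Proposition \ref{ecchangecategories}), and the \textbf{upgrade from approximate to exact} complete positivity and unitality, which I defer to the ultraweak limit rather than perturbing each $\psi_\lambda$ by functional calculus. Finite-dimensionality of $E$ is used essentially, both to present $\psi$ by finitely many values and to compactify the quantification over the positive cone.
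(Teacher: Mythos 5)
Your argument is correct in outline, but it takes a genuinely different route from the paper's. The paper, following Choi--Effros, works with one-step extensions of \emph{all} of $X$: for each finite self-adjoint tuple $b\in\B(H)^k$ it considers the existential formulae $\phi_{n,k,\sigma}(a,b)=\inf_{x\in\C_n}\|[a_{ij}+\sum_l\sigma_{ij}^lb_l]-x\|$ with parameters $a$ ranging over all self-adjoint matrices over $X$, uses existential closedness to give the approximate-solution sets $W_{n,k,\sigma,a,\epsilon}\subseteq X^k$ the finite intersection property, and extracts from weak compactness, already at this first stage, an \emph{exact} u.c.p.\ map $\eta_b\colon X+\bC b_1+\dotsb+\bC b_k\to\overline{X}$ fixing all of $X$; a single weak cluster point over tuples $b$ then finishes. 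You instead work in local-reflexivity style with finite-dimensional domains $E\subseteq\B(H)$: your $\psi_\lambda$ are only approximately unital/positive/identity-on-$F_0$, each application of existential closedness uses only finitely many parameters from $X$ plus scalar data, and all sharpening is deferred to one ultrafilter limit. The paper's route buys exactness at each stage (so the u.c.p.\ verification is immediate) and, because only positive existential formulae appear, the proof transfers verbatim to the p.e.c.\ setting, which is exactly how Corollaries \ref{pecwep} and \ref{pecrwi} are later extracted; your route avoids the finite-intersection-property step and the quantification over all of $M_n(X)$, at the cost of perturbation bookkeeping that you elide and should spell out. Two such points. First, ``distance to the positive cone'' is not atomic in the operator-system language of Appendix B: the cone $\C_n$ is a sort, so this predicate is itself an $\inf$, i.e.\ existential, exactly as in the paper's $\phi_{n,k,\sigma}$. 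This is harmless --- an $\inf$ of an existential formula is still an $\inf$ of a quantifier-free one, so e.c.\ transfers it --- but your ``quantifier-free'' label and the appeal to the definability machinery behind Proposition \ref{ecchangecategories} are misplaced; no absoluteness lemma is needed, since existential closedness compares each structure's own interpretation, and a small value in $X$ gives proximity to $M_n(X)_+$ itself, which is the stronger conclusion you want. Second, the boundedness of the net $(\psi_\lambda(T))$ needed for ultraweak compactness, and the bound on $\|\psi_\lambda\otimes\id_{M_n}\|$ needed to pass from your finite $\delta$-net to all of $M_n(E)_+$ in the limit, do not come for free: they follow from unitality together with approximate positivity at level $2n$ via the standard $\left(\begin{smallmatrix}1&T\\T^*&1\end{smallmatrix}\right)$ trick (so $N$ must be taken large relative to $n$, which your net permits), or more simply by adding the constraints $\|y_i\|\leq\|e_i\|$ to your condition, which the witness $y_i=e_i$ satisfies. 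With these details supplied, your limit map is positive (the cone is ultraweakly closed and your tolerances vanish along the ultrafilter), unital, hence $*$-linear, maps into $\overline{X}$, and restricts to the identity on $X$, so your proof goes through.
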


\begin{proof} Let $\C_n$ be the positive cone in $M_n(\bullet)$. Consider the existential formulae \[\phi_{n,k,\sigma}(a,b) = \inf_{x\in \C_n}\|[a_{ij} + \sigma_{ij}^1b_1 + \dotsb + \sigma_{ij}^k b_k] - x\|\] where $[a_{ij}]\in M_n(\bullet)$ is a self-adjoint matrix and $\sigma =(\sigma^1,\dotsc,\sigma^k)\in M_n(\bC)^k$ is self-adjoint. For $b = (b_1,\dotsc,b_k)\in \B(H)^k$ self-adjoint, operator systems $X\subset Y\subset \B(H)$, and $b'\in Y^k$, note that the linear map $$\eta: X+ \bC b_1 + \dotsb + \bC b_k \to Y, \quad \eta(x + \sum_l \lambda_l b_l) := x + \sum_l \lambda_l b_l'$$ is u.c.p.\ if and only if, for every self-adjoint $a\in M_n(X)$ and every self-adjoint $\sigma\in M_n(\mathbb C)^k$, we have $\phi_{n,k,\sigma}(a,b)^{\B(H)}=0$ implies $\phi_{n,k,\sigma}(a,b')^{Y}=0$.  Call a formula $\phi_{n,k,\sigma}(a,y)$ \emph{admissible} if $(\inf_y \phi_{n,k,\sigma}(a,y))^{\B(H)}=0$.

Note that the set $W_{n,k,\sigma,a,\epsilon} := \{b\in X^k : \|b\|\leq 1,\ \phi_{n,k,\sigma}(a,b)^X<\e\}$ is a bounded subset of $X$, so its weak closure is weakly compact. Since $X$ is e.c., the family $(W_{n,k,\sigma,a,\epsilon})$, where we only consider admissible $\phi_{n,k,\sigma}(a,y)$, has the finite intersection property, whence the intersection of their weak closures is non-empty. This shows that for every $b\in \B(H)^k$, there is a u.c.p.\ map $\eta_b: X + \bC b_1 + \dotsb + \bC b_k\to \overline{X}$ which extends the identity on $X$. Letting $\mathcal F$ be the net all of finite subsets of self-adjoint elements of $\B(H)$ directed by inclusion, we have that any weak cluster point $\eta$ of $\{\eta_b : b\in\mathcal F\}$ is a u.c.p.\ map $\eta: \B(H)\to \overline{X}$ which extends the identity on $X$, whence $X$ is weakly injective.
\end{proof}

If $A$ is a C$^*$-algebra, then choosing the embedding $A\subset \B(H_u)$ in the universal representation, we have that that the following corollary holds by \cite[Corollary 6.3]{CE}.

\begin{cor}\label{ecwep} If $A$ is a unital C$^*$ algebra that is semi-existentially closed as an operator systems, then $A$ has the WEP.
\end{cor}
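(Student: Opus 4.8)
The plan is to feed the universal representation of $A$ into Proposition \ref{ecinj} and then invoke \cite[Corollary 6.3]{CE} to convert the resulting weak injectivity into the full weak expectation property. So I would first represent $A$ on its universal Hilbert space, giving a unital inclusion $A \subseteq \B(H_u)$ of C$^*$-algebras. Since $A$ is semi-existentially closed as an operator system (Definition \ref{semi}), applying that hypothesis to this particular extension yields $\varphi(a)^A = \varphi(a)^{\B(H_u)}$ for every existential operator-system formula $\varphi$ and every tuple $a$ from $A$. But this is exactly the assertion that $A$ is existentially closed as an operator system inside $\B(H_u)$, which is the hypothesis of Proposition \ref{ecinj} with $X := A$ and $H := H_u$. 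That proposition then furnishes a u.c.p.\ map $\phi : \B(H_u) \to \overline{A}$ extending the identity on $A$, where $\overline{A}$ is the weak closure of $A$ in $\B(H_u)$.

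The point of using the universal representation is the canonical identification of this weak closure with the enveloping von Neumann algebra, $\overline{A} = A'' \cong A^{**}$. Thus $\phi$ is precisely a contraction $\B(H_u) \to A^{**}$ restricting to $\id_A$, so $A$ is weakly injective relative to $A$ in its universal representation. It remains to promote this single weak expectation into a weak expectation for \emph{every} unital extension $A \subseteq B$, which is the content of the WEP; this is exactly what \cite[Corollary 6.3]{CE} supplies, since every representation of $A$ is quasi-contained in the universal one, making weak injectivity with respect to $H_u$ equivalent to the weak expectation property.

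The only step involving more than bookkeeping — and hence the main point to get right — is the insistence on the universal representation. With an arbitrary faithful representation $A \subseteq \B(H)$, Proposition \ref{ecinj} would yield only a weak expectation onto $A''$, in general a proper normal quotient of $A^{**}$, leaving the genuinely harder task of lifting the map along $A^{**} \twoheadrightarrow A''$. Choosing $H_u$ makes $A'' = A^{**}$ outright, so no lifting is needed and the corollary follows immediately from Proposition \ref{ecinj} together with the cited Choi--Effros result.
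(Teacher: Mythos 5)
Your proposal is correct and is essentially identical to the paper's (very brief) proof: the paper likewise applies Proposition \ref{ecinj} to the inclusion $A\subseteq \B(H_u)$ given by the universal representation, so that the weak closure $\overline{A}=A''$ is canonically $A^{**}$, and then cites \cite[Corollary 6.3]{CE} to pass from weak injectivity in the universal representation to the full WEP. Your added observations---that the semi-e.c.\ hypothesis applies here because $\B(H_u)$ is itself a unital C$^*$ extension of $A$, and that an arbitrary faithful representation would only yield a weak expectation onto a quotient of $A^{**}$---are accurate and explain exactly why the paper makes this choice of representation.
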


We would next like to offer an alternative proof of Corollary \ref{ecwep}.  First, for a positive element $a$ of an operator system $S$, we say that $a$ is \emph{strictly positive} if there is $r>0$ such that $a\geq r1$.  According to \cite[Section II.4.2]{Black}, if $A$ is a \emph{unital} C$^*$ algebra, then a positive element $a$ of $A$ is strictly positive if and only if $a$ is invertible.  It is an easy exercise to check that these two notions of strictly positive agree for elements of unital $C^*$ algebras.  We need the following recent theorem of Farenick, Kavruk, and Paulsen.

\begin{thm}[\cite{FKP}]
Suppose that $A\subseteq \B(H)$ is a unital $C^*$ algebra.  The following are equivalent:
\begin{enumerate}
\item $A$ has WEP;
\item Whenever, for arbitrary $p\in \n$, there exists $x_1,x_2\in M_p(A)$ and $a,b,c\in M_p(\B(H))$ such that $a+b+c=1$ and the matrix
$$\left[ \begin{matrix} a & x_1 & 0\\ x_1^* & b & x_2\\ 0 & x_2^* & c\end{matrix}\right]$$ is strictly positive in $M_{3p}(\B(H))$, then there also exist $\tilde a, \tilde b, \tilde c$ in $M_p(A)$ with the same property.
\end{enumerate}
\end{thm}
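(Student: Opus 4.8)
The plan is to prove the two implications separately. The implication (1)$\Rightarrow$(2) I would reduce to a Hahn--Banach separation argument, while (2)$\Rightarrow$(1) I would handle by constructing a weak conditional expectation through a weak-compactness argument of the same flavour as the proof of Proposition \ref{ecinj}.

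For (1)$\Rightarrow$(2), suppose $A$ has WEP and fix a weak conditional expectation $\Phi\colon \B(H)\to A^{**}$ with $\Phi|_A=\id_A$, which is u.c.p.\ by the remark following the definition of WEP. Given $x_1,x_2\in M_p(A)$ and $a,b,c\in M_p(\B(H))$ with $a+b+c=1$ and the tridiagonal matrix $\geq\e 1$ for some $\e>0$, I would amplify $\Phi$ to $\Phi_{3p}=\id_{M_{3p}}\otimes\Phi$ and apply it to this matrix. Since $\Phi$ fixes $A$ it fixes $x_1,x_2$, and since $\Phi$ is unital and positive the image $S$ is again tridiagonal with the same off-diagonal entries $x_1,x_2$, with diagonal $\Phi_p(a),\Phi_p(b),\Phi_p(c)$ still summing to $1$, and with $S\geq\e 1$ in $M_{3p}(A^{**})$. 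It remains to descend to $A$. Let $L\subseteq M_{3p}(A)_{sa}$ be the (nonempty) real-affine set of tridiagonal matrices with off-diagonal $x_1,x_2$ and diagonal summing to $1$, and let $P$ be the open convex cone of invertible positive elements of $M_{3p}(A)_{sa}$; I must show $L\cap P\neq\emptyset$. If not, separation produces a nonzero positive functional $\tau$ on $M_{3p}(A)$ with $\tau\leq 0$ on $L$. Writing $\tau$ through its block functionals $\tau_{ij}$ on $M_p(A)$ and using that $\tau$ is bounded above on the linear directions of $L$ (diagonals summing to $0$, zero off-diagonal) forces $\tau_{11}=\tau_{22}=\tau_{33}=:\rho$; since $\tau$ is positive and nonzero, $\rho$ is positive and nonzero, so $\rho(1)>0$. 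The crucial point---and the reason the normalization $a+b+c=1$ is imposed---is that for a functional with equal diagonal blocks the value on a tridiagonal matrix depends on the diagonal only through its sum. Hence, extending $\tau$ to its canonical normal positive functional $\tilde\tau$ on $M_{3p}(A^{**})$, the quantity $\tilde\tau(S)$ equals $\tau$ evaluated on any element of $L$, which is $\leq 0$; but $S\geq\e 1$ gives $\tilde\tau(S)\geq 3\e\rho(1)>0$, a contradiction. Thus $L\cap P\neq\emptyset$, which is exactly the required $\tilde a,\tilde b,\tilde c$.

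For (2)$\Rightarrow$(1), I would build a weak expectation $\B(H_u)\to A^{**}$ by the finite-intersection-property method of Proposition \ref{ecinj}, working in the universal representation $A\subseteq\B(H_u)$ so that weak injectivity there is equivalent to WEP by \cite[Corollary 6.3]{CE}. Extending $\id_A$ one self-adjoint operator $b\in\B(H_u)$ at a time, the existence of a u.c.p.\ extension to $A+\bC b$ amounts to finding $\tilde b\in A^{**}$ such that, for every $n$, every self-adjoint $[a_{ij}]\in M_n(A)$ and every self-adjoint scalar matrix $[\sigma_{ij}]$, positivity of $[a_{ij}]+[\sigma_{ij}]b$ in $M_n(\B(H_u))$ implies positivity of $[a_{ij}]+[\sigma_{ij}]\tilde b$ in $M_n(A^{**})$---precisely the positivity constraints appearing in Proposition \ref{ecinj}. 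Phrasing each constraint as membership in a bounded, hence weakly compact, subset of $A$, it then suffices to satisfy any finite family of them approximately within $A$; a weak cluster point supplies the extension on $A+\bC b$, and a further cluster point over the net of finite subsets of $\B(H_u)_{sa}$ yields the global u.c.p.\ map, establishing WEP.

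The main obstacle is the last reduction in (2)$\Rightarrow$(1): condition (2) only furnishes positive completions for the single \emph{tridiagonal} $3\times3$ pattern with normalized diagonal, whereas the extension problem above involves arbitrary constraints $[a_{ij}]+[\sigma_{ij}]b\geq0$. The heart of the proof is therefore to show that these general constraints reduce to the tridiagonal ones---a banded/chordal matrix-completion phenomenon, equivalently the statement that the finite-dimensional operator system governing WEP is the tridiagonal operator system $\mathcal W_3$. I expect this reduction, rather than either the separation or the compactness step, to require the genuine work, and to be exactly where the operator-system tensor-product machinery underlying \cite{FKP} is needed.
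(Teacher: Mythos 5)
A preliminary remark: the paper does not prove this theorem at all --- it is quoted from \cite{FKP} as a black box and used to give a second proof of Corollary \ref{ecwep} --- so there is no internal proof to compare against; your attempt must be measured against the Farenick--Kavruk--Paulsen argument. With that said, your direction (1)$\Rightarrow$(2) is correct and essentially complete. The amplified weak expectation $\Phi_{3p}$ is u.c.p.\ and fixes $x_1,x_2$, hence carries the strictly positive tridiagonal matrix to a tridiagonal $S\geq \e 1$ in $M_{3p}(A^{**})$ with diagonal summing to $1$; and your Hahn--Banach descent is sound: separating the affine set $L$ from the open cone of invertible positives yields a nonzero positive functional $\tau\leq 0$ on $L$ which must vanish on the difference space of $L$, forcing the diagonal block functionals to coincide with a single $\rho$, which is nonzero by the Cauchy--Schwarz argument you gesture at; since a tridiagonal matrix with the fixed off-diagonal $x_1,x_2$ is then evaluated by $\tau$ (or its normal bidual extension $\tilde\tau$) only through the sum of its diagonal, one gets $0\geq \tilde\tau(S)\geq 3\e\rho(1)>0$, a genuine contradiction. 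This half of your proposal I would accept as written.

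The direction (2)$\Rightarrow$(1), however, contains a genuine gap --- one you honestly flag, but which is larger than your framing suggests. The missing step is not a banded/chordal matrix-completion lemma reducing the constraints $[a_{ij}]+[\sigma_{ij}]b\geq 0$ of the one-element-extension scheme to the tridiagonal pattern; no such elementary reduction is known, and the finite-intersection-property machinery of Proposition \ref{ecinj} cannot supply it, since those constraints range over arbitrary scalar patterns $\sigma$ and arbitrary matrix sizes $n$. In \cite{FKP} the implication runs through entirely different machinery: an operator-system duality identifies condition (2) with the coincidence, at all matrix levels, of the minimal and maximal operator-system tensor products of $A$ with the five-dimensional system $\mathcal{S}_2=\operatorname{span}\{1,u_1,u_2,u_1^*,u_2^*\}\subset C^*(\mathbb F_2)$; results of Kavruk--Paulsen--Todorov--Tomforde upgrade $A\otimes_{\min}\mathcal{S}_2=A\otimes_{\max}\mathcal{S}_2$ to $A\otimes_{\min}C^*(\mathbb F_2)=A\otimes_{\max}C^*(\mathbb F_2)$; and Kirchberg's theorem \cite{K} (WEP is equivalent to min $=$ max against full free group C$^*$ algebras) then yields WEP. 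Your outline never invokes Kirchberg's theorem, and without it the compactness/extension scaffolding has nothing to act on: the ``heart of the proof'' you defer is not a technical reduction within your scheme but the entire substance of the hard direction.
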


We can use the previous theorem to give an alternative proof of Corollary \ref{ecwep}.  We verify condition (2) in the previous theorem.  Fix $x_1,x_2,a,b,c$ as in the condition; call that matrix $u$.  Take $r>0$ such that $u-rI$ is positive in $M_{3p}(\B(H))$, say $u-rI=d^*d$.  Fix $s>0$ very small.  Then since $A$ is e.c., $A$ has elements $a',b',c',d'$ such that $a'+b'+c'$ is within $s$ of $1$ (in the max metric on $M_p(A)$) and such that $u'-r1$ is within $s$ of $(d')^*d'$ (again, with the max metric on $M_{3p}(A)$); here $u'$ denotes the corresponding matrix with the primed elements replacing the original elements.  By perturbing $a',b',c'$ we may assume that $a', b', c'$ are self-adjoint and add up to $1$, and perhaps now $u'-r1$ is within $2s$ of $(d')^*d'$.  Now by functional calculus, there is $t>0$ such that $u'-t1$ is positive, whence $u'$ is strictly positive again.

We would like to thank David Sherman for pointing out that the previous theorem ought to imply Corollary \ref{ecwep}.

\begin{rmk}
Corollary \ref{ecwep} yields yet another proof of the fact that e.c.\  exact C$^*$ algebras are nuclear.  Indeed, by the results of \cite{effhaag}, an exact C$^*$ algebra is nuclear if and only if it has WEP.
\end{rmk}



\begin{cor} If a unital inclusion $A\subset B$ of C$^*$-algebras is existential in the language of operator systems, then $A$ has the WEP if $B$ has the WEP.
\end{cor}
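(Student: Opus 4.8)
The plan is to factor the problem through the relative notion the paper has just introduced. I would first show that the hypothesis ``$A$ is existential in $B$ as an operator system'' already forces \emph{$B$ to be weakly injective relative to $A$}, i.e.\ produces a u.c.p.\ contraction $\Theta_0\colon B\to A^{**}$ with $\Theta_0|_A=\id_A$. I would then combine this with the WEP of $B$ via the standard transitivity of weak injectivity: since $B$ has the WEP, in the universal representation $B\subseteq\B(H)$ there is a weak conditional expectation $P\colon\B(H)\to B^{**}$ fixing $B$ (this is the content of \cite[Corollary 6.3]{CE} already invoked above). Extending $\Theta_0$ to a u.c.p.\ map $\widetilde\Theta_0\colon B^{**}\to A^{**}$ (via the bitranspose $\Theta_0^{**}$ followed by the canonical normal projection $(A^{**})^{**}\to A^{**}$, which fixes $A^{**}$), the composite $\widetilde\Theta_0\circ P\colon\B(H)\to A^{**}$ is u.c.p.\ and restricts to $\id_A$, because $P|_A=\id_A$ and $\widetilde\Theta_0|_B=\Theta_0$. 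As $A\subseteq\B(H)$ is a faithful representation and we have exhibited a weak expectation into $A^{**}$, this witnesses that $A$ has the WEP. So the entire weight of the corollary rests on producing $\Theta_0$.

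For that step I would simply rerun the proof of Proposition \ref{ecinj} with the ambient $\B(H)$ replaced by $B$ and the target weak closure replaced by $A^{**}$. Concretely, for each finite self-adjoint tuple $b=(b_1,\dots,b_k)$ from $B$ I seek elements $b_1',\dots,b_k'\in A^{**}$ so that $a+\sum_l\lambda_l b_l\mapsto a+\sum_l\lambda_l b_l'$ defines a u.c.p.\ map $\eta_b\colon A+\bC b_1+\dots+\bC b_k\to A^{**}$ extending $\id_A$. As in Proposition \ref{ecinj}, complete positivity of $\eta_b$ is equivalent to requiring that $b'$ satisfy every constraint $\phi_{n,k,\sigma}(a,y)$ that $b$ satisfies \emph{in $B$}; accordingly I call a constraint admissible when $(\inf_y\phi_{n,k,\sigma}(a,y))^{B}=0$. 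The sets $W_{n,k,\sigma,a,\e}=\{y\in A^k:\|y\|\le 1,\ \phi_{n,k,\sigma}(a,y)^{A}<\e\}$ are bounded, hence relatively weak$^*$-compact in $A^{**}$, and the point is that they enjoy the finite intersection property: finitely many admissible constraints are simultaneously solvable in $B$, witnessed by the single tuple $b$, so the combined infimum vanishes in $B$, and since $A$ is existential in $B$ it vanishes in $A$ as well. A point--weak$^*$ cluster point of the $\eta_b$ over the net of finite self-adjoint subsets of $B$ then yields $\Theta_0\colon B\to A^{**}$ extending $\id_A$, exactly as the cluster point in Proposition \ref{ecinj} produces a map on all of $\B(H)$.

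The hard part is precisely this first step, and the delicate bookkeeping is twofold. First, the only existential closedness available is that of $A$ inside $B$, so admissibility must be read as solvability \emph{in $B$} rather than in $\B(H)$; this is exactly what lets the existential hypothesis transfer the combined infimum down to $A$, and it is why $B$ (not $\B(H)$) must play the role of the ambient algebra in the Proposition \ref{ecinj} argument, with the WEP of $B$ entering only later through $P$. Second, one must check that the positivity conditions cutting out the $W$'s are weak$^*$-closed in $A^{**}$, so that a weak$^*$ limit $b'$ genuinely satisfies $\phi_{n,k,\sigma}(a,b')=0$ in $M_n(A^{**})$; this is the same lower-semicontinuity observation used in Proposition \ref{ecinj}, now carried out in the bidual rather than in a concrete weak closure. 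Once $\Theta_0$ is in hand, the extension to $B^{**}$ and the composition with $P$ are routine, so I expect no further obstruction.
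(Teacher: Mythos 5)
Your proof is correct and is essentially the argument the paper intends: your first step is exactly the relative form of Proposition \ref{ecinj} that the paper itself records as Corollary \ref{pecrwi} (an existential, hence positively existential, inclusion $A\subseteq B$ in the operator system language yields a weak conditional expectation $\Theta_0\colon B\to A^{**}$ fixing $A$), with the remaining composition against $B$'s weak expectation left implicit by the paper. Your bidual bookkeeping ($\Theta_0^{**}$ followed by the canonical normal projection $(A^{**})^{**}\to A^{**}$, then precomposition with $P$ and the representation-independence of the WEP as in \cite[Corollary 6.3]{CE}) correctly supplies the details the paper omits, including the point --- shared with the paper's own phrasing of admissibility --- that the finite intersection property is witnessed by the fixed tuple $b$ itself.
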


Since Kirchberg proved that the Connes Embedding Problem (CEP) is equivalent to C$^*(\mathbb F_\infty)$ having WEP, we see that CEP follows from C$^*(\mathbb F_\infty)$ being semi-existentially closed as an operator system.
(We already know that it is not e.c.\   as a C$^*$ algebra.)
In fact, since any injective von Neumann algebra has the WEP, CEP would follow as a consequence of there existing an embedding C$^*(\mathbb F_\infty)\hookrightarrow \mathcal R$ which is existential in the language of operator systems (again, the Dixmier property obstructs any such embedding from being existential in the language of C$^*$-algebras). Perhaps the existence of such an existential embedding (or at least such a \emph{positively} existential embedding, see below) is equivalent to CEP. Also, by \cite[Theorem 7.2]{FKP}, we have:

\begin{cor}
If $C^*(\mathbb F_2)$ is semi-existentially closed as an operator systems, then CEP holds.
\end{cor}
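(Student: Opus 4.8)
The plan is to deduce the conclusion by feeding the hypothesis into Corollary~\ref{ecwep} and then appealing to the cited strengthening of Kirchberg's equivalence. Set $A := C^*(\mathbb F_2)$. The assumption that $A$ is semi-existentially closed as an operator system is precisely the hypothesis of Corollary~\ref{ecwep}, so that corollary applies verbatim and yields that $C^*(\mathbb F_2)$ has the WEP. Thus the entire remaining content of the proof is to convert ``$C^*(\mathbb F_2)$ has WEP'' into ``CEP holds.''

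For this conversion I would invoke \cite[Theorem 7.2]{FKP}. Kirchberg's original result only gives the equivalence of CEP with $C^*(\mathbb F_\infty)$ having WEP, which is why the preceding remark is phrased for $\mathbb F_\infty$. The Farenick--Kavruk--Paulsen theorem sharpens this: it shows that already the WEP for $C^*(\mathbb F_2)$ is equivalent to, and in particular implies, CEP. Granting that theorem, the corollary is immediate by composing the two implications: semi-e.c.\ as an operator system $\Rightarrow$ WEP (by Corollary~\ref{ecwep}) $\Rightarrow$ CEP (by \cite[Theorem 7.2]{FKP}).

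The only genuinely substantive point, and hence where I would locate the ``hard part,'' is the passage from $\mathbb F_\infty$ to $\mathbb F_2$, which is exactly what \cite[Theorem 7.2]{FKP} supplies and which we are entitled to assume. It is worth flagging why this is not automatic: although $C^*(\mathbb F_2)$ sits inside $C^*(\mathbb F_\infty)$, WEP is not inherited by subalgebras in general, so one cannot simply restrict Kirchberg's statement to the two-generator subalgebra. The FKP analysis instead exploits the finitely-generated operator-system structure to recover the full free-group information from the two-generator data. Since that theorem is available to us as a black box, no further work is required and the proof reduces to the short composition above.
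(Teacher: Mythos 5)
Your proof is correct and is essentially the paper's own argument: the paper states this corollary immediately after the phrase ``Also, by \cite[Theorem 7.2]{FKP}, we have:'', so its implicit proof is exactly your composition of Corollary \ref{ecwep} (semi-e.c.\ as an operator system $\Rightarrow$ WEP) with the Farenick--Kavruk--Paulsen equivalence of CEP with $C^*(\mathbb F_2)$ having WEP. One minor slip in your side remark (which does not affect the logic): the relevant containment runs the other way --- $C^*(\mathbb F_\infty)$ embeds into $C^*(\mathbb F_2)$ as a cp-complemented subalgebra since $\mathbb F_\infty$ is a subgroup of $\mathbb F_2$, while $C^*(\mathbb F_2)$ is a quotient, not a subalgebra, of $C^*(\mathbb F_\infty)$.
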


We now present a partial converse to Corollary \ref{ecwep}.  For the moment, we work in an arbitrary continuous signature $\la$.  Suppose that $\m$ and $\N$ are $\la$-structures.  We say a function $F:\m\to \N$ is a \emph{homomorphism} if it respects the interpretations of function and constant symbols and  is contractive with respect to predicate symbols, that is, $P^\N(F(a))\leq P^\m(a)$.  For example, u.c.p. maps between operator systems are homomorphisms.

We call a quantifier-free formula $\varphi(v)$ \emph{positive} if, for any $\la$-structures $\m$ and $\N$, any homomorphism $F:\m\to \N$, and any tuple $a$ from $M$, we have $\varphi(F(a))^\N\leq \varphi(a)^\m.$  This definition (which is a more general definition that that given in \cite[Section 3]{scowcroft} as the Lemma below establishes) is motivated by the fact from classical logic that a formula is positive (i.e., is built without using negations) if and only if its truth is preserved by homomorphisms. It turns out that many of the quantifier-free formulae considered above are positive.  In fact, we now give a general result indicating how to produce a large collection of positive formulae.


We say that a continuous function $f:[0,1]^n\to [0,1]$ is \emph{increasing} if $u_i\leq v_i$ for each $i$ implies $f(u_1,\ldots,u_n)\leq f(v_1,\ldots,v_n)$.  Notice that the $n$-ary connectives $\max$ and $\min$ as well as $n$-ary addition are all increasing.  The following lemma is proven by a routine induction on complexity of formulae:

\begin{lemma}
If a quantifier-free formula is built using only increasing connectives, then it is a positive formula.
\end{lemma}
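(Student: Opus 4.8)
The plan is to prove the lemma by induction on the complexity of the quantifier-free formula $\varphi(v)$, where by hypothesis all connectives appearing in the formula are increasing continuous functions $[0,1]^n \to [0,1]$. We want to show that for any $\la$-structures $\m$ and $\N$, any homomorphism $F : \m \to \N$, and any tuple $a$ from $\m$, we have $\varphi(F(a))^\N \leq \varphi(a)^\m$.

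For the base case, a quantifier-free atomic formula is of the form $P(t_1(v), \ldots, t_m(v))$ where $P$ is a predicate symbol and the $t_i$ are terms built from function and constant symbols. Since $F$ is a homomorphism, it respects interpretations of function and constant symbols, so $t_i(F(a))^\N = F(t_i(a)^\m)$ for each $i$; and contractivity with respect to predicate symbols gives $P^\N(F(b)) \leq P^\m(b)$ for any tuple $b$. Combining these, the atomic case follows directly. (The distinguished metric/distance predicate $d$ is handled the same way, as it is among the predicate symbols on which $F$ is assumed contractive.)

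For the inductive step, suppose $\varphi = f(\psi_1, \ldots, \psi_n)$ where $f$ is an increasing connective and each $\psi_i$ is a quantifier-free formula built from increasing connectives, for which the inductive hypothesis $\psi_i(F(a))^\N \leq \psi_i(a)^\m$ holds. Then since $f$ is increasing, applying it coordinatewise to the inequalities $\psi_i(F(a))^\N \leq \psi_i(a)^\m$ yields
$$\varphi(F(a))^\N = f\big(\psi_1(F(a))^\N, \ldots, \psi_n(F(a))^\N\big) \leq f\big(\psi_1(a)^\m, \ldots, \psi_n(a)^\m\big) = \varphi(a)^\m,$$
which is exactly what we need. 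This completes the induction.

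I do not expect any genuine obstacle here, as the argument is purely formal: the only two facts used are the definition of homomorphism (respecting function/constant symbols and contractivity on predicates) at the atomic level and the monotonicity of $f$ at the inductive level, and these fit together cleanly. The one point worth being careful about is ensuring that the notion of ``quantifier-free formula built using only increasing connectives'' is set up so that every such formula is indeed of the atomic or $f(\psi_1,\ldots,\psi_n)$ form with the $\psi_i$ again of this shape, so that the induction on complexity is well-founded; this is immediate from the inductive definition of formulas in continuous logic, where a quantifier-free formula is by definition either atomic or obtained by applying a connective to shorter formulas.
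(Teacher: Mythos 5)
Your proof is correct and is exactly the ``routine induction on complexity of formulae'' that the paper invokes without writing out: the homomorphism properties handle the atomic case, and monotonicity of the connectives carries the inductive step. Nothing further is needed.
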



We say that an $\la$-structure is \emph{positively existentially closed} (p.e.c.) if it satisfies the definition of being e.c.\ but only with respect to positive quantifier-free formulae.  (In \cite{scowcroft}, the author uses the term ``algebraically closed'' instead of p.e.c.)  There is also a natural notion of a C$^*$ algebra being \emph{semi-positively existentially closed} (semi-p.e.c.) as either an operator system or an operator space.

Inspecting the proof of Corollary \ref{ecwep}, we actually showed:

\begin{cor}\label{pecwep}
If $A$ is a unital C$^*$ algebra that is semi-p.e.c.\  as an operator systems, then $A$ has WEP.
\end{cor}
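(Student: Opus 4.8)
The plan is to show that the proof of Corollary~\ref{ecwep} goes through verbatim under the weaker hypothesis, by checking that its only appeal to existential closedness is made through \emph{positive} formulae. Recall that Corollary~\ref{ecwep} is obtained by applying Proposition~\ref{ecinj} to the universal representation $A\subseteq\B(H_u)$ and then invoking \cite[Corollary 6.3]{CE} to pass from weak injectivity to the WEP. The passage through \cite[Corollary 6.3]{CE} involves no model theory, so it suffices to re-examine the proof of Proposition~\ref{ecinj} and verify that existential closedness of $X$ in $\B(H)$ is used there \emph{only} to equate $\inf$-values of positive existential formulae between $\B(H)$ and $X$. Granting this, the proof of Proposition~\ref{ecinj} in fact establishes the stronger statement that a \emph{positively} e.c.\ operator system $X\subseteq\B(H)$ is weakly injective; since $\B(H_u)$ is a C$^*$ algebra, the semi-p.e.c.\ hypothesis supplies exactly the positive existential closedness of $A$ as an operator system in the C$^*$ algebra $\B(H_u)$ that is needed, and the WEP follows as in the proof of Corollary~\ref{ecwep}.

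The key step is therefore to verify positivity of the formulae driving the proof of Proposition~\ref{ecinj}, namely the $\phi_{n,k,\sigma}(a,b)=\inf_{x\in\C_n}\|[a_{ij}+\sigma^1_{ij}b_1+\dotsb+\sigma^k_{ij}b_k]-x\|$. Each such formula merely records the distance in $M_n(\bullet)$ from the self-adjoint element $z:=[a_{ij}+\sum_l\sigma^l_{ij}b_l]$ to the positive cone $\C_n$. I would argue that this distance is a positive predicate directly from the meaning of homomorphism in the operator system language: a homomorphism $F\colon\m\to\N$ is a u.c.p.\ map, so its amplification $M_n(F)$ is again positive and contractive. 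Hence $M_n(F)$ carries $\C_n$ into $\C_n$ and cannot increase norms, giving $\phi_{n,k,\sigma}(F(a),F(b))^{\N}\le\phi_{n,k,\sigma}(a,b)^{\m}$; that is, $\phi_{n,k,\sigma}$ is positive. The finite combinations of these formulae actually appearing in the argument are finite maxima (in the finite intersection property step) together with infima over the variables, and since $\max$ is an increasing connective, the preceding Lemma on increasing connectives guarantees that these combinations remain positive.

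Finally I would trace the one place where existential closedness enters the proof of Proposition~\ref{ecinj}: the finite intersection property of the family $(W_{n,k,\sigma,a,\e})$. There one transfers from $\B(H)$ down to $X$ the vanishing of an infimum of a finite maximum of the $\phi_{n_i,k,\sigma_i}(a_i,y)$, and by the previous paragraph this is the transfer of an infimum of a positive quantifier-free formula, i.e.\ precisely what positive existential closedness provides. Substituting ``p.e.c.'' for ``e.c.'' in this step --- the weak compactness and weak lower semicontinuity arguments being untouched --- completes the proof. The main obstacle is bookkeeping rather than conceptual: one must confirm that \emph{no} step of Proposition~\ref{ecinj} secretly invokes a non-positive formula, in particular that the criterion for u.c.p.-ness of the gluing map $\eta$ and the passage to weak closures are expressed entirely through the distance-to-$\C_n$ predicate, so that nothing beyond positive existential closedness is ever used.
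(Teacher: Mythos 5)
Your proposal is correct and follows essentially the same route as the paper, whose entire proof of Corollary~\ref{pecwep} is the remark that ``inspecting the proof of Corollary~\ref{ecwep}, we actually showed'' the statement --- i.e., that the only use of existential closedness in Proposition~\ref{ecinj} is to transfer infima of the formulae $\phi_{n,k,\sigma}$, which are infima over the sort $\C_n$ of positive quantifier-free distance predicates, so positive existential closedness suffices, with the passage to WEP via the universal representation and \cite[Corollary 6.3]{CE} unchanged. Your verification that u.c.p.\ maps (the homomorphisms of the operator system language) preserve $\C_n$ and decrease the relevant norms, and that the finite maxima arising in the finite intersection property step remain positive, is precisely the bookkeeping the paper leaves implicit.
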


More generally, the proof of Proposition \ref{ecinj} shows:

\begin{cor}\label{pecrwi}
If $A\subseteq B$ are C$^*$ algebras such that $A$ is p.e.c.\ in $B$ as an operator system, then $B$ is weakly injective relative to $A$.
\end{cor}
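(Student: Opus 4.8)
The plan is to rerun the proof of Proposition~\ref{ecinj} essentially verbatim, making two adjustments: replacing the ambient algebra $\B(H)$ by $B$ and the weak-closure target $\overline{X}$ by $A^{**}$, and --- this is the whole point of the generalization --- checking that the only formulae used are \emph{positive}, so that the hypothesis ``$A$ is e.c.\ in $B$'' can be weakened to ``$A$ is p.e.c.\ in $B$''. Concretely, I would first observe that each formula $\phi_{n,k,\sigma}(a,y)=\inf_{x\in\C_n}\|[a_{ij}+\sum_l\sigma^l_{ij}y_l]-x\|$, i.e.\ the distance of the self-adjoint matrix $[a_{ij}+\sum_l\sigma^l_{ij}y_l]$ to the positive cone, is positive in the sense of the paper. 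This is immediate from the fact that a u.c.p.\ map $F$ (a homomorphism of operator systems) satisfies $F^{(n)}(\C_n)\subseteq\C_n$ and is completely contractive: if $[a+\sigma y]=p+e$ with $p\geq 0$ and $\|e\|<\delta$, then $F^{(n)}([a+\sigma y])=F^{(n)}(p)+F^{(n)}(e)$ with $F^{(n)}(p)\geq 0$ and $\|F^{(n)}(e)\|<\delta$, so the distance to the cone does not increase. Since $\max$ preserves positivity, every formula appearing in the argument below is positive.

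Next I would redo the construction with these positive formulae. Fix a self-adjoint tuple $b\in B^k$ with $\|b_l\|\le 1$; for each triple $(n,\sigma,a)$ with $\phi_{n,k,\sigma}(a,b)^B=0$ and each $\e>0$ set $W_{n,k,\sigma,a,\e}:=\{c\in A^k:\|c\|\le 1,\ \phi_{n,k,\sigma}(a,c)^A<\e\}$, now taking weak$^*$ closures inside $(A^{**})^k$, which are weak$^*$-compact by Banach--Alaoglu. To obtain the finite intersection property I would use positive existential closedness rather than full e.c.: given finitely many constraints $(n_i,\sigma_i,a_i)$, the positive formula $\max_i\phi_{n_i,k,\sigma_i}(a_i,y)$ has $\inf_y$ equal to $0$ in $B$ (witnessed by $y=b$), hence equal to $0$ in $A$ by p.e.c., producing a common $c\in A^k$ in $\bigcap_i W_{n_i,k,\sigma_i,a_i,\e}$. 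Because the positive cone of the von Neumann algebra $M_n(A^{**})$ is weak$^*$-closed and the norm is weak$^*$-lower semicontinuous, any point $c$ of the (nonempty) total intersection satisfies $\phi_{n,k,\sigma}(a,c)^{A^{**}}=0$ for every constraint; by the usual characterization this makes the linear map $\eta_b\colon A+\bC b_1+\dotsb+\bC b_k\to A^{**}$, $b_l\mapsto c_l$, unital and completely positive, extending $\id_A$.

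Finally I would pass to a map on all of $B$. Realize $A^{**}$ as a weak$^*$-closed von Neumann subalgebra of some $\B(K)$; since $A^{**}$ is not injective in general, I cannot extend $\eta_b$ inside $A^{**}$, so instead I extend each $\eta_b$ to a u.c.p.\ map $\tilde\eta_b\colon B\to\B(K)$ by Arveson's theorem and take a point-weak$^*$ cluster point $\phi$ of the net $(\tilde\eta_b)$ indexed by finite self-adjoint subsets $b$ of $B$. Then $\phi\colon B\to\B(K)$ is u.c.p.\ with $\phi|_A=\id_A$, and for each fixed $z\in B$ we eventually have $\tilde\eta_b(z)=\eta_b(z)\in A^{**}$, so $\phi(z)\in A^{**}$ because $A^{**}$ is weak$^*$-closed; thus $\phi\colon B\to A^{**}$ is the desired weak conditional expectation (in particular a contraction), and $B$ is weakly injective relative to $A$. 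The main obstacle is the first step --- isolating positivity of the relevant formulae --- since this is exactly what licenses replacing e.c.\ by p.e.c.; the only genuinely new technical wrinkle compared with Proposition~\ref{ecinj} is handling the abstract, possibly non-injective target $A^{**}$ via an ambient $\B(K)$ in the final cluster-point argument.
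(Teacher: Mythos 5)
Your proposal is correct and takes essentially the same route as the paper, whose entire proof of Corollary \ref{pecrwi} is the remark that the argument for Proposition \ref{ecinj} uses only the positive formulae $\phi_{n,k,\sigma}$ (u.c.p.\ maps send cones to cones and are completely contractive, so distance to the positive cone does not increase), which is precisely the verification you carry out before rerunning the finite-intersection and cluster-point construction with $B$ in place of $\B(H)$. The only cosmetic difference is at the final step: the paper takes a point-weak cluster point of the partially defined maps $\eta_b$ directly, with $A^{**}$ realized as the weak closure of $A$ in a suitable representation, whereas you first extend each $\eta_b$ to all of $B$ by Arveson into an ambient $\B(K)$ and then note that the cluster point lands in the weak$^*$-closed copy of $A^{**}$ --- an equally valid packaging of the same argument.
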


Here is a partial converse to Corollary \ref{ecwep}.
\begin{prop}
Suppose that $A$ is a unital $C^*$ algebra with WEP.  Further suppose that, for every faithful representation $\pi:A\to \B(H)$, we have $A$ is positively e.c.\  in $\pi(A)''$ in the language of operator systems.  Then $A$ is positively e.c. as an operator system
\end{prop}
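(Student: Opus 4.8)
The plan is to compare $I_A := (\inf_w \varphi(a,w))^A$ with $I_X := (\inf_w \varphi(a,w))^X$, where $X\supseteq A$ is an arbitrary operator system extension, $\varphi(v,w)$ is a positive quantifier-free formula in the language of operator systems, and $a$ is a tuple from $A$. Since $A\subseteq X$ is a complete order embedding, the atomic formulae (matrix norms and the order-unit structure) take the same values on any fixed tuple $w$ from $A$ whether computed in $A$ or in $X$, so $\varphi(a,w)^A=\varphi(a,w)^X$ for $w$ from $A$; taking the infimum over the larger set $X$ therefore yields the trivial inequality $I_X\le I_A$. The whole content of the proposition is the reverse inequality $I_A\le I_X$: every witness available in $X$ can be pulled back into $A$ at the cost of an arbitrarily small increase in $\varphi$.

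To prove $I_A \le I_X$, fix $\e>0$ and a tuple $w^*\in X$ with $\varphi(a,w^*)^X<I_X+\e$. Let $\pi\colon A\to \B(H)$ be the universal representation, so that $\pi(A)''=A^{**}$. The key step is to manufacture a u.c.p.\ map $\Phi\colon X\to \pi(A)''$ with $\Phi|_A=\id_A$. I would build it in two moves. First, since $\B(H)$ is injective, Arveson's extension theorem extends the u.c.p.\ inclusion $\pi\colon A\to\B(H)$ to a u.c.p.\ map $\tilde\pi\colon X\to\B(H)$. Second, because $A$ has the WEP and $A\subseteq\B(H)$ is a unital $C^*$-extension, the definition supplies a weak conditional expectation $\phi\colon\B(H)\to A^{**}$ with $\phi|_A=\id_A$, which is u.c.p.\ by the remark following the definition of WEP (via Tomiyama's theorem). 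Setting $\Phi:=\phi\circ\tilde\pi$ then gives a u.c.p.\ map $X\to\pi(A)''$ with $\Phi(a)=\phi(\pi(a))=a$.

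Bridging the operator-system extension $X$ to the $C^*$-algebraic WEP in this way is the step I expect to be the main obstacle, since WEP is formulated for genuine $C^*$-algebra extensions while $X$ is only an operator system. The resolution is to start from a bona fide faithful $*$-representation of $A$ and extend it \emph{outward} to $X$ via Arveson, rather than representing $X$ and restricting to $A$ (where the $C^*$-product of $A$ need not be respected by an abstract operator-system representation); composing with the weak expectation then lands the image back inside $\pi(A)''$.

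With $\Phi$ in hand, the conclusion is immediate. As recorded in the excerpt, a u.c.p.\ map is a homomorphism, and $\varphi$ is positive, so
\[ \varphi(a,\Phi(w^*))^{\pi(A)''}\le \varphi(a,w^*)^X<I_X+\e, \]
whence $(\inf_w\varphi(a,w))^{\pi(A)''}<I_X+\e$. Invoking the hypothesis that $A$ is positively e.c.\ in $\pi(A)''$ in the language of operator systems gives $I_A=(\inf_w\varphi(a,w))^{\pi(A)''}<I_X+\e$. As $\e>0$ was arbitrary, $I_A\le I_X$, and together with the trivial inequality this yields $I_A=I_X$; that is, $A$ is positively e.c.\ as an operator system.
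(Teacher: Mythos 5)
Your proof is correct and follows essentially the same route as the paper's: both arguments use WEP to manufacture a u.c.p.\ map from the ambient operator system into $\pi(A)''$ restricting to the identity on $A$, push witnesses of positive quantifier-free formulae through it (u.c.p.\ maps being homomorphisms for the operator-system language), and then invoke the hypothesis that $A$ is positively e.c.\ in $\pi(A)''$. If anything, your explicit Arveson-extension step outward from the universal representation is a slightly more careful rendering of the paper's terse appeal to ``the conditional expectation $\B(H)\to \pi(A)''$,'' since it sidesteps the issue that when an abstract extension is represented concretely, the induced inclusion $A\subseteq \B(H)$ is only a complete order embedding rather than a $*$-representation.
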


\begin{proof}
Suppose that $S$ is an operator system with $A\subseteq S\subseteq \B(H)$.  Using the conditional expectation $\B(H)\to \pi(A)''$ (where $\pi$ is the induced representation of $A$ into $\B(H)$) we see that any solution to a positive formula in $S$ can be mapped to one in $\pi(A)''$; now use the fact that $A$ is positively e.c.\   in $\pi(A)''$.
\end{proof}

Of course the preceding assumption can be replaced with the equivalent assumption that $A$ is positively e.c.\  in $A^{**}$.

Recall that a von Neumann algebra is injective if and only if it has WEP.  Consequently, we have:

\begin{cor}
For a von Neumann algebra $A$, we have that $A$ is injective if and only if it is positively e.c.\  in the class of operator systems.
\end{cor}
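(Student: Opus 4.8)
The plan is to prove the equivalence by combining the preceding corollary with the immediately preceding proposition. The statement to prove is that, for a von Neumann algebra $A$, injectivity is equivalent to $A$ being positively e.c.\ in the class of operator systems. I would treat each direction separately.

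For the forward direction, suppose $A$ is injective. Recall (as stated in the excerpt) that a von Neumann algebra is injective if and only if it has WEP, so $A$ has WEP. The proposition immediately above then applies provided its hypothesis is met, namely that for every faithful representation $\pi:A\to \B(H)$, the algebra $A$ is positively e.c.\ in $\pi(A)''$ in the language of operator systems. But here is where injectivity does real work: if $A$ is injective and $\pi$ is a faithful normal representation, then $\pi(A)''$ is (isomorphic to) $A$ itself, so $A$ is trivially positively e.c.\ in $\pi(A)''$. The cleaner route, as the author's remark after the proposition suggests, is to use the equivalent reformulation that it suffices to check $A$ is positively e.c.\ in $A^{**}$; since $A$ is already a von Neumann algebra, $A^{**}\cong A\oplus (\text{a complementary summand})$ and the canonical embedding of $A$ into $A^{**}$ splits via a normal conditional expectation, so any positive existential formula satisfiable in $A^{**}$ with parameters from $A$ can be pulled back into $A$ by the expectation, exactly as in the proof of the preceding proposition. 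Hence the hypothesis of that proposition holds, and we conclude $A$ is positively e.c.\ as an operator system.

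For the reverse direction, suppose $A$ is positively e.c.\ as an operator system. Then in particular $A$ is semi-p.e.c.\ as an operator system (the former is the stronger condition, quantifying over all operator systems in which $A$ embeds completely order isomorphically, rather than only over C$^*$-algebra extensions). By Corollary \ref{pecwep}, $A$ has WEP. Since $A$ is a von Neumann algebra and WEP is equivalent to injectivity for von Neumann algebras, $A$ is injective. This direction is essentially immediate once Corollary \ref{pecwep} is in hand.

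The main obstacle, such as it is, lies entirely in the forward direction: one must verify that injectivity supplies the hypothesis of the preceding proposition. The conceptual point is that injectivity of a von Neumann algebra forces a normal conditional expectation from $A^{**}$ (equivalently from the bidual realized concretely) back onto $A$, and this expectation is a u.c.p.\ map, hence a homomorphism in the sense defined above, so it transports solutions of positive formulae from $A^{**}$ down to $A$ while only decreasing the values of positive predicates. Everything else is bookkeeping: unwinding the definition of ``positively e.c.\ in the class of operator systems'' and invoking the already-established corollaries. I expect no genuinely hard step, only care in confirming that the expectation exists and is normal (which is precisely the content of injectivity) and that it respects the operator-system structure.
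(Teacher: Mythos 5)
Your proof is correct and takes essentially the paper's intended route: the corollary is meant to follow by combining the preceding proposition (with its hypothesis verified via the remark that it suffices to be positively e.c.\ in $A^{**}$) with Corollary \ref{pecwep} and the fact that a von Neumann algebra is injective if and only if it has WEP. Two small clarifications: your first forward-direction attempt via faithful \emph{normal} representations would not by itself verify the proposition's hypothesis (which quantifies over all faithful representations, e.g.\ the universal one, where $\pi(A)''=A^{**}$), so your switch to the $A^{**}$ reformulation is the right move; and the splitting $A^{**}\to A$ by a normal conditional expectation (indeed a $*$-homomorphism, the adjoint of $A_*\hookrightarrow A^*$) exists for \emph{every} von Neumann algebra, not just injective ones, so in the forward direction the only genuine contribution of injectivity is the WEP.
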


Examining the proof of Theorem \ref{exactec2}, we see that the quantifier-free formulae involved were positive.  We thus have:

\begin{prop}
Suppose that $A$ is a separable exact $C^*$ algebra.  Then $A$ is nuclear if and only if $A$ is positively e.c.\  in $\mathcal O_2$ in the language of operator systems.
\end{prop}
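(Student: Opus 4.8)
The plan is to prove the two implications separately, using throughout a fixed unital embedding $A\hookrightarrow \mathcal O_2$, which exists by the Kirchberg--Phillips embedding theorem since $A$ is separable and exact. The observation underpinning both directions is that, because $\mathcal O_2$ is nuclear, the inclusion $A\hookrightarrow \mathcal O_2$ is automatically a \emph{nuclear} embedding: if $\id_{\mathcal O_2}$ is approximated by maps factoring as $\tau_n\circ\phi_n$ through matrix algebras via completely positive contractions, then precomposing $\phi_n$ with the inclusion exhibits the inclusion itself as nuclear.

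For the implication ``positively e.c.\ $\Rightarrow$ nuclear'', I would invoke the remark preceding the statement: every quantifier-free formula appearing in the proof of Theorem \ref{exactec2} (equivalently, its operator-system version Theorem \ref{exactecopsys}) is positive. Since $A\hookrightarrow\mathcal O_2$ is a nuclear embedding and $A$ is positively existentially closed in $\mathcal O_2$ in the language of operator systems, one runs that proof verbatim, invoking positive existential closure rather than full existential closure at the single step where witnesses are pulled back from $\mathcal O_2$ into $A$. This produces completely positive contractions $\psi_n\colon M_{k_n}\to A$ with $\psi_n\circ\phi_n\to\id_A$ pointwise, so $\id_A$ is nuclear and hence $A$ is nuclear.

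The substantive direction is ``nuclear $\Rightarrow$ positively e.c.''. Assume $A$ is nuclear. Then $A$ has the WEP, so there is a u.c.p.\ weak expectation $\Phi\colon\mathcal O_2\to A^{**}$ restricting to the identity on $A$. By the completely positive approximation property, fix u.c.p.\ maps $\phi_n\colon A\to M_{k_n}$ and $\psi_n\colon M_{k_n}\to A$ with $\|\psi_n(\phi_n(x))-x\|\to 0$ for all $x\in A$, let $\tilde\phi_n\colon A^{**}\to M_{k_n}$ be the normal u.c.p.\ extension of $\phi_n$, and set $E_n:=\psi_n\circ\tilde\phi_n\circ\Phi\colon\mathcal O_2\to A$. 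Each $E_n$ is a composite of u.c.p.\ maps, hence u.c.p., and for $x\in A$ we get $E_n(x)=\psi_n(\phi_n(x))\to x$; so the $E_n$ are approximate u.c.p.\ ``conditional expectations'' of $\mathcal O_2$ onto $A$. To transfer witnesses, let $\varphi(v)=\inf_w\psi(v,w)$ be a positive existential operator-system formula, $a$ a tuple from $A$, and $r:=\varphi(a)^{\mathcal O_2}$; the inequality $\varphi(a)^A\geq r$ is automatic from $A\subseteq\mathcal O_2$. Given $\epsilon>0$, choose $c$ in the unit ball of $\mathcal O_2$ with $\psi(a,c)^{\mathcal O_2}<r+\epsilon$. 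Since $E_n$ is a homomorphism of operator systems and $\psi$ is positive, $\psi(E_n(a),E_n(c))^A\leq\psi(a,c)^{\mathcal O_2}<r+\epsilon$, while $E_n(c)$ lies in the unit ball of $A$ because $E_n$ is contractive. As $E_n(a)\to a$ and $\psi$ is uniformly continuous, $\psi(a,E_n(c))^A<r+2\epsilon$ for large $n$, so $\varphi(a)^A\leq r+2\epsilon$; letting $\epsilon\to 0$ yields $\varphi(a)^A=r$.

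I expect the main obstacle to be exactly this forward direction: nuclearity supplies only approximations of $\id_A$ that live on $A$, so one must combine the CPAP with the weak expectation $\Phi$ into $A^{**}$ and the normal extension $\tilde\phi_n$ in order to escape the bidual and land the maps $E_n$ back in $A$ itself, where the positivity-preservation property of homomorphisms can be applied. (Alternatively, the same $E_n$-construction with $\mathcal O_2$ replaced by $A^{**}$ shows that nuclearity makes $A$ positively e.c.\ in $A^{**}$, after which one may quote the partial converse to Corollary \ref{ecwep} together with the WEP; the direct argument above has the advantage of handling $\mathcal O_2$ without detour.)
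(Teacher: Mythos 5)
Your proof is correct and takes essentially the same route as the paper, which disposes of the converse by the remark that the formulae in the proof of Theorem \ref{exactec2} are positive, and of the forward direction by transferring witnesses through a u.c.p.\ expectation onto $A$. Where the paper tersely asserts ``a conditional expectation from $\mathcal O_2$ to $A$'' (which need not literally exist), your approximate expectations $E_n=\psi_n\circ\tilde\phi_n\circ\Phi$ are the correct implementation --- and one can even skip the bidual by taking $\tilde\phi_n\colon\mathcal O_2\to M_{k_n}$ to be an Arveson extension of $\phi_n$ and setting $E_n=\psi_n\circ\tilde\phi_n$.
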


\begin{proof}
If $A$ is nuclear, then we have a conditional expectation from $\mathcal O_2$ to $A$, which implies that $A$ is positively e.c.\  in $\mathcal O_2$.  The remarks preceding this proposition yield the converse.
\end{proof}

We next remark on the connection between the WEP and existential closedness in the language of operator \emph{spaces}. For a map $\phi: A\to B$, let $\|\phi\|_n = \|\phi\otimes \id_{M_n(\bC)}\|$. Note that $\|\phi\|_n\leq \|\phi\|_{n+1}$ since $M_n(\bC)\to M_{n+1}(\bC)$ embeds (non-unitally). Consider the following properties:

\begin{itemize}

\item[($\alpha$)] For every unital inclusion $A\subset B$ of C$^*$-algebras and every finite-dimensional subspace $E\subset B$ and every $\e>0$ there exists a map $\phi: E\to A$ such that $\|\phi\|<1+\e$ and $\phi |_{E\cap A} = \id_{E\cap A}$.

\item[($\alpha'$)] For every unital inclusion $A\subset B$ of C$^*$-algebras and every finite-dimensional subspace $E\subset B$ and every $\e>0$ there exists a map $\phi: E\to A$ such that $\|\phi\|\leq 1$ and $\|\phi |_{E\cap A} - \id_{E\cap A}\|<\e$.

\item[($\beta$)] For every unital inclusion $A\subset B$ of C$^*$-algebras and every finite-dimensional subspace $E\subset B$ and every $\e>0$ there exists a map $\phi: E\to A$ such that $\|\phi\|_{\cb}<1+\e$ and $\phi |_{E\cap A} = \id_{E\cap A}$.

\item[($\beta'$)]  For every unital inclusion $A\subset B$ of C$^*$-algebras and every finite-dimensional subspace $E\subset B$ and every $\e>0$ there exists a map $\phi: E\to A$ such that $\|\phi\|_{\cb}\leq 1$ and $\|\phi |_{E\cap A} - \id_{E\cap A}\|<\e$.

\item[($\gamma$)] For every unital inclusion $A\subset B$ of C$^*$-algebras and every finite-dimensional subspace $E\subset B$ and every $n, \e>0$ there exists a map $\phi: E\to A$ such that $\|\phi\|_{n}<1+\e$ and $\phi |_{E\cap A} = \id_{E\cap A}$.

\item[($\gamma'$)] For every unital inclusion $A\subset B$ of C$^*$-algebras and every finite-dimensional subspace $E\subset B$ and every $n, \e>0$ there exists a map $\phi: E\to A$ such that $\|\phi\|_{n}\leq 1$ and $\|\phi |_{E\cap A} - \id_{E\cap A}\|<\e$.

\end{itemize}

Clearly we have that $(\beta) \Rightarrow (\gamma) \Rightarrow (\alpha)$. Note that $(\beta) \Leftrightarrow (\beta')$ by standard operator space perturbation techniques, and the equivalences $(\alpha) \Leftrightarrow (\alpha')$ and $(\gamma) \Leftrightarrow (\gamma')$ are standard. Then $A$ has the WEP $\Leftrightarrow (\alpha')$ (the non-trivial implication is the principle of local reflexivity for Banach spaces, see Lemma 13.3.2 in \cite{BO} for a proof of this equivalence), and $(\beta) \Rightarrow A$ is semi-p.e.c.\ as an operator space (c.b.\ maps are homomorphisms in the category of operator spaces). In fact, it is not much harder to see that $(\gamma)$ is sufficient to ensure $A$ is semi-p.e.c.\ as an operator space.

\begin{prop} If $A$ has the WEP, then $A$ is semi-p.e.c.\ as an operator space.
\end{prop}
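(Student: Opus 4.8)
The plan is to show that the WEP implies condition $(\gamma)$ above; since it was already observed that $(\gamma)$ suffices for $A$ to be semi-p.e.c.\ as an operator space, this finishes the proof. So fix a unital inclusion $A\subseteq B$, a finite-dimensional subspace $E\subseteq B$, and $n\in\n$, $\e>0$; the task is to produce $\phi\colon E\to A$ with $\|\phi\|_n<1+\e$ and $\phi|_{E\cap A}=\id$. First I would invoke the WEP to obtain a weak conditional expectation $\Phi\colon B\to A^{**}$ with $\Phi|_A=\id_A$. By Tomiyama's theorem (as recorded in the remark following the definition of the WEP) $\Phi$ is u.c.p., hence a complete contraction, so its restriction $\Phi|_E\colon E\to A^{**}$ is a complete contraction that is the identity on $E\cap A$.

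This reduces the problem to \emph{descending} from $A^{**}$ back to $A$ while keeping control of the single matricial level $n$: it suffices to produce $T\colon\Phi(E)\to A$ with $\|T\|_n\le 1+\e$ and $T=\id$ on $\Phi(E)\cap A$, for then $\phi:=T\circ\Phi|_E$ satisfies $\|\phi\|_n\le\|T\|_n\,\|\Phi|_E\|_n\le 1+\e$, and since $\Phi(E\cap A)=E\cap A\subseteq\Phi(E)\cap A$ it is the identity on $E\cap A$. For the descent I would apply the principle of local reflexivity for Banach spaces (the same tool used above for $(\alpha')$), but not to $A$: rather to the Banach space $M_n(A)$, using the completely isometric identification $M_n(A)^{**}=M_n(A^{**})$. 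The finite-dimensional subspace $M_n(\bC)\otimes\Phi(E)\subseteq M_n(A^{**})$ then admits a map $S\colon M_n(\bC)\otimes\Phi(E)\to M_n(A)$ with $\|S\|\le 1+\e$ fixing those elements that already lie in $M_n(A)$.

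The output $S$ is only a map at the level of $M_n$, not of the tensor form $\id_{M_n(\bC)}\otimes T$ to which $\|T\|_n$ refers, so the key remaining move is to average $S$ by conjugation over the unitary group of $M_n(\bC)$. This averaging projects onto the $M_n(\bC)$-bimodular maps---precisely those of the form $\id_{M_n(\bC)}\otimes T$---is contractive, fixes maps already of this form, and preserves the property of being the identity on $M_n(\bC)\otimes(\Phi(E)\cap A)$. It therefore yields $T\colon\Phi(E)\to A$ with $\|T\|_n=\|\id_{M_n(\bC)}\otimes T\|\le 1+\e$ and $T=\id$ on $\Phi(E)\cap A$, completing the descent and hence the verification of $(\gamma)$.

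The main obstacle is exactly this last descent. The subtlety is that although the WEP hands us a completely contractive map into $A^{**}$, full operator-space local reflexivity---uniform control of the completely bounded norm, i.e.\ of all the levels $\|\cdot\|_n$ simultaneously---can fail, so one cannot simply quote it. What rescues the argument is that a positive existential formula in the language of operator spaces mentions only finitely many matricial norms, hence only a single fixed level $n$ must be controlled; and at a fixed level the Banach-space principle together with the unitary-averaging trick is enough. In writing this up I would take some care to verify that the averaging genuinely preserves the normalization on $\Phi(E)\cap A$ and that the $(1+\e)$-distortion, propagated through the increasing connectives defining a positive existential formula, perturbs its value by only a controllable amount.
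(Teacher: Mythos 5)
Your strategy is essentially the paper's: extract a u.c.p.\ weak conditional expectation from the WEP, descend from the bidual using the Banach-space principle of local reflexivity at the \emph{single} matricial level $n$ (via the identification $M_n(A)^{**}\cong M_n(A^{**})$, which is exactly the point---full operator-space local reflexivity is unavailable, but a positive existential formula only sees finitely many matrix levels), and then symmetrize to force the descended map into the tensor form $\id_{M_n(\bC)}\otimes T$. The differences are organizational: the paper verifies $(\gamma')$ (a genuine contraction \emph{approximately} fixing $E\cap A$) by amplifying the expectation to $\phi_n\colon M_n(B)\to M_n(A)^{**}$, applying local reflexivity to the pair $M_n(A)\subseteq M_n(B)$, and symmetrizing by averaging over the \emph{finite} group of row/column interchanges followed by the compression $\phi''(e_{ij}\otimes x):=e_i\phi'(e_{ij}\otimes x)e_j$; you verify $(\gamma)$ (a $(1+\e)$-bounded map \emph{exactly} fixing $E\cap A$) using the exact-fixing form of local reflexivity and a Haar average. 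Both endpoints suffice, as the paper notes that $(\gamma)$ already implies semi-p.e.c.\ as an operator space.

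There is, however, one concrete flaw in your symmetrization step as stated: averaging $S$ by \emph{conjugation}, $u\mapsto (u\otimes 1)^{*}S\bigl((u\otimes 1)x(u\otimes 1)^{*}\bigr)(u\otimes 1)$, does not project onto the $M_n(\bC)$-bimodular maps. The conjugation representation of $U(n)$ on $M_n(\bC)$ decomposes as trivial plus adjoint, so the conjugation-equivariant maps on $M_n(\bC)\otimes\Phi(E)$ form a strictly larger space containing, e.g., components of the form $x\otimes v\mapsto \tr(x)\,1\otimes T'(v)$, which are not of tensor form. The repair is standard: average independently on the two sides, setting
$\tilde S(x)=\int\int (u\otimes 1)^{*}\,S\bigl((u\otimes 1)x(v\otimes 1)\bigr)\,(v\otimes 1)^{*}\,du\,dv$
with respect to Haar measure on $U(n)\times U(n)$. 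Translation-invariance of Haar measure gives genuine bimodularity (first for unitaries, then for all of $M_n(\bC)$ by linearity), hence $\tilde S=\id_{M_n(\bC)}\otimes T$; the bound $\|\tilde S\|\le\|S\|\le 1+\e$ survives; and since every translate $(u\otimes 1)x(v\otimes 1)$ of an element $x\in \bigl(M_n(\bC)\otimes\Phi(E)\bigr)\cap M_n(A)$ remains in that intersection, where $S$ acts as the identity, $\tilde S$ still fixes $M_n(\bC)\otimes(\Phi(E)\cap A)$, so $T=\id$ on $\Phi(E)\cap A\supseteq E\cap A$. With this two-sided averaging in place of conjugation (or with the paper's interchange-and-compress trick), your argument is complete.
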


\begin{proof} We need only show that WEP $\Rightarrow (\gamma')$. Let $\phi: B\to A^{**}$ be a weak conditional expectation.  Since $\phi$ is u.c.p., $\phi_n: M_n(B) \to M_n(A^{**})\cong M_n(A)^{**}$ is contractive; since $\phi_n$ restricts to the identity on $M_n(A)$, it follows that $\phi_n$ is a weak conditional expectation.


Let $F\subset B$ be a finite-dimensional operator system and define $E := F\cap A$ so that $M_n(E) = M_n(F)\cap M_n(A)$. By the princple of local reflexivity, we have that for every $\e>0$, there is a contraction $\phi': M_n(F)\to M_n(A)$ so that $\|\id_{M_n(E)} - \phi' |_{M_n(E)}\|<\e$. Since interchanging rows or columns is an isometry on $M_n(B)$ or $M_n(A)$ and $\id_{M_n(E)}$ commutes with these operations, by averaging we may assume without loss of generality that $\phi'$ commutes with interchanging any rows or columns. In particular $\phi'(e_{ij}\otimes x)_{ij} = \phi''(e_{kl}\otimes x)_{kl}$ for any $i,j,k,l\in \{1,\dotsc,n\}$, $x\in F$. It is easy to see that $\phi''(e_{ij}\otimes x) := e_i \phi'(e_{ij}\otimes x) e_j$ is again a contraction so that $\phi'' = \id_{M_n(\bC)}\otimes\psi$ with $\|\id_E - \psi |_{E}\|<\e$.
\end{proof}

We have thus seen that, for a unital C$^*$ algebra $A$, we have that the following conditions are successively weaker:

\begin{enumerate}
\item $A$ is semi-p.e.c.\  as an operator system;
\item $A$ has WEP;
\item $A$ is semi-p.e.c.\  as an operator space.
\end{enumerate}

This leads naturally to the following

\begin{question}\label{coincide}
For a unital C$^*$ algebra $A$, do we have that $A$ is semi-p.e.c.\  as an operator system if and only if $A$ is semi-p.e.c.\  as an operator space?
\end{question}

A positive answer to the above question would have two nice consequences.  First, there would now be two new nontrivial definitions of WEP for operator systems.  Secondly, we would have a new equivalent formulation of CEP, namely that C$^*(\mathbb F_\infty)$ is semi-p.e.c.\ as an operator system.

Recall from \cite{effhaag} that a C$^*$ algebra $A$ is said to be \emph{approximately injective} if given finite-dimensional operator systems $E_1\subseteq E_2\subseteq \B(H)$, a completely positive map $\phi_1:E_1\to A$, and $\epsilon>0$, there is a completely positive map $\phi_2:E_2\to A$ such that $\|\phi_2|_{E_1}-\phi_1\|<\epsilon$.  It is shown in \cite[Proposition 4.5]{effhaag} that approximate injectivity implies WEP.  We leave the following proposition as an easy exercise.

\begin{prop}\label{appinjpec}
If $A$ is an approximately injective C$^*$ algebra, then $A$ is p.e.c.\  as an operator system. In particular, being approximately injective, being (semi-)p.e.c.\ as an operator system, and having WEP are successively weaker conditions.
\end{prop}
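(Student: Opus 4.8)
The plan is to verify the definition of p.e.c.\ directly, exploiting that completely positive maps are precisely the homomorphisms in the operator system category, so that approximate injectivity supplies exactly the ``transport of witnesses'' that positive existential closedness demands. First I would fix a unital inclusion of operator systems $A\subseteq S$, a positive quantifier-free formula $\varphi(v,w)$ in the language of operator systems, a tuple $a$ from $A$, and an $n\geq 1$, and aim to show $\inf\{\varphi(a,w)^A:w\in A_{\le n}\}=\inf\{\varphi(a,c)^S:c\in S_{\le n}\}$. The inequality $\geq$ is immediate: since $A$ is an operator subsystem of $S$, quantifier-free formulas agree on tuples from $A$, so every witness from $A_{\le n}$ is also a witness from $S_{\le n}$. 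The content is therefore the reverse inequality, namely pushing an approximate witness from $S$ back into $A$.

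To do this I would fix $\epsilon>0$ and $c\in S_{\le n}$ with $\varphi(a,c)^S$ within $\epsilon$ of the infimum over $S$, and realize $S\subseteq \B(H)$. Let $E_1\subseteq A$ be the finite-dimensional operator system generated by the entries of $a$ (with the unit and adjoints), and let $E_2\subseteq S\subseteq \B(H)$ be that generated by the entries of $a$ and $c$, so $E_1\subseteq E_2\subseteq \B(H)$. Applying approximate injectivity to the inclusion $\phi_1:=\id_{E_1}:E_1\to A$ with a small tolerance $\delta>0$ yields a completely positive $\phi_2:E_2\to A$ with $\|\phi_2|_{E_1}-\id_{E_1}\|<\delta$. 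For $\delta$ small, $\phi_2(1)$ is positive and invertible, so the compression $\widetilde\phi_2(x):=\phi_2(1)^{-1/2}\phi_2(x)\phi_2(1)^{-1/2}$ is u.c.p.\ and still satisfies $\|\widetilde\phi_2|_{E_1}-\id_{E_1}\|<\delta'$ with $\delta'=O(\delta)$.

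Now set $b:=\widetilde\phi_2(c)\in A$; since a u.c.p.\ map is contractive, $b\in A_{\le n}$. Because u.c.p.\ maps are homomorphisms of operator systems and $\varphi$ is positive, $\varphi(\widetilde\phi_2(a),b)^A=\varphi(\widetilde\phi_2(a),\widetilde\phi_2(c))^A\le \varphi(a,c)^{E_2}=\varphi(a,c)^S$, the last equality holding because $E_2$ is an operator subsystem of $S$. Since $\|\widetilde\phi_2(a_i)-a_i\|<\delta'$ for each entry $a_i$ of $a$, uniform continuity of $\varphi$ gives $\varphi(a,b)^A\le \varphi(\widetilde\phi_2(a),b)^A+\Delta(\delta')$ for a modulus of continuity $\Delta$. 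Hence $\inf\{\varphi(a,w)^A:w\in A_{\le n}\}\le \varphi(a,b)^A\le \varphi(a,c)^S+\Delta(\delta')$, and letting $\delta\to 0$ and then $\epsilon\to 0$ produces the reverse inequality, so $A$ is p.e.c.\ as an operator system. For the ``in particular'' clause, a p.e.c.\ operator system is a fortiori semi-p.e.c.\ as an operator system (the latter tests only C$^*$-algebra extensions, a subfamily of all operator system extensions), and semi-p.e.c.\ as an operator system implies WEP by Corollary \ref{pecwep}; thus the three conditions are successively weaker.

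The only genuinely delicate points, and where I would concentrate care, are two. First, the identification $\varphi(a,c)^{E_2}=\varphi(a,c)^S$ relies on the operator system language (Appendix B) being arranged so that operator subsystems are substructures, i.e.\ the quantifier-free predicates---including those encoding positivity---are computed intrinsically and hence preserved under the complete order (equivalently, complete isometric) embedding $E_2\hookrightarrow S$. Second, one must pass from the merely completely positive extension delivered by approximate injectivity to a bona fide u.c.p.\ map, which is what qualifies as a homomorphism; this is the elementary compression step above. Both are routine, consistent with the statement being offered as an exercise, but they are the hinges on which the argument turns.
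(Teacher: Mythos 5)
Your proof is correct, and since the paper explicitly leaves Proposition \ref{appinjpec} as an exercise, there is no competing argument to diverge from: what you give is plainly the intended one, transporting a near-optimal witness $c$ back into $A$ via an approximate c.p.\ extension of $\id_{E_1}$, correcting it to a u.c.p.\ map by the compression $x\mapsto\phi_2(1)^{-1/2}\phi_2(x)\phi_2(1)^{-1/2}$ (the same trick the paper uses in its CP-stability proposition), and then invoking positivity of $\varphi$ together with the fact that u.c.p.\ maps are homomorphisms in the operator system language. The two ``delicate points'' you isolate---that operator subsystems are substructures for the Appendix~B language, so quantifier-free values of $(a,c)$ agree in $E_2$ and $S$, and the u.c.p.\ correction---are exactly the right hinges, and your handling of them, as well as of the ``in particular'' chain via Corollary \ref{pecwep}, is sound.
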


\begin{question}\label{coincide2}
For a unital C$^*$ algebra $A$, can either of the previous implications be reversed? That is, do we have either: (A) $A$ is p.e.c.\  as an operator system if and only if $A$ is approximately injective; or (B) $A$ is p.e.c.\  as an operator system if and only if $A$ has WEP?
\end{question}

Note that we cannot have positive answers to both Questions \ref{coincide2}(A) and (B), for otherwise we would have that the notions of WEP and approximately injective coincide for separable C$^*$ algebras, contradicting Corollary 3.1 of \cite{jungepisier}.  Corollary 3.1 of \cite{jungepisier} also implies that C$^*(\mathbb F_\infty)$ is not approximately injective, whence a positive answer to Question \ref{coincide2}(A) would imply that C$^*(\mathbb F_\infty)$ is not p.e.c.\ as an operator system. Further, a negative answer to (B) would imply a negative answer to Question \ref{coincide}.


We have seen that the difference between the WEP and approximate injectivity lies in the fact that with the WEP, given $E_1\subset E_2$ finite-dimensional operator systems, and a u.c.p.\ map $\phi: E_1\to A$, we are only able to find for any $n$ an $n$-contractive approximate extension of $\phi$ to $E_2$. With this in mind, we make the following definition:

\begin{df}\label{cp-stable} An operator system $X$ is said to be \emph{CP-stable} if for any for any finite-dimensional subspace $E_1\subset X$ and $\delta$ there exists a finite-dimensional subspace $E_2\supset E_1$ and $n,\e$ so that for any unital map $\phi: E_2\to A$ into any unital C$^*$-algebra $A$ with $\|\phi\|_n< 1+ \e$, there exists a u.c.p.\ map $\psi: E_1\to A$ so that $\|\phi |_{E_1} - \psi\|<\delta$. If, in addition, we can ensure that $\|\phi |_{E_1}-\psi\|_{\cb}<\delta$, then we say that $X$ is \emph{strongly CP-stable}.  In this case, note that $\phi |_{E_1}$ is completely bounded with $\|\phi |_{E_1}\|_{\cb}< 1+ \delta$.
\end{df}

\begin{prop}\label{cp-stable-wep} Let $X$ be an operator system which is contained in a CP-stable operator system and let $A$ be a C$^*$-algebra with the WEP. Then for any finite-dimensional subsystems $E_1\subset E_2\subset X$, any u.c.p.\ map $\phi: E_1\to A$, and every $\e>0$ there exists a u.c.p.\ map $\phi': E_2\to A$ so that $\|\phi' |_{E_1} - \phi\|<\e$.
\end{prop}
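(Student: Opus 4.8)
The plan is to use $A$'s WEP (in the guise of property $(\gamma)$) together with Arveson's extension theorem to build a \emph{unital}, $n$-contractive map into $A$ that restricts to $\phi$ on $E_1$, and then to let CP-stability convert $n$-contractivity into genuine complete positivity at the cost of an arbitrarily small perturbation over $E_2$. The point is that Arveson's theorem supplies a completely positive extension for free but only into $\B(H)$, while WEP supplies a near-idempotent back into $A$; composing them and invoking CP-stability does the rest.

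First I would fix a CP-stable operator system $Y \supseteq X$ and set $\delta := \e$. Applying Definition \ref{cp-stable} to the finite-dimensional subspace $E_2 \subset Y$ and to $\delta$ returns a finite-dimensional $E_3$ with $E_2 \subseteq E_3 \subseteq Y$, an integer $n$, and a slack $\e_0 > 0$, such that every unital $\psi \colon E_3 \to A$ with $\|\psi\|_n < 1 + \e_0$ admits a u.c.p.\ $\eta \colon E_2 \to A$ with $\|\psi|_{E_2} - \eta\| < \delta$. It then suffices to produce one such $\psi$ with $\psi|_{E_1} = \phi$, for then $\phi' := \eta$ gives $\|\phi'|_{E_1} - \phi\| = \|\eta|_{E_1} - \psi|_{E_1}\| \leq \|\eta - \psi|_{E_2}\| < \e$.

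To manufacture $\psi$, I would fix a faithful representation $A \subseteq \B(H)$ and extend $\phi \colon E_1 \to A \subseteq \B(H)$, via Arveson's extension theorem, to a u.c.p.\ map on the ambient algebra of $Y$; restricting gives a u.c.p.\ map $\Phi \colon E_3 \to \B(H)$ with $\Phi|_{E_1} = \phi$ and $\|\Phi\|_n \leq 1$. Setting $F := \Phi(E_3) \subseteq \B(H)$, a finite-dimensional subspace with $1 \in F$ and $\phi(E_1) \subseteq F \cap A$, I would then apply property $(\gamma)$ (available since WEP $\Rightarrow (\gamma') \Leftrightarrow (\gamma)$) to the inclusion $A \subset \B(H)$, the subspace $F$, the integer $n$, and the slack $\e_0$, obtaining $\theta \colon F \to A$ with $\|\theta\|_n < 1 + \e_0$ and $\theta|_{F \cap A} = \id_{F \cap A}$. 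The composite $\psi := \theta \circ \Phi \colon E_3 \to A$ is then unital (as $\theta(1) = 1$), satisfies $\|\psi\|_n \leq \|\theta\|_n\,\|\Phi\|_n < 1 + \e_0$ by submultiplicativity of $\|\cdot\|_n$ under composition, and obeys $\psi|_{E_1} = \phi$ because $\Phi(x) = \phi(x) \in F \cap A$ is fixed by $\theta$ for each $x \in E_1$. This is the desired $\psi$.

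The only delicate point is parameter matching: CP-stability dictates the pair $(n, \e_0)$ in advance, and the construction must deliver an $n$-contractive unital map for exactly that $n$ within exactly that slack. This is harmless, since property $(\gamma)$ is quantified over all $n$ and all positive slacks, so it can be invoked with the very $(n, \e_0)$ produced by Definition \ref{cp-stable}. (If one insists on using $(\gamma')$ in place of $(\gamma)$, then $\theta$ fixes $F \cap A$ only approximately, $\psi$ is merely approximately unital, and $\psi|_{E_1} \approx \phi$; one then absorbs these errors by a routine unital perturbation of $\psi$ together with a corresponding tightening of the slacks.)
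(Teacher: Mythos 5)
Your proof is correct and follows essentially the same route as the paper: apply CP-stability to $E_2$ first to fix the data $(E_3,n,\e_0)$, then use the WEP to produce a unital map $E_3\to A$ with $n$-norm $<1+\e_0$ compatible with $\phi$, and let CP-stability finish. The only difference is cosmetic: the paper leaves the WEP step implicit and settles for $\|\phi'|_{E_1}-\phi\|<\delta$ (hence a $2\delta$ final bound), whereas you spell it out via Arveson extension composed with property $(\gamma)$, obtaining $\psi|_{E_1}=\phi$ exactly and the slightly cleaner bound $<\e$.
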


\begin{proof} Suppose that $X\subset \widetilde X$ with $\widetilde X$ CP-stable. Let $E_1\subset E_2\subset X$ be finite-dimensional operator systems, $\phi: E_1\to A$ a u.c.p.\ map, and $\delta>0$. By CP-stability we can then find $E_2\subset E_3\subset \widetilde X$ finite-dimensional and $n,\e$ so that for any unital map $\phi': E_3\to A$ so that $\|\phi'\|_n<1+\e$,  there is a u.c.p.\ map $\phi'': E_2\to A$ so that $\|\phi' |_{E_2} - \phi''\|< \delta$. Since $A$ has the WEP, the set of all such maps $\phi': E_3\to A$ is non-empty; further, we may find such $\phi'$ with $\|\phi' |_{E_1} - \phi\|<\delta$. Therefore $\|\phi'' |_{E_1} - \phi\|<2\delta$, and we are done.

\end{proof}

The following result is probably well known but seems hard to source in the literature. In particular it demonstrates that $M_k$ is strongly CP-stable. (We thank Martino Lupini for showing us how Choi's theorem could be used to eliminate one of the parameters in the conclusion.)  Before stating this result we introduce some notation. For any linear map $\phi: M_k\to A$, we define $\hat\phi:=[\phi(e_{ij})]_{ij}\in M_k(A)$.

\begin{prop} Fixing $k$, for any $\delta>0$ there exists $\e>0$ so that for any unital map $\phi: M_k\to A$, $A$ an arbitrary unital C$^*$-algebra, with $\|\phi\|_k<1+\e$, there exists a u.c.p.\ map $\psi: M_k\to A$ so that $\|\phi -\psi\|_{\cb}\leq 1+\delta$.
\end{prop}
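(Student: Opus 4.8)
The plan is to reduce everything to the Choi matrix $\hat\phi=[\phi(e_{ij})]_{ij}\in M_k(A)$ and to run a compactness (ultraproduct) argument, so that all constants are uniform in the arbitrary target algebra $A$. The facts I would use are: (i) Smith's lemma (see, e.g., \cite{Paulsen}), which for a linear map out of $M_k$ gives $\|\phi\|_{\cb}=\|\phi\|_k$, so the hypothesis $\|\phi\|_k<1+\e$ is literally a bound on the completely bounded norm; (ii) the standard fact that a \emph{unital} completely contractive map is completely positive, hence u.c.p.; and (iii) Choi's theorem, that $\phi$ is completely positive if and only if $\hat\phi\geq 0$. I would also record the purely algebraic observation that $\phi\mapsto\hat\phi$ is a linear bijection given by the same formula for every $A$ (with inverse $\phi(x)=\sum_{ij}x_{ij}\hat\phi_{ij}$), so that $\|\phi\|_{\cb}$ and $\|\hat\phi\|_{M_k(A)}$ are equivalent norms with constants depending only on $k$, not on $A$; write $\|\phi\|_{\cb}\leq C_k\|\hat\phi\|_{M_k(A)}$ for the direction I need. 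In particular a small perturbation of the Choi matrix yields a small perturbation of the map in $\cb$-norm.

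Suppose the statement fails for some $\delta>0$. Then for each $n$ there is a unital C$^*$-algebra $A_n$ and a unital map $\phi_n:M_k\to A_n$ with $\|\phi_n\|_k<1+\tfrac1n$, yet $\|\phi_n-\psi\|_{\cb}>\delta$ for every u.c.p.\ map $\psi:M_k\to A_n$. In the ultraproduct $A:=\prod_\omega A_n$ the bound $\|\phi_n(x)\|\leq(1+\tfrac1n)\|x\|$ shows that $\phi:M_k\to A$, $\phi(x):=[\phi_n(x)]_\omega$, is a well-defined unital linear map. Since $M_k(M_k)$ is finite dimensional, for each fixed $X$ in its unit ball we have $\|\phi_k(X)\|=\lim_\omega\|(\phi_n)_k(X)\|\leq\lim_\omega(1+\tfrac1n)=1$, so $\|\phi\|_k\leq 1$. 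By (i) this gives $\|\phi\|_{\cb}\leq 1$; by (ii) $\phi$ is u.c.p.; and by (iii) $\hat\phi\geq 0$ in $M_k(A)\cong\prod_\omega M_k(A_n)$.

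It remains to descend. Writing $\hat\phi=[\hat\phi_n]_\omega$, positivity of $\hat\phi$ means $\lim_\omega\|(\hat\phi_n)_-\|=0$. Put $\hat\psi_n:=(\hat\phi_n)_+\geq 0$; by Choi's theorem this is the Choi matrix of a completely positive map $\psi_n':M_k\to A_n$, and since $\|\hat\phi_n-\hat\psi_n\|=\|(\hat\phi_n)_-\|$ the uniform norm equivalence gives $\|\phi_n-\psi_n'\|_{\cb}\leq C_k\|(\hat\phi_n)_-\|$, so $\lim_\omega\|\phi_n-\psi_n'\|_{\cb}=0$. To restore unitality, note $\psi_n'(1)=\sum_i(\hat\psi_n)_{ii}$ and $\phi_n(1)=1$ give $\lim_\omega\|\psi_n'(1)-1\|=0$; hence $c_n:=\psi_n'(1)$ is positive and invertible for almost all $n$ with $\lim_\omega\|c_n^{-1/2}-1\|=0$, and $\psi_n(x):=c_n^{-1/2}\psi_n'(x)c_n^{-1/2}$ is u.c.p.\ (a composition of completely positive maps sending $1$ to $1$) with $\lim_\omega\|\psi_n-\psi_n'\|_{\cb}=0$. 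Therefore $\lim_\omega\|\phi_n-\psi_n\|_{\cb}=0$, so $\|\phi_n-\psi_n\|_{\cb}<\delta$ for almost all $n$, contradicting the choice of $\phi_n$; this in fact yields the stronger (and presumably intended) bound $\|\phi-\psi\|_{\cb}\leq\delta$, which dominates the stated $1+\delta$. The main obstacle is precisely this descent: one must convert the \emph{exactly} positive Choi matrix produced in the ultraproduct back into genuine u.c.p.\ maps on the $A_n$ while simultaneously controlling the $\cb$-distance, and the two features that make it go through are the $A$-independent equivalence of $\|\cdot\|_{\cb}$ with the Choi-matrix norm and the stability of positivity and invertibility under small perturbations, handled by functional calculus and the normalization $c_n^{-1/2}(\cdot)c_n^{-1/2}$. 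A direct quantitative route is also possible---symmetrize $\phi$ to a self-adjoint map at cost $O(\sqrt{\e})$ in $\cb$-norm, apply $\phi_k$ to the projection $\tfrac1k[e_{ij}]\in M_k(M_k)$ to obtain $\hat\phi\geq -g_k(\e)1$, then take positive parts and renormalize---but there the difficulty shifts to bookkeeping the $O(\sqrt{\e})$ self-adjointification with constants independent of $A$.
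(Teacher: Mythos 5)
Your argument is correct and is essentially the paper's own proof: the same ultraproduct compactness step (the limit map is unital with $\|\phi\|_k\le 1$, hence $k$-positive and therefore u.c.p.\ by Choi's theorem, making its Choi matrix positive in $\prod_\omega M_k(A_n)$), the same $k^2$-type passage from Choi-matrix proximity to $\cb$-proximity of the maps, and the same unitalization $\psi(x)=c^{-1/2}\psi'(x)c^{-1/2}$; the paper merely packages the ultraproduct step as a separate claim that $\hat\phi$ is uniformly close to a positive contraction and performs the perturbation once at the end. The single repair needed: $\hat\phi_n$ need not be self-adjoint, so $(\hat\phi_n)_\pm$ are undefined as written; instead use that positivity of $(\hat\phi_n)^\bullet$ directly yields positive elements $p_n\in M_k(A_n)$ with $\lim_\omega\|\hat\phi_n-p_n\|=0$ (or first replace $\hat\phi_n$ by its self-adjoint part, which is $\omega$-almost-everywhere close to it), exactly as the paper's formulation via $\operatorname{dist}(\hat\phi,M_k(A)^+_{\leq 1})$ arranges -- and note your appeal to a ``domain'' version of Smith's lemma is dispensable, since unitality together with $\|\phi\|_k\le 1$ already gives $k$-positivity, which is all Choi's theorem requires.
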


\begin{proof} We claim that for any $\eta$, there exists $\e$ so that for $\phi$ as above there exists $a\in M_k(A)^+$, $\|a\|\leq1$ so that $\|\hat\phi - a\|<\eta$.  Given the claim, we now show how the proposition follows. If $\psi_a: M_k\to A$ is the c.c.p.\ map induced by $a$, then by the ``small perturbation argument'' (see Lemma 12.3.15 in \cite{BO}), choosing the dual basis $\{e_{ij}^*\}$, we have that $\|\phi - \psi_a\|_{cb}<k^2\eta =: \delta$. Since $\|\psi_a(1) - 1\|<\delta$, if $\delta<1$, $b := \psi_a(1)$ is invertible, whence setting $\psi(x) := b^{-1/2}\psi_a(x)b^{-1/2}$ yields the desired u.c.p.\ map.

We now prove the claim.  Suppose, towards a contradiction, that the claim is false, that is, there exists $\eta>0$ so that for any $\e>0$ there exists a unital map $\phi: M_k\to A$ in to some C$^*$-algebra (we may assume separable) with $\|\phi\|_k<1+\e$ so that ${\rm dist}(\hat\phi, M_k(A)^+_{\leq 1})\geq \eta$. For $j\geq 1$, let $\phi_j: M_k\to A_j$ be such a map corresponding to $\epsilon=\frac{1}{j}$. Consider the ultrapower map $\phi:=(\phi_j)^\bullet: M_k\to \prod_\omega A_j =: \mathcal A$. Then it is easy to see that $\phi$ is unital and $\|\phi\|_{k}=1$, whence $\phi$ is $k$-positive and hence u.c.p. by Choi's Theorem (see, for example, \cite[Theorem 3.1.4]{Paulsen}). This implies that $\hat\phi = (\hat\phi_j)^\bullet$ is a positive contraction in $M_k(\mathcal A)$; however, this cannot be the case since $M_k(\mathcal A) \cong \prod_\omega M_k(A_j)$ and each $\hat\phi_j$ is $\eta$-separated form the positive contractions in $M_k(A_j)$.
\end{proof}

We remark that setting $X=M_k$ in Proposition \ref{cp-stable-wep} recovers an observation of Effros and Haagerup that WEP implies ``finite approximate injectivity.'' (See the remarks following Proposition 4.5 in \cite{effhaag}.) Note our argument differs slightly from the one given there.

\begin{cor} The class of strongly CP-stable operator systems contains all nuclear operator systems.
\end{cor}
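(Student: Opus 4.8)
The plan is to route the fixed finite-dimensional system $E_1$ through a matrix algebra, combining the completely positive approximation property of a nuclear operator system with the preceding proposition, which is exactly the assertion that each $M_k$ is strongly CP-stable. Let $X$ be a nuclear operator system and fix a finite-dimensional $E_1\subseteq X$ and $\delta>0$. By the completely positive approximation property characterizing nuclear operator systems there are u.c.p.\ maps $\alpha\colon X\to M_k$ and $\beta\colon M_k\to X$ with $\|\beta\circ\alpha|_{E_1}-\id_{E_1}\|$ as small as we please; I would set $E_2:=E_1+\beta(M_k)$ (a finite-dimensional subsystem of $X$, containing $1=\beta(1)$), take $n:=k$, and let $\epsilon$ be the tolerance supplied by the previous proposition (for matrix size $k$ and target $\delta/3$), shrunk so that $\epsilon<1$.

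Now suppose $\phi\colon E_2\to A$ is unital with $\|\phi\|_n<1+\epsilon$. Since $\beta$ is u.c.p.\ we have $\|\beta\|_k\le\|\beta\|_{\cb}=1$, so the unital map $\phi\circ\beta\colon M_k\to A$ satisfies $\|\phi\circ\beta\|_k\le\|\phi\|_k\,\|\beta\|_k=\|\phi\|_n<1+\epsilon$. The previous proposition then yields a u.c.p.\ map $\theta\colon M_k\to A$ with $\|\phi\circ\beta-\theta\|_{\cb}<\delta/3$, and I would take $\psi:=\theta\circ\alpha|_{E_1}\colon E_1\to A$, which is u.c.p.\ as a composition of u.c.p.\ maps, as the required witness.

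It remains to estimate $\|\phi|_{E_1}-\psi\|_{\cb}$, which I split as $\|\phi|_{E_1}-\phi\beta\alpha|_{E_1}\|_{\cb}+\|(\phi\beta-\theta)\alpha|_{E_1}\|_{\cb}$. The second summand is at most $\|\phi\beta-\theta\|_{\cb}\,\|\alpha\|_{\cb}<\delta/3$, since $\alpha$ is u.c.p. The first summand equals $\|\phi\circ(\id_{E_1}-\beta\alpha)|_{E_1}\|_{\cb}$, and this is where the one real difficulty lies: $\phi$ is assumed only $k$-contractive, not completely contractive, so one cannot simply extract $\|\phi\|_{\cb}$. The key observation that resolves this is that $\phi\circ(\id_{E_1}-\beta\alpha)|_{E_1}$ is a map \emph{out of} the fixed system $E_1$; by the standard operator-space estimate $\|u\|_{\cb}\le(\dim E_1)\|u\|$ for linear maps $u$ on a $\dim E_1$-dimensional operator space, one obtains $\|\phi\circ(\id_{E_1}-\beta\alpha)|_{E_1}\|_{\cb}\le(\dim E_1)\,\|\phi\|\,\|\beta\alpha|_{E_1}-\id_{E_1}\|\le 2(\dim E_1)\,\|\beta\alpha|_{E_1}-\id_{E_1}\|$, using $\|\phi\|\le\|\phi\|_n<2$. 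Since $\dim E_1$ is fixed at the outset, I would have chosen $\alpha,\beta$ so that $\|\beta\alpha|_{E_1}-\id_{E_1}\|<\delta/(6\dim E_1)$, forcing this summand below $\delta/3$. Adding the two bounds gives $\|\phi|_{E_1}-\psi\|_{\cb}<\delta$, so $X$ is strongly CP-stable.

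I expect the only genuinely delicate point to be precisely this completely bounded control of the factorization error $\phi\circ(\id_{E_1}-\beta\alpha)$: the whole argument turns on the fact that this error is a map out of the \emph{fixed-dimensional} $E_1$, so that the dimension bound upgrades the available ordinary-norm estimate to a cb-estimate with a constant independent of the matrix size $k$ produced by nuclearity, which in turn lets the tolerance $\epsilon$ be chosen before $\phi$ is seen.
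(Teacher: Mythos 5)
Your proof is correct, and since the paper states this corollary without any proof, your argument---factoring $E_1$ through $M_k$ via the completely positive approximation property, setting $E_2=E_1+\beta(M_k)$, $n=k$, and invoking the preceding proposition on $\phi\circ\beta$---is evidently the intended derivation, with the one nontrivial ingredient being exactly the point you isolate: upgrading the norm estimate on the error $\phi\circ(\id_{E_1}-\beta\alpha)$ to a cb-estimate via the standard fact $\|u\|_{\cb}\leq(\dim E_1)\|u\|$ for maps out of a fixed finite-dimensional operator space (Auerbach basis plus $\|f\|_{\cb}=\|f\|$ for functionals), which works precisely because $\dim E_1$ is fixed before $k$ is produced. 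Note that you have implicitly (and correctly) read the conclusion of the preceding proposition as $\|\phi-\psi\|_{\cb}\leq\delta$; the ``$\leq 1+\delta$'' in its statement is a typo, as its proof in the paper yields $\|\phi-\psi_a\|_{\cb}<k^2\eta=:\delta$ before the unital correction.
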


%

We end this section on the connection between existential closedness and the \emph{lifting property}.  Recall that a unital C$^*$ algebra $A$ has the lifting property (LP) if, for every unital C$^*$ algebra $C$, every closed ideal $J$ of $C$, and every u.c.p.\  map $\rho:A\to C/J$, there is a u.c.p.\ lift $\tilde{\rho}:A\to C$ (i.e., $\pi_J\circ \tilde{\rho}=\rho$, where $\pi_j:C\to C/J$ is the canonical projection map).  If, in the preceding definition, we can only ensure that $\rho|X$ has a u.c.p.\ lift for every finite-dimensional operator system $X\subseteq A$, then we say that $A$ has the \emph{local lifting property} (LLP) of Kirchberg.

We now show that, unlike WEP, not every e.c.\ C$^*$ algebra has the LLP.  We need a preparatory result giving an alternate characterization of CP-stability for C$^*$ algebras in terms of an appropriate lifting property:

\begin{prop}
Suppose that $A$ is a separable unital C$^*$ algebra.  Then $A$ is CP-stable if and only if $A$ has the \emph{local ultrapower lifting property} (LULP), namely, for every unital C$^*$ algebra $B$, every nonprincipal ultrafilter $\omega$ on $\n$, every u.c.p.\ map $\phi:A\to B^\omega$, and every finite-dimensional operator system $E\subseteq A$, there are u.c.p.\ maps $\phi_n:E\to B$ such that $\phi |_E=(\phi_n)^\bullet$.  In particular, every separable C$^*$ algebra with the LLP is CP-stable.
\end{prop}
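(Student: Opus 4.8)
The plan is to prove the equivalence between CP-stability and the local ultrapower lifting property (LULP) by establishing both directions separately, and then derive the ``in particular'' clause by showing that the LLP implies the LULP.

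First I would prove that LULP implies CP-stability. Suppose $A$ has the LULP but fails to be CP-stable. Then there is a finite-dimensional $E_1 \subseteq A$ and a $\delta > 0$ witnessing the failure: for every finite-dimensional $E_2 \supseteq E_1$ and every $n, \e$, there is a unital map $\phi: E_2 \to A_{E_2,n,\e}$ into some unital C$^*$-algebra with $\|\phi\|_n < 1 + \e$ but admitting no u.c.p.\ map $\psi: E_1 \to A_{E_2,n,\e}$ with $\|\phi|_{E_1} - \psi\| < \delta$. The strategy is to assemble these counterexamples into a single u.c.p.\ map into an ultrapower that cannot be locally lifted. Concretely, exhaust $A$ by an increasing chain of finite-dimensional operator systems $E_1 \subseteq E_2 \subseteq \cdots$ and take a sequence $n_j \to \infty$, $\e_j \to 0$; the corresponding maps $\phi_j: E_j \to B_j$ combine (via the fact that the relevant $n$-norms converge to the c.b.\ norm, so the ultrapower map is u.c.p.\ by Choi's theorem, exactly as in the preceding proposition) into a u.c.p.\ map $\phi: A \to \prod_\omega B_j$. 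Applying the LULP to $\phi$ and $E_1$ produces u.c.p.\ maps $\psi_n: E_1 \to B_n$ with $\phi|_{E_1} = (\psi_n)^\bullet$, which forces $\|\phi_n|_{E_1} - \psi_n\| \to 0$ along $\omega$, contradicting the uniform $\delta$-separation.

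Next I would prove the reverse implication, that CP-stability implies the LULP. Given a u.c.p.\ map $\phi: A \to B^\omega$ and a finite-dimensional $E_1 \subseteq A$, apply CP-stability to obtain $E_2 \supseteq E_1$ in $A$ together with $n, \e$ governing the approximate liftability. The composite $\phi|_{E_2}: E_2 \to B^\omega$ is u.c.p., and since $B^\omega = \prod_\omega B$, a standard representing-sequence argument gives unital maps $\phi_m: E_2 \to B$ with $\|\phi_m\|_n < 1 + \e$ for almost all $m$ (the $n$-norm is continuous under the ultraproduct). CP-stability then supplies u.c.p.\ maps $\psi_m: E_1 \to B$ with $\|\phi_m|_{E_1} - \psi_m\|$ small, and the diagonal $(\psi_m)^\bullet$ agrees with $\phi|_{E_1}$ after passing to $\omega$. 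Finally, the last sentence follows because if $A$ has the LLP then any u.c.p.\ $\phi: A \to B^\omega$ restricts on each finite-dimensional $E$ to a map that lifts u.c.p.\ through the quotient $\ell^\infty(B)/c_\omega(B) = B^\omega$, which is precisely the LULP; hence $A$ is CP-stable by the equivalence just proven.

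The main obstacle I expect is the direction LULP $\Rightarrow$ CP-stability, specifically in correctly packaging the family of counterexamples so that the resulting ultrapower map is genuinely u.c.p.\ and the failure of approximate liftability survives the passage to the ultrafilter. The delicate points are ensuring that the target algebras $B_j$ can be taken uniformly (so that $\prod_\omega B_j$ makes sense as $B^\omega$ for a fixed $B$, which may require first reducing to separable targets and absorbing them into a single universal $B$), and verifying via Choi's theorem that $\|\phi\|_n \to 1$ forces complete positivity of the limit map, mirroring the argument in the proposition immediately preceding this one. Both technical reductions are routine but must be arranged with care so the diagonal argument closes cleanly.
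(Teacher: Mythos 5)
Your proposal is correct and follows essentially the same route as the paper: the same representing-sequence argument for CP-stable $\Rightarrow$ LULP (iterated over tolerances $1/n$ so that $\delta$-closeness upgrades to $\phi|_E=(\phi_n)^\bullet$), the same contrapositive assembly of counterexamples through an ultraproduct for the converse, and the same lifting through $\ell^\infty(B)\to B^\omega$ for the LLP clause. On the delicate point you flag, the paper resolves it at the outset by taking all witnessing maps $\phi_n:E_n\to\B(H)$ with $\phi_n|_E$ at distance $\geq\delta$ from u.c.p.\ maps into $\B(H)$ itself, exploiting that the LULP quantifies over arbitrary (possibly nonseparable) $B$ and that every separable unital C$^*$-algebra embeds unitally into $\B(H)$, so that $\prod_\omega B_j\subseteq\B(H)^\omega$; note that a \emph{separable} universal $B$ cannot exist (Junge--Pisier, cf.\ Proposition \ref{uncountable}), so your ``absorption'' step must be read as exactly this $\B(H)$-normalization, and the $\delta$-separation must accordingly be taken relative to $\B(H)$-valued u.c.p.\ maps (as the paper arranges), since the maps $\psi_n$ produced by the LULP take values in $\B(H)$ rather than in the original targets $B_j$.
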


\begin{proof}
First assume that $A$ is CP-stable and fix a u.c.p.\  map $\phi:A\to B^\omega$.  Fix a finite-dimensional operator system $E\subseteq A$.  Fix $n\geq 1$.  By CP-stability, there are finite-dimensional $E'\supseteq E$ and $m$, $\e$ such that, for any unital map $\psi: E'\to C$ into any unital C$^*$-algebra $C$ with $\|\psi\|_m< 1+ \e$, there exists a u.c.p.\ map $\theta: E\to C$ so that $\|\psi |_E - \theta\|<\frac{1}{2n}$.  Write $\phi |_{E'}=(\psi_k)^\bullet$, where each $\psi_k:E\to B$ is a unital linear map.  Since $\phi |_{E'}$ is u.c.p., for almost all $k$ we have that $\|\psi_k\|_m<1+\e$.  It follows that, for almost all $k$, there is u.c.p.\ map $\theta:E\to B$ such that $\|\psi_k |_E-\theta\|<\frac{1}{2n}$.  Also, for almost all $k$, we have $\|\psi_k |_E-\phi |_E\|<\frac{2}{n}$.  Thus, there is a u.c.p.\ map $\phi_n:E\to B$ such that $\|\phi_n-\phi |_E\|<\frac{1}{n}$.  It follows that $\phi |_E=(\phi_n)^\bullet$.

Conversely, suppose that $A$ has the LULP and yet $A$ is not CP-stable as witnessed by some finite-dimensional $E\subseteq X$ and some $\delta>0$.  Let $$E=E_0\subseteq E_1\subseteq E_2\subseteq \cdots$$ be an increasing sequence of finite-dimensional operator subsystems of $A$ whose union is dense in $A$.  For each $n\geq 1$, we have a unital linear map $\phi_n:E_n\to \B(H)$ such that $\|\phi_n\|_n<1+\frac{1}{n}$ and yet $\phi_n |E$ is not within $\delta$ of a u.c.p.\ map $E\to \B(H)$.  Note that $A$ is a subalgebra of $\prod_\omega E_n$, so we may define $\phi:A\to \B(H)^\omega$ by $\phi=(\prod_\omega \phi_n) |_A$.  Since $\|\phi_n\|_n<1+\frac{1}{n}$ for all $n$, it follows that $\phi$ is u.c.p.  By the LULP, there are u.c.p.\ $\psi_n:E\to \B(H)$ such that $\phi |_E=(\psi_n)^\bullet$.  For almost all $n$, we have that $\|\phi_n |E-\psi_n\|<\delta$, contradicting the fact that each $\phi_n$ is at least $\delta$ away from any u.c.p.\ map $E\to \B(H)$.
\end{proof}

\begin{cor}
There exists a separable C$^*$ algebra that is not contained in any CP-stable C$^*$ algebra.  In particular, it is not the case that every separable e.c.\ C$^*$ algebra is CP-stable, whence it follows that not every separable e.c.\ C$^*$ algebra has the LLP.
\end{cor}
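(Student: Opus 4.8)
The plan is to first reduce the two ``In particular'' assertions to the main existence statement, and then to prove that statement by contradiction, showing that if every separable C$^*$ algebra embedded into a CP-stable one, then the WEP and approximate injectivity would coincide.

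For the reductions, suppose we have produced a separable C$^*$ algebra $A$ that is not contained in any CP-stable C$^*$ algebra. By the earlier proposition asserting that every separable C$^*$ algebra is a unital subalgebra of an e.c.\ one, fix a separable e.c.\ C$^*$ algebra $B\supseteq A$. Were $B$ CP-stable, then $A$ would be contained in a CP-stable C$^*$ algebra, a contradiction; hence $B$ is a separable e.c.\ C$^*$ algebra that is not CP-stable, which gives the first ``In particular.'' Since the proposition immediately preceding the corollary shows that every separable C$^*$ algebra with the LLP is CP-stable, the same $B$ cannot have the LLP, which gives the second. Thus everything reduces to constructing $A$.

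For the existence statement I would argue by contradiction: assume every separable C$^*$ algebra is a unital subalgebra of some CP-stable C$^*$ algebra, and deduce that every C$^*$ algebra $A$ with the WEP is approximately injective. Fix finite-dimensional operator systems $E_1\subseteq E_2\subseteq \B(H)$, a u.c.p.\ map $\phi: E_1\to A$, and $\e>0$. Let $X\subseteq \B(H)$ be the separable unital C$^*$ subalgebra generated by $E_2$, so that $E_1\subseteq E_2\subseteq X$. By hypothesis there is a CP-stable C$^*$ algebra $\widetilde X\supseteq X$, which is in particular a CP-stable operator system. Proposition \ref{cp-stable-wep}, applied to $X\subseteq \widetilde X$ and to the WEP algebra $A$, then furnishes a u.c.p.\ map $\phi': E_2\to A$ with $\|\phi'|_{E_1}-\phi\|<\e$. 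As $E_1\subseteq E_2$, $\phi$, and $\e$ were arbitrary, this is precisely the assertion that $A$ is approximately injective (a routine unitization reduces the general completely positive case of the definition to the unital one). But \cite[Corollary 3.1]{jungepisier} provides a separable C$^*$ algebra with the WEP that is \emph{not} approximately injective; applying the conclusion just obtained to that algebra yields the desired contradiction, so some separable $A$ is not contained in any CP-stable C$^*$ algebra.

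The crux—and the step I expect to require the most care—is the transport in the third paragraph. Proposition \ref{cp-stable-wep} only extends maps defined on subsystems of a \emph{fixed} CP-stable-embeddable operator system, so the real content is the observation that every pair $E_1\subseteq E_2$ of finite-dimensional operator systems already sits inside the separable algebra $C^*(E_2)$, which the contradiction hypothesis then places inside a CP-stable algebra; this is exactly what upgrades the relative extension property of Proposition \ref{cp-stable-wep} into the absolute approximate injectivity of \emph{all} WEP algebras. The remaining points—the passage from u.c.p.\ to general completely positive maps in the definition of approximate injectivity, and the fact that the WEP does not force approximate injectivity—are routine given the results already established.
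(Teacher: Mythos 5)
Your proposal is correct and follows essentially the same route as the paper: assume every separable C$^*$ algebra embeds in a CP-stable one, invoke Proposition \ref{cp-stable-wep} to conclude that WEP implies approximate injectivity, and contradict \cite[Corollary 3.1]{jungepisier}. Your third paragraph merely makes explicit the step the paper leaves implicit (passing to the separable C$^*$ algebra generated by $E_2$ before applying the contradiction hypothesis), which is a faithful elaboration rather than a different argument.
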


\begin{proof}
Suppose that every separable C$^*$ algebra is contained in a CP-stable C$^*$ algebra.  Then by Proposition  \ref{cp-stable-wep}, we have that every C$^*$ algebra with WEP is approximately injective, contradicting Corollary 3.1 of \cite{jungepisier}.  (See Remark (iv) after the aforementioned corollary.)
\end{proof}

\begin{cor}
Assuming KEP, $\O_2^\omega$ does not have the LLP, whence having the LLP is not an elementary property.
\end{cor}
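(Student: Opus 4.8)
The plan is to reduce the statement to the single assertion that, assuming KEP, $\O_2^\omega$ fails the LLP, and then to read off the ``whence'' clause from elementary equivalence. For the latter I would record two facts. First, since $\O_2$ is separable and nuclear it has the global lifting property (by the Choi--Effros lifting theorem), in particular the LLP. Second, the diagonal embedding $\O_2\hookrightarrow\O_2^\omega$ is elementary, so $\O_2\equiv\O_2^\omega$. Granting that $\O_2^\omega$ has no LLP, these two facts exhibit elementarily equivalent C$^*$ algebras exactly one of which has the LLP; hence the LLP is not preserved under elementary equivalence, i.e.\ it is not an elementary property.

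The core is therefore to prove that $\O_2^\omega$ is not LLP under KEP. Here I would invoke the preceding corollary, which furnishes a separable C$^*$ algebra $C$ that is not contained in any CP-stable C$^*$ algebra. Assuming KEP, every separable C$^*$ algebra embeds into $\O_2^\omega$, so in particular $C$ embeds into $\O_2^\omega$. It thus suffices to establish the implication ``$\O_2^\omega$ has the LLP $\Rightarrow$ $\O_2^\omega$ is CP-stable'': if this holds, then $C$ would sit inside the CP-stable algebra $\O_2^\omega$, contradicting the choice of $C$, so $\O_2^\omega$ cannot have the LLP.

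The main obstacle is that the implication LLP $\Rightarrow$ CP-stable was proved above only for \emph{separable} algebras, whereas $\O_2^\omega$ is non-separable; the separable argument passes through the LULP and builds a dense increasing chain of finite-dimensional subsystems, which is unavailable here. My plan is to bypass the LULP and argue directly, exploiting that the LLP provides lifts against \emph{all} quotient maps $C/J$, not merely against $\mathbb N$-indexed ultrapowers. Concretely, suppose $\O_2^\omega$ were not CP-stable, as witnessed by a finite-dimensional $E_1\subseteq\O_2^\omega$ and $\delta>0$: for every finite-dimensional $E_2\supseteq E_1$ in $\O_2^\omega$ and every $n$ there is a unital map $\psi_{E_2,n}\colon E_2\to B_{E_2,n}$ with $\|\psi_{E_2,n}\|_n<1+\frac1n$ whose restriction to $E_1$ is $\delta$-far from every u.c.p.\ map $E_1\to B_{E_2,n}$. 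I would index by the directed set $I$ of pairs $(E_2,n)$ and fix an ultrafilter $\mathcal U$ on $I$ refining the order filter, so that $E_2$ exhausts $\O_2^\omega$ and $n\to\infty$ along $\mathcal U$. Setting $\mathcal B:=\prod_{\mathcal U}B_{E_2,n}$ and assembling the $\psi_{E_2,n}$ coordinatewise yields a well-defined map $\Psi\colon\O_2^\omega\to\mathcal B$ which, because $\|\psi_{E_2,n}\|_m\le\|\psi_{E_2,n}\|_n<1+\frac1n$ for $m\le n$ and $n\to\infty$ along $\mathcal U$, is unital and completely contractive, hence u.c.p.

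The key point is that $\mathcal B$ is genuinely a quotient: writing $C:=\ell^\infty\bigl(I;(B_{E_2,n})\bigr)$ for the $\ell^\infty$-direct sum and $J$ for the closed ideal of families vanishing along $\mathcal U$, we have $\mathcal B=C/J$ with $C$ unital. Applying the LLP of $\O_2^\omega$ to $\Psi$ and the finite-dimensional system $E_1$, I would lift $\Psi|_{E_1}$ to a u.c.p.\ map $\widetilde\Psi\colon E_1\to C$. Since each coordinate projection $C\to B_{E_2,n}$ is a unital $*$-homomorphism, the components $\theta_{E_2,n}$ of $\widetilde\Psi$ are u.c.p.\ maps $E_1\to B_{E_2,n}$, and the lifting condition forces $\|\theta_{E_2,n}-\psi_{E_2,n}|_{E_1}\|\to 0$ along $\mathcal U$. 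This contradicts that each $\psi_{E_2,n}|_{E_1}$ is $\delta$-separated from every u.c.p.\ map, completing the proof that the LLP implies CP-stability for $\O_2^\omega$, and with it the corollary. I expect the only delicate verification to be that $\Psi$ is well-defined and completely contractive (uniform boundedness of the $\psi_{E_2,n}$ together with the monotonicity $\|\cdot\|_m\le\|\cdot\|_n$), the remaining steps being formal.
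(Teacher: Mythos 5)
Your proof is correct, but it takes a genuinely different route from the paper's. The paper argues by transferring the LLP \emph{downward}: assuming $\O_2^\omega$ has the LLP, it takes an arbitrary separable e.c.\ C$^*$ algebra $A$, observes that under KEP $A$ embeds into $\O_2^\omega$ and, being e.c., is p.e.c.\ in $\O_2^\omega$ as an operator system (Proposition \ref{ecchangecategories}), so that $\O_2^\omega$ is weakly injective relative to $A$ by Corollary \ref{pecrwi}; Kirchberg's permanence result \cite[Corollary 2.6(v)]{K} then gives $A$ the LLP, contradicting the final clause of the preceding corollary (not every separable e.c.\ algebra has the LLP). You instead use the \emph{first} clause of that corollary (a separable $C$ contained in no CP-stable algebra), embed $C$ into $\O_2^\omega$ via KEP, and prove directly that the LLP implies CP-stability for the nonseparable algebra $\O_2^\omega$. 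You correctly identified the real obstruction --- the paper's proposition deriving CP-stability passes through the LULP and an increasing chain of finite-dimensional subsystems exhausting the algebra, which requires separability --- and your repair is sound: indexing by the directed set of pairs $(E_2,n)$, assembling the bad maps into a u.c.p.\ map $\Psi$ into the ultraproduct $\prod_{\mathcal U}B_{E_2,n}$ (unital and completely contractive since $\|\psi_{E_2,n}\|_m\leq\|\psi_{E_2,n}\|_n<1+\frac{1}{n}$ with $n\to\infty$ along $\mathcal U$, hence u.c.p.), realizing the ultraproduct as a quotient of the $\ell^\infty$-product, and lifting $\Psi|_{E_1}$ using the full strength of the LLP against arbitrary quotients rather than only $\mathbb N$-indexed ultrapowers. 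What the paper's route buys is brevity, outsourcing the lifting analysis to Kirchberg's relative weak injectivity machinery; what yours buys is self-containedness and a nonseparable ``LLP implies CP-stable'' statement of independent interest. Two routine points deserve a sentence in a final write-up: take $E_1$ to be a finite-dimensional operator \emph{system} (enlarge by adjoints and the unit) so that u.c.p.\ maps on $E_1$ and the LLP, which is stated for finite-dimensional operator systems, apply; and the passage from $\lim_{\mathcal U}\|\theta_i(x)-\psi_i(x)\|=0$ on a basis of $E_1$ to $\lim_{\mathcal U}\|\theta_i-\psi_i|_{E_1}\|=0$ uses finite-dimensionality of $E_1$. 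Your handling of the ``whence'' clause ($\O_2$ has the LLP by the Choi--Effros lifting theorem, and $\O_2\equiv\O_2^\omega$ by \L o\'s's theorem) is exactly the intended reading, which the paper leaves implicit.
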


\begin{proof}
Suppose, towards a contradiction, that $\O_2^\omega$ had LLP.  Let $A$ be a separable e.c.\ C$^*$ algebra.  Then $\O_2^\omega$ is weakly injective relative to $A$ by Lemma \ref{pecrwi}, whence we would have that $A$ has LLP by \cite[Corollary 2.6(v)]{K}.  Since $A$ was an arbitrary separable e.c.\ C$^*$ algebra, we get a contradiction to the previous corollary.
\end{proof}

\begin{rmk}
If KEP holds, then a C$^*$ algebra $A$ is semi-p.e.c.\ as an operator system if and only if it is p.e.c.\ in $\O_2^\omega$, in which case $A$ has WEP.  If one drops the KEP assumption but still assumes that $A$ is p.e.c.\ in $\O_2^\omega$, then one can still conclude that $A$ is WEP \emph{if} one also assumes that $A$ has the LLP.  Indeed, if $A$ is p.e.c.\ in $\O_2^\omega$, then $\O_2^\omega$ is weakly injective relative to $A$ by Lemma \ref{pecrwi}.  By \cite[Corollary 3.3(i)]{K}, $\ell^\infty(\O_2)$ has WEP, whence $\O_2^\omega$ is QWEP (quotient of a C$^*$ algebra with WEP).  By \cite[Corollary 3.3(iii)]{K}, it follows that $A$ is QWEP.  Finally, by \cite[Corollary 3.6(ii)]{K}, we have that $A$ is WEP.
\end{rmk}

\section{Kirchberg's embedding problem}

\subsection{General remarks}
Kirchberg's embedding problem (KEP) asks whether every separable C$^*$ algebra embeds into an ultrapower of $\O_2$.  As stated this is a bit ambiguous as it's not clear whether we are working in the unital or nonunital category.  It turns out that the unital and nonunital versions of KEP are equivalent.  Indeed, if every separable, unital C$^*$ algebra embeds unitally into $\O_2^\omega$, then by considering unitizations, we see that every separable C$^*$ algebra embeds into $\O_2^\omega$.  Conversely, suppose that every separable C$^*$ algebra embeds into $\O_2^\omega$.  Suppose that $A$ is a separable, unital C$^*$ algebra and $f:A\to \O_2^\omega$ is a (not necessarily unital) embedding.  Let $p:=f(1)$, a projection in $\O_2^\omega$.  Notice then that $f:A\to p\O_2^\omega p$ is a unital embedding.  We may write $p=(p_n)^\bullet$, with each $p_n$ a projection in $\O_2$.  Since $p_n\O_2 p_n\cong \O_2$ (unitally) for each $n$, we see that $p\O_2^\omega p \cong \O_2^\omega$ (unitally).


 In some sense, KEP is the C$^*$ analog of the Connes Embedding Problem.  We now want to point out an interesting difference between the two problems.  It is known that if $M$ is a (separable) II$_1$ factor that is $\R^\omega$-embeddable and $\alpha$ is an automorphism of $M$, then $M\rtimes_\alpha \z$ is once again $\R^\omega$-embeddable.  It turns out that the analogous statement for $\O_2^\omega$-embeddability is equivalent to KEP.

\begin{df} Let $\cS$ denote the universal, separable UHF algebra. A C$^*$-algebra $A$ is said to be \emph{quasi-diagonal} if there exists an embedding $A\to \cS^\omega$ which admits a c.c.p. lift (u.c.p.\ if $A$ is unital).
\end{df}

This is not the usual definition, but is easily seen to be equivalent. Since $\cS$ embeds into $\O_2$, we see that quasi-diagonal C$^*$ algebras are $\O_2^\omega$-embeddable.

For any C$^*$-algebra $A$ consider the C$^*$-algebra $C_0(\r)\otimes A$, which is the same as the algebra $C_0(\r,A)$ of all continuous functions $f: \r \to A$ for which $\lim_{x\to \pm\infty} f(x) = 0$.  Voiculescu proved that $C_0(\r,A)$ and its unitization $B$ are always quasi-diagonal (see \cite[Theorem 5]{V}).

We now follow the argument given in \cite[Theorem 6.3.11]{Ror}.  We let $\alpha$ be the automorphism of $C_0(\r,A)$ given by $\alpha(f(t)) = f(t+1)$ and extend $\alpha$ to an automorphism (also denoted $\alpha$) of $B$.  We then have that $\k\otimes A\cong C_0(\r,A)\rtimes_\alpha \z \subseteq B\rtimes_\alpha \z$.  Thus, if $\O_2^\omega$-embeddable (even quasidiagonal) algebras remain $\O_2^\omega$-embeddable after taking a cross product with $\z$, then every separable C$^*$ algebra embeds into $\O_2^\omega$.  We just proved:

\begin{prop}
KEP is equivalent to the statement that the class of $\O_2^\omega$-embeddable algebras is closed under crossed products by $\z$.
\end{prop}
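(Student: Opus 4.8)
The plan is to prove both implications of the biconditional, observing at the outset that one direction is essentially immediate while the other is precisely what the chain of embeddings set up just before the statement delivers.

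For the implication that KEP gives closure under crossed products by $\z$, I would argue as follows. Assume KEP, so that every separable C$^*$ algebra embeds into $\O_2^\omega$. If $A$ is any $\O_2^\omega$-embeddable (hence separable) algebra and $\alpha\in\Aut(A)$, then the crossed product $A\rtimes_\alpha\z$ is again separable, being generated by the separable algebra $A$ together with the single unitary implementing $\alpha$. Hence $A\rtimes_\alpha\z$ is $\O_2^\omega$-embeddable directly by KEP, making no use of the hypothesis that $A$ was already $\O_2^\omega$-embeddable. Thus the class is trivially closed under crossed products by $\z$ once KEP is assumed.

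For the converse — closure implies KEP — I would run the construction preceding the statement. Let $A$ be an arbitrary separable C$^*$ algebra and let $B$ be the unitization of $C_0(\r,A)$, equipped with the automorphism $\alpha$ induced by the translation $f(t)\mapsto f(t+1)$. By Voiculescu's theorem $B$ is quasi-diagonal, and since $\cS$ embeds into $\O_2$ (so $\cS^\omega$ embeds into $\O_2^\omega$), quasi-diagonality yields that $B$ is $\O_2^\omega$-embeddable. Invoking the closure hypothesis with the $\O_2^\omega$-embeddable algebra $B$ and the automorphism $\alpha$, the crossed product $B\rtimes_\alpha\z$ is $\O_2^\omega$-embeddable. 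Now $\k\otimes A\cong C_0(\r,A)\rtimes_\alpha\z$ sits as a subalgebra of $B\rtimes_\alpha\z$, and $\O_2^\omega$-embeddability passes to subalgebras (simply restrict an embedding of the larger algebra), so $\k\otimes A$ is $\O_2^\omega$-embeddable.

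The final step descends from $\k\otimes A$ back to $A$: choosing a rank-one projection $e\in\k$, the corner $e\otimes A\cong A$ is a subalgebra of $\k\otimes A$, so $A$ inherits $\O_2^\omega$-embeddability by the same subalgebra argument; as $A$ was arbitrary, KEP follows. I do not expect a genuine obstacle here, since the two substantive inputs — Voiculescu's quasi-diagonality of $C_0(\r,A)$ and its unitization, and the identification $\k\otimes A\cong C_0(\r,A)\rtimes_\alpha\z$ — are quoted results. The only points requiring care are bookkeeping: staying inside the unital category (which is why one works with the unitization $B$ rather than $C_0(\r,A)$ itself, so that $B\rtimes_\alpha\z$ is unital), and verifying that $\O_2^\omega$-embeddability is preserved both downward to subalgebras and across the stabilization-then-corner passage.
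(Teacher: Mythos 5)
Your proof is correct and is essentially the paper's own argument: the paper establishes the hard direction by exactly your chain (Voiculescu's quasi-diagonality of $C_0(\r,A)$ and its unitization $B$, hence $\O_2^\omega$-embeddability via $\cS\hookrightarrow\O_2$, then $\k\otimes A\cong C_0(\r,A)\rtimes_\alpha\z\subseteq B\rtimes_\alpha\z$), treating the converse as trivial just as you do. You merely make explicit the bookkeeping the paper leaves implicit (separability of the crossed product, passage to subalgebras, and the corner $e\otimes A\cong A$), which is fine.
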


As in the case of CEP, KEP is equivalent to the statement that $\Th_\exists(\O_2)\leq \Th_\exists(A)$ for every C$^*$ algebra $A$.  The following model-theoretic equivalents of KEP are somewhat deeper.

\begin{thm}\label{KEPequiv}
The following are equivalent:
\begin{enumerate}
\item KEP;
\item $\mathcal O_2$ is existentially closed;
\item there is an exact e.c.\   $C^*$ algebra.
\end{enumerate}
\end{thm}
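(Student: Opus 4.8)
The plan is to route everything through statement (2), treating ``$\O_2$ is existentially closed'' as the hub: I would prove $(2)\Rightarrow(3)$ and $(3)\Rightarrow(2)$ to get $(2)\Leftrightarrow(3)$, and $(1)\Rightarrow(2)$ and $(2)\Rightarrow(1)$ to get $(1)\Leftrightarrow(2)$. The implication $(2)\Rightarrow(3)$ is immediate: $\O_2$ is nuclear, hence exact, and unital, so if it is e.c.\ then it is itself an exact e.c.\ $C^*$ algebra. For $(3)\Rightarrow(2)$, suppose $A$ is an exact e.c.\ $C^*$ algebra. By the downward L\"owenheim--Skolem theorem choose a separable elementary substructure $A_0\preceq A$; then $A_0$ is unital, exact (exactness passes to $C^*$-subalgebras), and again e.c. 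The last point is the one used in the proof that e.c.\ algebras have the SDP: given $A_0\subseteq B$, one forms the full amalgamated free product $B*_{A_0}A$, uses that $A$ is e.c.\ in it, and transports the resulting value of an existential formula back to $A_0$ along $A_0\preceq A$. Now Theorem~\ref{exactec} shows $A_0$ is nuclear, and Proposition~\ref{onlypossibility} forces $A_0\cong\O_2$; since $A_0$ is e.c., so is $\O_2$, giving (2).

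For $(1)\Rightarrow(2)$ I would lean on Lemma~\ref{ssa}, which already records that $\O_2$ is an e.c.\ model of its theory, i.e.\ $\O_2$ is existentially closed in every extension that embeds into $\O_2^\omega$. To upgrade this to full existential closedness under KEP, first reduce to the separable case. Given any $B\supseteq\O_2$, a quantifier-free $\varphi(v,w)$, a tuple $a$ from $\O_2$, and witnesses $c_k\in B_{\leq n}$ with $\varphi(a,c_k)$ approaching $\inf_{c\in B_{\leq n}}\varphi(a,c)$, the separable subalgebra $B_0$ generated by $\O_2$ together with the $c_k$ contains $\O_2$ and computes the same infimum. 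By KEP the separable algebra $B_0$ embeds into $\O_2^\omega$, so Lemma~\ref{ssa} applies to the extension $\O_2\subseteq B_0$ and yields $\inf_{B_0}=\inf_{\O_2}$, whence $\inf_B=\inf_{\O_2}$. As $B$ was arbitrary, $\O_2$ is e.c.

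The substantive work is $(2)\Rightarrow(1)$, which is where the model theory manufactures the embeddings demanded by KEP. Let $D$ be an arbitrary separable $C^*$ algebra. I would form the unital full free product $B:=\O_2 *_{\bC} D$, a separable $C^*$ algebra into which both factors embed unitally. Since $\O_2$ is e.c.\ by hypothesis, it is existentially closed in $B$, and the standard characterization of existential embeddings between separable structures furnishes an embedding $\theta:B\to\O_2^\omega$ (one may even arrange $\theta$ to fix $\O_2$, though this is not needed). Restricting $\theta$ to $D\subseteq B$ exhibits $D$ as a subalgebra of $\O_2^\omega$; as $D$ was arbitrary, KEP follows.

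The step most in need of care is this last one. The main obstacle is verifying the principle ``$\O_2$ e.c.\ in $B$ implies $B\hookrightarrow\O_2^\omega$'' in the continuous setting, which is the usual diagonal realization of the existential type of $B$ over $\O_2$ inside the ultrapower $\O_2^\omega$, together with the routine but essential point that both canonical maps into the full free product $\O_2 *_{\bC} D$ are isometric embeddings. By contrast, the genuinely deep input, namely that every embedding of $\O_2$ into its own ultrapower is elementary, is entirely outsourced to Lemma~\ref{ssa} and used only through it.
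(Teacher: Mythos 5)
Your proof is correct, and most of it follows the paper's own architecture: your $(1)\Rightarrow(2)$ is the paper's argument via Lemma \ref{ssa}, with the reduction to a separable intermediate extension $B_0$ (which the paper leaves implicit, KEP being a statement about separable algebras) spelled out; and your $(2)\Leftrightarrow(3)$ runs exactly through Theorem \ref{exactec} and Proposition \ref{onlypossibility}, except that you also supply a justification of a heredity claim the paper only asserts (in the proof that e.c.\ algebras have the SDP), namely that a separable elementary substructure $A_0\preceq A$ of an e.c.\ algebra is again e.c.; your amalgamated free product argument for this is sound, granted the standard but nontrivial fact that the factors embed faithfully into the full amalgamated free product $B*_{A_0}A$. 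The genuine divergence is in $(2)\Rightarrow(1)$: the paper embeds a given separable $A$ into \emph{some} e.c.\ algebra $B$ (using that e.c.\ algebras form a universal class) and invokes the fact that all e.c.\ C$^*$ algebras share the same existential theory (a joint-embedding argument), concluding $\Th_\exists(\O_2)=\Th_\exists(B)\leq \Th_\exists(A)$ and hence $A\hookrightarrow \O_2^\omega$; you instead place $D$ and $\O_2$ in the common separable algebra $\O_2 *_{\bC} D$ (both factors do embed, e.g.\ by tensoring faithful representations), apply hypothesis (2) to see that $\O_2$ is e.c.\ there, and realize the quantifier-free type of a dense sequence of the free product over $\O_2$ inside $\O_2^\omega$ by countable saturation of the ultrapower. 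Your route avoids both the chain construction of e.c.\ extensions and the equal-existential-theories lemma, at the cost of carrying out the saturation argument explicitly; the paper's route is softer but rests on more ambient model theory. Both versions correctly quarantine the deep operator-algebraic input --- that every embedding $\O_2\to\O_2^\omega$ is elementary --- inside Lemma \ref{ssa}.
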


\begin{proof}
By Lemma \ref{ssa}, $\O_2$ is an e.c.\   model of its universal theory; KEP implies that every (unital) C$^*$ algebra is a model of the universal theory of $\O_2$, whence $\O_2$ is actually e.c.\ in the class of all (unital) C$^*$ algebras.  For the converse, note that e.c.\   $C^*$ algebras have the same existential (even $\forall\exists$) theories, so if $A$ is a separable $C^*$ algebra embedded into the e.c.\   $C^*$ algebra $B$, we have
$$\Th_\exists(\mathcal O_2)= \Th_\exists(B)\leq \Th_\exists(A),$$ whence $A$ embeds into an ultrapower of $\mathcal O_2$.  (2) implies (3) is trivial and the converse follows from  Theorem \ref{exactec} and Proposition \ref{onlypossibility}.
\end{proof}

\begin{rmk}
The preceding theorem yields yet another reformulation of KEP, namely that KEP is equivalent to the assertion that every separable nuclear C$^*$ algebra is contained in a separable e.c.\ nuclear C$^*$ algebra.  Moreover, KEP is also equivalent to the assertion that every separable exact C$^*$ algebra is contained in a separable e.c.\ exact C$^*$ algebra.
\end{rmk}

\begin{rmk} Suppose $A$ is a C$^*$-algebra which is an e.c.\ model of its universal theory. Let KEP$_A$ be the statement, ``every separable C$^*$-algebra embeds in an ultrapower of $A$.'' Then as in the previous theorem KEP$_A$ is equivalent to $A$ being existentially closed. In particular, if $A$ is strongly self absorbing, then it follows easily from Proposition \ref{flip} that KEP$_A$ is known to be false unless $A\cong \O_2$. Is there an operator algebraic proof of this fact (even just for the case $A = \O_\infty$)?
\end{rmk}

\subsection{Good nuclear witnesses}

Following \cite{FHRTW}, for a C$^*$ algebra $A$ and an $m$-tuple $a$ from $A$, define $$\Delta^A_{\nuc m}(a):=\inf_{F,\phi,\psi}\max_{i\leq m} \|(\psi\circ \phi)(a)-a\|,$$ where $F$ ranges over all finite-dimensional C$^*$ algebras, $\phi$ ranges over all cpc maps $\phi:A\to F$ and $\psi$ ranges over all cpc maps $\psi:F\to A$.  So $A$ is nuclear if and only if $\Delta^A_{\nuc m}(a)=0$ for all $m$ and all $m$-tuples $a$ from $A$.  Note that, if $A\subseteq B$ and $a$ is an $m$-tuple from $A$, we have $\Delta^B_{\nuc m}(a)\leq \Delta^A_{\nuc m}(a)$ as every cpc map $\phi:A\to F$ can be extended to one $B\to F$.

%

\begin{fact}[\cite{FHRTW}]
For each $m\geq 1$, there are \emph{existential} formulae $\Phi_{m,n}(x)$, with $x$ and $m$-tuple of variables, such that, for any C$^*$ algebra $A$ and any $m$-tuple $a$ from $A$, we have
$$\Delta^A_{\nuc m}(a)=\inf_n \Phi_{m,n}(a).$$
\end{fact}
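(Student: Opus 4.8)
The plan is to realize each $\Phi_{m,n}$ as an existential formula that quantifies over \emph{internal} witnesses for a completely positive contractive (cpc) factorization $A\xrightarrow{\phi}M_k\xrightarrow{\psi}A$, letting $n$ encode the matrix size $k$ together with a finite discretization of the remaining scalar data. First I would reduce to the case where the finite-dimensional algebra $F$ is a full matrix algebra $M_k$: since every finite-dimensional C$^*$ algebra $\bigoplus_j M_{k_j}$ embeds into $M_{\sum_j k_j}$ with a cpc conditional expectation back onto it, restricting the infimum defining $\Delta^A_{\nuc m}(a)$ to matrix blocks does not change its value. The map $\psi\colon M_k\to A$ is then straightforward to internalize: by the operator system duality $\mathrm{CP}(M_k,A)\leftrightarrow M_k(A)^+$ already exploited in the second proof of Theorem \ref{exactec2}, a cpc $\psi$ is the same datum as a positive contraction $u=[u_{pq}]\in M_k(A)$, with $\psi([\lambda_{pq}])=\sum_{pq}\lambda_{pq}u_{pq}$. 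Both the positivity of $u$ (write $u=w^*w$ and existentially quantify $w\in M_k(A)$) and the contractivity constraint are expressible by existential formulae over $A$, using that the operator norm on $M_k(A)$ is a quantifier-free-approximable definable predicate (this is Lupini's result, invoked already for Proposition \ref{ecchangecategories} and proved in Appendix C).

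The genuine difficulty is the map $\phi\colon A\to M_k$, which under the dual correspondence $\mathrm{CP}(A,M_k)\leftrightarrow M_k(A)^*_+$ is a positive \emph{functional} on $M_k(A)$ rather than an element, and so is not directly visible to an existential quantifier. To internalize it I would pass to the Stinespring/GNS description: a cpc map $\phi$ is, up to arbitrarily small error, of the form $\phi(x)=[\,\omega(c_p^*xc_q)\,]_{pq}$ for elements $c_1,\dots,c_k\in A$ and a state $\omega$ on $A$ with $[\omega(c_p^*c_q)]\le 1_k$, and conversely every such expression is cpc. Since only the values $\phi(a_i)$ enter $\Delta^A_{\nuc m}$, Arveson's extension theorem (injectivity of $M_k$) lets me work with a cpc map defined only on the finite-dimensional operator system $S=\mathrm{span}\{1,a_1,a_1^*,\dots,a_m,a_m^*\}$, whose complete positivity is a positivity condition for amplifications inside $M_r(A)$ and is again detectable through the definable matrix norm. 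Thus $\Phi_{m,n}$ will existentially quantify over the Stinespring elements $c_p$ and the Choi matrix $u$, impose the Gram and normalization constraints forcing the factorization to be cpc, and measure $\max_{i\le m}\bigl\|\sum_{pq}(\phi(a_i))_{pq}\,u_{pq}-a_i\bigr\|$, with the residual scalar parameters absorbed into $n$.

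The verification then splits into the two expected inequalities. For $\inf_n\Phi_{m,n}(a)\le\Delta^A_{\nuc m}(a)$, I would start from a near-optimal genuine cpc pair $(\phi,\psi)$, feed in the Choi matrix of $\psi$ and the Stinespring data of $\phi$ as witnesses, and absorb the rounding of the finitely many scalars into the choice of $n$; this is the routine direction. For the reverse inequality I must show each $\Phi_{m,n}(a)$ is essentially an upper bound for $\Delta^A_{\nuc m}(a)$: from witnesses making $\Phi_{m,n}(a)$ small, the positivity clause produces a genuine cpc $\psi$ from $u$, and the normalization clauses together with Arveson's theorem must promote the finite data to an honest cpc $\phi\colon A\to M_k$ with $\phi(a_i)$ close to the prescribed values. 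I expect this last step to be the main obstacle: the statement ``the prescribed scalars are the values of some cpc map $A\to M_k$'' is a priori a moment/positivity (universal-flavored) condition, so the crux is to repackage it as an \emph{existential} certificate — carried by the quantified elements $c_p$ and a witness for the realizing state, if necessary by realizing that state as a sequence in an ultrapower and using the monotonicity $\Delta^B_{\nuc m}\le\Delta^A_{\nuc m}$ for $A\subseteq B$ noted above. Getting this certificate to be genuinely quantifier-free up to uniform approximation, so that $\Phi_{m,n}$ is honestly existential, is where the definability of the matrix norm does the decisive work.
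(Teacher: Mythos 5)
You should first note a point of comparison: the paper does not prove this Fact at all --- it is quoted from \cite{FHRTW} --- so your proposal can only be measured against the intended argument and against the in-paper analogues, namely the formula $\inf_{u,w}\max(\sigma_1,\sigma_2,\sigma_3)$ in the proof of Theorem \ref{exactec2}, the CP-stability proposition for $M_k$, and Appendix C. Your architecture matches that machinery well (Choi matrix $u=w^*w$ for $\psi$, scalar data for $\phi$ absorbed into $n$, Lupini's definable matrix norms), and you correctly isolate the crux: $\phi\colon A\to M_k$ is dual data, invisible to existential quantifiers. But your proposed resolution of that crux is a genuine gap. Quantifying the Stinespring elements $c_p$ together with ``a witness for the realizing state'' does not produce an existential formula: states are not elements of any sort of the C$^*$-language, the $c_p$ certify nothing by themselves (every tuple $c_p$ with $[\omega(c_p^*c_q)]\leq 1_k$ is Stinespring data for \emph{some} cpc map, so the constraint content lives entirely in the unquantifiable $\omega$), and moving to an ultrapower relocates nothing, since $A\preceq A^\omega$ forces $\Phi_{m,n}(a)^{A^\omega}=\Phi_{m,n}(a)^A$ for any formula. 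Moreover, the certification you are missing is not optional. Here is an explicit collapse: for a self-adjoint contraction $a\in A$, take $u=\mathrm{diag}\bigl(\frac{1+a}{2},\frac{1-a}{2}\bigr)\in M_2(A)^+$, whose associated $\psi\colon M_2(\bC)\to A$ is cpc, and $\lambda=\mathrm{diag}(1,-1)$; then $\sum_{p,q}\lambda_{pq}u_{pq}=a$ \emph{exactly}. So any candidate $\Phi_{m,n}$ that quantifies the $\psi$-side data and only normalizes the scalars, without a clause forcing $\lambda$ to be (near) the value of an actual cpc map at $a$, evaluates to $0$ on every tuple, and the claimed identity fails for every non-nuclear $A$.

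What actually closes the gap is to certify $\phi$ by \emph{quantifier-free} rather than existential means, plus a uniform stability lemma. A unital self-adjoint linear map on $S=\operatorname{span}\{1,a_i,a_i^*\}$ with prescribed values $\lambda^i\in M_k(\bC)$ is u.c.p.\ if and only if it is contractive at level $k$ (Smith's lemma, since the range is $M_k$, exactly as invoked in Proposition \ref{lifting}), i.e., if and only if
\begin{equation*}
\Bigl\|\alpha_0\otimes 1_k+\sum_i\bigl(\alpha_i\otimes\lambda^i+\beta_i\otimes(\lambda^i)^*\bigr)\Bigr\|
\leq
\Bigl\|\alpha_0\otimes 1+\sum_i\bigl(\alpha_i\otimes a_i+\beta_i\otimes a_i^*\bigr)\Bigr\|_{M_k(A)}
\end{equation*}
for all scalar matrices $\alpha_j,\beta_j\in M_k(\bC)$; the left side is a constant and the right side is quantifier-free up to uniform approximation by Appendix C, and Arveson extension (injectivity of $M_k$) promotes any such map on $S$ to all of $A$, so working on $S$ loses nothing. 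This is an infinite family of conditions, and truncating to the finite $\delta$-net of the compact coefficient ball that a single formula $\Phi_{m,n}$ can carry destroys the inequality $\Phi_{m,n}(a)\geq\Delta^A_{\nuc m}(a)$ --- \emph{unless} one proves the quantitative statement that a unital self-adjoint map satisfying the net inequalities within $\e$ is within $\delta'(\e,\delta)$ of a genuine u.c.p.\ map, uniformly over all $A$ and all tuples $a$, and then folds the resulting additive error $\epsilon_n\to 0$ into $\Phi_{m,n}$ itself. That uniform stability lemma is proved by precisely the ultraproduct compactness argument the paper uses for CP-stability of $M_k$; it, not the definability of the matrix norm alone, is where the decisive work happens, and it is absent from your proposal. (Minor slip along the way: the Choi correspondence pairs contractive cp maps $\psi\colon M_k\to A$ with $u\in M_k(A)^+$ satisfying $\bigl\|\sum_p u_{pp}\bigr\|\leq 1$, not $\|u\|\leq 1$; the paper's $\sigma_3$-style net condition handles this normalization.)
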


\begin{rmk}
The above fact gives a simpler (albeit less elementary) proof of the fact that if $A$ and $B$ are simple unital C$^*$ algebras and $A$ is e.c.\   in $B$, then $B$ nuclear implies $A$ nuclear.  Indeed, suppose that $A$ is not nuclear.  Then there is some $m$ and some $k\geq 1$ such that the partial type $\{\Phi_{m,n}(x)\geq \frac{1}{k} \ : \ n\geq 1\}$ is realized in $A$, say by $a\in A$.  Since each $\Phi_{m,n}(x)$ is existential and since $A$ is e.c.\   in $B$, we have that $\Phi_{m,n}^B(a)\geq \frac{1}{k}$ for each $n\geq 1$, whence $B$ is not nuclear.
\end{rmk}

Let us define a \emph{condition} to be a finite set $p$ of expressions of the form $\varphi(x)<r$, where $\varphi(x)$ is quantifier-free and $r$ is a positive real number.  We say that a tuple $a$ from a C$^*$ algebra $A$ \emph{satisfies the condition $p$} if $\varphi(a)^A<r$ for every condition $\varphi(x)<r$ in $p$.  Note that a condition is satisfiable if and only if it is satisfied in $\B(H)$ for separable $H$.

In order to motivate the main result of this section, let us first note the somewhat easy equivalent reformulation of KEP:

\begin{prop}
The following are equivalent:
\begin{enumerate}
\item KEP;
\item every satisfiable condition is satisfied in a nuclear C$^*$ algebra;
\item every satisfiable condition is satisfied in an exact C$^*$ algebra.
\end{enumerate}
\end{prop}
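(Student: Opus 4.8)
The plan is to prove the cycle (1) $\Rightarrow$ (2) $\Rightarrow$ (3) $\Rightarrow$ (1). The middle implication (2) $\Rightarrow$ (3) is immediate: every nuclear C$^*$ algebra is exact, so any condition satisfied in a nuclear algebra is a fortiori satisfied in an exact one.

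For (1) $\Rightarrow$ (2), I would start from a satisfiable condition $p$, which by definition is satisfied in $\B(H)$ for separable $H$, say by a tuple $a$. Passing to the unital subalgebra $A := C^*(a,1)$, which is separable, the condition $p$ is still satisfied by $a$ in $A$, since the truth values of quantifier-free formulas are computed inside the generated subalgebra. KEP then yields a unital embedding $j : A \to \O_2^\omega$, and since quantifier-free formulas are preserved by embeddings, $j(a)$ satisfies $p$ in $\O_2^\omega$. Writing $j(a) = (a_n)^\bullet$ with each $a_n$ a tuple from $\O_2$, the continuous version of \L o\'s's theorem gives $\varphi(j(a))^{\O_2^\omega} = \lim_\omega \varphi(a_n)^{\O_2}$ for each of the finitely many formulas appearing in $p$; hence for $\omega$-almost all $n$ the tuple $a_n$ satisfies $p$ in $\O_2$. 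Since $\O_2$ is nuclear, this exhibits $p$ as satisfied in a nuclear C$^*$ algebra.

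The substantive direction is (3) $\Rightarrow$ (1), and here I would use the reformulation of KEP as $\Th_\exists(\O_2) \leq \Th_\exists(A)$ for every C$^*$ algebra $A$. Suppose this fails: there is $A$ and an existential sentence $\sigma = \inf_{\bar x} \varphi(\bar x)$, with $\varphi$ quantifier-free, such that $\sigma^A < \sigma^{\O_2}$. Then some tuple $\bar a$ from $A$ has $\varphi(\bar a)^A < r < \sigma^{\O_2}$ for a suitable rational $r$. The condition $p := \{\varphi(\bar x) < r\}$ is satisfiable, since it is satisfied by $\bar a$ in $A$, hence after representing the separable subalgebra $C^*(\bar a, 1)$ on a separable Hilbert space, in $\B(H)$. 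By hypothesis (3), $p$ is satisfied by some tuple $\bar b$ in an exact C$^*$ algebra $B$; replacing $B$ by $C^*(\bar b, 1)$ I may take $B$ separable and exact. The Kirchberg--Phillips embedding theorem then embeds $B$ unitally into $\O_2$, and since $\varphi$ is quantifier-free this gives $\varphi(\bar b)^{\O_2} = \varphi(\bar b)^{B} < r < \sigma^{\O_2} = \inf_{\bar x}\varphi(\bar x)^{\O_2}$, contradicting the definition of the infimum. Hence $\Th_\exists(\O_2) \leq \Th_\exists(A)$ for all $A$, which is KEP.

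The only real obstacle is conceptual rather than technical: recognizing that the condition machinery lets one transfer an existential discrepancy between $A$ and $\O_2$ into a genuine finite condition, whose satisfiability in an exact algebra can then be funneled back into $\O_2$ via Kirchberg--Phillips. Everything else---preservation of quantifier-free truth values under embeddings and ultraproducts, and the reduction to separable subalgebras---is routine. As an alternative to the $\Th_\exists$ argument for (3) $\Rightarrow$ (1), one could instead realize the full quantifier-free diagram of a given separable $A$: each of its finite approximations is a satisfiable condition, hence satisfied in a separable exact $B_n$, and the ultraproduct $\prod_\omega B_n$ both receives an embedding of $A$ and, via $B_n \hookrightarrow \O_2$, embeds into $\O_2^\omega$.
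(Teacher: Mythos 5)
Your proposal is correct and follows essentially the same route as the paper: the trivial implication (2) $\Rightarrow$ (3), transfer of the condition from $\O_2^\omega$ down to $\O_2$ itself for (1) $\Rightarrow$ (2) (the paper uses the equality of existential values in $\O_2$ and its ultrapower where you invoke \L o\'s, a cosmetic difference), and for (3) $\Rightarrow$ (1) the identical mechanism of passing to the separable subalgebra generated by the witness and applying Kirchberg's exact embedding theorem, which the paper phrases as a direct $\epsilon$-argument rather than your contrapositive.
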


\begin{proof}
First suppose that KEP holds and that $p(x)$ is a condition.  Suppose that $a$ is a tuple from a C$^*$ algebra $A$ satisfying $p$.  Without loss of generality, we may suppose that $A$ is separable.  Suppose $p(x)=\{\varphi_i(x)<r_i \ : \ i=1,\ldots,m\}$.  Let $s_i:=\varphi_i(a)<r_i$.  Let $\sigma(x):=\max_{1\leq i\leq m}(\varphi_i(x)\dotminus s_i)$.  Then since $A$ is embeddable in $\O_2^\omega$, we have $(\inf_x \sigma(x))^{\O_2}=(\inf_x \sigma(x))^{\O_2^\omega}=0$.  Thus, if $\epsilon>0$ is small enough such that $s_i+\epsilon<r_i$ for each $i$, then by choosing $b\in \O_2$ such that $\sigma(b)<\epsilon$, we have that $b$ satisfies $p(x)$ and (2) holds.

Clearly (2) implies (3).  Finally, suppose that (3) holds.  We want $\Th_\exists(\O_2)\leq \Th_\exists(A)$ for every (separable) C$^*$ algebra $A$.  Suppose that $\sigma:=\inf_x\varphi(x)$ is an existential sentence and $r:=\sigma^A$.  Fix $\epsilon>0$ and consider the condition $p(x):=\{\varphi(x)<r+\epsilon\}$.  Since $p(x)$ is realized in $A$, there is an exact C$^*$ algebra $B$ such that $p(x)$ is realized in $B$, say by $a\in B$.  Let $B_1$ be the subalgebra of $B$ generated by $a$.  Then $B_1$ is separable, exact, so embeds in $\O_2$ by Kirchberg's Exact Embedding Theorem (see \cite[Theorem 6.3.11]{Ror}).  It follows that $p(x)$ is realized in $\O_2$, so $\sigma^{\O_2}\leq r+\epsilon$.  Since $\epsilon$ was arbitrary, we get that $\sigma^{\O_2}\leq r$.
\end{proof}

The idea is to now weaken the hypothesis in (2).

\begin{df}\label{gnw} Let us say that a satisfiable condition $p(x)$ has \emph{good nuclear witnesses} if, for every $\epsilon>0$, there is a C$^*$ algebra $A$ and a tuple $a$ from $A$ that realizes $p(x)$ and such that $\Delta^A_{\nuc}(a)<\epsilon$.

\end{df}

\begin{thm}\label{witnesses}
The following are equivalent:
\begin{enumerate}
\item KEP;
\item every satisfiable condition has good nuclear witnesses.
\end{enumerate}
\end{thm}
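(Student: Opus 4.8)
The plan is to establish the two implications separately, with essentially all of the work going into $(2)\Rightarrow(1)$. The implication $(1)\Rightarrow(2)$ is immediate from the preceding proposition: assuming KEP, every satisfiable condition $p(x)$ is satisfied in a nuclear C$^*$ algebra $A$, say by a tuple $a$; since $A$ is nuclear, $\Delta^A_{\nuc m}(a)=0<\epsilon$ for every $\epsilon>0$, so the single pair $(A,a)$ exhibits good nuclear witnesses for $p$. For the converse I would use the good nuclear witnesses to build, via the continuous model-theoretic forcing developed in Appendix A, a generic C$^*$ algebra $M$ that is simultaneously existentially closed \emph{and} nuclear. Concretely, I set up the forcing over the class of unital C$^*$ algebras, whose conditions are exactly the satisfiable finite conditions $p(x)$ introduced above; since this class is universally (hence $\forall\exists$-) axiomatizable, the forcing theorem of Appendix A guarantees that the generic model $M$ is an e.c.\ C$^*$ algebra. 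The remaining task is to arrange, by forcing, that $M$ omits for every $m$ and every $k\geq1$ the ``non-nuclearity type'' $\{\Phi_{m,n}(x)\geq \frac{1}{k} : n\geq1\}$, so that $M$ becomes nuclear.

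To omit these types I would prove the crucial density statement, which is precisely where the hypothesis enters: for each $m$, $k$, and tuple of variables $x$, the set of conditions $q$ forcing $\Delta_{\nuc m}(x)<\frac{1}{k}$ is dense. Given any condition $p$ mentioning $x$, satisfiability of $p$ together with good nuclear witnesses produces a C$^*$ algebra $A$ and a realizing tuple whose $x$-part $a$ satisfies $\Delta^A_{\nuc m}(a)<\frac{1}{k}$ (restricting to the $x$-coordinates only decreases $\Delta_{\nuc}$). Because $\Delta^A_{\nuc m}(a)=\inf_n\Phi^A_{m,n}(a)$, some $\Phi_{m,n}(a)<\frac{1}{k}$, and because each $\Phi_{m,n}$ is \emph{existential}, say $\Phi_{m,n}(x)=\inf_z\theta(x,z)$ with $\theta$ quantifier-free, there is a witnessing tuple $b$ with $\theta(a,b)<\frac{1}{k}$. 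Adjoining the quantifier-free requirement $\theta(x,z)<\frac{1}{k}$ in fresh variables $z$ to $p$ yields a satisfiable $q\supseteq p$ in which $\Delta_{\nuc m}(x)\leq\Phi_{m,n}(x)\leq\theta(x,z)<\frac{1}{k}$, proving density. Genericity then forces $\Delta^M_{\nuc m}=0$ on a dense set of tuples, and since $\Delta_{\nuc m}$ is uniformly continuous in its argument, $M$ is nuclear.

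It then remains to descend to $\O_2$ and invoke the earlier structure theory. Passing to a separable elementary substructure $A_0\preceq M$, existential closedness is inherited by $A_0$ exactly as in the proof that e.c.\ algebras have the SDP, and nuclearity is inherited as well, since $\Delta^{A_0}_{\nuc m}(a)=\inf_n\Phi^{A_0}_{m,n}(a)=\inf_n\Phi^{M}_{m,n}(a)=0$ by elementarity. Thus $A_0$ is a separable, unital, e.c., nuclear C$^*$ algebra, so $A_0\cong\O_2$ by Proposition \ref{onlypossibility}; hence $\O_2$ is existentially closed and KEP follows from Theorem \ref{KEPequiv}.

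I expect the main obstacle to be the verification that the forcing delivers existential closedness and the omission of all non-nuclearity types at once: one must check that the two families of dense sets---those responsible for e.c.-ness via Appendix A and those driving $\Delta_{\nuc}\to0$---are jointly usable, and that the nuclearity density arguments, run only for the generic constants, propagate to every finite tuple of $M$ by uniform continuity of $\Delta_{\nuc}$. The good nuclear witnesses hypothesis is exactly what keeps these nuclearity density sets nonempty, furnishing at each stage a realization of the current condition along which $\Delta_{\nuc}$ can be pushed below any prescribed threshold; the existential form of the $\Phi_{m,n}$ from \cite{FHRTW} is what converts such a realization into an actual quantifier-free extension of the condition.
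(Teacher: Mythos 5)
Your proposal is correct and takes essentially the same route as the paper: its proof of (2)$\Rightarrow$(1) applies the Omitting Types Theorem of Ben Yaacov--Iovino to the sentences $\sup_x \Delta_{\nuc m}(x)$ (whose hypothesis, the satisfiability of $p\cup\{\Delta_{\nuc m}(x)<\frac{1}{k}\}$, is exactly your density statement, enabled by the existential form of the $\Phi_{m,n}$ from \cite{FHRTW}), then invokes the Appendix A fact that the resulting finitely generic model is existentially closed, and concludes via Proposition \ref{onlypossibility} and Theorem \ref{KEPequiv} just as you do. Your explicit density argument simply unwinds the machinery that the cited Omitting Types Theorem packages, so the two proofs coincide in substance.
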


\begin{proof}
(1) implies (2) follows from the previous theorem.  We now show (2) implies (1).  The idea is to use a version of the Omitting Types Theorem for continuous logic.  The version we use is Corollary 4.7 from \cite{BI}; for the sake of the reader, we include the formulation of this result in Appendix A.

Let $\varphi_{m,k}$ be the infinitary sentence $\sup_x\Delta_{\nuc m}(x)$.  (Yes, we know that $k$ doesn't appear in $\varphi_{m,k}$.)  Let $r_{m_k}:=\frac{1}{k}$.  Then assumption (2) implies that $p\cup \{\Delta_{\nuc m}(x)<\frac{1}{k}\}$ is satisfiable for any satisfiable condition $p$.  Thus the Omitting Types theorem applies and gives us a canonical structure $M$ (also defined in Appendix A) such that $\varphi_{m,k}^M\leq \frac{1}{k}$ for every $m,k$, i.e. $\sup_x \Delta^M_{\nuc m}(x)=0$ for all $m$.

The proof of the Omitting Types Theorem in \cite{BI} shows that the aforementioned canonical structure $M$ is a so-called \emph{finitely generic} structure.  In particular, $M$ is existentially closed!  (This is well-known in classical logic; see \cite[Theorem 8.13]{Keisler} for a proof.  We give a proof of this in the continuous case in Appendix A.)  It follows that $M$ is existentially closed and nuclear, whence KEP holds (and in fact $M$ must be $\O_2$!).

\end{proof}

At this point two remarks are in order.  First, define a \emph{basic condition} to be a finite set of formulae of the form $|\|t(x)\|-r|<\epsilon$ for some *polynomial $t(x)$ and some $r$ and $\epsilon$.  We claim that in the above proof it is enough to ask that satisfiable basic conditions have good nuclear witnesses.  Indeed, suppose that satisfiable basic conditions have good nuclear witnesses and that $p$ is a satisfiable condition.  Let $a$ be a tuple satisfying $p$ in $\B(H)$.  The quantifier-free formulae in $p$ are of the form $f(\varphi_1,\ldots,\varphi_n)$, where each $\varphi_i$ of the form $\|t(x)\|$ for some term $t$.  Let $t_1,\ldots,t_m$ enumerate all terms appearing in the q.f. formulae in $p$ and let $r_i:=\|t_i(a)\|$.  Then for each $\delta>0$, by assumption the satisfiable basic condition asking that $|\|t_i(x)\|-r_i|<\delta$ has good nuclear witnesses.  If we choose $\delta$ small enough, such witnesses also witness $p$.

Second, let us remark that good nuclear witnesses can be reformulated in an \emph{a priori} weaker way, namely that, for every satisfiable condition $p$ and $\epsilon>0$, there is a tuple $a$ in $\B(H)$ witnessing $p$ and such that there are cpc maps $\phi:S\to M_k$ and $\psi:M_k\to \B(H)$ for which $\|(\psi\circ \phi)(a)-a\|<\epsilon$, where $S$ is the operator system generated by $a$.  Indeed, if this is the case, let $A$ be the C$^*$ algebra generated by $a$ and the image of $\psi$.  We can then extend $\phi$ to a cpc map $\phi:A\to M_k$ and it remains to notice that $\psi$ takes values in $A$.

\

It is useful to define a local version of good nuclear witnesses. For a tuple $a$ in a C$^*$-algebra, we define the ``quantifier-free type'' of $a$ to be the subset of all conditions $p(x)$ for which it holds that $\varphi(a)^A<r$ for all $``\varphi(x)<r"\in p$. We say that the tuple $a$ has \emph{good nuclear witnesses} if for every $\e>0$ and every condition $p(x)$ in the quantifier-free type of $a$, there is a tuple $a'\in \B(H)$ which satifies $p$ with $\Delta_{\nuc}^{\B(H)}(a')<\e$. In other words, there exists a sequence $(a_n)$ of tuples in $\B(H)$ so that $\Delta_{\nuc}^{\B(H)}(a_n)<1/n$ and so that the obvious map $a\mapsto (a_i)^\bullet$ extends to an embedding of $C^*(a)$ into $\B(H)^\omega$.
Naturally, we will say that a C$^*$-algebra $A$ admits good nuclear witnesses if every finite tuple $a\in A$ does. By the previous remark, every satisfiable condition has good nuclear witnesses if and only if every (separable) C$^*$-algebra has good nuclear witnesses. Additionally, it follows by the same argument as in Theorem \ref{witnesses} that $A$ admits good nuclear witnesses if there is a nuclear C$^*$-algebra which is an e.c.\ model of the universal theory of $A$.

We can see that having good nuclear witnesses is an \emph{a priori} weaker property than $\O_2^\omega$-embeddability, though it shares many of the key properties, e.g., being closed under substructure, tensor products, and (full) free products. A very natural question to ask then is whether the local version of Theorem \ref{witnesses} holds:

\begin{question} Does a separable C$^*$-algebra $A$ have good nuclear witnesses if and only if it is $\O_2^\omega$-embeddable?
\end{question}

For an exact $C^*$-algebra $A$ there is a well-developed notion of entropy for an action on $A$ by an automorphism due to Voiculescu and Brown (see \cite{Brown}).

\begin{question} Can one extend this to an entropy theory for actions by automorphisms for the class of $C^*$-algebras which admit good nuclear witnesses?
\end{question}

\noindent Since KEP is equivalent to permanence of the class of $\O_2^\omega$-embeddable C$^*$-algebras under crossed products by general automorphisms, such an entropy theory could provide important insight into this problem.

We now remark on the connection between good nuclear witnesses and the phenomena of soficity/hyperlinearity for discrete groups. See \cite{pestov} for an introductory treatment of this topic. To recall, a discrete group is said to be \emph{sofic}
if it is embeddable in a metric ultraproduct of finite symmetric groups equipped with the Hamming distance. The closest known analog of soficity in the category of C$^*$-algebras is the embeddability of $C_r^*(G)$ in a C$^*$-ultraproduct of matrix algebras (in which case $C_r^*(G)$ admits good nuclear witnesses). In contrast with soficity few groups are known to satisfy the latter, though it is an open problem whether these two properties are equivalent.

From such observations it is reasonable to draw a loose analogy between KEP and the open problem of whether every discrete group is sofic. We venture to make this slightly more precise.

\begin{question}  Does a stably finite C$^*$-algebra have good nuclear witnesses if and only if it embeds in an ultraproduct of matrix algebras? For a discrete group $G$ are either of these conditions on $C_r^*(G)$ equivalent to soficity?
\end{question}

As very preliminary evidence, we note that if $G$ embeds in an ultraproduct $\prod_\omega H_n$ of discrete amenable groups (whence $G$ is sofic), then the faithful unitary representation $\pi$ of $G$ induced by the embedding extends to an embedding $C^*(\pi(G))\to \prod_\omega C^*(H_n)$, whence $C^*(\pi(G))$ admits good nuclear witnesses. In fact it is easy to see that $\pi$ weakly contains the left-regular representation, so $C_r^*(G)$ is  quotient of $C^*(\pi(G))$.

\

Finally, we remark on the formulation of an operator system version of KEP. It is not hard to see that KEP implies that every separable C$^*$-algebra $A$ (equivalently, every separable operator system) has a complete order embedding as an operator system in $\cS^\omega$, i.e., there is a u.c.p.\ embedding $\phi: A\to \cS^\omega$ so that $\phi^{-1}: \phi(A)\to A$ is also u.c.p.  (Recall that $\cS$ is the universal, separable UHF C$^*$ algebra.) However, it is easy to see that this fact holds without assuming KEP.  Indeed, fix a separable operator system $E$ realized concretely in $\B(H)$ for a separable Hilbert space $H$.  Choose an increasing sequence  $(p_n)$ of finite rank projections in $H$ converging strongly to the identity.  Then the map $x\mapsto (p_n x p_n)^\bullet:E\to \prod_\omega M_{r(n)}(\bC)\subset \cS^\u$ is a complete order embedding, where $r(n) := {\rm rank}(p_n)$.

In \cite{MTOA3}, the authors call a model $\m$ of some theory $T$ \emph{locally universal} if every model of $T$ embeds in an ultrapower of $\m$; equivalently, if $\Th_\forall(\cN)$ is dominated by $\Th_\forall(\m)$ for every $\cN\models T$.  Thus CEP (resp., KEP) asks whether or not $\cR$ is locally universal for the theory of tracial von Neumann algebras (resp., whether or not $\O_2$ is locally universal for the theory of C$^*$ algebras).  By abstract model theory, one can show that there is a locally universal object in each of the classes of tracial von Neumann algebras, C$^*$ algebras, and operator systems.  In the former two cases, we cannot identify a concrete locally universal object, whereas the previous paragraph shows that $\cS$ is a locally universal operator system.

Since $\cR$ embeds into any II$_1$ factor, we see that CEP is equivalent to $\Th_\forall(M)=\Th_\forall(\cR)$ for each II$_1$ factor $\cR$.  This motivates us to ask the following:

\begin{question}
Is $\Th_\forall(\cS)$ the unique universal theory of infinite-dimensional operator systems?
\end{question}

\section{Tubularity}

In what follows, we will be considering maps $\rho:A\to B^\omega$.  Given such a map $\rho$, we define maps $\rho_i:A\to B$ in such a way that $\rho(a)=(\rho_i(a))^\bullet$.  Of course the maps $\rho_i$ are not uniquely defined.  If $\rho(1)=1$, we can (and will) always suppose that $\rho_i(1)=1$.

Following Jung \cite{Jung}, we make the following definition.

\begin{df} Let $A$ and $B$ be unital C$^*$-algebras. A u.c.p.\ map $\rho = (\rho_i)^\bullet : A\to B^\omega$ is \emph{tubular} if for any finite subset $F\subset A$ and any $\delta>0$ there exists $k$ so that there are u.c.p.\ maps $\phi_i: B\to M_k(\bC)$, $\psi_i: M_k(\bC)\to B$ so that $\|\rho_i(x) - (\psi_i\circ\phi_i)\circ\rho_i(x)\|<\delta$ for all $x\in F$ and for almost all $i$.
\end{df}

\begin{rmks}
\

\begin{enumerate}
\item Tubularity does not depend on the representative sequence $(\rho_i)^\bullet$.
\item Tubular maps are nuclear.
\item If $B$ is nuclear, then for any embedding $\theta: A\to B$, the ``diagonal'' embedding $\theta^\omega: A\to B^\omega$ obtained by composing $\theta$ with the diagonal embedding $B\to B^\omega$ is tubular.
\end{enumerate}
\end{rmks}

\begin{lemma}
Suppose that $A$ is separable and there exists $B$ and an elementary tubular embedding $\rho:A\to B^\omega$.  Then $A$ is nuclear.
\end{lemma}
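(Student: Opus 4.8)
The plan is to verify nuclearity of $A$ through the quantitative invariant $\Delta^A_{\nuc m}$: it suffices to show that $\Delta^A_{\nuc m}(a)=0$ for every $m$ and every $m$-tuple $a$ from $A$. The key leverage is the Fact recalled above, that $\Delta^A_{\nuc m}(a)=\inf_n \Phi_{m,n}(a)$ for certain \emph{existential} formulae $\Phi_{m,n}$, valid uniformly across all C$^*$ algebras. Since $\rho$ is elementary --- and in particular existential --- each such existential formula is preserved, so $\Phi_{m,n}(a)^A=\Phi_{m,n}(\rho(a))^{B^\omega}$ for every $n$. Taking the infimum over $n$ on both sides yields $\Delta^A_{\nuc m}(a)=\Delta^{B^\omega}_{\nuc m}(\rho(a))$, and the problem reduces to showing that $\rho(a)$ is ``approximately nuclear'' inside $B^\omega$.

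For this reduction I would exploit tubularity. Fixing the tuple $a=(a_1,\dots,a_m)$ and $\delta>0$, tubularity supplies some $k$ and u.c.p.\ maps $\phi_i:B\to M_k(\bC)$ and $\psi_i:M_k(\bC)\to B$ with $\|\rho_i(a_j)-(\psi_i\circ\phi_i)(\rho_i(a_j))\|<\delta$ for all $j\leq m$ and almost all $i$. The idea is to assemble these into ultraproduct maps $\Phi:=(\phi_i)^\bullet:B^\omega\to M_k(\bC)^\omega$ and $\Psi:=(\psi_i)^\bullet:M_k(\bC)^\omega\to B^\omega$, which are again completely positive contractions, and to observe that $M_k(\bC)^\omega$ is finite-dimensional. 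Because the ultraproduct maps act coordinatewise, $(\Psi\circ\Phi)(\rho(a_j))=(\psi_i(\phi_i(\rho_i(a_j))))^\bullet$, so the tubularity estimate gives $\max_{j\leq m}\|(\Psi\circ\Phi)(\rho(a_j))-\rho(a_j)\|\leq \delta$. Since $\Phi,\Psi$ factor through the finite-dimensional algebra $M_k(\bC)^\omega$, this shows $\Delta^{B^\omega}_{\nuc m}(\rho(a))\leq\delta$; letting $\delta\to 0$ gives $\Delta^{B^\omega}_{\nuc m}(\rho(a))=0$. Combining with the first step, $\Delta^A_{\nuc m}(a)=0$ for every $m$ and $a$, whence $A$ is nuclear.

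The genuinely delicate point is the coordinatewise factorization. I must check that $(\phi_i)^\bullet$ and $(\psi_i)^\bullet$ are well defined completely positive contractions on the respective ultraproducts --- uniform boundedness by contractivity makes well-definedness routine --- and, crucially, that tubularity is phrased in terms of the \emph{components} $\rho_i$, so that the pointwise estimates reassemble cleanly under $(\cdot)^\bullet$ with no loss. The only other thing to keep straight is that the transfer in the first step uses only the existential fragment preserved by $\rho$, applied to the formulae $\Phi_{m,n}$; thus elementarity is more than enough, and existentiality of $\rho$ would already suffice.
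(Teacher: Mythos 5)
Your proof is correct, but it takes a genuinely different route from the paper's. The paper disposes of this lemma in one line: by Remark (2) tubular maps are nuclear, and an elementary embedding is in particular existential, so the lemma is immediate from Theorem \ref{exactec2} (an existential nuclear embedding $A\to B$ forces $A$ to be nuclear), whose proof runs through an explicit quantifier-free formula encoding a positive matrix $u\in M_{k_n}(B)$, pulls witnesses back into $A$ by existential closedness, and invokes functional calculus and Choi's theorem to recover genuine c.p.c.\ maps. You instead bypass Theorem \ref{exactec2} entirely: you transfer the value of $\Delta_{\nuc m}$ along $\rho$ using the \cite{FHRTW} fact that $\Delta^A_{\nuc m}(a)=\inf_n\Phi_{m,n}(a)$ for existential formulae $\Phi_{m,n}$, and then kill $\Delta^{B^\omega}_{\nuc m}(\rho(a))$ by assembling the tubularity witnesses into ultraproduct u.c.p.\ maps $(\phi_i)^\bullet:B^\omega\to M_k(\bC)^\omega\cong M_k(\bC)$ and $(\psi_i)^\bullet:M_k(\bC)\to B^\omega$. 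This is sound: the transfer step correctly avoids the trap that $\Delta_{\nuc}$ is only monotone the wrong way under inclusions (as the paper notes, $\Delta^B_{\nuc m}(a)\leq\Delta^A_{\nuc m}(a)$ for $A\subseteq B$), the ultrapower of $M_k(\bC)$ collapses to $M_k(\bC)$ by compactness of its unit ball, and ultraproducts of u.c.p.\ maps are u.c.p.\ --- and, crucially, tubularity provides the $\phi_i$ on all of $B$, which is exactly what makes your $\Phi$ a map on $B^\omega$ factoring the \emph{identity} of $B^\omega$ through a finite-dimensional algebra on the tuple $\rho(a)$, which is what $\Delta^{B^\omega}_{\nuc m}$ demands. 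What each approach buys: the paper's reduction is shorter given its prior infrastructure, and its underlying Theorem \ref{exactec2} is more elementary (no appeal to the definability fact, which is quoted from \cite{FHRTW} as a black box); your argument is self-contained modulo that fact, avoids re-extracting witnesses inside $A$ via functional calculus, and in effect proves a sharpening of Remark (2) --- tubularity yields not merely nuclearity of the map $\rho$ but that each image tuple has $\Delta^{B^\omega}_{\nuc}$ equal to zero --- which dovetails with the ``good nuclear witnesses'' viewpoint of Section 3. Your closing observation that existentiality of $\rho$ already suffices matches the paper, whose cited theorem assumes only an existential nuclear embedding.
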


\begin{proof}
We already showed above that if a C$^*$ algebra admits an existential nuclear embedding into another algebra, then it is nuclear.
\end{proof}

Nuclear maps on separable C$^*$-algebras are always liftable by a result of Choi and Effros (see \cite[6.1.4]{Ror}). We are able to offer an alternative quantitative proof in the tubular case.

\begin{prop}\label{lifting} Let $A$ be a unital, separable C$^*$-algebra. If the u.c.p.\ map $\rho=(\rho_i)^\bullet: A\to B^\omega$ is tubular, then there is a u.c.p.\ lift $(\tilde\rho_i: A\to B)$ which may furthermore be chosen so that $\tilde{\rho_i}$ is finite rank almost everywhere.
\end{prop}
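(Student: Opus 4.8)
The plan is to build the lift level-by-level through the finite-dimensional matrix algebras supplied by tubularity, and then to splice these levels together by a standard ultrafilter reindexing. Since $A$ is separable, I fix a dense sequence $(a_m)$ in $A$ and put $F_n := \{a_1,\dots,a_n\}$. Applying the definition of tubularity with $F = F_n$ and $\delta = \frac1n$ yields an integer $k_n$, together with u.c.p.\ maps $\phi_i^{(n)} : B \to M_{k_n}(\bC)$ and $\psi_i^{(n)} : M_{k_n}(\bC) \to B$, such that $\|\rho_i(x) - (\psi_i^{(n)}\circ\phi_i^{(n)})\circ\rho_i(x)\| < \frac1n$ for all $x \in F_n$ and for almost all $i$.

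The main obstacle is that the chosen representatives $\rho_i$ of a u.c.p.\ map into an ultrapower need \emph{not} themselves be u.c.p.\ (this is precisely what makes the lifting problem nontrivial), so one cannot simply set $\tilde\rho_i := (\psi_i^{(n)}\circ\phi_i^{(n)})\circ\rho_i$ and hope for u.c.p.\ maps. The device that resolves this is that the middle algebra is finite-dimensional: since $M_{k_n}(\bC)$ is finite-dimensional, every bounded sequence has a unique $\omega$-limit and hence $M_{k_n}(\bC)^\omega \cong M_{k_n}(\bC)$ canonically. Thus $(\phi_i^{(n)})^\bullet$ descends to an honest u.c.p.\ map $B^\omega \to M_{k_n}(\bC)$, and I set $\Phi^{(n)} := (\phi_i^{(n)})^\bullet \circ \rho : A \to M_{k_n}(\bC)$, which is u.c.p.\ as a composition of u.c.p.\ maps. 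Now I define $\tilde\rho_i^{(n)} := \psi_i^{(n)} \circ \Phi^{(n)} : A \to B$; each is u.c.p.\ and has range inside $\psi_i^{(n)}(M_{k_n}(\bC))$, hence is of finite rank. A short computation shows $(\tilde\rho_i^{(n)}(a_m))_i^\bullet = (\psi_i^{(n)}\phi_i^{(n)}\rho_i(a_m))_i^\bullet$ in $B^\omega$, since $\psi_i^{(n)}$ is contractive and $\phi_i^{(n)}(\rho_i(a_m)) \to \Phi^{(n)}(a_m)$ along $\omega$; combined with the tubularity estimate this gives $\|\rho(a_m) - (\tilde\rho_i^{(n)})^\bullet(a_m)\|_{B^\omega} \le \frac1n$ for all $m \le n$.

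It remains to diagonalize over $n$. For each $n$ the estimate above, unwound at the level of representatives, shows that $U_n := \{\, i : \|\tilde\rho_i^{(n)}(a_m) - \rho_i(a_m)\| < \tfrac2n \text{ for all } m \le n \,\}$ belongs to $\omega$; replacing $U_n$ by $\bigcap_{l\le n} U_l$ I may assume the $U_n$ are decreasing. Setting $m(i) := \max\{\, n \le i : i \in U_n \,\}$ (and $m(i) := 0$ if this set is empty), the fact that each $U_n$ lies in $\omega$ and $\omega$ is nonprincipal forces $m(i) \to \infty$ along $\omega$. Finally I put $\tilde\rho_i := \tilde\rho_i^{(m(i))}$ when $m(i) \ge 1$ and let $\tilde\rho_i$ be any fixed finite-rank u.c.p.\ map otherwise; the exceptional set is $\omega$-null, so each $\tilde\rho_i$ is u.c.p.\ and finite rank for almost all $i$. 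Because $i \in U_{m(i)}$ whenever $m(i)\ge 1$, for fixed $a_m$ and $i$ with $m(i)\ge m$ one has $\|\tilde\rho_i(a_m) - \rho_i(a_m)\| < \frac{2}{m(i)}$, so $\lim_\omega \|\tilde\rho_i(a_m)-\rho_i(a_m)\| = 0$; as $(a_m)$ is dense and all maps are contractive, $(\tilde\rho_i)^\bullet = \rho$, completing the construction.
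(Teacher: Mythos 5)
Your proof is correct, but it takes a genuinely different route from the paper's at the key step. Both arguments confront the same obstruction: the representatives $\rho_i$ of a u.c.p.\ map into $B^\omega$ need not themselves be u.c.p. The paper resolves this quantitatively and representative-by-representative: it restricts each $\rho_i$ to a finite-dimensional operator system $E$, symmetrizes to make $\rho_i^E$ unital and $\ast$-linear, controls $\|\phi_i\circ\rho_i^E\|_{\cb}$ via Smith's lemma (the cb-norm of a map into $M_r(\bC)$ is its $r$-norm), perturbs to an honest u.c.p.\ map $\sigma_i:E\to M_r(\bC)$ using the perturbation lemma \cite[Lemma 6.1.7]{Ror}, and then applies Arveson extension to pass from $E$ to $A$. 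You sidestep all of this machinery with one soft observation: since $M_{k_n}(\bC)$ is finite-dimensional, $M_{k_n}(\bC)^\omega\cong M_{k_n}(\bC)$, so $\Phi^{(n)}:=(\phi_i^{(n)})^\bullet\circ\rho$ is an honest u.c.p.\ map on all of $A$ simply because $\rho$ is, with no perturbation, no Smith's lemma, and no Arveson extension needed; the discrepancy between $\Phi^{(n)}$ and the individual $\phi_i^{(n)}\circ\rho_i$ is absorbed into the $\omega$-limit, which is all that matters for a lift. What each approach buys: the paper's argument is deliberately quantitative (it is advertised as an ``alternative quantitative proof'' of Choi--Effros liftability), producing for each good index $i$ a u.c.p.\ map uniformly close in cb-norm to $\phi_i\circ\rho_i^E$, a local statement reusable elsewhere; your inner map $\Phi^{(n)}$ is the same for all $i$ at a given level and tracks only the $\omega$-limit, so the argument is softer but shorter. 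On the other hand, your diagonalization over $n$ -- the sets $U_n$, the index function $m(i)$, and the verification that $m(i)\to\infty$ along $\omega$ -- is carried out fully and correctly, whereas the paper compresses this step into the phrase ``letting $\e$ tend to $0$''; your writeup is actually more complete at that point.
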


\begin{proof} For $E$ a finite-dimensional unital operator subspace of $A$, let $\rho_i^E$ denote the restriction of $\rho_i$ to $E$.  It suffices to show that, for any such $E$ and any $\e>0$, there exists $r\geq 1$ and, for almost all $i$, u.c.p. maps $\sigma_i:E\to M_r(\bC)$ and $\tau_i:M_r(\bC)\to B$ so that $\|\tau_i\circ \sigma_i-\rho_i^E\|\leq \e$.  Indeed, we may then use Arveson extension to extend each $\sigma_i$ to a u.c.p. map $\tilde{\sigma}_i:A\to M_r(\bC)$ and then set $\tilde\rho_j^E = \tau_j\circ\tilde\sigma_j: A\to B$. Letting $E\subset A$ be an arbitrary finite-dimensional operator subspace and letting $\e$ tend to $0$, we have constructed the requisite u.c.p.\ lift $(\tilde\rho_i: A\to B)$ of $\rho$.

Fix once and for all $\e>0$ and $F\subset A$ a finite set of self-adjoint elements, and let $E\subset A$ be the finite-dimensional operator system spanned by $F$ and $1$.  Restricting $\rho_i$ to the finite-dimensional space $E$, we may assume without loss of generality that each $\rho_i^E$ is unital, linear, and $\ast$-linear. (Choose a basis $1, v_1, \dotsc, v_n$ of $E$, and replace $\rho_i$ with $\rho_i'$ which is the linear extension of $\rho_i'(1) = 1, \rho_i'(v_j)= (\rho_i(v_j) + \rho_i(v_j^*)^*)/2$. Since $\rho_i$ converges pointwise to a unital, $\ast$-linear map $\rho$, thus uniformly on bounded subsets of $E$, so does $\rho_i'$.)

Since $\rho$ itself is u.c.p.\ and $E$ is finite-dimensional, it must be the case that for each $n$, $\lim_\cU\|\rho_i^E\otimes\id_{M_n(\bC)}\|\leq \|\rho\|_{\cb} = 1$. Choose $I_n\in \omega$ so that $\sup_{i\in I_n} \|\rho_i^E\otimes\id_{M_n(\bC)}\|\leq 1+\e/2$. Since $\rho$ is tubular, there exists $r$ and $J\subset I_r$ generic so that there are u.c.p.\ maps $\phi_j: B\to M_r(\bC)$ and $\psi_j: M_r(\bC)\to B$ so that $\|(\psi_j\circ\phi_j)\circ\rho_j^E - \rho_j^E\|< \e/2$ for $j\in J$. Consider the map $\sigma_j':= \phi_j\circ\rho_j^E: E\to M_r(\bC)$. By Smith's lemma (\cite[Proposition 8.11]{Paulsen}) we have that \[\|\sigma_j'\|_{\cb} = \|\sigma_j'\otimes\id_{M_r(\bC)}\|\leq \|\phi_j\|_{\cb}\cdot\|\rho_j^E\otimes\id_{M_r(\bC)}\|\leq 1 + \e/2.\] By \cite[Lemma 6.1.7]{Ror}, for each $j\in J$, there exists u.c.p. $\sigma_j: E\to M_r(\bC)$ so that  $\|\sigma_j - \sigma_j'\|_{\cb}\leq \e/2$. Setting $\tau_j = \psi_j$, we see that $(\sigma_j, \tau_j)$ are the required pairs of u.c.p.\ maps.

\end{proof}

\begin{prop}\label{uniquetubular} Let $A$ be a unital, separable C$^*$-algebra, and let $B$ be unital, nuclear. Suppose $\rho: A\to B^\omega$ is the unique embedding up to unitary conjugacy. Then $\rho$ is tubular.
\end{prop}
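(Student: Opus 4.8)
The plan is to lean on two facts: tubularity is invariant under unitary conjugation, and the hypothesis pins down $\rho$ up to exactly such conjugation. Hence it suffices to exhibit a single tubular embedding $\sigma\colon A\to B^\omega$; uniqueness then makes $\rho$ unitarily conjugate to $\sigma$, and so $\rho$ is tubular too. I would record the conjugacy-invariance first, as a short computation: if $\sigma=\operatorname{Ad}(u)\circ\rho$ for a unitary $u=(u_i)^\bullet\in B^\omega$ with unitary lifts $u_i$, and $\phi_i\colon B\to M_k(\bC)$, $\psi_i\colon M_k(\bC)\to B$ witness tubularity of $\rho$ on $(F,\delta)$, then $\phi_i':=\phi_i\circ\operatorname{Ad}(u_i^*)$ and $\psi_i':=\operatorname{Ad}(u_i)\circ\psi_i$ are again u.c.p.\ and satisfy $\|\sigma_i(x)-(\psi_i'\circ\phi_i')(\sigma_i(x))\|=\|\rho_i(x)-(\psi_i\circ\phi_i)(\rho_i(x))\|$, so they witness tubularity of $\sigma$ with the \emph{same} $k$.

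There are two natural ways a tubular embedding can arise, and either would finish the argument once it is placed in the conjugacy class of $\rho$. The first is the diagonal route: if $A$ embeds into $B$ via some $\theta\colon A\to B$, then since $B$ is nuclear, Remark~(3) says the diagonal embedding $\theta^\omega$ is tubular. The second, which I would develop as a self-contained lemma, is that nuclearity of the \emph{source} already suffices: if $A$ is nuclear, then every embedding $A\to B^\omega$ is tubular, regardless of the choice of u.c.p.\ lifts $\rho_i$. To see this, fix $F,\delta$ and use nuclearity of $A$ to pick cpc maps $\gamma\colon A\to M_k(\bC)$, $\eta\colon M_k(\bC)\to A$ with $\|\eta\gamma(x)-x\|<\delta$ on $F$. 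The subtle point is to convert this \emph{pre}-composition factorization of $\id_A$ into the \emph{post}-composition factorization required by tubularity: since $\rho$ is an isometric homomorphism, $\rho(a)\mapsto\gamma(a)$ is well defined and u.c.p.\ on $\rho(A)$, and Arveson-extends to a u.c.p.\ map $\Phi\colon B^\omega\to M_k(\bC)$ with $\Phi\circ\rho=\gamma$; putting $\Psi:=\rho\circ\eta$ gives $\Psi\circ\Phi\circ\rho=\rho\circ\eta\circ\gamma$, which is within $\delta$ of $\rho$ on $F$. Using the duality $\mathrm{CP}(B,M_k(\bC))\leftrightarrow M_k(B)^+$ together with $M_k(B)^\omega\cong M_k(B^\omega)$ (exactly as in the alternative proof given after Theorem~\ref{exactecopsys}), both $\Phi$ and $\Psi$ are represented by sequences $\phi_i\colon B\to M_k(\bC)$, $\psi_i\colon M_k(\bC)\to B$ of u.c.p.\ maps, and the estimate passes to almost all $i$ with $k$ \emph{uniform} in $i$, precisely because $k$ came from nuclearity of $A$ and not from the varying finite sets $\rho_i(F)\subseteq B$. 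This last point is exactly where the naive ``factor $\rho_i$ through a matrix algebra via $B$'s nuclearity'' strategy breaks down, since there the matrix size is forced to grow with $i$.

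The remaining, and I expect hardest, task is to guarantee that one of these tubular models lies in the conjugacy class of $\rho$; this is where uniqueness and nuclearity of $B$ must be used together. The cleanest target is to show that uniqueness forces $\rho$ to be unitarily conjugate to a diagonal embedding $\theta^\omega$ for an honest $\theta\colon A\to B$ (equivalently, to diagonalize $\rho$), after which Remark~(3) and the first paragraph finish. The obstacle is upgrading the bare uniqueness of $\rho$ up to conjugacy into an approximate unitary intertwining, in the spirit of the strongly self-absorbing case sketched at the end of Section~5. I would attack this by a back-and-forth/reindexing argument inside $B^\omega$: at each stage manufacture the finite-rank u.c.p.\ partial maps using nuclearity of $B$ and the liftability of tubular maps from Proposition~\ref{lifting}, then use separability of $A$ and the countable cofinality of $\omega$ to assemble the partial intertwiners into a single unitary conjugacy onto a diagonal sequence. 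The contrapositive alternative---showing that a non-tubular $\rho$ spawns a second, inequivalent embedding by spreading the failure of uniform local finite-dimensional approximation into two incompatible matricial models, à la Jung---runs through the same technical core, since tubularity is the conjugacy invariant one must learn to control.
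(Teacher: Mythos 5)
Your opening reduction is sound as far as it goes: tubularity is indeed invariant under unitary conjugation, so under the uniqueness hypothesis it would suffice to exhibit any one tubular embedding $A\to B^\omega$. The genuine gap is that neither of your two candidate constructions exists under the stated hypotheses, so your proof never actually gets off the ground. Route (a) requires an embedding $\theta\colon A\to B$, which is not given (and is the wrong target in general: the hypothesis only provides maps into $B^\omega$). Route (b) requires $A$ to be nuclear, which is also not given --- and assuming it would be circular relative to the proposition's purpose, since the corollaries that follow it in the paper \emph{derive} exactness of $A$, and (for separable models of $\Th(\O_2)$) nuclearity, from this very result. Thus the entire burden falls on your third paragraph, which you candidly present as a plan rather than a proof; moreover that plan aims at a target (conjugating $\rho$ to a diagonal embedding $\theta^\omega$) that again presupposes $A\hookrightarrow B$. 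A secondary flaw: your claim that $\Phi\colon B^\omega\to M_k(\bC)$ ``is represented by sequences'' is not justified by the duality you cite, which handles maps \emph{out of} matrix algebras (i.e., $\Psi$, via lifting positive elements of $M_k(B^\omega)\cong M_k(B)^\omega$); exact sequential representations of u.c.p.\ maps out of an ultrapower generally fail, and even the approximate version on a finite set would need a separate density-of-limit-matrix-states argument.

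For comparison, here is where the paper locates the real work, which is precisely the step you deferred. It first proves a quantitative reformulation of the uniqueness hypothesis (the ``Claim,'' in the spirit of Jung): for every $\delta$ and $m$ there exist $k,n,\e$ such that any two unital maps $\xi,\eta\in X_A(F_l,n,\e)$ (maps defined on the finite set $F_l$, $l\geq k$, approximately preserving norms of degree-$n$ $*$-polynomials) are $\delta$-unitarily conjugate on $F_m$; the proof is an ultraproduct/sparsification argument converting uniqueness up to conjugacy into this uniform statement about ``microstates'' in $B$ itself. Given the Claim, since $\rho_i\in X_A(F_l,n,\e)$ for almost all $i$, one fixes a \emph{single} $\xi\in X_A(F_l,n,\e)$ and obtains unitaries $u_i\in U(B)$ with $\|u_i\xi(x)u_i^*-\rho_i(x)\|<\delta/8$ on $F_l$. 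Nuclearity of $B$ is then invoked exactly once, applied to the finite set $\xi(F_l)\subseteq B$, yielding one pair of u.c.p.\ maps $\phi\colon B\to M_k(\bC)$, $\psi\colon M_k(\bC)\to B$; setting $\phi_i:=\phi\circ\operatorname{Ad}(u_i^*)$ and $\psi_i:=\operatorname{Ad}(u_i)\circ\psi$ gives witnesses with the matrix size $k$ uniform in $i$, which is what tubularity demands (your own observation about why the naive use of $B$'s nuclearity fails --- the matrix size grows with $i$ --- is exactly right, and this conjugate-to-one-microstate device is how the paper defeats it). Note the crucial flexibility: $\xi$ is merely a unital map on a finite set, not a homomorphism on $A$, so the paper never needs $A\hookrightarrow B$ nor any regularity of $A$ --- the flexibility your diagonalization target gives up. Until you supply an argument of this kind, the uniqueness hypothesis is never actually used in a completed step of your proof.
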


\begin{proof} Let $1\in F\subset A$ be any finite subset of self-adjoint elements. We denote by $X_A(F, n,\e)$ the set of unital maps $\varphi: F\to B$ so that $\|p(\varphi(F))\| \sim_\e \|p(F)\|$ for any non-commutative polynomials $p(X) =\sum_i c_i X^{\alpha(i)}$ of degree $n$ in $|F|$ variables with $\sum_i |c_i|\leq 1$. Let $1\in F_1\subset F_2\subset\dotsb \subset F_n\subset\dotsb$ be a sequence of finite subsets of self-adjoint elements of $(A_{sa})_1$ so that $\bigcup_n F_n$ is dense in $(A_{sa})_1$ and generates $A$ as a C$^*$-algebra.

\

\noindent \textbf{Claim:} $\rho = (\rho_i)^\bullet$ is the unique embedding up to unitary conjugacy if and only if for any $\delta, m$, there exists $k, n, \e$ so that for any $l\geq k$ and $\xi, \eta\in X_A(F_l,n,\e)$ there is a unitary $u\in U(B)$ so that $\|u\xi(x)u^* - \eta(x)\|<\delta$ for all $x\in F_m$.

\

\noindent \textbf{Proof of Claim:} Note that two embeddings $(\rho_i)_\omega$ and $(\rho_i')_\omega$ of $A$ are unitarily conjugate in $B^\omega$ if and only if they are approximately unitarily conjugate if and only if for every $F\subset A$ finite and $\delta>0$ there is a sequence of unitaries $u_i\in U(B)$ so that $\lim_\omega \max_{x\in F} \|\rho_i(x) - u_i^* \rho_i'(x) u_i\| <\delta$. Consider two embeddings $\rho = (\rho_i)_\omega, \rho' = (\rho_i')_\omega$. Let $F$ be any finite subset of self-adjoint elements in $(A_{sa})_1$ and choose $m$ sufficiently large so that $F\subset_{\delta/2} F_m$. For the $k,n,\e$ corresponding to $m,\delta/2$ we have that $\rho_i,\rho_i'\in X_A(F_k,n,\e)$ for almost every $i$, so there exist unitaries $u_i\in U(B)$ so that $\|\rho_i(x) - u_i^*\rho_i'(x) u_i\|<\delta/2$ for all $x\in F_m$ and almost every $i$.  It follows that $\|\rho_i(x) - u_i^*\rho_i'(x) u_i\|<\delta$ for all $x\in F$ and almost every $i$.

Conversely, suppose there exist $m,\delta$ so that for all $k,n,\e$ there is $l\geq k$ and $\xi_l,\eta_l\in X_A(F_l,n,\e)$ so that $\inf_{u\in U(B)} \max_{x\in F_m} \|\xi_l(x) - u^*\eta_l(x) u\|\geq \delta$. Since $F_m$ is finite, by sparsfying, we can find a sequence $(l_p\geq p)$ so that there are $\xi_{l_p},\eta_{l_p} \in X_A(F_{l_p},p,1/p)$ so that $\inf_{u\in U(B)}\max_{F_m}\|\xi_{l_p}(x) - u^*\eta_{l_p}(x) u\|\geq \delta$. The embeddings $\xi = (\xi_{l_p})_{p\in\omega}$ and $(\eta_{l_p})_{p\in\omega}$ then cannot be unitarily conjugate.

\

Let $F$ be a finite subset of $(A_{sa})_1$. For such $\delta/8$, fix $\xi\in X_A(F_l,n,\e)$ for $l$ sufficiently large so that $F\subset_{\delta/8} F_l$. Then, since $\rho_i\in X_A(F_l,n,\e)$ for almost every $i$, there are unitaries $u_i\in B$ so that $\|u_i\xi(x)u_i^* - \rho_i(x)\|<\delta/8$. By the nuclearity of $B$, we can find for some $k$ u.c.p.\ maps  $\phi: B\to M_k(\bC)$ and $\psi: M_k(\bC)\to B$ so that $\|\xi(x) - (\psi\circ\phi)\circ\xi(x)\|<\delta/8$ for all $x\in F_l$. Setting $\phi_i(x) = \phi(u_i^* x u_i)$ and $\psi_i(x) = u_i\psi(x)u_i^*$, we obtain pairs $(\phi_i, \psi_i)$ which are an $(F,\delta)$-witness for tubularity. Since $F$ and $\delta$ were arbitrary, we are done.
\end{proof}

\begin{cor} Suppose that $A$ is a unital, separable C$^*$-algebra, $B$ is a nuclear C$^*$ algebra, and suppose that there is a unique embedding up to unitary conjugacy $A\to B^\omega$.  Then $A$ is exact.
\end{cor}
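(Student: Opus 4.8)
The plan is to upgrade the uniqueness hypothesis to a concrete nuclear embedding and then invoke Kirchberg's characterization of exactness. First I would apply Proposition \ref{uniquetubular} directly: since $A$ is unital and separable, $B$ is nuclear, and $\rho\colon A\to B^\omega$ is the unique embedding up to unitary conjugacy, that proposition yields that $\rho$ is tubular. By Remark (2) following the definition of tubularity, every tubular map is nuclear, so $\rho$ is a nuclear embedding of $A$ into $B^\omega$.

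Next, represent $B^\omega$ faithfully, say $\iota\colon B^\omega\hookrightarrow\B(H)$ for some Hilbert space $H$, and consider the faithful representation $\iota\circ\rho\colon A\to\B(H)$. If $\phi_n\colon A\to M_{k_n}$ and $\psi_n\colon M_{k_n}\to B^\omega$ are cpc maps with $\psi_n\circ\phi_n\to\rho$ point-norm, then $(\iota\circ\psi_n)\circ\phi_n\to\iota\circ\rho$ point-norm with each $\iota\circ\psi_n$ still cpc; hence $\iota\circ\rho$ is again nuclear. Thus $A$ admits a faithful nuclear representation, i.e.\ $A$ is nuclearly embeddable.

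Finally I would appeal to the theorem of Kirchberg that a C$^*$ algebra is exact if and only if it is nuclearly embeddable (see \cite{BO}), concluding that $A$ is exact.

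There is essentially one point to be careful about, and it explains why the conclusion is exactness rather than nuclearity: the uniqueness hypothesis delivers only that $\rho$ is tubular, hence a nuclear embedding, but gives no reason for $\rho$ to be existential or elementary. Consequently neither Theorem \ref{exactec2} nor the Lemma preceding Proposition \ref{lifting} (which require an existential, resp.\ elementary, tubular embedding and yield nuclearity) applies; we must instead pass through the weaker notion of nuclear embeddability and obtain only exactness.
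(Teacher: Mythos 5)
Your proof is correct and is exactly the argument the paper intends: the corollary appears without proof immediately after Proposition \ref{uniquetubular} precisely because it follows from that proposition together with the remark that tubular maps are nuclear and Kirchberg's characterization of exactness as nuclear embeddability. Your closing caveat --- that the hypothesis gives no elementarity or existentiality, so one gets only exactness rather than nuclearity --- is also the right reading of why the paper's conclusion stops at exact.
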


\begin{cor} Let $D$ be unital, separable, simple, and $\O_2$-stable. If $D$ has a unique embedding into $\O_2^\omega$ up to unitary conjugacy, then $D$ is elementarily equivalent to $\O_2$ if and only if $D\cong\O_2$.
\end{cor}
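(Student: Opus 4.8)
The implication $D\cong\O_2\Rightarrow D\equiv\O_2$ is immediate, so the plan is to prove the converse. Assume $D\equiv\O_2$. The entire argument reduces to a single key point: \emph{the unique embedding $\rho:D\to\O_2^\omega$ is elementary}. Granting this, Proposition \ref{uniquetubular} (applied with the nuclear algebra $B=\O_2$) shows that $\rho$ is tubular; since tubular maps are nuclear, $\rho$ is then an existential (indeed elementary) nuclear embedding, and Theorem \ref{exactec2} forces $D$ to be nuclear. This is precisely the content of the unlabeled lemma sitting just before Proposition \ref{lifting}.

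To produce the elementarity I would first manufacture \emph{some} elementary embedding of $D$ into $\O_2^\omega$ by a saturation argument, and then transport elementarity to $\rho$ via uniqueness. For the first step, recall that $\O_2^\omega$ is a C$^*$ ultrapower along the nonprincipal ultrafilter $\omega$ on $\n$, hence countably saturated, while $\O_2^\omega\equiv\O_2\equiv D$ by hypothesis. A separable structure elementarily embeds into any countably saturated structure elementarily equivalent to it: one realizes, one dense element at a time, the type over the previously chosen images of a countable dense sequence of $D$. This yields an elementary embedding $\rho_0:D\preceq\O_2^\omega$. For the second step, the hypothesis says that all embeddings of $D$ into $\O_2^\omega$ are unitarily conjugate, so the given $\rho$ equals $u\rho_0(\cdot)u^*$ for a unitary $u\in\O_2^\omega$ (or is an approximate-unitary limit of such maps). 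Conjugation by a unitary is an inner automorphism of $\O_2^\omega$ and therefore preserves the value of every formula; as formulas are uniformly continuous, the conclusion survives the passage to an approximate-unitary limit. Hence $\varphi^{\O_2^\omega}(\rho(a))=\varphi^{\O_2^\omega}(\rho_0(a))=\varphi^D(a)$ for every formula $\varphi$ and every tuple $a$ from $D$, i.e.\ $\rho$ is elementary, as required.

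With $D$ now known to be nuclear (and simple, separable, unital by hypothesis), I would conclude exactly as in the proof of Proposition \ref{onlypossibility}: Kirchberg's ``$A\otimes\O_2$'' theorem (see \cite[Theorem 7.1.2]{Ror}) gives $D\otimes\O_2\cong\O_2$, while $\O_2$-stability gives $D\cong D\otimes\O_2$, and combining these yields $D\cong\O_2$.

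I expect the heart of the matter to be the passage to the elementary embedding: extracting $\rho_0$ by saturation and checking that unitary conjugacy transports its elementarity to $\rho$. Everything downstream is an application of results already in hand (Proposition \ref{uniquetubular}, Theorem \ref{exactec2}, and Kirchberg's theorem). The one point that demands care is the exact meaning of ``unique up to unitary conjugacy'' — whether this is read as exact or approximate unitary equivalence — but the uniform continuity of formulas makes both readings deliver elementarity of $\rho$.
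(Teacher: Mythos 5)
Your proof is correct and takes essentially the same route as the paper's: the paper's one-sentence argument likewise observes that the unique embedding of $D$ into $\O_2^\omega$ must be elementary (because $\O_2^\omega$ is an $\aleph_1$-saturated model of $\Th(\O_2)\equiv\Th(D)$, so an elementary embedding exists and uniqueness up to unitary conjugacy transports elementarity) and tubular (Proposition \ref{uniquetubular}), whence $D$ is nuclear and therefore isomorphic to $\O_2$ by Kirchberg's theorem together with $\O_2$-stability. You have simply spelled out the saturation and unitary-conjugacy details that the paper leaves implicit.
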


\begin{proof}

If $D$ is a separable model of $\Th(\O_2)$ and has a unique embedding into $\O_2^\omega$ up to unitary conjugacy, then that embedding is tubular and elementary (since $\O_2^\omega$ is an $\aleph_1$-saturated model of its theory), whence $D$ is nuclear and thus isomorphic to $\O_2$.
\end{proof}
%

\begin{question} Let $B$ be unital, separable, exact, (simple?). Does $B\otimes\O_2$ have a unique embedding into $\O_2^\omega$ up to unitary conjugacy? Are any two embeddings $B\to \O_2'\cap \O_2^\omega$ unitarily conjugate?
\end{question}



\section{Some model theory of $\O_2$}

In this section, we use our earlier results to initiate a model theoretic study of the theory of $\O_2$.  First, recall that a structure $M$ is the prime model of its theory if it embeds elementarily into all models of $\Th(M)$.

\begin{prop}
$\mathcal O_2$ is the prime model of its theory.
\end{prop}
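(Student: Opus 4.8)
The plan is to exploit that $\O_2$ is finitely generated and simple, which reduces elementary embeddability to the realizability of a single type. Recall $\O_2=C^*(v_1,v_2)$ where $v_1,v_2$ are isometries with $v_1v_1^*+v_2v_2^*=1$. By universality of $\O_2$, \emph{any} pair $(b_1,b_2)$ in a unital C$^*$ algebra $N$ satisfying these (exact) Cuntz relations induces a unital $*$-homomorphism $j:\O_2\to N$ with $j(v_i)=b_i$, and $j$ is injective since $\O_2$ is simple. Moreover, writing $r:=\tp^{\O_2}(v_1,v_2)$, such a $j$ is \emph{elementary} precisely when $(b_1,b_2)$ realizes $r$: for a $*$-polynomial tuple $\bar t$, the expression $\varphi(\bar t(v_1,v_2))$ is itself a formula in $(v_1,v_2)$, so equality of types forces $\varphi(\bar t(b_1,b_2))^N=\varphi(\bar t(v_1,v_2))^{\O_2}$, and the general case follows by density of $*$-polynomials and uniform continuity of formulas. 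Thus it suffices to show that \emph{every} model $N\models\Th(\O_2)$ contains an exact Cuntz pair realizing $r$.

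First I would produce \emph{some} exact Cuntz pair in $N$. Since $\O_2\models \inf_{w_1,w_2}\max(\|w_1^*w_1-1\|,\|w_2^*w_2-1\|,\|w_1w_1^*+w_2w_2^*-1\|)=0$ and $N\equiv\O_2$, the same $\inf$-sentence has value $0$ in $N$, so $N$ contains approximate Cuntz pairs; weak stability (semiprojectivity) of the Cuntz relations then yields an \emph{exact} pair $(b_1,b_2)$ in $N$. This already gives an embedding $j:\O_2\to N$; the crux is to verify that it is elementary, i.e.\ that $\tp^N(b_1,b_2)=r$.

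For this I would pass to $N^\omega$, which is $\aleph_1$-saturated and satisfies $N\preceq N^\omega$ and $N^\omega\equiv\O_2$. By saturation, $r$ is realized in $N^\omega$, say by an exact Cuntz pair $(w_1,w_2)$. The key input is that in every model of $\Th(\O_2)$ any two exact Cuntz pairs are approximately unitarily conjugate; this is the elementary shadow of the homogeneity of $\O_2$ recorded in Lemma~\ref{ssa} (every unital embedding $\O_2\to\O_2^\omega$ is unitarily conjugate to the diagonal, hence elementary). Granting it, $(b_1,b_2)$ and $(w_1,w_2)$ are approximately unitarily conjugate in $N^\omega$; since conjugation by a unitary of $N^\omega$ is an (inner) automorphism it preserves types, so the realizations $u b_i u^*$ of the fixed type $\tp^{N^\omega}(b_1,b_2)$ converge to $w_i$, forcing $\tp^{N^\omega}(b_1,b_2)=\tp^{N^\omega}(w_1,w_2)=r$ (as the $d$-metric separates complete types). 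As $N\preceq N^\omega$ and $b_1,b_2\in N$, this gives $\tp^N(b_1,b_2)=r$, so $j$ is elementary and $\O_2$ embeds elementarily into $N$. Note that no separability hypothesis on $N$ is needed, since $N^\omega$ is automatically $\aleph_1$-saturated.

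The main obstacle is the ``elementary shadow'' step: transferring the assertion \emph{any two exact Cuntz pairs are approximately unitarily conjugate} from $\O_2$ to an arbitrary $N^\omega$. Pointwise this is not a single first-order sentence; what one needs is the \emph{uniform} version (for every $\delta$ there is $\eta$ so that pairs with Cuntz-defect $<\eta$ are within $\delta$ of being unitarily conjugate), which is genuinely $\forall\exists$-expressible and hence transfers across $\equiv$. I would establish this uniformity for $\O_2$ by a routine ultrapower/compactness argument: a failure would produce, in $\O_2^\omega$, two exact Cuntz pairs that are not approximately unitarily conjugate, contradicting that every unital embedding of $\O_2$ into $\O_2^\omega$ is conjugate to the diagonal (so that any two such are mutually conjugate). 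Verifying this uniformity carefully, together with the weak stability used to land an exact pair in $N$, is where the real work lies; the remainder is bookkeeping with saturation and continuity of types.
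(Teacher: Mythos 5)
Your proof is correct, but it takes a genuinely different route from the paper's. The paper's argument is a short ``sandwich'': given a separable $A\models\Th(\O_2)$, it transfers $\O_2$-stability (which is $\forall\exists$-axiomatizable, Fact \ref{O2stableaxiom}) to get $\O_2\subseteq A$, embeds $A$ \emph{elementarily} into $\O_2^\omega$ using countable saturation of the ultrapower, and then observes that since the composite $\O_2\hookrightarrow\O_2^\omega$ is elementary (every embedding of $\O_2$ into its ultrapower being unitarily conjugate to the diagonal) and the second map is elementary, the inclusion $\O_2\subseteq A$ must be elementary as well -- no type-chasing and no transfer of homogeneity is needed. You instead produce the copy of $\O_2$ in an arbitrary model $N$ via approximate Cuntz pairs plus weak stability (a fine substitute for the paper's $\O_2$-stability step), and then, rather than mapping $N$ into $\O_2^\omega$, you work inside $N^\omega$: you realize the generators' type by saturation and identify it with the type of your Cuntz pair by transferring a uniform, $\forall\exists$-expressible version of ``exact Cuntz pairs are approximately unitarily equivalent'' from $\O_2$ to $N^\omega$. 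Your uniformity lemma is correct -- its ultraproduct proof rests on exactly the same operator-algebraic input as the paper's composite-elementarity step (uniqueness of embeddings $\O_2\to\O_2^\omega$ up to unitary conjugacy, plus lifting unitaries along ultrapowers), and it is expressible as a sentence since the unitary group is definable; it is also the same style of quantitative reformulation the paper uses in the Claim within Proposition \ref{uniquetubular}. What your longer route buys: a stronger local conclusion (every exact Cuntz pair in every model of $\Th(\O_2)$ realizes the complete type of $(v_1,v_2)$, hence generates an elementary copy of $\O_2$) and no separability reduction (the paper's restriction to separable $A$ is harmless by downward L\"owenheim--Skolem, but you avoid it outright). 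What the paper's route buys: economy -- the two-out-of-three trick for elementary maps replaces both your uniformity lemma and the weak-stability perturbation with a three-line argument.
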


\begin{proof}
Suppose that $A\models \Th(\mathcal O_2)$ is separable.
Then $A$ is $\O_2$-stable, whence $A$ contains a copy of $\mathcal O_2$.  We then have $\mathcal O_2\subseteq A \hookrightarrow \mathcal O_2^\omega$, where the embedding of $A$ into $\mathcal O_2^\omega$ is elementary; since the composition $\mathcal O_2\hookrightarrow \mathcal O_2^\omega$ is elementary, we get that the embedding $\mathcal O_2\subseteq A$ is elementary.
\end{proof}

Notice that $\mathcal O_2$ is not a \emph{minimal} model of its theory, that is, it has proper elementary submodels.  Indeed, the embedding $\id\otimes 1\mathcal: \O_2\hookrightarrow \mathcal O_2\otimes \mathcal O_2$ is elementary.

\begin{cor}
Every elementary submodel of $\mathcal O_2$ is isomorphic to $\mathcal O_2$.
\end{cor}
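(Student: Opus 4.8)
The plan is to prove that every elementary submodel $B\preceq\mathcal O_2$ is a separable, unital, simple, nuclear, $\mathcal O_2$-stable C$^*$ algebra, and then to conclude $B\cong\mathcal O_2$ by the same tensor-product argument used in Proposition \ref{onlypossibility}. First, since $B$ is a substructure of the separable algebra $\mathcal O_2$ it is itself separable, and since the inclusion is elementary we have $B\equiv\mathcal O_2$, so $B$ is a separable model of $\Th(\mathcal O_2)$.

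Second, I would read off the ``soft'' properties directly from $B\equiv\mathcal O_2$. The $\mathcal O_2$-stability axioms and the ``simple and purely infinite'' axioms recorded in Fact \ref{O2stableaxiom} are genuine sentences that hold in $\mathcal O_2$; as $B\equiv\mathcal O_2$ they hold in $B$ as well, and since $B$ is separable this forces $B$ to be simple and $\mathcal O_2$-stable, so in particular $B\otimes\mathcal O_2\cong B$.

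Third---and this is the heart of the matter---I would establish that $B$ is nuclear, using that $B$ is an elementary \emph{submodel} of $\mathcal O_2$ and not merely elementarily equivalent to it. By the Fact of \cite{FHRTW} quoted above, for each $m$ there are existential formulae $\Phi_{m,n}$ with $\Delta^A_{\nuc m}(a)=\inf_n\Phi_{m,n}(a)^A$ in every C$^*$ algebra $A$. For any tuple $a$ from $B$, the tuple $a$ also lies in $\mathcal O_2$, and elementarity of the inclusion gives $\Phi_{m,n}(a)^B=\Phi_{m,n}(a)^{\mathcal O_2}$ for every $n$; taking the infimum over $n$ yields $\Delta^B_{\nuc m}(a)=\Delta^{\mathcal O_2}_{\nuc m}(a)=0$, the last equality because $\mathcal O_2$ is nuclear. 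Since $a$ and $m$ were arbitrary, $B$ is nuclear. I expect this to be the step most in need of care: it is tempting to believe nuclearity passes to any $B\equiv\mathcal O_2$, but that is false in general, and the trivial monotonicity $\Delta^{\mathcal O_2}_{\nuc m}(a)\leq\Delta^B_{\nuc m}(a)$ coming from $B\subseteq\mathcal O_2$ points the wrong way. What rescues the argument is the \emph{equality} of formula values supplied by elementarity, together with the representation of $\Delta_{\nuc m}$ as an infimum of existential formulae.

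Finally, with $B$ separable, unital, simple, nuclear, and $\mathcal O_2$-stable, I would finish exactly as in Proposition \ref{onlypossibility}: Kirchberg's ``$A\otimes\mathcal O_2$'' theorem (\cite[Theorem 7.1.2]{Ror}) gives $B\otimes\mathcal O_2\cong\mathcal O_2$, while $\mathcal O_2$-stability gives $B\otimes\mathcal O_2\cong B$, whence $B\cong\mathcal O_2$. I would note that Proposition \ref{onlypossibility} cannot be cited verbatim here, since $B$ need not be existentially closed in the class of all C$^*$ algebras; instead one re-runs its two-isomorphism argument using only the simplicity, nuclearity, and $\mathcal O_2$-stability that have just been verified.
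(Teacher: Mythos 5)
Your proof is correct, but it takes a genuinely different route from the paper, which disposes of the corollary in two lines: having just shown that $\mathcal O_2$ is the prime model of its theory, it observes that an elementary submodel of a prime model is itself prime (compose the elementary inclusion $B\preceq\mathcal O_2$ with the elementary embedding of $\mathcal O_2$ into an arbitrary model of $\Th(\mathcal O_2)$), and prime models are unique up to isomorphism. You instead re-derive everything concretely: separability and $B\equiv\mathcal O_2$, simplicity and $\mathcal O_2$-stability from Fact \ref{O2stableaxiom}, nuclearity of $B$ by transferring the values of the existential formulae $\Phi_{m,n}$ along the elementary inclusion so that $\Delta^B_{\operatorname{nuc} m}(a)=\Delta^{\mathcal O_2}_{\operatorname{nuc} m}(a)=0$, and then the Kirchberg $A\otimes\mathcal O_2$ argument. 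Your diagnosis of the delicate point is exactly right: nuclearity does \emph{not} pass to arbitrary models of $\Th(\mathcal O_2)$ (the paper exhibits a nonexact separable model), and the trivial monotonicity $\Delta^{\mathcal O_2}_{\operatorname{nuc} m}(a)\leq\Delta^B_{\operatorname{nuc} m}(a)$ points the wrong way, so the equality supplied by elementarity is what carries the step. Note that your nuclearity argument uses only that the inclusion preserves values of existential formulae, i.e., that $B$ is e.c.\ in $\mathcal O_2$; the paper's very next proposition proves precisely this strengthening (any subalgebra of $\mathcal O_2$ that is e.c.\ in $\mathcal O_2$ is isomorphic to $\mathcal O_2$ and is elementary), and the remark following the Fact on $\Phi_{m,n}$ records the same transfer principle in the e.c.\ setting. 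So the paper's route buys brevity by cashing in abstract model theory already paid for in the prime-model proposition, while your route buys a self-contained operator-algebraic argument that in effect proves the stronger e.c.-in-$\mathcal O_2$ statement; you are also right that Proposition \ref{onlypossibility} cannot be cited verbatim (as $B$ is not known to be e.c.\ in all C$^*$ algebras) and that its two isomorphisms must be re-run by hand, which is exactly what Lemma \ref{ee-nuclear} later does in greater generality.
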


\begin{proof}
An elementary submodel of $\mathcal O_2$ is necessarily a prime model; since prime models are unique up to isomorphism, we are done.
\end{proof}

We can say even more:

\begin{prop}
Suppose that $A$ is a subalgebra of $\mathcal O_2$ that is e.c.\   in $\mathcal O_2$.  Then $A$ is isomorphic to $\mathcal O_2$ and $A\preceq \mathcal O_2$.
\end{prop}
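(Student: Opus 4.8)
The plan is to establish the two conclusions separately: first that $A\cong \O_2$, and then that the inclusion $\iota\colon A\hookrightarrow\O_2$ is elementary. The delicate point throughout is that we are only given that $A$ is e.c.\ \emph{in} $\O_2$, rather than being fully existentially closed, so Proposition \ref{onlypossibility} does not apply directly; instead I must transfer the relevant structural properties from $\O_2$ down to $A$ using only the existential embedding.

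First I would record the key consequence of $\iota$ being existential: for every quantifier-free $\theta(\bar x,\bar y)$, every $n$, and every tuple $\bar a$ from $A$, we have $\inf_{\bar y\in A_{\le n}}\theta(\bar a,\bar y)^A=\inf_{\bar y\in (\O_2)_{\le n}}\theta(\bar a,\bar y)^{\O_2}$. Since the right-hand infimum is bounded above by the corresponding $\forall\exists$-sentence evaluated in $\O_2$, taking a supremum over $\bar a\in A$ yields $\sigma^A\le \sigma^{\O_2}$ for every $\forall\exists$-sentence $\sigma$ (with the inner quantifiers restricted to a bounded ball, as is the case for the axioms of Fact \ref{O2stableaxiom}). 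Hence every $\forall\exists$-axiomatizable property of $\O_2$ whose axioms evaluate to $0$ is inherited by $A$. Applying this to Fact \ref{O2stableaxiom}, and using that $A$ is separable (being a unital subalgebra of $\O_2$), I conclude that $A$ is simple and $\O_2$-stable; in particular $A\cong A\otimes \O_2$.

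Next I would obtain nuclearity. The inclusion $\iota$ is a nuclear map: one approximates $\id_{\O_2}$ by cpc maps through matrix algebras and restricts the first leg to $A$. Since $\iota$ is also existential, Theorem \ref{exactec2} shows that $A$ is nuclear. Now $A$ is unital, separable, simple and nuclear, so Kirchberg's ``$A\otimes\O_2$'' theorem gives $A\otimes\O_2\cong\O_2$; combined with $A\cong A\otimes\O_2$ from the previous step, this gives $A\cong \O_2$. (Note that the direct route via Kirchberg is essential here: we cannot assume KEP, hence cannot assume $\O_2$ itself is e.c., so we may not invoke Proposition \ref{onlypossibility}.)

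Finally, the harder point is to upgrade the existential inclusion to an \emph{elementary} one, since existential closure only controls existential formulas. Here I would compose with the diagonal embedding $d\colon \O_2\hookrightarrow \O_2^\omega$, which is elementary (as any diagonal embedding into an ultrapower is). The composite $d\circ\iota\colon A\to \O_2^\omega$ is an embedding of a copy of $\O_2$ (since $A\cong\O_2$) into $\O_2^\omega$, and every such embedding is elementary --- this is exactly the fact used in Lemma \ref{ssa} that all embeddings of $\O_2$ into its ultrapower are elementary. Thus for every formula $\varphi$ and tuple $\bar a$ from $A$ we get $\varphi(\bar a)^A=\varphi(d(\iota(\bar a)))^{\O_2^\omega}$, while elementarity of $d$ gives $\varphi(d(\iota(\bar a)))^{\O_2^\omega}=\varphi(\iota(\bar a))^{\O_2}$; chaining these equalities yields $\varphi(\bar a)^A=\varphi(\iota(\bar a))^{\O_2}$, i.e.\ $A\preceq\O_2$. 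I expect this last step to be the main obstacle, precisely because the passage from existential to arbitrary formulas rests essentially on the special self-embedding property of $\O_2$ and cannot be extracted from existential closure alone.
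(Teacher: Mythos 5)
Your proof is correct and follows essentially the same route as the paper: transfer the $\forall\exists$-axiomatizable properties (simplicity, $\O_2$-stability) down through the existential inclusion, get nuclearity from Theorem \ref{exactec2}, conclude $A\cong\O_2$ via Kirchberg's ``$A\otimes\O_2$'' theorem, and then invoke the fact that every (self-)embedding of $\O_2$ is elementary. Your final step merely unpacks the paper's one-line appeal to that fact by factoring through the diagonal embedding into $\O_2^\omega$, which is exactly how the paper justifies it elsewhere (Lemma \ref{ssa}).
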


\begin{proof}
Since being $\mathcal O_2$ stable is $\forall\exists$-axiomatizable (Fact \ref{O2stableaxiom}) and $\mathcal O_2$ is $\mathcal O_2$-stable, we have that $A$ is $\mathcal O_2$-stable.  On the other hand, $A$ is exact and e.c.\   in $\mathcal O_2$, whence $A$ is simple (Fact \ref{O2stableaxiom}) and nuclear (Theorem \ref{exactec2}) and thus $A\otimes \mathcal O_2\cong \mathcal O_2$.  It follows that $A\cong \mathcal O_2$.  Since any embedding of $\mathcal O_2$ into itself is elementary, we have that $A\preceq \mathcal O_2$.
\end{proof}

Recall that a theory is $\omega$-categorical if it has a unique (up to isomorphism) separable model.

\begin{prop}
$\Th(\O_2)$ is not $\omega$-categorical.  In fact, there must exist a nonexact separable model of $\Th(\O_2)$.
\end{prop}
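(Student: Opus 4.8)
The plan is to prove the \emph{in fact} assertion first, since the failure of $\omega$-categoricity is then immediate: $\O_2$ is itself a separable model of $\Th(\O_2)$, and it is nuclear and hence exact, so any nonexact separable model of $\Th(\O_2)$ gives a second separable model up to isomorphism, whence the theory is not $\omega$-categorical. I would produce the desired nonexact model as a separable elementary substructure of the ultrapower $\O_2^\omega$. The starting observation is that $\O_2 \preceq \O_2^\omega$ (the diagonal embedding is elementary by {\L}o\'s's theorem), so $\O_2^\omega \models \Th(\O_2)$, and that it suffices to exhibit a single nonexact separable subalgebra $C \subseteq \O_2^\omega$. Indeed, by the downward L\"owenheim--Skolem theorem for continuous logic one may choose a separable $A$ with $C \subseteq A \preceq \O_2^\omega$; then $A \equiv \O_2^\omega \equiv \O_2$, so $A \models \Th(\O_2)$, while $A$ is nonexact because exactness is inherited by subalgebras and $C \subseteq A$. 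Since $\O_2$ is exact, $A \not\cong \O_2$.

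The heart of the matter is therefore to embed a known nonexact separable C$^*$ algebra into $\O_2^\omega$, and I would take $C = $ C$^*(\mathbb F_2)$. Two ingredients are needed. First, because $\O_2$ is $\O_2$-stable, Kirchberg's ``$A\otimes \O_2$'' theorem applied to the simple nuclear algebra $A=M_n$ gives $\O_2 \cong \O_2\otimes M_n$ for every $n$, hence unital embeddings $\iota_n : M_n \hookrightarrow \O_2$. Second, C$^*(\mathbb F_2)$ is residually finite-dimensional: fixing a countable dense sequence $(a_m)$ and, for each $j$, forming a finite direct sum $\pi_j : \text{C}^*(\mathbb F_2)\to M_{k_j}$ of finite-dimensional representations chosen so that $\|\pi_j(a_m)\| > \|a_m\| - 1/j$ for all $m\leq j$, one obtains (using $\|\pi_j(a)\|\leq \|a\|$ for the reverse inequality) that $\lim_j \|\pi_j(a_m)\| = \|a_m\|$ for each $m$, and hence $\lim_\omega \|\pi_j(a_m)\| = \|a_m\|$.

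Combining these, the map $(\iota_{k_j}\circ \pi_j)^\bullet : \text{C}^*(\mathbb F_2) \to \O_2^\omega$ is a $*$-homomorphism which is isometric on the dense set $(a_m)$, hence (being a contractive $*$-homomorphism) isometric everywhere, i.e.\ an embedding. Its image is a separable subalgebra of $\O_2^\omega$ isomorphic to C$^*(\mathbb F_2)$, and this algebra is nonexact; this provides the required $C$. Equivalently, the $\iota_{k_j}$ assemble into an embedding $\prod_\omega M_{k_j}\hookrightarrow \O_2^\omega$, and the point is that $\prod_\omega M_{k_j}$ (with $\dim\to\infty$) fails to be exact.

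The main obstacle is exactly this nonexactness input: one must certify that $\O_2^\omega$ is not exact, and the cleanest route I see is to transport a genuinely nonexact separable algebra inside it as above. The delicate step is the residual-finite-dimensionality construction that realizes C$^*(\mathbb F_2)$ inside the matrix ultraproduct, together with the external fact that C$^*(\mathbb F_2)$ is nonexact (for instance because C$^*(\mathbb F_\infty)\hookrightarrow$ C$^*(\mathbb F_2)$ and full free-group C$^*$-algebras are nonexact). Once this is in hand, the remaining steps---elementary equivalence of ultrapowers, downward L\"owenheim--Skolem, and permanence of exactness under subalgebras---are routine, and they also show, more strongly, that every separable nuclear model of $\Th(\O_2)$ is $\cong \O_2$ so that the model $A$ produced here is necessarily non-nuclear.
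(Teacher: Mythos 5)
Your proof is correct and follows essentially the same route as the paper: both embed the nonexact algebra C$^*(\mathbb F_2)$ into $\O_2^\omega$ and then apply downward L\"owenheim--Skolem together with the heredity of exactness under subalgebras. The only difference is that the paper gets the embedding in one line by citing quasidiagonality of C$^*(\mathbb F_2)$, whereas you unpack that step explicitly via Choi's residual finite-dimensionality and unital embeddings $M_n\hookrightarrow \O_2$ --- a correct, self-contained expansion of the same idea.
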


\begin{proof}
Let $A:=C^*(\mathbb F_2)$.  Since $A$ is quasidiagonal, $A$ is $\O_2^\omega$-embeddable.  Without loss of generality, assume that $A$ is a subalgebra of $\O_2^\omega$.  Let $\O_2'$ be a separable elementary substructure of $\O_2^\omega$ containing $A$.  Then $\O_2'$ is not exact, else $A$ is exact.
\end{proof}

\begin{rmk}
If KEP holds, then there are uncountably many nonisomorphic separable models of $\Th(\O_2)$.
Indeed, suppose that there are only countably many separable models of $\Th(\O_2)$ up to isomorphism.  KEP implies that every separable C$^*$ algebra embeds into some such model, whence as in the proof of Proposition \ref{uncountable} their tensor product is a universal separable e.c.\ C$^*$ algebra.
\end{rmk}

\begin{question}\label{exactmodel}
Can $\Th(\O_2)$ have an exact, nonnuclear model?  (We already know that $\O_2$ is the only separable, nuclear model of its theory.)  More generally, do the axioms for being simple and $\O_2$-stable axiomatize $\Th(\O_2)$?
\end{question}

We suspect the answer to both of these questions is:  no. In connection with this, we now establish that a positive answer to the latter question implies a strong form of KEP is true.

\begin{lemma}\label{ee-nuclear}
Suppose that $A$ is a simple, $\mathcal O_2$-stable C$^*$ algebra that is elementarily equivalent to a nuclear algebra.  Then $A\equiv \O_2$.
\end{lemma}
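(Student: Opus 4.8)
The plan is to reduce everything to Kirchberg's classification of simple nuclear $\O_2$-stable algebras, applied to a \emph{separable} witness. By hypothesis $A\equiv N$ for some nuclear $N$, so it suffices to prove $N\equiv\O_2$; and since elementary equivalence is unaffected by passing to elementary substructures, I would fix, by downward L\"owenheim--Skolem, a separable elementary substructure $N_0\preceq N$ and prove $N_0\cong\O_2$. Then $A\equiv N\equiv N_0\cong\O_2$ gives the conclusion.

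First I would verify that $N_0$ inherits the structural hypotheses. By Fact \ref{O2stableaxiom}, both ``$\O_2$-stable'' and ``simple and purely infinite'' are $\forall\exists$-axiomatizable among separable C$^*$ algebras, hence invariant under elementary equivalence. A simple $\O_2$-stable algebra is automatically purely infinite and absorbs $\O_2$ tensorially, so $A$ satisfies the corresponding axioms; as $N_0\equiv N\equiv A$ and $N_0$ is separable, Fact \ref{O2stableaxiom} yields that $N_0$ is simple, purely infinite, and $\O_2$-stable.

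The genuinely model-theoretic step is nuclearity, which does \emph{not} follow from $N_0\equiv N$ alone but \emph{does} descend along the elementary inclusion $N_0\preceq N$. By \cite{FHRTW} there are existential formulae $\Phi_{m,n}$ with $\Delta^{C}_{\nuc m}(a)=\inf_n\Phi_{m,n}(a)^{C}$ for every C$^*$ algebra $C$, and $C$ is nuclear exactly when $\Delta^{C}_{\nuc m}(a)=0$ for all $m$ and all $m$-tuples $a$. For any tuple $a$ from $N_0$, the relation $N_0\preceq N$ gives $\Phi_{m,n}(a)^{N_0}=\Phi_{m,n}(a)^{N}$, whence $\Delta^{N_0}_{\nuc m}(a)=\Delta^{N}_{\nuc m}(a)=0$ since $N$ is nuclear. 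Thus $N_0$ is nuclear. Now I assemble the pieces as in Proposition \ref{onlypossibility}: $N_0$ is separable, simple and nuclear, so Kirchberg's ``$A\otimes\O_2$'' Theorem (\cite[Theorem 7.1.2]{Ror}) gives $N_0\otimes\O_2\cong\O_2$, while $\O_2$-stability gives $N_0\cong N_0\otimes\O_2$; hence $N_0\cong\O_2$ and $A\equiv\O_2$.

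I expect the main obstacle to be conceptual rather than computational: recognizing that although nuclearity is famously not an elementary property, it is preserved under elementary \emph{substructure} because $\Delta_{\nuc}$ is a definable predicate (an infimum of existential formulae). This is precisely why the argument must be carried out on $N$ and its separable elementary substructure $N_0$, rather than on $A$ directly --- one cannot conclude that $A$ itself is nuclear merely from $A\equiv N$. The remaining points are routine bookkeeping: performing the separable reduction on $N$ so that both Kirchberg's theorem and the separable formulation of Fact \ref{O2stableaxiom} apply.
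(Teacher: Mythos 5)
Your proof is correct and follows essentially the same route as the paper: transfer simplicity, pure infiniteness, and $\O_2$-stability via Fact \ref{O2stableaxiom}, pass to a separable elementary substructure of the nuclear witness $N$, and conclude with Kirchberg's ``$A\otimes\O_2$'' theorem. The only cosmetic difference is that for nuclearity descending to $N_0\preceq N$ you invoke the FHRTW existential formulae $\Phi_{m,n}$ directly (exactly the mechanism in the remark following that Fact), whereas the paper cites Theorem \ref{exactec2}; these amount to the same argument.
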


\begin{proof}
Suppose that $A\equiv N$ where $N$ is nuclear.  Since $A$ is simple and purely infinite, we have that $N$ is simple and purely infinite.  Passing to a separable elementary substructure of $N$ (which preserves the nuclearity of $N$ by Theorem \ref{exactec2}), we may further assume that $N$ is separable.  Since $N$ is separable, simple, and nuclear, we have that $N\otimes \O_2\cong \O_2$; since $N$ is $\O_2$-stable, we conclude that $N\cong \O_2$.
\end{proof}

\begin{prop}\label{ee-kep}
KEP is equivalent to the statement that there is an e.c.\ C$^*$ algebra that is elementarily equivalent to a nuclear algebra.
\end{prop}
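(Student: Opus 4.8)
The plan is to prove the two implications separately, with essentially all of the work in the converse. If KEP holds, then $\mathcal O_2$ is existentially closed by Theorem \ref{KEPequiv}; since $\mathcal O_2$ is itself nuclear, it witnesses the statement, being an e.c.\ algebra elementarily equivalent to a nuclear algebra, namely to itself. So the substantive task is to begin with an e.c.\ algebra $A$ satisfying $A \equiv N$ for some nuclear $N$ and to deduce KEP.

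My overall strategy for the converse is to show that these hypotheses already force $A \equiv \mathcal O_2$, and then to transfer existential closedness from $A$ down onto $\mathcal O_2$, after which Theorem \ref{KEPequiv} finishes the argument. First I would reduce to the separable case: passing to a separable elementary substructure $A_0 \preceq A$, one has $A_0 \equiv A \equiv N$, and an elementary substructure of an e.c.\ algebra is again e.c.\ (a fact already invoked in the discussion of the strong Dixmier property). Thus I may assume $A$ is separable. Now existential closedness supplies two structural properties for free: $A$ is simple, and, being separable and e.c., $A$ is $\mathcal O_2$-stable (this is exactly the corollary deduced above from Fact \ref{O2stableaxiom} together with the fact that every separable C$^*$ algebra embeds into an $\mathcal O_2$-stable one). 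These are precisely the hypotheses of Lemma \ref{ee-nuclear}, so I would apply that lemma to the simple, $\mathcal O_2$-stable algebra $A$ which is elementarily equivalent to the nuclear algebra $N$, concluding $A \equiv \mathcal O_2$.

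With $A \equiv \mathcal O_2$ established, the key move is to observe that $A$ is then a separable model of $\Th(\mathcal O_2)$, so, since $\mathcal O_2$ is the prime model of its theory (proven at the start of this section), there is an elementary embedding $\mathcal O_2 \hookrightarrow A$; identifying $\mathcal O_2$ with its image, a copy of $\mathcal O_2$ sits as an elementary substructure of $A$. Because $A$ is e.c.\ and an elementary substructure of an e.c.\ algebra is again e.c., it follows that $\mathcal O_2$ is existentially closed, and KEP holds by Theorem \ref{KEPequiv}.

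I expect the conceptual crux to be the upgrade from ``$A$ is elementarily equivalent to \emph{some} nuclear algebra'' to ``$A \equiv \mathcal O_2$'': the whole point is that existential closedness is exactly what provides the simplicity and $\mathcal O_2$-stability required to invoke Lemma \ref{ee-nuclear}, which is what pins the elementary-equivalence class down to that of $\mathcal O_2$. The remaining transfer step, where $\mathcal O_2$ inherits existential closedness from $A$, is the only other place one might worry; but it reduces cleanly to the standard (and already-used) principle that elementary substructures of e.c.\ structures are e.c., applied to the elementary copy of $\mathcal O_2$ that the prime-model property drops inside $A$.
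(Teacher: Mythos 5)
Your proposal is correct and takes essentially the same route as the paper's proof: reduce to separable $A$ by Downward L\"owenheim--Skolem, use simplicity and $\O_2$-stability of separable e.c.\ algebras to apply Lemma \ref{ee-nuclear} and conclude $A\equiv\O_2$, then use that $\O_2$ is the prime model of its theory to embed it elementarily into $A$ and transfer existential closedness, finishing with Theorem \ref{KEPequiv}. The only difference is that you make explicit the principle (used implicitly by the paper, and earlier in its SDP argument) that an elementary substructure of an e.c.\ algebra is again e.c.
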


\begin{proof}
The forward direction follows from our earlier characterization of KEP, namely that $\O_2$ is e.c.  Now suppose that $A$ is e.c., $N$ is nuclear, and $A\equiv N$.  By the Downward L\"owenheim--Skolem theorem, we can assume that $A$ is separable.  Since $A$ is simple and $\O_2$-stable, it follows from Lemma \ref{ee-nuclear} that $A\equiv \O_2$.  Since $\O_2$ is a prime model of its theory, $\O_2$ embeds elementarily into $A$; since $A$ is e.c., it follows that $\O_2$ is e.c., whence KEP holds.
\end{proof}

\begin{cor}\label{strong-kep} The following statements are successively weaker:

\begin{enumerate}
\item simplicity and $\O_2$-stability axiomatize $\Th(\O_2)$;
\item every e.c.\ C$^*$-algebra has the same theory as $\O_2$;
\item KEP is true.
\end{enumerate}

\end{cor}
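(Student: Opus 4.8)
The plan is to prove the two implications $(1)\Rightarrow(2)$ and $(2)\Rightarrow(3)$ by assembling results established earlier in the paper; essentially no new analysis is needed, so the work lies entirely in threading the prior lemmas together correctly.

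For $(1)\Rightarrow(2)$, let $A$ be an arbitrary e.c.\ C$^*$ algebra; I must show $A\equiv\O_2$. By the downward L\"owenheim--Skolem theorem I would pass to a separable elementary substructure $A_0\preceq A$, which is again e.c.\ (this is exactly the reduction used at the start of the proof that e.c.\ algebras have the SDP). Then $A_0$ is simple, since every e.c.\ unital C$^*$ algebra has the strong Dixmier property and is therefore simple, and $A_0$ is $\O_2$-stable by the corollary to Fact \ref{O2stableaxiom}. Hypothesis (1) asserts that simplicity together with $\O_2$-stability axiomatizes $\Th(\O_2)$, so $A_0\models\Th(\O_2)$, i.e.\ $A_0\equiv\O_2$. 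Since $A_0\preceq A$ we conclude $A\equiv\O_2$, which is statement (2). (One may instead avoid L\"owenheim--Skolem and verify the displayed $\sup\inf$ axioms for $\O_2$-stability directly in $A$ by transferring them across the existential embedding $A\hookrightarrow A\otimes\O_2$, using that $A$ is e.c.\ in $A\otimes\O_2$ and the latter satisfies those axioms.)

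For $(2)\Rightarrow(3)$, I would first note that e.c.\ C$^*$ algebras exist at all: every separable C$^*$ algebra is a unital subalgebra of an e.c.\ one, by the proposition proved earlier. Fix any e.c.\ C$^*$ algebra $A$. Hypothesis (2) gives $A\equiv\O_2$, and $\O_2$ is nuclear, so $A$ is an e.c.\ C$^*$ algebra that is elementarily equivalent to a nuclear algebra. This is precisely the hypothesis of Proposition \ref{ee-kep}, which therefore delivers KEP.

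The proof has no hard step of its own; the delicate content has already been isolated in Proposition \ref{ee-kep} (via Lemma \ref{ee-nuclear} and the primeness of $\O_2$). The one place I would be careful is the reduction to the separable case in $(1)\Rightarrow(2)$, since the $\O_2$-stability of e.c.\ algebras and the SDP are most cleanly available for separable algebras; handling this via an elementary substructure (or via the direct axiom transfer noted above) is the only genuine subtlety.
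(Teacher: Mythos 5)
Your proof is correct and takes essentially the same route as the paper, whose entire proof reads that $(1)\Rightarrow(2)$ is trivial (precisely the combination of the SDP/simplicity fact and $\O_2$-stability of e.c.\ algebras that you cite) and that $(2)\Rightarrow(3)$ follows directly from Proposition \ref{ee-kep}. Your extra care about the separable reduction via L\"owenheim--Skolem and about the existence of e.c.\ algebras simply fills in details the paper leaves implicit.
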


\begin{proof} The former implication is trivial, while the latter implication follows directly from the previous proposition.
\end{proof}


We remark that Lemma \ref{ee-nuclear} has some interest independent of its use in the proof of Proposition \ref{ee-kep}. In \cite{AF}, the authors ask if every C$^*$ algebra is elementarily equivalent to a nuclear one.  In \cite{FHRTW}, they give examples of C$^*$ algebras which are not elementarily equivalent to nuclear algebras.  Moreover, all of their counterexamples admit traces.  In \cite{Fnotes}, the author asks for an example of an exact, nonnuclear C$^*$ algebra that is not elementarily equivalent to a nuclear one.
Thus, if $A$ is a simple, $\O_2$-stable C$^*$ algebra that is not elementarily equivalent to $\O_2$, then $A$ is not elementarily equivalent to a nuclear algebra, providing a completely different type of counterexample to the question posed in \cite{AF}.  Moreover, if $A$ is also exact, then we also have a solution to the question posed in \cite{Fnotes}.

\

Recall that a theory is model-complete if all of its models are existentially closed models of $T$.  In \cite{nomodcomp}, the authors show that if CEP holds, then $\Th(\R)$ is not model-complete; in \cite{ecfactor}, the authors show that $\Th(\R)$ is not model-complete without assuming CEP.  We now show that KEP implies $\Th(\O_2)$ is not model-complete.  The argument follows the same strategy as the argument in \cite{ecfactor}; the dependence on KEP is needed to ensure that taking a crossed product by $\z$ retains $\O_2^\omega$-embeddability.

\begin{prop}
If KEP holds, then $\Th(\O_2)$ is not model-complete.
\end{prop}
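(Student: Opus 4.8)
The plan is to argue by contradiction, mirroring the crossed-product argument of \cite{ecfactor}. Suppose $\Th(\O_2)$ is model complete, so that every model of $\Th(\O_2)$ is an existentially closed model of $\Th(\O_2)$. The heart of the matter will be to locate a \emph{separable} model $M\models\Th(\O_2)$ together with an automorphism $\alpha$ of $M$ that is \emph{not} approximately inner; granting this, I will derive a contradiction with the proposition that an e.c.\ C$^*$ algebra has only approximately inner automorphisms.

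First I would form $B:=M\rtimes_\alpha\z$ and recall that $\alpha$ is implemented by a unitary $u\in B$, so that for every finite tuple $\bar x$ from $M$ the existential formula $\inf_w\max_i\|wx_iw^*-\alpha(x_i)\|$ (with $w$ ranging over the definable set of unitaries) evaluates to $0$ in $B$. This is exactly where KEP enters: by the closure of the class of $\O_2^\omega$-embeddable algebras under crossed products by $\z$, the algebra $B$ is $\O_2^\omega$-embeddable, so I may fix an embedding $B\hookrightarrow\O_2^\omega$ and then choose, by downward L\"owenheim--Skolem, a separable $N\preceq\O_2^\omega$ containing (the image of) $B$. Then $N\models\Th(\O_2)$ and $M\subseteq B\subseteq N$. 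Model completeness forces the inclusion $M\hookrightarrow N$ to be elementary, whence $M\preceq N$ and in particular $M$ is existentially closed in $N$; since $M\subseteq B\subseteq N$, the monotonicity of infima over larger sets shows $M$ is existentially closed in $B$ as well. Now the reasoning behind ``e.c.\ implies automorphisms are approximately inner'' applies verbatim: the unitary $u$ witnesses that the displayed infimum is $0$ in $B$, existential closedness transfers this into $M$ up to arbitrarily small error, and functional calculus upgrades the resulting near-unitaries of $M$ to genuine unitaries conjugating each $x_i$ arbitrarily close to $\alpha(x_i)$. This says precisely that $\alpha$ is approximately inner, contradicting the choice of $(M,\alpha)$.

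The main obstacle is thus the existence step: producing a separable model of $\Th(\O_2)$ carrying a non-approximately-inner automorphism. One cannot take $M=\O_2$ itself, since $\O_2$ is strongly self absorbing and hence \emph{every} automorphism of $\O_2$ is approximately inner; the model $M$ must therefore be a non-exact model of $\Th(\O_2)$, of the sort produced in the Proposition preceding Question \ref{exactmodel}. Conceptually, Proposition \ref{flip} already signals where such automorphisms live: for a separable e.c.\ algebra $A$ not isomorphic to $\O_2$, the flip on $A\otimes_{\min}A$ fails to be approximately inner. To realize a non-approximately-inner automorphism on an \emph{actual} model of $\Th(\O_2)$ I would follow \cite{ecfactor}: starting from an automorphism $\Phi$ of $\O_2^\omega$ that is not approximately inner (witnessed by some tuple $\bar a$ and some $\delta>0$ at which the infimum above stays $\geq\delta$), build an increasing chain of separable elementary submodels closed under $\Phi$ and $\Phi^{-1}$ and containing $\bar a$, and let $M$ be the closure of their union. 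Then $M\preceq\O_2^\omega$ is a separable model of $\Th(\O_2)$, $\alpha:=\Phi|_M$ is an automorphism of $M$, and since the relevant infimum is a definable predicate it is reflected by $M\preceq\O_2^\omega$, so $\alpha$ remains non-approximately-inner. Verifying that a suitable such $\Phi$ exists on $\O_2^\omega$ is the crux, and is the C$^*$-analog of the von Neumann algebraic input used in \cite{ecfactor}.
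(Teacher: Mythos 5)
There is a genuine gap, and it sits exactly where you placed it: the existence of a separable model $M\models\Th(\O_2)$ (or of $\O_2^\omega$) carrying a non-approximately-inner automorphism is never established, and it cannot be obtained by the soft means you sketch. Worse, under the standing hypotheses of your contradiction argument the object you seek provably does not exist, by the very results you cite: if $\Th(\O_2)$ is model complete, then every model of $\Th(\O_2)$ is an e.c.\ model of its theory, and KEP upgrades this to full existential closedness (KEP gives $\Th_\forall(\O_2)\leq\Th_\forall(B)$ for \emph{every} extension $B$, so ``e.c.\ among models of the theory'' coincides with ``e.c.''); the crossed-product argument then shows every automorphism of every model of $\Th(\O_2)$ is approximately inner. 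So your plan is circular: the only way to complete it is an \emph{unconditional} construction of a non-approximately-inner automorphism of some model of $\Th(\O_2)$, which is essentially as hard as (or harder than) the proposition itself. Your fallback suggestions do not repair this: Proposition \ref{flip} produces a non-approximately-inner flip on $A\otimes_{\mathrm{min}}A$ for a separable e.c.\ $A\not\cong\O_2$, but nothing makes $A\otimes_{\mathrm{min}}A$ a model of $\Th(\O_2)$; and the elementary-submodel reflection trick only transports a non-approximately-inner $\Phi$ on $\O_2^\omega$ \emph{if you already have one}. Note also that any automorphism of $\O_2^\omega$ lifted from a sequence of automorphisms of $\O_2$ is automatically approximately inner (every automorphism of $\O_2$ is), so no easy candidate exists. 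Even in the von Neumann precedent you invoke, \cite{ecfactor} does \emph{not} proceed by constructing a non-approximately-inner automorphism of $\R^\omega$; it uses Jung's theorem on uniqueness of embeddings.

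The paper's proof follows that latter template and avoids your existence step entirely. It fixes an \emph{arbitrary} separable $B\models\Th(\O_2)$ and shows, assuming model completeness plus KEP, that all embeddings $B\to\O_2^\omega$ are unitarily conjugate: model completeness makes any two embeddings $f,g$ elementary, so $\tp(f(\bar b))=\tp(g(\bar b))$; by saturation one realizes the conjugating map as an automorphism $\alpha$ of an elementary extension $\O_2'$ of $\O_2^\omega$; model completeness makes $\O_2'$ an e.c.\ model of its theory, KEP makes it genuinely e.c.\ (this is where the crossed product by $\z$ is used, inside the approximate-innerness proposition), so $\alpha$ is approximately inner and the desired unitary descends to $\O_2^\omega$ by elementarity. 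Then Proposition \ref{uniquetubular} shows the unique embedding is tubular; since it is also elementary, $B$ is nuclear, hence $B\cong\O_2$, making $\Th(\O_2)$ $\omega$-categorical --- contradicting the \emph{unconditional} existence of a non-exact separable model of $\Th(\O_2)$. In short: the paper uses approximate innerness of automorphisms \emph{positively} (to conjugate embeddings), with the tubularity machinery and non-$\omega$-categoricity supplying the contradiction, whereas your proposal needs a non-approximately-inner automorphism as input, which is precisely what is unavailable.
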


\begin{proof}
Let $B$ be a separable model of $\Th(\O_2)$.  We claim that all embeddings of $B$ into $\O_2^\omega$ are unitarily conjugate.  If this is the case, then by Proposition \ref{uniquetubular}, the unique embedding (up to unitary conjugacy) is tubular.  Since this embedding is also elementary, we get that $B$ is nuclear, whence isomorphic to $\O_2$, contradicting that $\Th(\O_2)$ is not $\omega$-categorical.

We now prove the claim.  Suppose that $f,g:B\to \O_2^\omega$ are embeddings.  Let $(b_i)_{i\in \n}$ be a countable dense subset of $B$.  It suffices to find, for any $i\in \n$, a unitary $u$ in $\O_2^\omega$ such that $d(u^*f(b_j)u,g(b_j))<\frac{1}{i}$ for all $j\leq i$. Since $f$ and $g$ are elementary (as $\Th(\O_2)$ is model-complete), we have that $\tp^{\O_2^\omega}(f(b_1)\cdots f(b_i))=\tp^{\O_2^\omega}(g(b_1)\cdots g(b_j))$, whence there is an elementary extension $\O_2'$ of $\O_2^\omega$ and an automorphism $\alpha$ of $\O_2'$ such that $\alpha(f(b_j))=g(b_j)$ for $j\leq i$.  By model-completeness, $\O_2'$ is an e.c.\   model of its theory; by KEP, we have that $\alpha$ is approximately inner, and by elementarity, the desired unitary exists in $\O_2^\omega$.
\end{proof}

Recall that a model complete theory $T$ is the \emph{model companion} of another theory $T_0$ if every model of $T$ embeds in a model of $T_0$ and every model of $T_0$ embeds in a model of $T$.  If $T_0$ is universally axiomatizable (e.g., when $T_0$ is the theory of unital C$^*$ algebras), then the model companion of $T_0$ exists if and only if the class of e.c.\   models of $T$ is an axiomatizable class.

\begin{cor}
KEP implies that the theory of unital C$^*$ algebras does not have a model companion.
\end{cor}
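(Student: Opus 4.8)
The plan is to prove the corollary by contradiction, drawing the contradiction from the previous proposition: I will assume KEP together with the existence of a model companion, and deduce that $\Th(\O_2)$ must be model-complete, which the previous proposition forbids under KEP.

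Concretely, I would suppose that KEP holds and that $T_0$, the theory of unital C$^*$ algebras, has a model companion $T$. By the criterion recalled immediately before the corollary --- valid because $T_0$ is universally axiomatizable --- the theory $T$ is model-complete and its models are exactly the e.c.\ (unital) C$^*$ algebras. Next I would feed in KEP through Theorem \ref{KEPequiv}, which says that under KEP the algebra $\O_2$ is itself existentially closed. Hence $\O_2$ is a model of $T$, i.e.\ $T\subseteq \Th(\O_2)$.

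The decisive step --- and really the only thing to observe --- is that this containment of theories transports model-completeness down to $\Th(\O_2)$: given any pair of models $A\subseteq B$ of $\Th(\O_2)$, both are in particular models of $T$ (since $T\subseteq\Th(\O_2)$), so model-completeness of $T$ gives $A\preceq B$. Thus $\Th(\O_2)$ is model-complete. This contradicts the previous proposition, which asserts that KEP implies $\Th(\O_2)$ is \emph{not} model-complete, and the contradiction shows that no model companion of $T_0$ can exist.

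I do not expect any serious obstacle here; there is no hard analysis or computation. The hard part, such as it is, is purely logical bookkeeping: one must notice that $\O_2\models T$ upgrades to the full containment $T\subseteq\Th(\O_2)$, which is precisely what licenses applying model-completeness of $T$ to arbitrary models of $\Th(\O_2)$. All of the genuine mathematical content has already been packaged into Theorem \ref{KEPequiv} and into the preceding proposition.
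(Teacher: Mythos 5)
Your argument is correct, and it reaches the contradiction by a genuinely leaner route than the paper's. The paper first upgrades the model companion $T$ to a model \emph{completion}: since the class of unital C$^*$ algebras has the amalgamation property, $T$ admits quantifier elimination, and since any two models of $T$ share the common substructure $\mathbb{C}$, quantifier elimination forces $T$ to be complete; combined with the fact that KEP makes $\O_2$ existentially closed (Theorem \ref{KEPequiv}) and hence a model of $T$, completeness yields the exact identification $T=\Th(\O_2)$, whose model-completeness then contradicts the previous proposition. You dispense with amalgamation, quantifier elimination, and completeness entirely: you only need that $\O_2\models T$, after which the single observation that model-completeness passes upward to any theory extending $T$ --- every model of $\Th(\O_2)$ is elementarily equivalent to $\O_2$ and so is a model of $T$, whence any embedding between such models is elementary --- already makes $\Th(\O_2)$ model-complete. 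What the paper's route buys is the stronger structural conclusion that the hypothetical model companion would have to \emph{equal} $\Th(\O_2)$ (and along the way it records that the companion would in fact be a model completion); what your route buys is economy, since it avoids the nontrivial operator-algebraic input that unital C$^*$ algebras amalgamate as well as the quantifier-elimination machinery, resting only on Theorem \ref{KEPequiv}, the standard fact that the models of a model companion of a universally axiomatizable theory are exactly the e.c.\ models, and the previous proposition. Both proofs bottom out in the same two prior results, so the difference is one of efficiency rather than substance.
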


\begin{proof}
Suppose that $T$ is a model companion for the theory of C$^*$ algebras.  Since the class of unital C$^*$ algebras has the amalgamation property, $T$ has quantifier-elimination.  Since any two models of $T$ have a common substructure, namely $\mathbb C$, we see that $T$ is complete.  By KEP, $\O_2$ is e.c., whence a model of $T$.  Thus, $T=\Th(\O_2)$, contradicting the fact that KEP implies that $\Th(\O_2)$ is not model-complete.
\end{proof}


There are two goals that we have not achieved outright:  proving that $\Th(\O_2)$ is not model-complete and showing that $\O_2$ is the only exact model of its theory.  The following proposition shows that at least one of these goals is going to be achieved.

\begin{prop}\label{goodnews}
If $\Th(\O_2)$ is model-complete, then $B\otimes \O_2\not\equiv \O_2$ for any simple, exact, nonnuclear $B$.
\end{prop}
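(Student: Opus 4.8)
The plan is to argue by contradiction, reducing everything to the rigidity results for $\O_2$ already proved and using model-completeness to upgrade a concrete C$^*$-embedding into an elementary one. So suppose $\Th(\O_2)$ is model-complete and that $B$ is simple, exact, and nonnuclear with $C:=B\otimes_{\min}\O_2\equiv\O_2$. First I would record the permanence properties of $C$. Since $\O_2$ is nuclear, $C$ is exact, as the minimal tensor product of two exact algebras is exact; and $C$ is \emph{non}nuclear, because a minimal tensor product is nuclear if and only if both factors are, and $B$ is nonnuclear. Next I would pass to a separable model: pick a tuple $a$ and an $m$ with $\Delta^{C}_{\nuc m}(a)>0$ and, by downward L\"owenheim--Skolem, take a separable $C_0\preceq C$ with $a\in C_0$. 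Then $C_0\equiv\O_2$, $C_0$ is exact (a subalgebra of an exact algebra is exact), and $C_0$ is nonnuclear, since $\Delta^{C_0}_{\nuc m}(a)\ge\Delta^{C}_{\nuc m}(a)>0$ by the monotonicity of $\Delta_{\nuc m}$ under inclusions noted after Definition \ref{gnw}.

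The heart of the argument is then short. By Kirchberg's exact embedding theorem (\cite[Theorem 6.3.11]{Ror}), the separable exact algebra $C_0$ embeds into $\O_2$; composing with the isomorphism $p\O_2p\cong\O_2$, where $p$ is the image of the unit (every corner of $\O_2$ by a nonzero projection is again a unital Kirchberg algebra with trivial $K$-theory, hence $\cong\O_2$), we obtain a \emph{unital} embedding $\theta:C_0\hookrightarrow\O_2$. Now both $C_0$ and $\O_2$ are models of the complete theory $\Th(\O_2)$, so model-completeness forces $\theta$ to be elementary: model-completeness is equivalent to the assertion that every embedding between models is elementary (the continuous form of Robinson's test, already used in the proof that KEP rules out model-completeness of $\Th(\O_2)$). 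Thus $\theta(C_0)$ is an elementary submodel of $\O_2$, and by the corollary that every elementary submodel of $\O_2$ is isomorphic to $\O_2$ we conclude $C_0\cong\O_2$. In particular $C_0$ is nuclear, contradicting the previous paragraph. Hence no such $B$ exists.

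I expect the genuinely delicate points to be bookkeeping rather than conceptual. The main one is that nonnuclearity must survive the passage to the separable substructure; this rests on the fact from \cite{FHRTW} that $\Delta_{\nuc m}=\inf_n\Phi_{m,n}$ is an infimum of \emph{existential} formulae, so it is monotone under inclusions (and, being a definable predicate, computed correctly in elementary substructures). The second is the unitalization of Kirchberg's embedding, handled by the corner isomorphism $p\O_2 p\cong\O_2$. The appeal to ``model-complete $\Leftrightarrow$ every embedding is elementary'' is standard and is invoked elsewhere in the paper, so it needs no new work here.

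Finally, I would note that simplicity of $B$ is never used: the same argument shows that if $\Th(\O_2)$ is model-complete, then \emph{every} separable exact model of $\Th(\O_2)$ is isomorphic to $\O_2$, so $\O_2$ would be the unique exact model of its theory. This makes the dichotomy preceding the proposition explicit --- either $\Th(\O_2)$ is not model-complete, or it is and then the second goal is achieved outright. If one prefers an argument native to Section 4, the first two steps are the same, but one instead observes that the composite $\theta^\omega$ of $\theta$ with the (elementary) diagonal embedding $\O_2\to\O_2^\omega$ is an elementary embedding that is tubular by the third remark following the definition of tubularity (as $\O_2$ is nuclear); the lemma that an elementary tubular embedding of a separable algebra forces nuclearity then yields that $C_0$ is nuclear, again a contradiction.
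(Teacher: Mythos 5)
Your proof is correct, and its skeleton --- embed the tensor product into $\O_2$ by Kirchberg's exact embedding theorem, let model-completeness upgrade that embedding to an elementary one, and extract nuclearity for a contradiction --- is exactly the paper's. Where you diverge is in the finishing move. The paper argues in two lines: since $\O_2$ is nuclear, the elementary (hence existential) embedding $B\otimes\O_2\hookrightarrow\O_2$ is an existential nuclear embedding, so Theorem \ref{exactec2} makes $B\otimes\O_2$ nuclear (this is where simplicity is nominally invoked, via the remark on simple algebras e.c.\ in nuclear ones), and a slice map then descends nuclearity to $B$. You instead pass to a separable elementary substructure $C_0\preceq B\otimes\O_2$, track nonnuclearity through that reduction via the monotonicity of $\Delta_{\nuc m}$, and finish with prime-model rigidity: the elementary image $\theta(C_0)\preceq\O_2$ forces $C_0\cong\O_2$ by the corollary that elementary submodels of $\O_2$ are isomorphic to $\O_2$. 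Your version buys two things the paper's proof does not explicitly deliver: it works verbatim for nonseparable $B$ (the paper's appeal to \cite[Theorem 6.3.11]{Ror} tacitly needs separability, which your L\"owenheim--Skolem step supplies), and it makes visible that simplicity of $B$ is never needed, so model-completeness would in fact force every exact model of $\Th(\O_2)$ to be $\O_2$. Two points of hygiene: $\Delta_{\nuc m}=\inf_n\Phi_{m,n}$ is an infimum of existential formulae rather than a definable predicate in the uniform sense, but since each $\Phi_{m,n}$ is a genuine formula the value is computed correctly in elementary substructures, and in any case your appeal to monotonicity under inclusion already suffices; and your unitalization via $p\O_2 p\cong\O_2$ is precisely the device the paper uses in Section 3, so it is safe. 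Your alternative ending through tubularity (an elementary tubular embedding of a separable algebra forces nuclearity) is also valid, though note that that lemma is itself proved by citing Theorem \ref{exactec2}, so it circles back to the paper's own mechanism.
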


\begin{proof}
Suppose that $B$ is simple, exact, and nonnuclear.  We then have that $B\otimes \O_2$ embeds into $\O_2$.  If $B\otimes \O_2$ were elementarily equivalent to $\O_2$, then that embedding would be elementary, whence, as $B\otimes \O_2$ is simple, we would have that $B\otimes \O_2$ is nuclear as well, whence $B$ would be nuclear, a contradiction.  (To see that $B$ is nuclear, just compose with a slice map.  Alternatively, if $B\otimes \O_2$ is nuclear and elementarily equivalent to $\O_2$, then $B\otimes \O_2\cong \O_2$, whence $B$ is nuclear.)
\end{proof}

\begin{prop}
Suppose that $A$ is a simple, purely infinite, $\O_2^\omega$-embeddable C$^*$ algebra with trivial $K_0$ such that $\Th(A)$ is $\forall\exists$-axiomatizable (e.g., $\Th(A)$ is model complete).  Then $A\equiv \O_2$.
\end{prop}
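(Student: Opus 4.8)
The plan is to realize $\O_2$ as a unital subalgebra of $A$ in which it is existentially closed, and then to push the resulting \emph{one-sided} comparison of $\forall\exists$-sentences through the $\forall\exists$-axiomatization of $\Th(A)$.

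First I would produce a unital embedding $\O_2\hookrightarrow A$; this is the only genuinely operator-algebraic input, and it is exactly where the hypotheses of trivial $K_0$ and pure infiniteness enter. Since $K_0(A)=\{0\}$ we have $[1_A]=0$, and since $A$ is simple and purely infinite the unit is properly infinite, so there is a projection $p_1<1_A$ with $p_1\sim 1_A$; setting $p_2=1_A-p_1$ we get $[p_2]=[1_A]-[p_1]=0=[1_A]$, hence $p_2\sim 1_A$ as well (in a purely infinite simple algebra two nonzero projections are Murray--von Neumann equivalent iff they have the same $K_0$-class). Choosing isometries $v_i$ with $v_iv_i^*=p_i$ and $v_i^*v_i=1$, the Cuntz relation $v_1v_1^*+v_2v_2^*=1_A$ holds, giving the desired unital copy of $\O_2$ inside $A$.

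Next I would invoke Lemma \ref{ssa}: $\O_2$ is an existentially closed model of its theory, which, as explained in the discussion preceding that lemma, means precisely that $\O_2$ is existentially closed in every extension that embeds into $\O_2^\omega$. Since $A$ is $\O_2^\omega$-embeddable by hypothesis, $A$ is such an extension, so $\O_2$ is existentially closed in $A$. Consequently, for every quantifier-free $\varphi$ and every tuple $\bar p$ from $\O_2$ we have $(\inf_{\bar y}\varphi(\bar p,\bar y))^{\O_2}=(\inf_{\bar y}\varphi(\bar p,\bar y))^{A}$; taking the supremum over $\bar p\in\O_2\subseteq A$ then shows that for every $\forall\exists$-sentence $\sigma=\sup_{\bar x}\inf_{\bar y}\varphi$ one has
$$\sigma^{\O_2}=\sup_{\bar p\in\O_2}(\inf_{\bar y}\varphi(\bar p,\bar y))^{A}\le \sup_{\bar a\in A}(\inf_{\bar y}\varphi(\bar a,\bar y))^{A}=\sigma^{A}.$$
Finally I would feed this into the $\forall\exists$-axiomatizability of $\Th(A)$: that theory is cut out by conditions of the form $\sigma\le r$ with $\sigma$ a $\forall\exists$-sentence, and since $A$ satisfies each such axiom we have $\sigma^A\le r$, whence $\sigma^{\O_2}\le\sigma^A\le r$. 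Thus $\O_2$ satisfies every axiom, so $\O_2\models\Th(A)$, i.e. $A\equiv\O_2$.

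The step I expect to require the most care is the last one, because the preservation here is inherently asymmetric: existential closedness of $\O_2$ in $A$ yields only $\sigma^{\O_2}\le\sigma^{A}$ and never the reverse inequality (indeed $A$ is not known to be existentially closed in $\O_2^\omega$). The argument goes through precisely because a genuinely $\forall\exists$-axiomatizable theory is defined by inequalities $\sigma\le r$ with $\sigma\in\forall\exists$ --- the dual conditions $\sigma\ge r$ would be $\exists\forall$, not $\forall\exists$ --- so the single inequality above is exactly what is needed. I would therefore state this one-sidedness explicitly, rather than silently asserting equality of the two $\forall\exists$-theories, which is the natural trap in this argument.
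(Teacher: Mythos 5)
Your proof is correct, but it assembles the ingredients differently from the paper. The paper's proof is the ``sandwiching'' trick: it cites \cite[Proposition 4.2.3(ii)]{Ror} for the unital embedding $\O_2\hookrightarrow A$ (which you reprove by hand, correctly, via proper infiniteness of the unit and the Cuntz--$K_0$ classification of projections in a simple purely infinite algebra), then iterates the two embeddings $\O_2\hookrightarrow A\hookrightarrow \O_2^\omega$ through ultrapowers to build a chain as in Proposition~3.2 of \cite{nomodcomp}; the union is elementarily equivalent to $\O_2$ because the $\O_2$-side forms an elementary chain (every embedding of $\O_2$ into $\O_2^\omega$ is elementary, and ultrapowers of elementary maps are elementary), and elementarily equivalent to $A$ because $\forall\exists$-axiomatizable theories are preserved under unions of chains. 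You avoid the chain entirely: from $A\hookrightarrow\O_2^\omega$ and the discussion around Lemma \ref{ssa} you get that $\O_2$ is existentially closed in $A$ (this uses the same elementarity input, namely that embeddings $\O_2\to\O_2^\omega$ are elementary), then transfer $\forall\exists$-sentences one-sidedly, $\sigma^{\O_2}\le\sigma^A$, and feed this into the axiomatization of $\Th(A)$ to conclude $\O_2\models\Th(A)$ directly. Your route is shorter and makes explicit the asymmetry that the chain argument hides inside the elementary-chain theorem; the paper's sandwich is the more flexible template, since it needs no existential closedness on either side and so applies in settings (as in \cite{nomodcomp}) where the analogue of Lemma \ref{ssa} is unavailable, producing a common model of both theories rather than verifying one theory in the other. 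One small point worth stating in your write-up: ``$\O_2^\omega$-embeddable'' is taken in the unital category (the paper justifies this reduction at the start of Section~3), so that the composite $\O_2\subseteq A\hookrightarrow\O_2^\omega$ is a unital embedding, which is the form in which elementarity of embeddings of $\O_2$ into its ultrapower is known.
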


\begin{proof}
This follows from the usual ``sandwiching'' trick.  By \cite[Proposition 4.2.3(ii)]{Ror}, we have that $\O_2\hookrightarrow A$.  Now embed $A\hookrightarrow \O_2^\omega$ and do the union of chains argument as in, for example, Proposition 3.2 in \cite{nomodcomp}.  The union will be elementarily equivalent to both $\O_2$ and $A$.
\end{proof}

We now offer one possible approach for obtaining an answer to Question \ref{exactmodel}.  Let us say that a C$^*$ algebra $A$ is \emph{stably presented} if there are a finite set of generators $\mathcal G$ and a finite set of weakly stable relations $\R$ such that $A=C^*(\mathcal G | \R)$.  (See \cite[II.8.3]{Black})

\begin{lemma}
Suppose that $A$ is simple, stably presented, and $B^\omega$-embeddable for some nuclear $B$.  Then $A$ is exact.
\end{lemma}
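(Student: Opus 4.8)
The plan is to leverage stable presentability to replace the abstract embedding $A\hookrightarrow B^\omega$ by an honest sequence of $*$-homomorphisms $A\to B$, and then to conclude from simplicity together with the exactness of $B$. First I would write $A=C^*(\mathcal{G}\mid\R)$ with $\mathcal{G}=\{g_1,\ldots,g_n\}$ finite and $\R$ a finite set of weakly stable relations, fix an embedding $\iota:A\to B^\omega$, and choose representatives $\iota(g_j)=(b_j^{(i)})^\bullet$ with each $b_j^{(i)}\in B$.

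Next, since the tuple $(\iota(g_1),\ldots,\iota(g_n))$ satisfies every relation in $\R$ exactly in $B^\omega$ and $\R$ is finite, for each $\delta>0$ the coordinate tuples $(b_1^{(i)},\ldots,b_n^{(i)})$ satisfy $\R$ to within $\delta$ for almost all $i$. Weak stability of $\R$ then lets me correct these approximate solutions to exact ones: for each $k$ there is, for almost all $i$, a tuple $(\tilde{b}_1^{(i)},\ldots,\tilde{b}_n^{(i)})$ in $B$ satisfying $\R$ exactly with $\max_j\|\tilde{b}_j^{(i)}-b_j^{(i)}\|<1/k$, and telescoping over $k$ produces a single sequence with $\lim_\omega\max_j\|\tilde{b}_j^{(i)}-b_j^{(i)}\|=0$. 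By the universal property of the presentation, each assignment $g_j\mapsto\tilde{b}_j^{(i)}$ extends to a $*$-homomorphism $\pi_i:A\to B$, and the induced map $(\pi_i)^\bullet:A\to B^\omega$ agrees with $\iota$ on the generators, hence equals $\iota$.

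Finally, simplicity of $A$ forces each $\pi_i$ to be either zero or injective, since its kernel is an ideal of $A$; if $\pi_i$ vanished for almost all $i$ then $\iota$ itself would be zero, contradicting injectivity, so $\pi_i$ is injective for almost all $i$. Any such $\pi_i$ exhibits $A$ as a subalgebra of the nuclear, hence exact, algebra $B$, and exactness passes to subalgebras, so $A$ is exact. The main obstacle is the middle step: passing from the abstract $B^\omega$-embedding to convergent honest homomorphisms into $B$. Weak stability is precisely what makes this possible, allowing the approximate representations read off from the ultrapower coordinates to be corrected to exact representations of $A$; without the stable-presentation hypothesis there would be no reason for the embedding to factor through genuine maps $A\to B$, and the argument would collapse.
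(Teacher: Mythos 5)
Your proof is correct and follows essentially the same route as the paper, whose one-line argument simply asserts that stable presentability yields a $*$-homomorphism $A\to B$ and then invokes simplicity and nuclearity exactly as you do. Your write-up just makes explicit the weak-stability lifting of approximate coordinate solutions to exact ones (and carefully rules out the zero homomorphism via $(\pi_i)^\bullet=\iota$), which is precisely the reasoning the paper leaves implicit.
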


\begin{proof}
By assumption, there is a $\ast$-homomorphism $A\to B$.  Since $A$ is simple, this $\ast$-homomorphism is an embedding; since $B$ is nuclear, it follows that $A$ is exact.
\end{proof}

\begin{prop}
$\O_2$ is the only stably presented model of its theory.
\end{prop}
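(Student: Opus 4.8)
The plan is to show that such an $A$ must be nuclear and then invoke the fact that a separable nuclear model of $\Th(\O_2)$ is isomorphic to $\O_2$. First I would record the cheap structural consequences of the hypotheses. Writing $A = C^*(\mathcal{G}\mid\mathcal{R})$ with $\mathcal{G}$ finite, $A$ is automatically separable. Since simplicity together with pure infiniteness is $\forall\exists$-axiomatizable (Fact \ref{O2stableaxiom}) and holds in $\O_2$, it holds in $A$, so $A$ is simple; and $\Th_\forall(A)=\Th_\forall(\O_2)$ yields a unital embedding $A\hookrightarrow\O_2^\omega$. Thus $A$ meets the hypotheses of the preceding Lemma with $B=\O_2$, so $A$ is exact and in fact admits a unital embedding into $\O_2$. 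Once nuclearity is known, $A$ is separable, simple, nuclear and (by $A\equiv\O_2$ and Fact \ref{O2stableaxiom}) $\O_2$-stable, whence $A\cong A\otimes\O_2\cong\O_2$ by Kirchberg's theorem; so it all comes down to nuclearity.

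The heart of the argument is to produce an \emph{elementary tubular} embedding $A\to\O_2^\omega$ and then apply the Section 4 lemma that an elementary tubular embedding forces the source to be nuclear. Since $\O_2^\omega$ is $\aleph_1$-saturated and $A$ is separable with $A\equiv\O_2\equiv\O_2^\omega$, there is an elementary embedding $j\colon A\preceq\O_2^\omega$. This is where stable presentation is used a second time: applying weak stability of $\mathcal{R}$ to a representing lift of the images $j(\mathcal{G})$, I obtain, for almost every $n$, a unital $\ast$-homomorphism $\pi_n\colon A\to\O_2$ with $(\pi_n)^\bullet=j$, and each $\pi_n$ is an embedding because $A$ is simple. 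So $j$ is an elementary embedding carried by a representing sequence of honest unital embeddings into $\O_2$.

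It remains to check that $j=(\pi_n)^\bullet$ is tubular, i.e.\ that the local finite-dimensional factorizations can be taken with a \emph{uniform} matrix size $k$; this is precisely where exactness is decisive. By Kirchberg's uniqueness theorem, any two unital embeddings of the separable exact algebra $A$ into $\O_2$ are approximately unitarily equivalent, so each $\pi_n$ is approximately unitarily equivalent to a fixed $\iota:=\pi_1$. Given a finite $F\subseteq A$ and $\delta>0$, I would fix one finite-rank u.c.p.\ factorization $\psi\circ\phi$ of $\id_{\O_2}$ that is $\delta$-good on the finite set $\iota(F)$ (nuclearity of $\O_2$), and conjugate it by the unitaries $u_n$ implementing $\pi_n\approx_u\iota$ on $F$; the maps $\phi_n=\phi\circ\mathrm{Ad}(u_n)$ and $\psi_n=\mathrm{Ad}(u_n^*)\circ\psi$ then witness tubularity with the single dimension $k$. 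Hence $j$ is elementary and tubular, so $A$ is nuclear (equivalently, by Theorem \ref{exactec2}, since tubular maps are nuclear and elementary embeddings are existential), and therefore $A\cong\O_2$. The main obstacle is exactly this last uniformity: a general elementary embedding into $\O_2^\omega$ need not be tubular, and it is the combination of weak stability (to realize $j$ by genuine embeddings) with Kirchberg's uniqueness theorem (to bound the factorization dimension uniformly) that overcomes it.
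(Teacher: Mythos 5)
Your proof is correct. It relies on the same core ingredients as the paper's proof---weak stability of the presentation together with simplicity to upgrade a representing sequence to honest unital embeddings $\pi_n\colon A\to\O_2$, the Kirchberg--Phillips uniqueness theorem \cite[Theorem 6.3.8(ii)]{Ror}, and the Section 4 lemma that an elementary tubular embedding forces nuclearity---but the decomposition is genuinely different. The paper reduces everything to showing that \emph{any two} embeddings $f,g\colon A\to\O_2^\omega$ are unitarily conjugate (applying weak stability to both representing sequences and conjugating termwise by the unitaries from Kirchberg uniqueness), and then invokes Proposition \ref{uniquetubular} to deduce that the unique embedding is tubular; the proof of that proposition runs through a microstate-space characterization (the ``Claim'') of uniqueness up to conjugacy. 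You bypass Proposition \ref{uniquetubular} altogether: you fix a single elementary embedding $j$ (available by $\aleph_1$-saturation of $\O_2^\omega$), realize it by honest embeddings via weak stability, and verify tubularity of $j$ by hand, with the unitaries supplied directly by Kirchberg uniqueness inside $\O_2$ (comparing each $\pi_n$ to the fixed $\iota$) and the uniform-dimension factorization supplied by nuclearity of $\O_2$---in effect inlining only the final paragraph of the proof of Proposition \ref{uniquetubular}, which is legitimate precisely because the stable presentation hands you genuine embeddings into $\O_2$ rather than abstract microstates. What your route buys is self-containedness: no appeal to the Claim and no need to compare two arbitrary embeddings into the ultrapower. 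What the paper's route buys is the intermediate statement---unitary conjugacy of all embeddings $A\to\O_2^\omega$---which has independent interest, and reuse of Proposition \ref{uniquetubular}, the same engine behind the non-model-completeness result. Two trivial remarks: unitality of the $\pi_n$ holds for almost all $n$ since $\pi_n(1_A)$ is a projection at distance less than $1$ from $1$ (the paper performs the same correction), and your formulas $\phi_n=\phi\circ\mathrm{Ad}(u_n)$, $\psi_n=\mathrm{Ad}(u_n^*)\circ\psi$ merely adopt the opposite convention from the paper for which unitary implements $\pi_n\approx_u\iota$; both points are harmless.
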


\begin{proof}
Suppose that $A$ is a stably presented model of $\Th(\O_2)$.  It is enough to show that any two embeddings $f,g:A\to \O_2^\omega$ are unitarily conjugate, for then the unique (elementary) embedding of $A$ into $\O_2^\omega$ is tubular, whence $A$ is nuclear and hence isomorphic to $\O_2$.  Let $x_1,\ldots,x_k$ denote the generators of $A$ and, for $i=1,\ldots,k$, let $(y_{i,n})^\bullet,(z_{i,n})^\bullet\in \O_2^\omega$ denote $f(x_i)$ and $g(x_i)$.  By weak stability, we may assume that, for each $n$, $y_{1,n},\ldots,y_{k,n}$ generates a copy of $A$ in $\O_2$; likewise, that $z_{1,n},\ldots,z_{k,n}$ generates a copy of $A$ in $\O_2$.  Since $A$ is simple, we get embeddings $f_n,g_n:A\to \O_2$ obtained by defining $f_n(x_i):=y_{i,n}$, $g_n(x_i):=z_{i,n}$.  Moreover, by correcting that each $f_n$ and $g_n$ send $1_A$ to a projection that is within $1$ of the identity of $\O_2$, we may assume that each $f_n$ and $g_n$ are unital embeddings.  Since any two unital embeddings of $A$ in $\O_2$ are approximately unitarily conjugate (see \cite[Theorem 6.3.8(ii)]{Ror}), we get unitaries $u_n\in \O_2$ such that, for each $i=1,\ldots,k$, we have $(u_n^*)^\bullet (y_{i,n})^\bullet (u_n)^\bullet =(z_{i,n})^\bullet$.  It follows that $f$ and $g$ are unitarily conjugate.
\end{proof}

\begin{question}
If $A$ is stably presented, is $A\otimes \O_2$ stably presented?
\end{question}

\begin{question}
Is there a simple, stably presented, nonnuclear $A$?
\end{question}

If both of these questions have affirmative answers, then for $A$ simple, stably presented, nonnuclear, we know that $A\otimes \O_2$ is simple, $\O_2$-stable, and stably presented, whence not a model of $\Th(\O_2)$, answering Question \ref{exactmodel}.

As a caveat to the above questions:  note that if $A$ is simple, stably presented, and $\cS^\omega$-embeddable (e.g., quasi-diagonal), then $A$ is a matrix algebra.

\appendix
\section{Model-theoretic forcing}

We work in a countable signature $\la$ and add countably many  new constant symbols $C$ to the language.  An $\la(C)$-structure $M$ is said to be \emph{canonical} if $\{c^M \ : \ c\in C\}$ is dense in $M$.

  We fix a class $\k$ of structures and let $\k(C)$ denote the class of all structures $(M,a_c)_{c\in C_0}$, where $C_0$ is a finite subset of $C$.  We treat such structures as $\la(C_0)$-structures in the natural way.

Conditions are now finite sets of the form $\{\varphi_1<r_1,\ldots,\varphi_n<r_n\}$, where each $\varphi_i$ is an atomic $\la(C)$-sentence and there is $M\in \k(C)$ such that $\varphi_i^M<r_i$ for each $i=1,\ldots,n$.  The partial order on conditions is reverse inclusion.  If $p$ is a condition and $\varphi$ is an atomic sentence of $\la(C)$, we define $f_p(\varphi):=\min\{r\leq 1 \ | \varphi<r\in p\}$, with the understanding that $\min(\emptyset)=1$.  For a condition $p$ and an $\la(C)$-sentence $\varphi$, we define the value $F_p(\varphi)\in [0,1]$ by induction on $\varphi$.
\begin{itemize}
\item $F_p(\varphi)=f_p(\varphi)$ if $\varphi$ is atomic.
\item $F_p(\neg \varphi)=\neg \inf_{q\supseteq p}F_q(\varphi)$.
\item $F_p(\frac{1}{2}\varphi)=\frac{1}{2}(\varphi)$.
\item $F_p(\varphi+\psi)=F_p(\varphi)+F_p(\psi)$.  (Truncated addition)
\item $F_p(\inf_x\varphi(x))=\inf_{c\in C}F_p(\varphi(c))$.
\end{itemize}

If $r\in \r$ and $F_p(\varphi)<r$, we say that $p$ \emph{forces} that $\varphi<r$, and write $p\Vdash \varphi<r$.

\begin{df}
We say that a nonempty set $G$ of conditions is \emph{generic} if the union of two elements of $G$ is once again an element of $G$ and for every $\la(C)$-sentence $\varphi$ and every $r>1$, there is $p\in G$ such that $F_p(\varphi)+F_p(\neg \varphi)<r$.
\end{df}

It is proven in \cite{BI} that generic sets always exist.  If $G$ is generic and $\varphi$ is an $\la(C)$-sentence, set $\varphi^G:=\inf_{p\in G}F_p(\varphi)$.
\begin{thm}[Generic Model Theorem]
Let $M_0^G$ denote the term algebra $\mathcal T(C)$ equipped with the natural interpretation of the function symbols and interpreting the predicate symbols by $P^{M_0^G}(\vec \tau):=P(\vec \tau)^G$.  Let $M^G$ be the completion of $M_0^G$.  Then $M^G$ is an $\la(C)$-structure such that, for all $\la(C)$-sentences $\varphi$, we have $\varphi^{M^G}=\varphi^G$.
\end{thm}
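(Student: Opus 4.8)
The plan is to establish, by induction on the structure of an $\la(C)$-sentence $\varphi$, the single identity $\varphi^{M^G}=\varphi^G$, where $\varphi^G=\inf_{p\in G}F_p(\varphi)$. Two structural facts about $F_p$ should be recorded first. The \emph{monotonicity lemma}: if $p\subseteq q$ then $F_q(\varphi)\le F_p(\varphi)$ for every $\varphi$; this is a routine parallel induction (for atomic $\varphi$, enlarging $p$ only shrinks the set $\{r:(\varphi<r)\in p\}$ over which $f_p$ minimizes; the increasing connectives $\frac12(\cdot)$ and truncated addition preserve it; for $\neg$ one notes $\{q'\supseteq q\}\subseteq\{q'\supseteq p\}$; and $\inf_x$ is clear). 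The \emph{universal inequality}: $F_p(\varphi)+F_p(\neg\varphi)\ge 1$ for every condition $p$, immediate from $F_p(\neg\varphi)=1-\inf_{q\supseteq p}F_q(\varphi)\ge 1-F_p(\varphi)$. Combining monotonicity with the directedness of $G$ (closure under unions) yields the \emph{additivity fact} $\inf_{p\in G}\bigl(F_p(\varphi)+F_p(\psi)\bigr)=\varphi^G+\psi^G$: the inequality ``$\le$'' is trivial, and for ``$\ge$'' one approximates $\varphi^G$ and $\psi^G$ by separate conditions $p,p'\in G$ and then passes to $p\cup p'\in G$, using monotonicity to realize both approximations simultaneously.

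Before running the induction I would verify that $M^G$ is a genuine canonical $\la(C)$-structure. Each condition is satisfiable in some member of $\k(C)$, which supplies the moduli of uniform continuity and the (pseudo)metric axioms for the interpretations $P^{M_0^G}(\vec\tau):=P(\vec\tau)^G$, so $M_0^G$ completes legitimately to $M^G$. Canonicity --- density of $\{c^{M^G}:c\in C\}$ --- is the one point here that genuinely needs genericity, and it is needed for the quantifier step below: given a term $\tau$ and $\e>0$, the set of conditions $q$ admitting a constant $c$ with $(d(c,\tau)<\e)\in q$ is dense (extend any condition $p$, satisfiable in some $M\in\k(C)$, by interpreting a fresh constant $c$ as $\tau^M$), and genericity guarantees $G$ meets it; since $d(c,\tau)^{M^G}=d(c,\tau)^G\le f_q(d(c,\tau))\le\e$ by the atomic case below, the constants are dense.

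Now the induction. The atomic case is the definition of $M_0^G$ together with continuity of the completion. The connective cases $\frac12\varphi$ and $\varphi+\psi$ follow from the definition of $F_p$ on connectives and the additivity fact (directedness is exactly what commutes the infimum past truncated addition, using monotonicity of $\min(1,\cdot)$). For $\inf_x\varphi(x)$ one uses canonicity: by density of the constants and uniform continuity, $\bigl(\inf_x\varphi(x)\bigr)^{M^G}=\inf_{c\in C}\varphi(c)^{M^G}$, and then the inductive hypothesis on each sentence $\varphi(c)$ plus a swap of the two infima gives $\inf_{c\in C}\inf_{p\in G}F_p(\varphi(c))=\inf_{p\in G}F_p(\inf_x\varphi(x))=\bigl(\inf_x\varphi(x)\bigr)^G$. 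The negation step is the crux, and the place where the genericity axiom (rather than mere directedness) is essential: by the additivity fact $\varphi^G+(\neg\varphi)^G=\inf_{p\in G}\bigl(F_p(\varphi)+F_p(\neg\varphi)\bigr)$, which the universal inequality forces to be $\ge 1$ and the defining property of a generic set (for every $r>1$ some $p\in G$ has $F_p(\varphi)+F_p(\neg\varphi)<r$) forces to be $\le 1$; hence $(\neg\varphi)^G=1-\varphi^G$, matching $(\neg\varphi)^{M^G}=1-\varphi^{M^G}$ via the inductive hypothesis. I expect this negation case, hand in hand with the density-of-constants point, to be the main obstacle, since these are the only two places where one must use that $G$ meets the dense sets encoded by genericity; the remaining cases are bookkeeping driven by monotonicity, directedness, and continuity.
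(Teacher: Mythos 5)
The paper does not actually prove this theorem: it is quoted verbatim from Ben Yaacov--Iovino \cite{BI}, so your attempt has to be measured against the standard argument there. In architecture it matches: monotonicity of $F_p$ under extension of conditions, directedness of $G$ to pass infima through (truncated) addition, the inequality $F_p(\varphi)+F_p(\neg\varphi)\geq 1$, genericity to force $\varphi^G+(\neg\varphi)^G=1$ in the negation case (which you correctly identify as the crux), and density of constants for the quantifier step. One cosmetic slip: in your additivity fact the trivial direction is ``$\geq$'' (each $F_p(\varphi)+F_p(\psi)\geq\varphi^G+\psi^G$), and it is the union-of-conditions argument that gives ``$\leq$''; you have the labels reversed, though the argument you describe is the right one.

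The genuine soft spot is your prestructure verification. You assert that satisfiability of each condition in $\k(C)$ already ``supplies the moduli of uniform continuity and the (pseudo)metric axioms'' for $M_0^G$, with canonicity being ``the one point here that genuinely needs genericity.'' That is false: since $f_p(\psi)=1$ when $p$ places no constraint on the atomic $\psi$, a merely directed family of satisfiable conditions can leave $d(\tau,\tau)^G=1$, or have $d(\tau_1,\tau_3)^G=1$ while $d(\tau_1,\tau_2)^G+d(\tau_2,\tau_3)^G<1$; bare satisfiability of the members of $G$ never pushes the needed constraints \emph{into} $G$. The repair is the same mechanism you already use for negation: the genericity clause applied to an atomic $\psi$ yields $\psi^G=\sup_{p\in G}\inf_{q\supseteq p}F_q(\psi)$ (the weak value), and since any $p\in G$ bounding $d(\tau_1,\tau_2)$ and $d(\tau_2,\tau_3)$ extends, compatibly with any member of $G$, to a condition containing $d(\tau_1,\tau_3)<s$ for $s$ slightly above the sum of the bounds (any model of the condition witnesses satisfiability), one gets $d(\tau_1,\tau_3)^G\leq s$; reflexivity, symmetry, and the uniform-continuity/congruence axioms for the function and predicate symbols follow the same way, and this is where \cite{BI} spends the corresponding effort. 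Relatedly, the paper's definition of generic is only the negation-balance clause, not ``$G$ meets every dense set of conditions,'' so your density-of-constants step should be routed through the sentence $\inf_x d(x,\tau)$: every condition extends (via a fresh constant interpreted as $\tau^M$) to one forcing this sentence below any $\e$, so $F_p(\neg\inf_x d(x,\tau))=1$ for all $p$, and genericity then produces $p\in G$ containing some $d(c,\tau)<\e$. With these two points repaired, your proof is complete and coincides with the one in \cite{BI}.
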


We say that an $\la$-structure $N$ is \emph{finitely generic for $\k$} if there is a generic set $G$ of conditions such that $M$ is isomorphic to the $\la$-reduct of $M^G$.  Note that finitely generic structures exist as generic sets of conditions exist.

The following is a special case of the Omitting Types Theorem proven in \cite{BI}.  By a \emph{finite piece of $\k$} we mean a finite set of inequalities of the form $\varphi<r$ where $\varphi$ is quantifier-free and such that the set of inequalities is satisfiable in some structure in $\k$.
\begin{thm}[Omitting Types Theorem]
Let $(\varphi_n \ | \ n<\omega)$ be a sequence of $\la$-sentences such that, for each $n<\omega$, $\varphi_n$ is of the form
$$\sup_{x_1}\cdots\sup_{x_{m(n)}}\psi_n(x_1,\ldots,x_{m(n)}),$$ where $$\psi_n(x_1,\ldots,x_{m(n)}):=\bigwedge_{k<\omega}\inf_{y_1}\cdots \inf_{y_i(n,k)}\sigma_{n,k}(\vec x_n,\vec y_{n,k}),$$ and where $\sigma_{n,k}$ is quantifier-free, $\vec x_n=x_1,\ldots,x_{m(n)}$, and $\vec y_{n,k}=y_1,\ldots,y_{i(n,k)}$.  Let $(r_n \ | \ n<\omega)$ be a sequence of real numbers such that, for every finite piece $p$ of $\k$ and every $\vec {c}_n\in C^{m(n)}$, the set $p\cup \{\psi_n(\vec c_n)<r_n\}$ is satisfiable in $\k$.  Then there exists a canonical $\la(C)$-structure $M$ such that $\varphi_n^M\leq r_n$ for every $n<\omega$.  In fact, $M$ is of the form $M^G$ for some generic set $G$.
\end{thm}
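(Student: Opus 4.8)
The plan is to build a generic set $G$ of conditions whose associated structure $M=M^G$, furnished by the Generic Model Theorem, satisfies $\varphi_n^{M}\le r_n$ for every $n$. Generic sets exist by \cite{BI}; the extra work is to arrange, on top of the genericity requirements, that $G$ meets a countable family of dense sets designed to force each $\varphi_n$ to be small. Since there are only countably many such sets (indexed by $n$, by a tuple $\vec c$ from the constants $C$, and by a rational $\e>0$), together with the countably many dense sets needed for genericity, a standard step-by-step construction of a descending chain of conditions produces the desired $G$. The only genuinely new input is the density of the new sets, which is where the hypothesis enters.

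First I would record the reduction. Reading the infinitary conjunction as an infimum, we have for any tuple $\vec a$ of $M$ that $\psi_n(\vec a)^{M}=\inf_{k}\inf_{\vec y}\sigma_{n,k}(\vec a,\vec y)^{M}$, so that $\varphi_n^{M}=\sup_{\vec a}\psi_n(\vec a)^{M}$. Because $M$ is canonical the constants $C$ are dense in $M$, so (appealing to uniform continuity of the relevant predicates, a point I return to below) it suffices to guarantee $\psi_n(\vec c)^{M}\le r_n$ for every tuple $\vec c\in C^{m(n)}$. By the Generic Model Theorem $\psi_n(\vec c)^{M}=\inf_{p\in G}F_p(\inf_{k}\inf_{\vec y}\sigma_{n,k}(\vec c,\vec y))$, and since this is an infimum over $k$ and $\vec y$, the bound $\psi_n(\vec c)^{M}\le r_n$ holds as soon as, for every rational $\e>0$, the set
\[D_{n,\vec c,\e}:=\{\,p : \text{there exist } k \text{ and } \vec d\in C \text{ with } F_p(\sigma_{n,k}(\vec c,\vec d))<r_n+\e\,\}\]
meets $G$. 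Thus everything reduces to proving that each $D_{n,\vec c,\e}$ is dense.

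For density I would fix a condition $q$. As $q$ is a finite set of atomic (hence quantifier-free) inequalities satisfiable in some member of $\k(C)$, it is a finite piece of $\k$, so the hypothesis applies: there is $M'\in\k(C)$ with $M'\models q$ and $\psi_n(\vec c)^{M'}<r_n$. Since $\psi_n(\vec c)^{M'}=\inf_{k,\vec y}\sigma_{n,k}(\vec c,\vec y)^{M'}<r_n$, I may choose $k$ and a witnessing tuple $\vec b$ in $M'$ with $\sigma_{n,k}(\vec c,\vec b)^{M'}<r_n$. Naming the entries of $\vec b$ by constants $\vec d$ that do not occur in $q$ or $\vec c$ expands $M'$ to a member of $\k(C)$ in which $\sigma_{n,k}(\vec c,\vec d)$ has value $<r_n$. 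I then extend $q$ to a condition $p\supseteq q$ by adjoining atomic inequalities that pin the atomic subformulas of $\sigma_{n,k}(\vec c,\vec d)$ to within a small tolerance of their values in $M'$; by the approximation properties of the forcing relation established in \cite{BI}, a sufficiently small tolerance forces $F_p(\sigma_{n,k}(\vec c,\vec d))<r_n+\e$, so $p\in D_{n,\vec c,\e}$. Meeting all the $D_{n,\vec c,\e}$ while keeping $G$ generic then yields $M=M^G$ with $\varphi_n^{M}\le r_n$ for all $n$.

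The main obstacle is the density lemma, and within it the passage from the \emph{value} of the quantifier-free sentence $\sigma_{n,k}(\vec c,\vec d)$ in the witnessing structure $M'$ to a bound on its \emph{forcing value} $F_p$. This is exactly where the fine properties of continuous model-theoretic forcing are used: one must know that the forcing value of a quantifier-free formula can be driven close to its truth value in a structure realizing the condition. I expect the argument to go through smoothly here precisely because we are making an \emph{existential} (infimum) formula small by exhibiting explicit witnesses $k,\vec d$, rather than attempting to force a formula large, which the forcing relation does not in general permit. A secondary technical point, needed for the reduction, is that controlling $\psi_n$ on the dense set of constants controls $\sup_{\vec x}\psi_n$ on all of $M$; this rests on the uniform continuity of the predicates involved, which is automatic for the operator-algebraic predicates $\Delta_{\nuc m}$ to which the theorem is ultimately applied.
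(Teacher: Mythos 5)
The paper itself offers no proof of this statement: it is quoted verbatim as a special case of the Omitting Types Theorem of Ben Yaacov--Iovino, with an explicit pointer to \cite{BI} (Corollary 4.7). So the comparison is with your argument on its own merits. Your architecture --- a generic $G$ meeting countably many dense sets $D_{n,\vec c,\e}$, the reduction from $\sup_{\vec x}$ to the dense set of constants via canonicity, and the unravelling $\psi_n(\vec c)^{M^G}=\inf_k\inf_{p\in G}\inf_{\vec d\in C}F_p(\sigma_{n,k}(\vec c,\vec d))$ --- is the standard skeleton for such results, and those reductions are fine (including your correctly flagged caveat that the reduction to constants needs a common modulus of uniform continuity for the conjuncts $\inf_{\vec y}\sigma_{n,k}$).

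The gap is in the density lemma, at the step ``adjoin atomic inequalities pinning the atomic subformulas of $\sigma_{n,k}(\vec c,\vec d)$ near their values in $M'$; a sufficiently small tolerance forces $F_p(\sigma_{n,k}(\vec c,\vec d))<r_n+\e$.'' There is no such approximation property of the forcing relation for general quantifier-free sentences. Conditions impose only \emph{upper} bounds on atomic values, and the negation clause reads $F_p(\neg\theta)=1-\inf_{q'\supseteq p}F_{q'}(\theta)$, where $q'$ ranges over \emph{all} satisfiable extensions, not just those compatible with your witness $M'$. For atomic $\theta$ one has $\inf_{q'\supseteq p}F_{q'}(\theta)=\inf\{\theta^N : N\in\k(C),\ N\models p\}$, since any such value can be written into a satisfiable extension; hence $F_p(\neg\theta)$ is small only when $p$ \emph{semantically entails over the whole class} a lower bound on $\theta$, and approximating the atomic diagram of one model $M'$ contributes nothing to this. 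Concretely: take $\la$ with a single $[0,1]$-valued predicate $P$, let $\k$ be all $\la$-structures, let $M'$ satisfy $P(c)=1$, and let $\sigma=\neg P(c)$. Then $\sigma^{M'}=0$, yet $F_p(\sigma)=1$ for \emph{every} condition $p$, because $p\cup\{P(c)<\delta\}$ is always a condition. Your closing heuristic --- that you are safely ``making an existential formula small by exhibiting witnesses rather than forcing a formula large'' --- is precisely where this misfires: making $\neg\theta$ small \emph{is} forcing $\theta$ large, and the quantifier-free $\sigma_{n,k}$ are built from the connective basis $\neg,\tfrac12,+$, so negations cannot be assumed away (even in the paper's intended application the formulas use $\dotminus$, and $x\dotminus y=\neg(\neg x+y)$).

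Note also that your argument invokes the theorem's hypothesis only once, with the piece $p:=q$ and a single witness structure $M'$; but the hypothesis quantifies over \emph{all} quantifier-free finite pieces, and this extra strength is not decorative --- it is exactly what excludes the obstruction above (in the toy example the hypothesis fails: the piece $\{P(c)<1/4\}$ is incompatible with $\neg P(c)<1/2$). A correct proof must thread the hypothesis through the negation clauses of $F$ (equivalently, through the weak forcing $F^w$), as is done in \cite{BI}; alternatively, your witness-pinning argument is sound as written if one restricts the $\sigma_{n,k}$ to a negation-free (positive) fragment. As it stands, the density of $D_{n,\vec c,\e}$ --- the one genuinely new ingredient beyond \cite{BI}'s existence of generic sets --- is not established.
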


In Section 3, we needed to know that generic models of $\forall\exists$-axiomatizable theories are e.c.\   models of the theory.  The remainder of this appendix is devoted to establishing this fact.

It will convenient for this discussion to slightly alter the definition of atomic diagram of $M$.  We set $D(M)$ to be the set of conditions $\sigma<r$, where $\sigma$ is an atomic $L(M)$-sentence with $\sigma^M=0$ and $r\in (0,1]$.

\begin{lemma}\label{aefingen}
If $T$ is a $\forall\exists$-axiomatizable theory and $\k$ is the class of models of $T$, then a structure finitely generic for $\k$ belongs to $\k$.
\end{lemma}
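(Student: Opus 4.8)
The plan is to verify each axiom of $T$ in $M^G$. Writing a generic axiom as $\sigma = \sup_{\vec x}\inf_{\vec y}\varphi(\vec x,\vec y)$ with $\varphi$ quantifier-free (so that $\sigma^N = 0$ for every $N\in\k$), it suffices to show $\sigma^{M^G}=0$, equivalently $(\neg\sigma)^{M^G}=1$. By the Generic Model Theorem $(\neg\sigma)^{M^G} = \inf_{p\in G}F_p(\neg\sigma)$, and unwinding the forcing clauses for $\neg$ and $\inf$ gives $F_p(\neg\sigma) = \inf_{\vec c\in C}\bigl(1 - \inf_{q\supseteq p}F_q(\inf_{\vec y}\varphi(\vec c,\vec y))\bigr)$. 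Hence it is enough to prove the following purely combinatorial statement about conditions: for every condition $p$, every tuple $\vec c$ from $C$, and every $\varepsilon > 0$, there are an extension $q\supseteq p$ and a tuple $\vec d$ from $C$ with $F_q(\varphi(\vec c,\vec d)) < \varepsilon$ (using the $\inf$-clause, this amounts to $\inf_{q\supseteq p}F_q(\inf_{\vec y}\varphi(\vec c,\vec y)) = 0$). Granting this, each bracketed term equals $1$, so $F_p(\neg\sigma)=1$ for every $p$, and therefore $(\neg\sigma)^{M^G}=1$.

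To produce such a $q$, I would start from the defining feature of conditions: $p$ is realized by some $N\in\k(C)$, which I extend so as to interpret $\vec c$. Since $N$ is a model of $T$ and $\sigma$ is an axiom, $(\inf_{\vec y}\varphi(\vec c,\vec y))^N \le \sigma^N = 0$, so there is a tuple $\vec b$ in $N$ with $\varphi(\vec c,\vec b)^N < \varepsilon/2$; I interpret a tuple $\vec d$ of fresh constants from $C$ as $\vec b$, keeping $N$ a realization. I would then extend $p$ to a condition $q$, again realized by $N$, by adjoining a finite piece of the modified diagram $D(N)$ — that is, commitments pinning the relevant atomic sentences to their (near-)values in $N$ — arranged so that $F_q(\varphi(\vec c,\vec d)) < \varepsilon$.

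The step I expect to be the main obstacle is exactly this last forcing estimate: that committing to the relevant atomic data of $N$ forces the quantifier-free sentence $\varphi(\vec c,\vec d)$ down to within $\varepsilon$ of its value $\varphi(\vec c,\vec b)^N$. I would prove it by induction on the construction of $\varphi$ from atomic sentences via the connectives $\frac{1}{2}(\cdot)$, truncated $+$, and $\neg$. The scaling and additive cases are routine once one notes that $F_{(\cdot)}(\varphi)$ is monotone under extension of conditions, so the finitely many sub-conditions obtained for the individual atomic constituents — all realized by the single model $N$ — can be amalgamated into one condition. The genuinely delicate case is negation, where $F_q(\neg\psi) = 1 - \inf_{q'\supseteq q}F_{q'}(\psi)$ quantifies over all further extensions; bounding this weak-forcing value is precisely where the realizability of every condition in $\k$ and the structure of $D(N)$ must be exploited, following the forcing calculus of \cite{BI}. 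This interplay between weak forcing and satisfaction in realizing members of $\k$ is the technical heart of the lemma.
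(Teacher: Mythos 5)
Your overall skeleton matches the paper's: the reduction to the density statement that $\inf_{q\supseteq p}F_q(\inf_{\vec y}\varphi(\vec c,\vec y))=0$ for \emph{every} condition $p$ and tuple $\vec c$ is exactly what the paper proves (phrased there contrapositively, starting from $(\inf_y\varphi(c,y))^{M^G}=\epsilon>0$ and a $p\in G$ with $F_p(\inf_y\varphi(c,y))+F_p(\neg\inf_y\varphi(c,y))<1+\frac{\epsilon}{2}$), and your production of a realization $N\in\k(C)$ of $p$ in which fresh constants $\vec d$ name witnesses with $\varphi(\vec c,\vec d)^N<\varepsilon/2$ is also the paper's move. (A cosmetic slip: since $\sup_{\vec x}$ abbreviates $\neg\inf_{\vec x}\neg$, your displayed identity for $F_p(\neg\sigma)$ is really a weak-forcing value $\sup_{q\supseteq p}\inf_{q'\supseteq q}(\cdots)$; under the density claim both expressions equal $1$, so this is harmless.)

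The genuine gap is the step you yourself flag and then defer to ``the forcing calculus of \cite{BI}'': the estimate $F_q(\varphi(\vec c,\vec d))<\varepsilon$ for a finite piece $q$ of $D(N)$. Your proposed induction on connectives would break at negation, and not in a way that pinning down more atomic data of $N$ repairs: $F_q(\neg\psi)=1-\inf_{q'\supseteq q}F_{q'}(\psi)$ ranges over \emph{all} extensions $q'$, and an extension of $q$ need only be realizable in \emph{some} member of $\k(C)$, not in anything resembling $N$; nothing prevents some extension from forcing $\psi$ well below $\psi^N$, so there is no reason the diagram-condition $q$ itself forces $\varphi(\vec c,\vec d)$ near its $N$-value. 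Crucially, you are also trying to prove more than the density statement needs: it suffices that \emph{some} extension of $q$ forces $\varphi(\vec c,\vec d)$ small, not that $q$ does.

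That weaker statement is what the paper actually proves, and it does so with a trick that bypasses the induction entirely: an auxiliary generic model. Choose a finite $q\subseteq D(N)$ with $p\subseteq q$ and $q\models \varphi(c,d)<\varepsilon/2$ --- a single semantic entailment, available because $\varphi$ is quantifier-free and hence its value is a continuous function of finitely many atomic values, which $D(N)$ pins down. Now take \emph{any} generic set $G'$ containing $q$. By the Generic Model Theorem, $M^{G'}$ realizes $q$, hence $\varphi(c,d)^{M^{G'}}<\varepsilon/2$; applying the Generic Model Theorem once more, $\varphi(c,d)^{M^{G'}}=\inf_{q''\in G'}F_{q''}(\varphi(c,d))$, so some $q''\in G'$ satisfies $F_{q''}(\varphi(c,d))<\varepsilon/2$, and since $G'$ is closed under unions and $F_\bullet$ is monotone under extension of conditions, $q':=q\cup q''\supseteq q\supseteq p$ is the desired extension, contradicting (in the paper's formulation) $\inf_{q\supseteq p}F_q(\inf_y\varphi(c,y))\geq\frac{\epsilon}{2}$. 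This converts a semantic fact about one realization into the existence of a forcing extension without ever evaluating $F_q$ on a negated formula. Without this detour, or a fully worked-out substitute for the negation case, your argument remains open precisely at what you call its technical heart.
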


\begin{proof}
Suppose that $\sup_x\inf_y\varphi(x,y)=0$ is an axiom of $T$.  Suppose that $M$ is isomorphic to the $\la$-reduct of $M^G$.  Fix $c\in C$; it is enough to show that $(\inf_y\varphi(c,y))^{M^G}=0$.  Suppose, towards a contradiction, that $(\inf_y\varphi(c,y))^{M^G}=\epsilon>0$.  Take $p\in G$ such that $$F_p(\inf_y\varphi(c,y))+F_p(\neg \inf_y\varphi(c,y))<1+\frac{\epsilon}{2}.$$  It follows that $\inf_{q\supseteq p}F_q(\inf_y\varphi(c,y))\geq \frac{\epsilon}{2}$.  Now take $N\models T\cup p$.  Fix $d\in C$ not occurring in $p$.  By redefining the interpretation of $d$ in $N$ if necessary, we may assume that $\varphi(c,d)^N<\frac{\epsilon}{2}$ (since $N\models T$).  There is then a finite fragment $q$ of $D(N)$ such that $q\models \varphi(c,d)<\frac{\epsilon}{2}$; since $N\models p$, we may as well assume that $p\subseteq q$.  Suppose that $G'$ is generic set containing $q$; then $\varphi(c,d)^{M^{G'}}<\frac{\epsilon}{2}$.  It follows that there is $q'\supseteq q$ from $G'$ such that $F_{q'}(\varphi(c,d))<\frac{\epsilon}{2}$, a contradiction.
\end{proof}

Actually the above proof showed the following fact:

\begin{cor}
For any theory $T$, if $M$ is finitely generic for the class of models of $T$, then $M\models T_{\forall\exists}$.
\end{cor}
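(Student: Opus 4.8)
The plan is to extract from the proof of Lemma \ref{aefingen} exactly what it establishes, replacing the role of $0$ by the value that $T$ forces on a given $\forall\exists$ sentence. Write a $\forall\exists$ sentence as $\sigma=\sup_{\overline x}\inf_{\overline y}\varphi(\overline x,\overline y)$ with $\varphi$ quantifier-free, and set $s:=\sup\{\sigma^N \ : \ N\models T\}$; unwinding the definition of $T_{\forall\exists}$, to show $M\models T_{\forall\exists}$ it suffices to prove $\sigma^M\leq s$ for every such $\sigma$. Since $M$ is isomorphic to the $\la$-reduct of $M^G$ for some generic $G$, and since $M^G$ is canonical (so the constants $C$ are dense), continuity of $\sigma$ together with the Generic Model Theorem reduces this to showing
$$\bigl(\inf_{\overline y}\varphi(\overline c,\overline y)\bigr)^{M^G}\leq s\qquad\text{for every tuple }\overline c\text{ from }C.$$

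Fix such a $\overline c$ and abbreviate $\psi:=\inf_{\overline y}\varphi(\overline c,\overline y)$, an $\la(C)$-sentence. Suppose toward a contradiction that $\psi^{M^G}=s+\e$ with $\e>0$. By genericity applied to $\psi$ and $1+\tfrac{\e}{2}>1$, choose $p\in G$ with $F_p(\psi)+F_p(\neg\psi)<1+\tfrac{\e}{2}$. Using the clause $F_p(\neg\psi)=1-\inf_{q\supseteq p}F_q(\psi)$ and the fact that $F_p(\psi)\geq \psi^{M^G}=s+\e$ (because $\psi^{M^G}=\inf_{p'\in G}F_{p'}(\psi)$), we deduce $\inf_{q\supseteq p}F_q(\psi)>s+\tfrac{\e}{2}$; that is, $F_q(\psi)\geq s+\tfrac{\e}{2}$ for every condition $q\supseteq p$.

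Now I would derive a contradicting extension exactly as in Lemma \ref{aefingen}. Because $p$ is a condition for the class of models of $T$, there is $N\models T$ with $N\models p$. As $N\models T$ we have $\sigma^N\leq s$, so $\psi^N=(\inf_{\overline y}\varphi(\overline c,\overline y))^N\leq\sigma^N\leq s$; hence, after reinterpreting a tuple $\overline d$ of constants not occurring in $p$, we may arrange $\varphi(\overline c,\overline d)^N<s+\tfrac{\e}{2}$. Extracting a finite fragment $q$ of the diagram $D(N)$ that contains $p$ and forces $\varphi(\overline c,\overline d)<s+\tfrac{\e}{2}$ (via a generic set $G'\ni q$ and the Generic Model Theorem, precisely as in the cited lemma), we get $F_q(\psi)\leq F_q(\varphi(\overline c,\overline d))<s+\tfrac{\e}{2}$, contradicting the previous paragraph.

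The only genuinely delicate point is this last step---passing from the inequality $\varphi(\overline c,\overline d)^N<s+\tfrac{\e}{2}$ for the quantifier-free $\varphi$ to a finite fragment $q\supseteq p$ of $D(N)$ with $F_q(\varphi(\overline c,\overline d))<s+\tfrac{\e}{2}$---since the forcing value of a quantifier-free sentence is not literally its value in $N$ (on account of the $\neg$ clause). But this is exactly the manoeuvre carried out in the proof of Lemma \ref{aefingen}, and it goes through verbatim with $s$ in place of $0$; no new ideas are needed, which is precisely why the corollary falls out of re-reading that proof.
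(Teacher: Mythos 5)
Your proof is correct and matches the paper's intent exactly: the paper proves this corollary simply by remarking that the proof of Lemma \ref{aefingen} never used $\forall\exists$-axiomatizability of $T$, only that the sentence in question is bounded (by $0$ there, by your $s$ in general) across all models of $T$. Your writeup is precisely that argument rerun with $s$ in place of $0$, including the correct handling of the genericity estimate $\inf_{q\supseteq p}F_q(\psi)>s+\tfrac{\e}{2}$ and the extension through a finite fragment of $D(N)$ via an auxiliary generic $G'$, so no further comment is needed.
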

It will be convenient to introduce the analog of weak forcing:

\begin{df}
$F_p^w(\varphi)=\sup_{q\supseteq p}\inf_{q'\supseteq q}F_{q'}(\varphi)$.
\end{df}
\begin{lemma}
If $T$ is $\forall\exists$-axiomatizable and $M$ is finitely generic for the class of models of $T$, then $M$ is an e.c.\   model of $T$.
\end{lemma}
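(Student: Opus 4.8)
The plan is to verify directly that $M$ is existentially closed among models of $T$. Since $M$ is finitely generic for $\k$, Lemma \ref{aefingen} already gives $M\models T$, so the only thing left is to show that whenever $N\models T$ with $M\subseteq N$, $\varphi(x,y)$ is quantifier-free, and $a$ is a tuple from $M$, one has $(\inf_y\varphi(a,y))^M\le(\inf_y\varphi(a,y))^N$ (the reverse inequality being automatic from $M\subseteq N$). Because the interpretations of the constants $C$ are dense in $M=M^G$ and both sides are uniformly continuous in the parameter tuple, it suffices to treat the case $a=c^M$ for a tuple $c$ from $C$; write $\theta:=\inf_y\varphi(c,y)$ and $s:=\theta^N$.

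Next I would run the contradiction argument of Lemma \ref{aefingen} almost verbatim. Suppose $\theta^M=s'>s$ and fix $\epsilon>0$ with $s'>s+2\epsilon$. By genericity choose $p\in G$ with $F_p(\theta)+F_p(\neg\theta)<1+\epsilon$; since $F_p(\neg\theta)=\neg\inf_{q\supseteq p}F_q(\theta)$ and $F_p(\theta)\ge\theta^{M^G}=s'$ (by the Generic Model Theorem, as $\theta^{M^G}=\inf_{p'\in G}F_{p'}(\theta)$), this yields $\inf_{q\supseteq p}F_q(\theta)>s'-\epsilon$. Now, because atomic $\la(C)$-sentences with parameters in $M$ are preserved under the inclusion $M\subseteq N$, we have $N\models p$; choosing $b\in N$ with $\varphi(c,b)^N<s+\epsilon$ and a fresh constant $d$ interpreted as $b$, the structure $(N,b)$ is a model of $T\cup p$ with $\varphi(c,d)^{(N,b)}<s+\epsilon$.

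The heart of the argument is then to extract from this a single condition $q''\supseteq p$ with $F_{q''}(\varphi(c,d))<s+\epsilon$, exactly as in the proof of Lemma \ref{aefingen}: take a finite fragment $q_0$ of $D((N,b))$ with $p\subseteq q_0$, pass to a generic set $G'\ni q_0$, and use the Generic Model Theorem for $M^{G'}$ together with the fact that this diagram fragment pins $\varphi(c,d)$ down to conclude $\varphi(c,d)^{M^{G'}}<s+\epsilon$, whence some $q'\in G'$ has $F_{q'}(\varphi(c,d))<s+\epsilon$. Setting $q'':=q'\cup q_0$, which lies in $G'$ and contains $p$, and using monotonicity of forcing in the condition, gives $F_{q''}(\varphi(c,d))<s+\epsilon$. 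Since $F_{q''}(\theta)=\inf_{e\in C}F_{q''}(\varphi(c,e))\le F_{q''}(\varphi(c,d))<s+\epsilon$ while $q''\supseteq p$, we obtain $\inf_{q\supseteq p}F_q(\theta)<s+\epsilon$, contradicting $\inf_{q\supseteq p}F_q(\theta)>s'-\epsilon>s+\epsilon$. Hence $\theta^M\le s$, which is the required inequality.

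I expect the only genuinely delicate point to be the claim that a finite fragment of the diagram, once thrown into a generic set, pins the quantifier-free formula $\varphi(c,d)$ below $s+\epsilon$ — the subtlety being that the negation connective in forcing reacts only to extensions of a condition, so quantifier-free forcing is not simply evaluation. This is precisely the mechanism already exploited in the proof of Lemma \ref{aefingen}, so I would isolate it as a reusable sublemma (or simply cite that proof) rather than re-derive it; the weak forcing operator $F_p^w$ introduced just above is the natural device for packaging it. Everything else—the reduction to constant parameters, $M\models T$, the union-closure of generic sets, and the monotonicity bookkeeping—is routine.
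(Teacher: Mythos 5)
Your proposal is correct and follows essentially the same route as the paper's proof: the same genericity-plus-negation setup to bound $\inf_{q\supseteq p}F_q(\theta)$ from below, the same fresh-constant naming of a witness in $N$, compactness to extract a finite diagram fragment, and an auxiliary generic set $G'$ together with Lemma \ref{aefingen} to evaluate $\varphi(c,d)$ in $M^{G'}$. Your only deviations are cosmetic --- making the density/uniform-continuity reduction to constant parameters explicit, and using monotonicity of forcing to produce a single condition $q''\supseteq p$ with small forcing value where the paper packages the same step through the weak forcing operator $F_q^w$.
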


\begin{proof}
Suppose that $M\subseteq N$ with $N\models T$ and $\varphi(x)$ is an $L(C)$-formula.  Suppose that $(\inf_x \varphi(x))^N=r$.  Without loss of generality, $r<1$.  Fix $\epsilon>0$.  We must show that $(\inf_x \varphi(x))^M< r+\epsilon$.  Without loss of generality, we may assume that $M$ is the reduct of $M^G$ for some generic $G$.  Suppose, towards a contradiction, that $(\inf_x\varphi(x))^M\geq r+\epsilon$.  Fix $p\in G$ such that $$F_p(\inf_x\varphi(x))+F_p(\neg \inf_x\varphi(x))<1+\frac{\epsilon}{2}.$$  Our assumption then implies that $F_p(\neg \inf_x\varphi(x))<1-r-\frac{\epsilon}{2}$, whence $\inf_{q\supseteq p}F_q(\inf_x\varphi(x))\geq r+\frac{\epsilon}{2}$.

Pick a constant $c$ not appearing in either $p$ or $\varphi$.  Consider the $L(C)$-structure $N'$ obtained from $N$ by interpreting constants appearing in $p$ and $\varphi$ as $M$ does while interpreting $c$ in such a way so that $\varphi(c)^{N'}<r+\frac{\epsilon}{2}$.  Notice then that $D(N')\cup T\models \varphi(c)<r+\frac{\epsilon}{2}$.  By our choices and compactness, we may choose a finite $q\subseteq D(N')$ with $p\subseteq q'$ such that $q\cup T\models \varphi(c)<r+\frac{\epsilon}{2}$.  We claim that $F_q^w(\inf_x \varphi(x))<r+\frac{\epsilon}{2}$.  Indeed, fix $q'\supseteq q$.  Consider generic $G'$ containing $q'$.  By Lemma \ref{aefingen}. $M^{G'}\models T$, whence we have $\varphi(c)^{M^{G'}}<r+\frac{\epsilon}{2}$.  This  implies that there is $q''\in G$ containing $q'$ such that $F_{q''}(\varphi(c))<r+\frac{\epsilon}{2}$.

We have now arrived at the desired contradiction as $$F_q^w(\inf_x\varphi(x))\geq \inf_{q'\supseteq q}F_{q'}(\inf_x\varphi(x))\geq \inf_{q'\supseteq p}F_{q'}(\inf_x\varphi(x))\geq r+\frac{\epsilon}{2}.$$
\end{proof}

\section{Operator spaces and systems in continuous logic}

We first show how to axiomatize operator systems in continuous logic.  We work in the version of continuous logic presented in \cite{MTOA2}.  We think of an operator system as a many sorted structure with sorts:
\begin{itemize}
\item $M_n(\S)$, $n\geq 1$, for the matrix algebras over $\S$ with domains of quantification intended to be with respect to the matrix norm induced by the matrix order;
\item $\C_n$, $n\geq 1$, for the cones of positive elements, with domains of quantification those inherited from $M_n(\S)$;
\item $M_{m,n}(\mathbb C)$ for matrices over the complex numbers with domains of quantification with respect to the operator norm;
\item $\mathbb \r^{\geq 0}$ with domains of quantification as usual.
\end{itemize}

Note that we use a separate sort for the algebras over $\S$ as the metric on them is not merely the max metric induced by the metric on $\S$.

Here are the symbols:
\begin{itemize}
\item The constant symbols $0,1\in \S$ with domain $D_1(\S)$;
\item The function symbols $$+:M_n(\S)\times M_n(\S)\to M_n(\S) \text{ and }*:M_n(\S)\to M_n(\S).$$
\item The function symbols $l:\mathbb C\times M_n(\S)\to M_n(\S)$.
\item The function symbols $f_{m,n}:M_{m,n}(\mathbb C)\times M_m(\S)\to M_n(\S)$ (intended to be for $f_{m,n}(A,X):=A^*XA$).
\item The function symbols $+:\C_n\to \C_n$, $k:\r^{\geq 0}\times \C_n\to \C_n$ and $$g_{m,n}:M_{m,n}(\mathbb C)\times \C_m\to \C_n.$$
\item Inclusions $i_n:\C_n\to M_n(\S)$.
\item Function symbols $h_n:M_n(\S)\to M_{2n}(\S)$, intended to be for $$h_n(X):=\left(\begin{matrix} I & X \\ X^* & I \end{matrix}\right).$$
\item Function symbols $\pi_n^{ij}:M_n(\S)\to \S$, intended to be projection functions.  Likewise for scalar matrices.
\item Predicate symbols $\|\cdot \|_n:M_n(\S)\to \r^{\geq 0}$.
\item Function symbols for scalar matrix addition, multiplication and adjoint.  Constant symbols for $1$ and $i$.
\end{itemize}

In order to axiomatize operator systems in this language, we will need to know the following two easy facts:

\begin{lemma}\label{easy}
Suppose that $\S$ is a concrete operator system.
\begin{enumerate}
\item If $A\in M_n(\S)$, then
$$\|A\|\dotminus 1\leq d\left( \left(\begin{matrix} I & A \\ A^* & I \end{matrix}\right),\C_{2n}\right)\leq \sqrt{2}(\|A\|\dotminus 1).$$
\item If $A\in \C_n$, then $d(A,-\C_n)=\|A\|$.
\end{enumerate}
\end{lemma}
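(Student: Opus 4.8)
The plan is to handle the two parts separately, resting both on the elementary Schur-complement fact that, for a concrete operator system $\S\subseteq\B(H)$, a scalar $c>0$, and $A\in M_n(\S)$, the operator $\begin{pmatrix} cI & A \\ A^* & cI \end{pmatrix}$ is positive (i.e.\ lies in $\C_{2n}$) if and only if $\|A\|\le c$. I would first record this: taking the Schur complement of the invertible corner $cI=cI_{H^n}$, positivity is equivalent to $cI-\tfrac1c A^*A\ge 0$, that is, to $A^*A\le c^2 I$, that is, to $\|A\|\le c$. Here positivity means operator positivity in $\B(H^{2n})$, which is exactly membership in $\C_{2n}$ for a concrete operator system, and the corner entries $cI$ are scalar multiples of the unit $1_\S=I_H$.

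For the lower bound in part (1), write $\tilde A:=h_n(A)=\begin{pmatrix} I & A \\ A^* & I \end{pmatrix}$, fix any $P\in\C_{2n}$, and set $r:=\|\tilde A-P\|$. Since $P\ge 0$ and $\langle(\tilde A-P)v,v\rangle\ge -r\|v\|^2$ for all $v$ (both operators being self-adjoint), I get $\tilde A+rI\ge 0$; as $\tilde A+rI=\begin{pmatrix} (1+r)I & A \\ A^* & (1+r)I \end{pmatrix}$, the preliminary fact with $c=1+r$ forces $\|A\|\le 1+r$. Taking the infimum over $P$ yields $\|A\|-1\le d(\tilde A,\C_{2n})$, and since the leftmost term of the lemma is $\|A\|\dotminus 1=\max(\|A\|-1,0)$ and distances are nonnegative, the $\|A\|\le 1$ case is handled automatically.

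For the upper bound I would exhibit an explicit witness. When $\|A\|>1$, set $a:=\|A\|$ and take $P:=\begin{pmatrix} aI & A \\ A^* & aI \end{pmatrix}$, which lies in $\C_{2n}$ by the preliminary fact. Then $\tilde A-P=(1-a)I_{2n}$, so $d(\tilde A,\C_{2n})\le a-1=\|A\|-1\le\sqrt2(\|A\|\dotminus 1)$; when $\|A\|\le 1$ we have $\tilde A\in\C_{2n}$ and both sides vanish. This in fact gives the sharper constant $1$, so the stated bound has room to spare; I keep $\sqrt2$ only because that is what the application requires.

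Part (2) is shorter. The upper bound is immediate from $0\in-\C_n$, giving $d(A,-\C_n)\le\|A\|$. For the lower bound, any $B\in-\C_n$ satisfies $-B\ge 0$, so $A-B$ is a sum of positives and hence $0\le A\le A-B$; monotonicity of the norm on positive operators then gives $\|A\|\le\|A-B\|$, and taking the infimum over $B$ completes the equality. The computations throughout are routine; the only points needing care are the bookkeeping around $\dotminus$ in part (1) (so that the $\|A\|\le 1$ regime falls out for free) and checking that the witness $P$ genuinely lies in $M_{2n}(\S)$, which it does since its entries are scalar multiples of $1_\S$ together with $A$ and $A^*$. I therefore anticipate no serious obstacle.
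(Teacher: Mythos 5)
Your proof is correct, but it takes a genuinely different route from the paper's on part (1). The paper argues at the level of vectors: for the lower bound it evaluates the quadratic form of $\left(\begin{smallmatrix} I & A \\ A^* & I\end{smallmatrix}\right)-E$ at a nearly norm-attaining pair $(x,y)$ with $x=-Ay/\|Ay\|$, $\|y\|=1$, extracting $\|\tilde A-E\|\geq\|A\|-1-\epsilon$ and letting $\epsilon\to0$; for the upper bound it shrinks the off-diagonal entries, using the witness $\left(\begin{smallmatrix} I & r^{-1}A \\ r^{-1}A^* & I\end{smallmatrix}\right)$ with $r=\|A\|$ and bounding the error crudely by $\sqrt2(r-1)$. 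You instead isolate the clean Schur-complement equivalence $\left(\begin{smallmatrix} cI & A \\ A^* & cI\end{smallmatrix}\right)\geq0\iff\|A\|\leq c$ (valid since $c>0$ makes the corner invertible; this is essentially Paulsen's Lemma 3.1), get the lower bound softly from $\tilde A+rI\geq P\geq0$ with $r=\|\tilde A-P\|$, and inflate the diagonal for the upper bound, yielding $\tilde A-P=(1-a)I$ and the sharper constant $1$. Your approach buys modularity and avoids all $\epsilon$-bookkeeping; the paper's buys a self-contained inner-product computation that tracks its cited source. Incidentally, the paper's witness also achieves constant $1$, since $\bigl\|\left(\begin{smallmatrix}0 & B \\ B^* & 0\end{smallmatrix}\right)\bigr\|=\|B\|$ gives distance exactly $r-1$; the $\sqrt2$ in the lemma is slack in both arguments, as you observe. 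On part (2) the two proofs are essentially the same: the paper expands $\langle(A+B)x,x\rangle$ at a near-maximizing vector for $A$, while you invoke the equivalent fact that $0\leq A\leq A-B$ implies $\|A\|\leq\|A-B\|$.
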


\begin{proof}
(1)  This basically follows from a slightly more careful analysis of the proof of \cite[Lemma 3.1(i)]{Paulsen}.  Indeed, for $x,y\in H$, we have
$$\left\langle \left(\begin{matrix} I & A \\ A^* & I\end{matrix}\right)\left(\begin{matrix} x \\ y \end{matrix}\right),\left(\begin{matrix}x \\ y\end{matrix}\right)\right\rangle =\langle x,x\rangle+\langle Ay,x\rangle+\langle x,Ay\rangle +\langle y,y\rangle.$$  Now if $\|A\|\leq 1$, then the right hand side of the above display is positive.  We may thus suppose that $\|A\|>1$.  Fix $\epsilon>0$ such that $\|A\|-\epsilon>1$ and take $y\in H$ with $\|y\|=1$ such that $\|Ay\|\geq \|A\|-\epsilon$.  Set $x:=-Ay/\|Ay\|$ so that $\langle Ay,x\rangle=\|Ay\|$.  Fix $E\in \C_{2n}$.  We then have $$\left\langle \left( \left(\begin{matrix} I & A \\ A^* & I\end{matrix}\right)-E\right)\left(\begin{matrix} x \\ y \end{matrix}\right),\left(\begin{matrix}x \\ y\end{matrix}\right)\right\rangle \leq 2-2\|Ay\|.$$  On the other hand, $$\left|\left\langle \left(\left(\begin{matrix} I & A \\ A^* & I\end{matrix}\right)-E\right)\left(\begin{matrix} x \\ y \end{matrix}\right),\left(\begin{matrix}x \\ y\end{matrix}\right)\right\rangle\right |\leq 2\left \|\left(\begin{matrix} I & A \\ A^* & I\end{matrix}\right)-E\right\|.$$  Thus, $$\left\|\left(\begin{matrix} I & A \\ A^* & I\end{matrix}\right)-E\right\|\geq \|Ay\|-1\geq \|A\|-\epsilon-1.$$  The first inequality now follows by letting $\epsilon$ go to $0$.  For the other inequality, set $r:=\|A\|$ and note that $\left( \begin{matrix} I &r^{-1}A \\ r^{-1}A^* & I \end{matrix}\right)\in \C_{2n}$ and that $$d\left(\left(\begin{matrix}I & A \\ A^* & I\end{matrix}\right),\left(\begin{matrix} I & r^{-1}A \\ r^{-1}A^* & I\end{matrix}\right)\right)=\left\|\left(\begin{matrix}0 & (1-r^{-1})A\\ (1-r^{-1})A^* & 0\end{matrix}\right)\right\|\leq \sqrt{2}(r-1).$$

For (2), suppose that $A,B\in \C_n$.  Then, since $A+B$ is hermitian, we have that
$$d(A,-B)=\|A+B\|=\sup\{\langle (A+B)x,x\rangle  \ : \ \|x\|=1\}.$$  Fix $\epsilon>0$ and take $x\in H$ such that $\langle Ax,x\rangle>\|A\|-\epsilon$.  We then have
$$d(A,-B)\geq \langle \langle (A+B)x,x\rangle\geq \|A\|-\epsilon+\langle Bx,x\rangle\geq \|A\|-\epsilon.$$  We thus have $d(A,-\C_n)\geq \|A\|$.  But $0\in \C_n$, so $d(A,-\C_n)=\|A\|$.
\end{proof}

Here are the axioms for $T_{\opsys}$:

\begin{itemize}
\item Each $M_n(\S)$ is a $*$-v.s. and the operations on $M_n(\S)$ are those induced from $\S$.  For example, $\pi^{ij}_n(A+B)=\pi^{ij}_n(A)+\pi^{ij}_n(B)$ and $\pi^{ij}_n(A^*)=\pi^{ji}_n(A)^*$.
\item Axioms saying that each $f_{m,n}$ is interpreted as it should be.
\item Axioms saying that each $i_n$ is an isometric inclusion.
\item $\sup_{A\in \C_n}d(i_n(A),i_n(A)^*)=0$.
\item Axioms saying that the operations of addition and scalar multiplication on $\C_n$ agree with those in $M_n(\S)$.
\item Axioms saying that each $h_n$ is interpreted as it should be.
\item $$\sup_{A\in M_n(\S)}\max\left((\|A\|_n\dotminus 1)\dotminus d(h_n(A),\C_{2n}),d(h_n(A),\C_{2n})\dotminus \sqrt{2}(\|A\|_n\dotminus 1)\right)=0.$$
\item $\sup_{A\in \C_n}\sup_{B\in \C_n}(\|A\|\dotminus d(A,-B))=0$.
\item Axioms saying that $\|\cdot\|_n$ is a seminorm.
\item $\sup_{A,B\in M_n(\S)}|d(A,B)-\|A-B\|_n|=0$.
\end{itemize}

Note that a model of the axioms is naturally a matrix ordered $*$-vector space.  Moreover, it follows from the axioms that $1$ is a matrix order unit.  Indeed, given $X\in M_n(\S)_h$, there is $r\in \R^{>0}$ such that $\|X\|_n\leq r$.  It follows that $\|\frac{1}{r}X\|_n\leq 1$, so $h_n(\frac{1}{r}X)\in \C_{2n}$ whence $\left(\begin{matrix} rI & X \\ X & rI\end{matrix}\right)\in \C_{2n}$.  By conjugating with the vector of all $I$'s, we have that $n(rI+X)\in \C_n$, whence it follows that $rI+X\in \C_n$.

It is not immediately clear that we get that $1$ is an \emph{archimedean} matrix order unit.  However, having an archimedean matrix order unit is only used to show that the seminorms induced from the matrix order are actually norms and that each $\C_n$ is closed in the topology induced by the norm.  However, the first fact follows from the connection between the seminorm and the metric and the latter fact follows from the fact that $\C_n$ is required to be complete as it is a sort in a metric structure.

Recall that an abstract operator system is a matrix-ordered $*$-vector space with an archimedean matrix order unit.  The Choi-Effros Theorem (see \cite[Theorem 13.1]{Paulsen}) says that the notions of abstract operator system and concrete operator system coincide.
\begin{thm}
Every (abstract) operator system $\S$ is, in a natural way, a model $M(\S)$ of $T_{\opsys}$.  Conversely, every model of $T_{\opsys}$ is an operator system.  The map $\S\mapsto M(S)$ is an equivalence of categories between the category of operator systems with complete order isomorphisms and the category of models of $T_{\opsys}$ with (model-theoretic) isomorphisms.
\end{thm}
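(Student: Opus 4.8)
The plan is to establish the three assertions in turn: that an abstract operator system $\S$ yields a model $M(\S)$ of $T_{\opsys}$, that conversely every model of $T_{\opsys}$ is an operator system, and then that the assignment $\S\mapsto M(\S)$ is an equivalence of categories. The conceptual heart of the matter is that the matrix norms are \emph{not} free data but are pinned down by the matricial order together with the unit, and the quantitative form of this dependence is exactly Lemma \ref{easy}.

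First I would construct $M(\S)$. Given an abstract operator system $\S$, I invoke the Choi--Effros theorem to realize it as a concrete operator system $\S\subseteq \B(H)$, and then interpret each sort and symbol in the evident way: $M_n(\S)$ with its matrix norm $\|\cdot\|_n$, $\C_n$ as the positive cone, and the function symbols $f_{m,n}, g_{m,n}, h_n, i_n, \pi_n^{ij}$ by their intended meanings. The bulk of the work is checking that each axiom of $T_{\opsys}$ holds. The purely algebraic axioms (the $*$-vector-space laws, compatibility of the $\pi_n^{ij}$ with the operations, the prescribed forms of $f_{m,n}$ and $h_n$, isometry of $i_n$, and that $\|\cdot\|_n$ is a seminorm linked to $d$) are routine verifications from the concrete representation. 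The two axioms carrying genuine content are the one relating $\|A\|_n\dotminus 1$ to $d(h_n(A),\C_{2n})$, which is precisely Lemma \ref{easy}(1), and the axiom $\sup_{A,B\in\C_n}(\|A\|\dotminus d(A,-B))=0$, which is Lemma \ref{easy}(2). I would also record that each function and predicate symbol is uniformly continuous on the domains of quantification with a fixed modulus, so that $M(\S)$ is a legitimate $\la$-structure.

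For the converse I would follow the remarks already made after the list of axioms. A model $M$ of $T_{\opsys}$ is, by the algebraic axioms, a matrix-ordered $*$-vector space, and the $h_n$-axiom together with the conjugation argument sketched above shows that $1$ is a matrix order unit. The one delicate point is archimedeanness, which the axioms do not impose outright. Here I would use that the archimedean property is needed only to guarantee (a) that the order seminorms are genuine norms and (b) that each cone $\C_n$ is closed; but (a) follows from the axiom $\sup_{A,B}|d(A,B)-\|A-B\|_n|=0$, which forces $\|\cdot\|_n$ to separate points, and (b) holds because $\C_n$ is a sort of a metric structure and hence complete, so closed in $M_n(\S)$. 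Thus $M$ is an abstract operator system; applying Choi--Effros again realizes it concretely, and the recovered matrix norm agrees with $\|\cdot\|_n$ precisely because the norm-versus-distance-to-cone relation of Lemma \ref{easy} is built into the axioms.

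Finally, for the equivalence of categories I would verify that $\S\mapsto M(\S)$ is essentially surjective, full, and faithful. Essential surjectivity is immediate from the previous paragraph: a model $M$ is canonically identified with $M(\S)$, where $\S$ is its underlying operator system. For fullness and faithfulness I observe that a complete order isomorphism $\S\to\S'$ preserves every piece of interpreted structure — the matrix norms, the cones $\C_n$, and all the function symbols, which are themselves defined from the matricial order and the unit — and is therefore exactly a model-theoretic isomorphism $M(\S)\to M(\S')$; conversely a model isomorphism, preserving the sorts $\C_n$ and the norms $\|\cdot\|_n$, restricts to a complete order isomorphism of the underlying systems. I expect the main obstacle to be the archimedeanness subtlety in the converse direction; its resolution is the observation above that completeness of the cone sorts substitutes for an explicit archimedean axiom, with Lemma \ref{easy} ensuring that the matricial order determines the matrix norm.
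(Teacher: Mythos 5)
Your proposal is correct and takes essentially the same route as the paper: the forward direction reduces, via Choi--Effros, to checking that concrete operator systems model $T_{\opsys}$ with Lemma \ref{easy} supplying the two substantive axioms, and the converse bypasses archimedeanness exactly as the paper does, using the metric axiom to make the order seminorms genuine norms and completeness of the sorts to make each $\C_n$ closed. One wording to tighten: rather than concluding ``$M$ is an abstract operator system'' and then \emph{applying} Choi--Effros, you should (as the paper does) run the \emph{proof} of Choi--Effros with those two facts substituted for archimedeanness, obtaining a complete order isomorphism with a concrete operator system, from which archimedeanness of the unit follows only a posteriori.
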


\begin{proof}
By Lemma \ref{easy}, every concrete operator system is naturally a model of $T_{\opsys}$;  now use Choi-Effros.  Conversely, a model of $T_{\opsys}$ satisfies every axiom for being an abstract operator system except perhaps for $1$ being an archimedean matrix order unit.  However, since each $\C_n$ is closed, we can run the proof of Choi-Effros and get that the model is completely order isomorphic to a concrete operator system, whence $1$ must have been archimedean.  The rest is obvious.
\end{proof}

For operator spaces, one plays a similar, but even easier game.  We have sorts for

\begin{itemize}
\item $M_{m.n}(V)$, $m,n\geq 1$, for the matrix algebras over $V$ with domains of quantification intended to be with respect to the matrix norms;
\item $M_{p,q}(\mathbb C)$ for matrices over the complex numbers with domains of quantification with respect to the operator norm;
\end{itemize}

Here are the symbols:
\begin{itemize}
\item The constant symbol $0 \in V$ with domain $D_1(V)$;
\item The function symbol $+:M_{m,n}(V)\times M_{m,n}(V)\to M_{m,n}(V)$.
\item The function symbols $l:\mathbb C\times M_{m,n}(V)\to M_{m,n}(V)$.
\item The function symbols $$f_{p,m,n,q}:M_{p,m}(\mathbb C)\times M_{m,n}(V)\times M_{n,q}(\mathbb C)\to M_{p,q}(V)$$ (intended to be for $f_{p,m,n,q}(A,X,B):=AXB$).
\item Function symbols $\pi_{m,n}^{ij}:M_{m,n}(V)\to V$, intended to be projection functions.  Likewise for scalar matrices.
\item Function symbols $\oplus_{m,n,p,q}:M_{m,n}(V)\times M_{p,q}(V)\to M_{m+p,n+q}(V)$.
\item Predicate symbols $\|\cdot \|_{m,n}:M_{m,n}(V)\to \r^{\geq 0}$.
\item Predicate symbols $\|\cdot \|_{m,n}:M_{m,n}(\c)\to \r^{\geq 0}$.
\item Function symbols for scalar matrix addition, multiplication and adjoint.  Constant symbols for $1$ and $i$.
\end{itemize}

Here are the axioms $T_{\operatorname{mns}}$ for matrix normed spaces:

\begin{itemize}
\item Each $M_{m,n}(V)$ is a vector space and the operations on $M_{m,n}(V)$ are those induced from $V$.
\item Axioms saying that each $f_{p,m,n,q}$ is interpreted as it should be.
\item Axioms saying that $\|\cdot\|_{m,n}$ is a seminorm.
\item $\sup_{A,B\in M_{m,n}(V)}|d(A,B)-\|A-B\|_{m,n}|=0$.
\item Axioms that tell us the norm of an element of $\c^n$.
\item $\sup_{A\in M_{m,n}(\c)}\sup_{x\in \c^n}(\|Ax\|\dotminus (\|A\|\cdot \|x\|))=0$.
\item For each $r\in \r^{>0}$, $$\sup_{A\in M_{m,n}(\c)}\min\left(\|A\|\dotminus r, \inf_x \max(\|x\|\dotminus 1,(r\|x\|\dotminus \|Ax\|)\right)=0.$$
\item $\sup_{A,X,B} \left(\|f_{p,m,n,q}(A,X,B)\|_{p,q}\dotminus \left(\|A\|_{p,m}\cdot \|X\|_{m,n}\cdot \|B\|_{n,q}\right)\right)=0$.
\end{itemize}

The axioms for $T_{\operatorname{ops}}$ are the axioms for $T_{\operatorname{mns}}$ plus the following axioms:
\begin{itemize}
\item $\sup_{X,Y}|\|X\oplus Y\|_{m+p,n+q}-\max(\|X\|_{m,n},\|Y\|_{p,q})|=0$.
\end{itemize}

Then by Ruan's Theorem (see \cite[Theorem 13.4]{Paulsen}), we have:

\begin{thm}
Every (abstract) operator space $V$ is, in a natural way, a model $M(V)$ of $T_{\ops}$.  Conversely, every model of $T_{\opsys}$ is an operator space.  The map $V\mapsto M(V)$ is an equivalence of categories between the category of operator spaces with complete isometries and the category of models of $T_{\ops}$ with (model-theoretic) isomorphisms.
\end{thm}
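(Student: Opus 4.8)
The plan is to follow the proof of the preceding theorem for operator systems almost verbatim, replacing the Choi--Effros Theorem by Ruan's Theorem (\cite[Theorem 13.4]{Paulsen}).

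For the forward direction, I would first realize an abstract operator space $V$ concretely as a subspace of some $\B(H)$, and then define $M(V)$ by interpreting each sort $M_{m,n}(V)$ as the $m\times n$ matrices over $V$ with their matrix norms $\|\cdot\|_{m,n}$, the scalar sort $M_{p,q}(\c)$ with the operator norm, the symbol $f_{p,m,n,q}$ as $(A,X,B)\mapsto AXB$, the symbol $\oplus$ as the block direct sum, and the remaining projection, addition, scalar-multiplication and scalar-matrix symbols in the obvious way. Checking the axioms of $T_{\ops}$ is then routine bookkeeping: the seminorm axioms together with $\sup_{A,B}|d(A,B)-\|A-B\|_{m,n}|=0$ merely record that the metric on each sort is the matrix norm; the axioms for $\|\cdot\|_{m,n}$ on scalar matrices pin down the genuine operator norm (the axiom $\sup_{A,x}(\|Ax\|\dotminus\|A\|\|x\|)=0$ bounds it from above and the variational axiom involving $\min$ and $\inf$ bounds it from below); and the last two axioms are exactly Ruan's submultiplicativity $\|AXB\|\leq\|A\|\|X\|\|B\|$ and the direct-sum identity $\|X\oplus Y\|=\max(\|X\|,\|Y\|)$, both of which hold automatically for a concrete operator space.

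For the converse, let $M$ be a model of $T_{\ops}$ and let $V$ be the interpretation in $M$ of the base sort $M_{1,1}(V)$. The axioms force each $M_{m,n}(V)$ to be a complete normed space: it is seminormed by the seminorm axioms, the seminorm is in fact a norm because $d(A,B)=\|A-B\|_{m,n}$ forces $\|A\|_{m,n}=0\Rightarrow A=0$, and completeness is automatic since each $M_{m,n}(V)$ is a sort of a metric structure. The two remaining axioms are precisely the Ruan axioms, so Ruan's Theorem yields a complete isometry of $V$ onto a concrete operator space which induces the given matrix norms; hence $M$ is an operator space.

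Finally, to establish the equivalence of categories, I would argue exactly as in the operator system case: the assignment $V\mapsto M(V)$ is plainly functorial, it is essentially surjective by the converse just sketched, and it is full and faithful because a completely isometric isomorphism $V\to W$ induces an isometric bijection at each matrix level that respects the module action and direct sums, hence is a model isomorphism $M(V)\to M(W)$, while conversely any model isomorphism restricts on $M_{1,1}(V)$ to a completely isometric bijection. I do not anticipate a substantive obstacle: all of the genuine analytic content is absorbed into Ruan's Theorem, and the only delicate point is the purely formal verification that the scalar-matrix norm axioms correctly determine the operator norm on $M_{p,q}(\c)$, so that $f_{p,m,n,q}$ genuinely computes $AXB$ with the intended norms.
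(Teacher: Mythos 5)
Your proposal is correct and is essentially the paper's own argument: the paper gives no written proof beyond the remark ``by Ruan's Theorem,'' the understanding being exactly what you spell out, namely that one repeats the operator system proof with Ruan's Theorem in place of Choi--Effros, with the converse direction even easier here since the matrix norms are primitive in the language (so no analogue of the archimedean subtlety or of Lemma B.1 is needed, completeness and the identification of seminorms with norms coming for free from the metric-structure axioms, as you note). Your elaboration of the routine sort-by-sort verifications and the functoriality of $V\mapsto M(V)$ fills in precisely the details the paper declares obvious.
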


\section{Definability of the operator norm}

In this appendix, we prove that the matrix norm on $M_n(A)$ is definable in the C$^*$ algebra $A$.  All of this material is due to Martino Lupini and we thank him for allowing us to include this material here.

Suppose that $A$ is a C$^*$ algebra.  We consider $A^n$ as a Hilbert $A$-module by giving it the usual $A$-module structure and
defining%
\begin{equation*}
\left\langle \vec x ,\vec y\right\rangle
=\sum x_{i}^{\ast }y_{i}
\end{equation*}%
In
particular the norm on $A^{n}$ is defined by%
\begin{equation*}
\left\Vert \vec x \right\Vert =\left\Vert \sum x_{i}^{\ast }x_{i}\right\Vert^{1/2} \text{.}
\end{equation*}

An element $\left( a_{ij}\right) $ of $M_{n}\left( A\right) $
defines an element $T_{\left( a_{ij}\right) }$ of $\B\left( A^{n}\right) $
by ordinary matrix multiplication.  The operator norm on $\B\left( A^{n}\right) $ defines a C*-norm on $M_n\left( A\right) $. This C*-norm has to agree with the usual (complete!)
C*-norm on $M_{n}\left( A\right) $.

Finally, set
$$X_{n,A}:=\{(\vec x,\vec y)\in A_1^{2n} \ : \ \max\left(\left\|\sum x_i^*x_i\right\|,\left\|\sum y_i^*y_i\right\|\right)\leq 1\}.$$
\begin{prop}
If $A$ is a C*-algebra and $\left( a_{ij}\right) \in M_{n}\left(
A\right) $ then%
\begin{equation*}
\left\Vert \left( a_{ij}\right) \right\Vert =\sup_{(\vec x,\vec y)\in X_{n,A}} \left\{ \left\Vert
\sum_{i\in n}\sum_{j\in n}x_{i}^{\ast }a_{ij}y_{j}\right\Vert\right\}.
\end{equation*}%
\end{prop}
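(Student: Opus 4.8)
The plan is to interpret $\|(a_{ij})\|$ as the operator norm of $T:=T_{(a_{ij})}\in\B(A^n)$ acting on the Hilbert $A$-module $A^n$, and then to reduce that operator norm to a supremum of inner products, exactly as one does for operators on a Hilbert space. Writing $\langle \vec x,\vec y\rangle=\sum_i x_i^*y_i$ for the module inner product and $\|\vec x\|=\|\langle \vec x,\vec x\rangle\|^{1/2}$ for the associated norm, I would combine the defining property $\|T\|=\sup_{\|\vec y\|\le 1}\|T\vec y\|$ with the module analogue of the elementary fact that a vector's norm is recovered by testing against unit vectors, namely
$$\|\vec z\|=\sup_{\|\vec x\|\le 1}\|\langle \vec x,\vec z\rangle\|\qquad(\vec z\in A^n).$$

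The only point that genuinely needs an argument is this last identity. For the inequality $\le$ one applies the Cauchy--Schwarz inequality for Hilbert $C^*$-modules, $\|\langle \vec x,\vec z\rangle\|\le \|\vec x\|\,\|\vec z\|$, which shows the right-hand side is at most $\|\vec z\|$. For the reverse inequality, assuming $\vec z\neq 0$, I would take $\vec x=\vec z/\|\vec z\|$, so that $\|\vec x\|=1$ and $\|\langle \vec x,\vec z\rangle\|=\|\langle \vec z,\vec z\rangle\|/\|\vec z\|=\|\vec z\|^2/\|\vec z\|=\|\vec z\|$; hence the supremum is attained and equals $\|\vec z\|$ (the case $\vec z=0$ being trivial).

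With the identity in hand the rest is a direct computation. For $\vec y\in A^n$ one has $T\vec y=\bigl(\sum_j a_{ij}y_j\bigr)_i$, and therefore
$$\langle \vec x,T\vec y\rangle=\sum_i x_i^*\Bigl(\sum_j a_{ij}y_j\Bigr)=\sum_{i}\sum_{j}x_i^*a_{ij}y_j.$$
Applying the vector-norm identity to $\vec z=T\vec y$ and then taking the supremum over $\|\vec y\|\le 1$ gives
$$\|(a_{ij})\|=\|T\|=\sup_{\|\vec y\|\le 1}\ \sup_{\|\vec x\|\le 1}\ \Bigl\|\sum_i\sum_j x_i^*a_{ij}y_j\Bigr\|.$$
Finally I would observe that the constraints $\|\vec x\|\le 1$ and $\|\vec y\|\le 1$ are exactly $\|\sum x_i^*x_i\|\le 1$ and $\|\sum y_i^*y_i\|\le 1$, and that each such constraint forces $\|x_i\|^2=\|x_i^*x_i\|\le\|\sum x_i^*x_i\|\le 1$ (since $x_i^*x_i\le\sum x_i^*x_i$ as positive elements), so that $(\vec x,\vec y)$ automatically lies in $A_1^{2n}$. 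Thus the pair of constraints describes precisely the set $X_{n,A}$, and the displayed supremum equals $\sup_{(\vec x,\vec y)\in X_{n,A}}\|\sum_i\sum_j x_i^*a_{ij}y_j\|$, as required.

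The main (and essentially the only) obstacle is justifying the vector-norm identity, which rests on Cauchy--Schwarz in Hilbert $C^*$-modules; everything else is bookkeeping, so I would simply cite a standard reference for Cauchy--Schwarz rather than reprove it. The one subtlety worth flagging is the degenerate case $\vec z=0$, where the optimal choice $\vec x=\vec z/\|\vec z\|$ is unavailable but the identity holds for trivial reasons.
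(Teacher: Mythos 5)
Your proof is correct and takes essentially the same approach as the paper's: both identify $\|(a_{ij})\|$ with the operator norm of $T_{(a_{ij})}$ on the Hilbert $A$-module $A^n$, reduce it to $\sup\|\langle \vec x, T\vec y\rangle\|$ over module-unit vectors, and note that $\|\sum x_i^*x_i\|\le 1$ forces each $x_i$ into the unit ball of $A$, so the constraints describe exactly $X_{n,A}$. The only difference is that you make explicit the identity $\|\vec z\|=\sup_{\|\vec x\|\le 1}\|\langle \vec x,\vec z\rangle\|$ (via module Cauchy--Schwarz and the choice $\vec x=\vec z/\|\vec z\|$), a step the paper's chain of equalities leaves implicit.
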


\begin{proof}
First observe that if $x_1,\ldots,x_n\in A$ are such that
\begin{equation*}
\left\Vert \sum x_{i}^{\ast }x_{i}\right\Vert \leq 1,
\end{equation*}%
then for every $j$ we have%
\begin{equation*}
0\leq x_{j}^{\ast }x_{j}\leq \sum_{i\in n}x_{i}^{\ast }x_{i}\leq 1,
\end{equation*}%
whence $x_{j}$ belongs to the unit ball of $A.$
It now follows that
\begin{alignat}{2}
\left\Vert \left( a_{ij}\right) \right\Vert &=\left\Vert T_{\left( a_{ij}\right) }\right\Vert  \notag \\ \notag
&=\sup \left\{ \left\Vert T_{\left( a_{ij}\right) }\left( y_{k}\right)
\right\Vert \ : \  \left( y_{k}\right) \in A^{n}\text{, }\left\Vert
\left( y_{k}\right) \right\Vert \leq 1 \right\}  \\ \notag
&=\sup_{(\vec x,\vec y)\in X_{n,A}} \left\{ \left\Vert \left\langle \left( x_{h}\right) ,T_{\left(
a_{ij}\right) }\left( y_{k}\right) \right\rangle \right\Vert\right\}  \\ \notag
&=\sup_{(\vec x,\vec y)\in X_{n,A}} \left\{ \left\Vert
\sum_{i\in n}\sum_{j\in n}x_{i}^{\ast }a_{ij}y_{j}\right\Vert\right\}.
\end{alignat}%

In order to show that the operator norm on $M_n(A)$ is definable, we must show that $X_{n,A}$ is definable, whence quantifying over it preserves definability.  Towards this end, it suffices to show that the relation $\|\sum x_i^*x_i\|\leq 1$ is a stable relation.  Suppose that $x_1,\ldots,x_n\in A$ are such that $\left\Vert \sum x_{i}^{\ast }x_{i}\right\Vert \leq \left(
1+\varepsilon \right) ^{2}$.
Defining $y_{i}:=\frac{x_{i}}{1+\varepsilon }$, we see that $\|\sum_{i\in n}y_{i}^{\ast }y_{i}\|\leq 1$
and $\left\Vert y_{i}-x_{i}\right\Vert \leq \varepsilon$, as desired.
\end{proof}

\end{document}